\documentclass[12pt,reqno]{amsart}

%PACKAGES
\usepackage{geometry} %[top=90pt, bottom=70pt, left=70pt, right=60pt]
\usepackage{graphicx}
\usepackage[latin1]{inputenc}
\usepackage{amsmath,amssymb,wasysym}
\usepackage{amsthm,enumerate, mathdots}
\usepackage{latexsym}
\usepackage{xypic}
\usepackage[all]{xy}
\usepackage{pxfonts}
\usepackage{multirow}
\usepackage{color}
\usepackage{hyperref}
\usepackage{bbold}
\usepackage{aurical}
\usepackage[T1]{fontenc}

%PAGE SETTINGS
\setlength{\hoffset}{-5mm}
\setlength{\textwidth}{165mm}
\setlength{\oddsidemargin}{5mm}
\setlength{\evensidemargin}{0mm}
\setlength{\marginparsep}{3mm}
\setlength{\marginparwidth}{20mm}

\textheight=226truemm
\parskip=4truept
\headheight=20pt
\voffset-15mm
\pagestyle{myheadings}

%%%%Alex macros

%NEWTHEOREMS
\theoremstyle{plain}
\newtheorem{thm}{Theorem}[section]
\newtheorem{prop}[thm]{Proposition}
\newtheorem{lmm}[thm]{Lemma}
\newtheorem{cor}[thm]{Corollary}

\theoremstyle{definition}
\newtheorem{rmk}[thm]{Remark}{\rm}
\newtheorem{defi}[thm]{Definition}
\newtheorem{expl}[thm]{Example}{\rm}
{\rm}
\newtheorem{hyp}[thm]{Hypothesis}{\rm}
{\rm}

%\newenvironment{proofsketch}{\trivlist\item[]\emph{Proof Sketch.}~}%
%{\unskip\nobreak\hskip 1em plus 1fil\nobreak$\Box$
%\parfillskip=0pt%
%\endtrivlist}%

%NEWCOMMANDS

%miscellaneous

\newcommand{\spa}{\mbox{ }}

    %%%% Carles: modificat, anteriorment \conj 

\newcommand{\defin}{\stackrel{def} =}

\newcommand{\3}[1]{\overline{#1}}
\newcommand{\4}[1]{\widetilde{#1}}

\newcommand{\7}[1]{\boldsymbol{#1}}
\newcommand{\9}[1]{{}^{#1}}

%comments

%\mathbb

\newcommand{\F}{\mathbb{F}}
\newcommand{\Z}{\mathbb{Z}}

\newcommand{\N}{\mathbb{N}}

%\mathcal

\newcommand{\cat}{\mathcal{C}}
\newcommand{\ee}{\mathcal{E}}

\newcommand{\FF}{\mathcal{F}}
\newcommand{\g}{\mathcal{G}}
\newcommand{\hh}{\mathcal{H}}

\newcommand{\kk}{\mathcal{K}}
\newcommand{\LL}{\mathcal{L}}
\newcommand{\mm}{\mathcal{M}}
\newcommand{\nn}{\mathcal{N}}

\newcommand{\pp}{\mathcal{P}}

\newcommand{\TT}{\mathcal{T}}

%\mathfrak

%\mathbf

%operators and \mathrm

\newcommand{\incl}{\mathrm{incl}}
\newcommand{\Hom}{\mathrm{Hom}}
\newcommand{\Mor}{\mathrm{Mor}}

\newcommand{\Aut}{\mathrm{Aut}}
\newcommand{\Inn}{\mathrm{Inn}}
\newcommand{\Out}{\mathrm{Out}}
\newcommand{\Iso}{\mathrm{Iso}}

\newcommand{\Ob}{\mathrm{Ob}}
\newcommand{\Ker}{\mathrm{Ker}}
\newcommand{\Id}{\mathrm{Id}}
\newcommand{\Syl}{\operatorname{Syl}\nolimits}

%Groups, rings, etc

    %Carles ---> \dosadic a blo-macr
      %Carles: modificat, abans \ptor
        % Carles: modificat, abans \tor
      %Carles ---> \zploc a blo-macr

 %Induced module
 %Coinduced

%\newcommand{\Res}{\mathrm{Res}}

%p-local compact groups

\newcommand{\typ}{\mathrm{typ}}

\newcommand{\ploc}{(S, \FF, \LL)}

%partial groups and b-simplicial sets

\newcommand{\ww}{\mathbb{W}} %before it was \operatorname{W}
\newcommand{\dd}{\mathbb{D}} %before it was \operatorname{D}

\newcommand{\locl}{\text{\bbfamily L}}
\newcommand{\locm}{\text{\bbfamily M}}

\newcommand{\loct}{\text{\bbfamily T}}

\newcommand{\loc}{(\LL, \Delta, S)}

%simplicial sets

\newcommand{\Sset}{\mathrm{\mathbf{sSets}}}
 %Ex functor

%categories
\newcommand{\topo}{\mathrm{\mathbf{Top}}}

\newcommand{\Setsc}{\mathrm{\mathbf{Sets}}}

\newcommand{\Sing}{\mathrm{Sing}}

%%%Carles macros

%   BLO-MACROS   %   Version  28/07/98
%
\usepackage{amssymb}
%\usepackage{/Users/agondem/Dropbox/MATES/Tex-extras/diagrams}

           %%% ADDED

%%%% diagrames:
%
%
%\diagramstyle[height=2em,width=2em,midshaft,labelstyle=\scriptstyle]
%\newarrow{To}----{->}
%\newarrow{Epi}----{->>}
%\newarrow{Mono}>---{->}
%\newarrow{Iso}>---{->>}
%\newarrow{Mapsto}|---{->}
%\newarrow{Igual}=====
%\newarrow{Dashto}{}{dash}{}{dash}{->}
%%%%%%%%%%%%%%%%%%%%%%%%%%%%%%%%%%%%

%
%    Integers, F, etc.
%
%\newcommand{\Z}{{\mathbb Z}}

%\newcommand{\R}{{\mathbb R}}
%\newcommand{\C}{{\mathbb C}}

%\newcommand{\F}{{\mathbb F}}
       %ADDED

%    Completion
%
%\newcommand{\dosadic}{\hat{\Z}_2}

%\newcommand{\padic}{\hat{\Z}_p}

%

\DeclareMathAlphabet\EuR{U}{eur}{m}{n}
\SetMathAlphabet\EuR{bold}{U}{eur}{b}{n}

%% OPERATORS

\newcommand{\Inj}{\operatorname{Inj}\nolimits}

\newcommand{\diag}{\operatorname{diag}\nolimits}

\newcommand{\pr}{\operatorname{pr}\nolimits}

%%%%%%     Limits
%

\newcommand{\colim}{\operatornamewithlimits{colim}}

%\newcommand{\hocolim}{\operatornamewithlimits{hocolim}}

%%MATHCAL, including those used for categories
\newcommand{\cala}{\mathcal{A}}

\newcommand{\calc}{\mathcal{C}}
\newcommand{\cald}{\mathcal{D}}

    %% !!!!

\newcommand{\calt}{\mathcal{T}}

%%%%%%%%%%%%
%CATEGORIES
%%%%%%%%%%%%

\newcommand{\autcat}{\mathcal{A}ut}   %%%{\curs{Aut}}
   %%%{\curs{Eqv}}

 %%% the realization of the nerve

%%\newcommand{\grps}{\mathcal{G}rps}

     %%% denotes ABELIAN p-groups

%% categories in cursive font

%\newcommand{\Id}{\operatorname{Id}\nolimits}
%\newcommand{\incl}{\operatorname{incl}\nolimits}
\newcommand{\proj}{\operatorname{proj}\nolimits}

\newcommand{\op}{^{\textup{op}}}
      %% iso?? ityp??

\newcommand{\fus}{_{\textup{fus}}}

\let\oldcirc=\circ
\renewcommand{\circ}{\mathchoice
    {\mathbin{\scriptstyle\oldcirc}}{\mathbin{\scriptstyle\oldcirc}}
    {\mathbin{\scriptscriptstyle\oldcirc}}
    {\mathbin{\scriptscriptstyle\oldcirc}}}

\newcommand{\hclim}[1]{\setbox1=\hbox{\rm hocolim}
    \setbox2=\hbox to \wd1{\rightarrowfill} \ht2=0pt \dp2=-1pt
    \mathop{\vtop{\baselineskip=5pt\box1\box2}}
    _{#1}}

\newcommand{\higherlim}[2]{\displaystyle\setbox1=\hbox{\rm lim}
    \setbox2=\hbox to \wd1{\leftarrowfill} \ht2=0pt \dp2=-1pt
    \setbox3=\hbox{$\scriptstyle{#1}$}
    \ifdim\wd1<\wd3
    \mathop{\hphantom{^{#2}}\vtop{\baselineskip=5pt\box1\box2}^{#2}}_{#1}
    \else
    \mathop{\vtop{\baselineskip=5pt\box1\box2}}\limits_{#1}\nolimits^{#2}
    \fi}

%  Map,  Hom, Tor, Ext, etc
%

%\newcommand{\Hom}{\operatorname{Hom}\nolimits}

%\newcommand{\Rep}{\operatorname{Rep}\nolimits}
%\newcommand{\Iso}{\operatorname{Iso}\nolimits}
%\newcommand{\Aut}{\operatorname{Aut}\nolimits}%discrete groups
\newcommand{\aut}{\operatorname{aut}\nolimits}%top groups

\renewcommand{\Im}{\operatorname{Im}\nolimits}

%
%

%%% MACROS FOR MORPHISMS IN THE CATEGORIES Q(-), P(-), O(-):

%%\newcommand{\q}[1]{\setbox1=\hbox{$#1$}
%%    \ifdim\wd1<12mm\widehat{#1}
%%    \else#1{}^{\wedge}
%%    \fi}

%%%%%%%%%%%%%%%%%%%%% Arrows %%%%%%%%%%%%%%%%%%%%%%

%\newcommand{\xxto}[1]{\mathrel{\mathop{%
%  \setbox0\hbox{$\ {\scriptstyle#1}\ $}%
%  \hbox to \wd0{\rightarrowfill}}^{#1}}%
%}

%\newcommand{\xto}[2][]{%
%  \mathrel{\mathop{%
%    \setbox0\vbox{%%\mathsurround=0pt
%      \hbox{$\scriptstyle\;\;{#1}\;\;$}%
%      \hbox{$\scriptstyle\;\;{#2}\;\;$}%
%    }%
%    \hbox to\wd0{\rightarrowfill}\displaystyle}%
%  \limits^{#2}\ifx{#1}{}\else{_{#1}}\fi}%
%}

\newcommand{\longleft}[1]{\;{\leftarrow%
\count255=0 \loop \mathrel{\mkern-6mu}%
    \relbar\advance\count255 by1\ifnum\count255<#1\repeat}\;}
\newcommand{\longright}[1]{\;{\count255=0 \loop \relbar\mathrel{\mkern-6mu}%
    \advance\count255 by1\ifnum\count255<#1\repeat\rightarrow}\;}
\newcommand{\Right}[2]{\overset{#2}{\longright#1}}
\newcommand{\RIGHT}[3]{\mathrel{\mathop{\kern0pt\longright#1}
        \limits^{#2}_{#3}}}
\newcommand{\Left}[2]{{\buildrel #2 \over {\longleft#1}}}
\newcommand{\LEFT}[3]{\mathrel{\mathop{\kern0pt\longleft#1}\limits^{#2}_{#3}}
}
\newcommand{\dRIGHT}[3]{\mathrel{%
   \mathop{\vcenter{\baselineskip=0pt\hbox{$\kern0pt\longright#1$}%
   \hbox{$\kern0pt\longright#1$}}}\limits^{#2}_{#3}}}
\newcommand{\LRIGHT}[3]{\mathrel{%
   \mathop{\vcenter{\baselineskip=0pt\hbox{$\kern0pt\longleft#1$}%
   \hbox{$\kern0pt\longright#1$}}}\limits^{#2}_{#3}}}
\newcommand{\RLEFT}[3]{\mathrel{%
   \mathop{\vcenter{\baselineskip=0pt\hbox{$\kern0pt\longright#1$}%
   \hbox{$\kern0pt\longleft#1$}}}\limits^{#2}_{#3}}}
\newcommand{\onto}[1]{\;{\count255=0 \loop \relbar\joinrel
    \advance\count255 by1
    \ifnum\count255<#1 \repeat \twoheadrightarrow}\;}

%{H_{P,Q}}

%%%%%%%%%Declarations
\newtheoremstyle{slant}{}{}{\slshape}{}{\bfseries}{.}{.5em}{}%
\newtheoremstyle{special}{}{}{\slshape}{}{\bfseries}{.}{.5em}{\thmnote{#3}}

% theorem style plain --- default
%%\theoremstyle{slant}

\newtheorem{Th}{Theorem}
\newtheorem{Coro}[Th]{Corollary}

\theoremstyle{remark}

\linespread{1.1}

%%%%%%%%%%%%%%%%%%%%%%%%%%%%%%%%%%%%%%%%%%%%%%%%%%%%%%%%%%%%%%%%%%%%%%%%%%%%%%%%%%%%

\begin{document}

\begin{abstract}

A partial group is a generalization of the concept of group recently introduced by A. Chermak in \cite{Chermak}. By considering partial groups as simplicial sets, we propose an extension theory for partial groups using the concept of (simplicial) fibre bundle. This way, the classical extension theory for groups naturally extends to an extension theory of partial groups. In particular, we show that the category of partial groups is closed by extensions. We also describe the cohomological obstructions for existence and uniqueness of extensions, generalizing the usual obstructions for group extensions.

The second part of the paper considers extensions of (finite) localities, which are a particular type of partial group, mimicking the $p$-local structure of finite groups. The goal here is to give sufficient conditions for an extension of localities to produce a new locality.

\end{abstract}

\title{An extension theory for partial groups and localities}
\author{A. Gonz\'alez}
%\date{}
%\address{}
%\email{}
%\thanks{}
\address{Department of Mathematics\\
Kansas State University\\
66506 Manhattan KS\\
United States of America}
\email{agondem@math.ksu.edu}

\maketitle

\tableofcontents

%%%%%%%%%%%%%%%%%%%%%%%%%%%%%%%%%%%%%%%%%%%%%%%%%%%%%%%%%%%%%%%%%%%%%%%%%%%%%%%%%%%%

The theory of $p$-local finite groups emerged from work by Puig in \cite{Puig1} and \cite{Puig2} from an algebraic point of view, and from work of Broto, Levi and Oliver (et al.) in \cite{BLO1} and \cite{BLO2} from a topological point of view. Roughly speaking, this theory provides an axiomatic characterization of the $p$-local structure of finite groups.

In these notes, we introduce an extension theory for $p$-local finite groups. Several partial results exist in the literature about extensions of $p$-local finite groups, for example \cite{OV} or \cite{BCGLO2}, each treating a particular situation. One of the advantages of our approach is that it provides a unifying extension theory. A second major advantage is that our theory also includes (and generalizes) the existing extension theory for groups.

In order to develop our theory, we first need a slight change of point of view. The categorical language of $p$-local finite groups becomes rather heavy when dealing with extensions. Instead, we use the recently introduced \emph{partial groups} of \cite{Chermak}. Chermak introduced these objects, together with the so-called localities (a specialization of partial groups), in order to solve (positively) a famous conjecture on $p$-local finite groups.

As pointed out to the author by Broto, the definition of partial group carries intrinsically a simplicial structure. From this point of view, an \emph{extension of partial groups} is simply a fibre bundle of the simplicial objects.

\begin{Th}\label{ThmA}

The category of partial groups is closed by extensions.

\end{Th}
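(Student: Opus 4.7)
The plan is to take seriously Broto's observation that a partial group is a particular kind of simplicial set, and to verify that the corresponding simplicial axioms are preserved under any simplicial fibre bundle whose base and fibre satisfy them.

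First, I would make explicit the simplicial characterization of partial groups. Unwinding Chermak's axioms, a partial group should be equivalent to a reduced simplicial set $X$ (that is, one with $X_0 = \ast$) for which the iterated face map $X_n \longrightarrow X_1 \times \cdots \times X_1$ is injective for every $n \geq 1$, together with a compatible inversion on $X_1$. Under this correspondence $X_1$ is the underlying set of the partial group, the image of $X_n$ inside $X_1^n$ is the level-$n$ piece of the domain $\dd$ of the partial product, and the product $\Pi \colon \dd_n \to X_1$ is recovered from a single face map of $X_n$.

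Now let $F \longrightarrow E \longrightarrow B$ be a simplicial fibre bundle with $F$ and $B$ partial groups. The first two axioms for $E$ are straightforward. Since $B_0 = \ast$ there is a unique fibre over it, which is $F_0 = \ast$, hence $E_0 = \ast$; and if $e, e' \in E_n$ have the same image in $E_1^n$, then their projections to $B_n$ agree by injectivity for $B$, so they lie in a common fibre, and a local trivialization identifies them with elements of $F_n$ that agree in $F_1^n$, forcing $e = e'$ by injectivity for $F$. One has to check that the identification furnished by the local trivialization really is compatible with projection onto the $n$ edges, but this is automatic because the trivialization is a simplicial map.

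The non-trivial step is to produce a partial product on $E$ compatible with the ones on $F$ and $B$. For this I would classify the bundle by a simplicial map $B \longrightarrow \mathrm{B}\aut(F)$ into the classifying space of simplicial automorphisms of $F$, and use this cocycle to twist the product on $F$ by $B$. Associativity and the existence of identities and inverses for this twisted product then reduce to the cocycle identity for the classifying map, exactly as in the classical construction of a group extension from a $2$-cocycle; in the simplicial language this identity is automatic from the fact that the classifying map is a morphism of simplicial sets. The main obstacle is carrying this verification out coherently at the level of \emph{partial} products, where the domain $\dd$ is not all of $X_1^n$, so one must check that the twisted product lies in the global domain whenever both factors do, and conversely that every word in $\dd^E$ arises this way. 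This is where the full strength of the fibre bundle hypothesis (rather than, say, merely that $E \to B$ is a Kan fibration) is used.
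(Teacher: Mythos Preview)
Your first two steps are correct and match the paper's approach: the simplicial characterization of partial groups as reduced $N$-simplicial sets with inversion (this is Theorem~\ref{isocat}), and your argument for reducedness and for injectivity of $E_n \to E_1^n$ via local trivializations is essentially the paper's argument, stated more conceptually. The paper first reduces to a regular twisted Cartesian product $\locm' \times_\phi \locm''$ (using \cite[IV.3.2]{BGM}) and then computes the enumerating operator explicitly, but your local-trivialization argument is a clean shortcut.

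However, your third step rests on a misconception. Once $E$ is known to be a reduced $N$-simplicial set, the partial product is \emph{already there}: it is the operator $\Pi = d_1^{\,n-1}\colon E_n \to E_1$ coming from the simplicial structure, the domain $\dd$ is the image of $E_n$ in $E_1^n$, and axioms (P1), (P2) of Definition~\ref{pgroup} are automatic consequences of the simplicial identities. There is nothing to construct and no associativity to verify; your proposed detour through the classifying map and the cocycle identity is simply unnecessary for this part. The concern you raise about ``whether the twisted product lies in the global domain whenever both factors do'' dissolves: $\dd^E$ is by definition $E_n$, not something built from $\dd^F$ and $\dd^B$.

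What actually remains after reducedness and the $N$-condition is the \emph{inversion} (Definition~\ref{inversion}): an anti-involution $\nu\colon E \to E^{\mathrm{op}}$ with $\Pi[\nu(x)\mid x] = 1$. This does not come for free from the simplicial structure, and it is not simply the componentwise $(\nu_{F},\nu_{B})$, since the twisting in $d_0$ obstructs that map from being simplicial. The paper's point is that in the RTCP model the twist enters only through $d_0$, while $\Pi$ uses only $d_1,\dots,d_n$; this lets one write down the inversion explicitly in terms of the twisting pair $(t,\eta)$ (see the formula $[(x,g)]^{-1} = [(\eta(g^{-1},g)^{-1}\cdot\Psi_{g^{-1}}(x^{-1}),\,g^{-1})]$ in Remark~\ref{represext}). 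So the reduction to an RTCP is not incidental: it is precisely what makes the inversion tractable.
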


Furthermore, this extension theory allows a cohomological obstruction theory for existence and uniqueness of extensions, which essentially generalizes the known obstruction theory for existence and uniqueness of group extensions (see \cite[Chapter IV]{Brown} or \cite[Chapter IV]{MacLane}).

\begin{Coro}\label{CorB}

Let $\locm', \locm''$ be partial groups, and let $\varepsilon \colon \locm'' \to \Out(\locm')$ be an outer action.
\begin{enumerate}[(i)]

\item There is an obstruction class $[\kappa] \in H^3(\locm''; Z(\locm'))$ to the existence of extensions of $(\locm', \locm'', \varepsilon)$: such extensions exist if and only if $[\kappa] = 0$.

\item If there is any, the set of isomorphism classes of extensions of $(\locm', \locm'', \varepsilon)$ is in one-to-one correspondence with the set $H^2(\locm''; Z(\locm'))$.

\end{enumerate}

\end{Coro}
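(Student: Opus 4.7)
The strategy is to port the classical obstruction theory for group extensions into the simplicial framework provided by Theorem~A, so that whatever we construct is guaranteed to be a partial group. Throughout, I view partial groups as simplicial sets and extensions as simplicial fibre bundles. As a first step I would identify isomorphism classes of extensions with fibre $\locm'$ and base $\locm''$ with homotopy classes of maps $\locm'' \to B\aaut(\locm')$, where $\aaut(\locm')$ denotes the (simplicial) partial group of automorphisms of $\locm'$. The projection $\aaut(\locm') \to \Out(\locm')$ induces a map $B\aaut(\locm') \to B\Out(\locm')$, under which fixing the outer action $\varepsilon$ corresponds to fixing a map $\locm'' \to B\Out(\locm')$. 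In these terms, producing an extension realizing $(\locm', \locm'', \varepsilon)$ is the same as producing a lift of $\varepsilon$ along $B\aaut(\locm') \to B\Out(\locm')$, and (i)--(ii) become a standard lifting/classification problem.

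For (i), I would analyze the homotopy fibre $F$ of $B\aaut(\locm') \to B\Out(\locm')$ using the short exact sequences $Z(\locm') \hookrightarrow \locm' \twoheadrightarrow \Inn(\locm')$ and $\Inn(\locm') \hookrightarrow \Aut(\locm') \twoheadrightarrow \Out(\locm')$; these combine to place $Z(\locm')$ in the appropriate Postnikov stage of $F$, so that the first obstruction to the lift lives in $H^3(\locm''; Z(\locm'))$ with local coefficients twisted by $\varepsilon$. A concrete cocycle representative $\kappa$ is built by the classical recipe, translated into the simplicial setting: choose a set-level section $s$ of $\varepsilon$ with values in $\aaut(\locm')$; its failure to be a homomorphism lies in $\Inn(\locm')$; lift this failure to $\locm'$; its associator defect is a 3-cochain with values in $Z(\locm')$, which one checks is a cocycle whose class is independent of choices and vanishes precisely when an extension exists. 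For (ii), assuming one extension realizing $\varepsilon$ exists, the set of lifts (up to isomorphism of extensions) is a torsor over $[\locm'', \Omega F]$; since the relevant component of $\Omega F$ has $Z(\locm')$ in degree one, this torsor is canonically identified with $H^2(\locm''; Z(\locm'))$.

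The main obstacle will be carrying out the cocycle construction cleanly inside partial group combinatorics: one needs sections that are compatible with the simplicial structure, correct normalization so that the associator defect assembles into a genuine cocycle on the simplicial set $\locm''$ with values in the twisted local system $Z(\locm')$, and a verification that replacing one section by an equivalent one changes $\kappa$ by a coboundary. A secondary, more technical point is pinning down the definition of $H^*(\locm''; Z(\locm'))$ used in the statement---namely the simplicial cohomology of $\locm''$ with local coefficient system determined by $\varepsilon$---and checking that when $\locm'$ and $\locm''$ are ordinary groups it reduces to the usual group cohomology, so that Corollary~B genuinely extends the classical constructions of \cite{Brown} and \cite{MacLane}.
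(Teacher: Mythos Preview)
Your proposal is correct and follows essentially the same route as the paper. The paper's proof (Theorem~\ref{classext}) is more terse: it invokes \cite[\S 5]{BGM} to translate extensions into lifts of $\locm'' \to B\Out(\locm')$ along $B\underline{\aut}(\locm') \to B\Out(\locm')$, then cites the already-established fibration $B^2Z(\locm') \to B\underline{\aut}(\locm') \to B\Out(\locm')$ (Theorem~\ref{autX}) and finishes by classical obstruction theory; your analysis of the fibre via the exact sequences is precisely the content of Theorem~\ref{autX}, and the explicit cocycle recipe you sketch matches the informal discussion the paper gives immediately after the proof.
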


The proof of the above results, and specially of Theorem \ref{ThmA}, require an exhaustive analysis of the category of partial groups, both from an algebraic and a simplicial points of view. Once this is proved, we specialize our study to extensions of localities. Essentially, we give sufficient conditions for an extension of localities to produce a new locality.

Let $\locl' \to \locl \to \locl''$ be an extension of partial groups, where in addition we assume that $(\locl', \Delta', S')$ and $(\locl'', \Delta'', S'')$ are both localities. Under these assumptions, we see (Proposition \ref{propext3-1}) that such an extension determines a certain locality $(\loct, \Delta, S)$, with $\loct \subseteq \locl$. The extension $\locl' \to \locl \to \locl''$ is called \emph{good} if both $(\locl', \Delta', S')$ and $(\locl'', \Delta'', S'')$ are saturated, $\loct$ contains $\locl'$ as a normal partial subgroup and $\Delta$ contains all the centric radicals of the fusion system associated to $\loct$.

\begin{Th}\label{ThmC}

If an extension $\locl' \to \locl \to \locl''$ is good, then the following holds.
\begin{enumerate}[(i)]

\item The fusion system associated to $\loct$ is saturated, and the $p$-completed nerve of $\loct$ is equivalent to the classifying space of a $p$-local finite group.

\item The inclusion $\loct \subseteq \locl$ induces an equivalence of $p$-completed nerves.

\item $\FF'$ is a normal subsystem of $\FF$.

\end{enumerate}

\end{Th}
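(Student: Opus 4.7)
The plan is to address (i) first, since saturation of $\FF = \FF_S(\loct)$ underlies the rest. By hypothesis both $\FF' = \FF_{S'}(\locl')$ and $\FF'' = \FF_{S''}(\locl'')$ are saturated and $\Delta$ contains every $\FF$-centric radical subgroup. To verify the saturation axioms for $\FF$, I would use the projection $\rho \colon \loct \twoheadrightarrow \locl''$ together with the normal inclusion $\locl' \subseteq \loct$ as a lifting tool: given a fully normalized $P \leq S$ and an $\FF$-morphism $\varphi$ with domain $P$, the image $\rho(\varphi)$ lies in the saturated system $\FF''$ where the Sylow and extension axioms apply, and the remaining correction is handled inside $\FF'$. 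Once $\FF$ is saturated, the assumption that $\Delta$ contains all centric radicals is exactly the input needed to apply Chermak's theorem, which, combined with the Broto--Levi--Oliver construction, identifies $|\loct|\pcom$ with the classifying space of a $p$-local finite group over $S$.

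For (iii), normality of $\locl'$ inside $\loct$ as a partial subgroup should translate directly to the three standard conditions defining a normal fusion subsystem: strong $\FF$-closure of $S'$ in $S$, invariance of $\FF'$ under $\FF$-conjugation, and the Frattini/extension condition relating $\Aut_{\FF'}(S')$ to $\Aut_{\FF}(S')$. Each of these follows by lifting $\FF$-morphisms to elements of $\loct$, conjugating the normal partial subgroup $\locl'$ by such lifts, and then using saturation of $\FF''$ to control what survives in the quotient.

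For (ii), I would show that $\loct \hookrightarrow \locl$ induces a mod-$p$ equivalence of nerves. The hypothesis $\Delta \supseteq \FF^{cr}$ is precisely what permits discarding, up to mod-$p$ equivalence, those objects of $\locl$ that lie outside $\Delta$, via the standard filtration and higher-limit arguments of \cite{BLO2}, once the saturation from (i) is in hand.

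I expect the main obstacle to lie in part (ii): one must translate the transporter and linking system reasoning of \cite{BLO2} into the partial-group language used here, in a way that respects the simplicial fibre-bundle structure of the extension $\locl' \to \locl \to \locl''$. Concretely, one needs a compatible choice of restrictions showing that the higher limits of the relevant functor on the orbit category of $\FF$ vanish above the centric-radical collection. Once this translation is carried out the equivalence $|\loct|\pcom \simeq |\locl|\pcom$ follows from the usual limit-vanishing package, and (iii) can then be stated either in the fusion-system language or, equivalently, in the language of the $p$-local finite group provided by (i).
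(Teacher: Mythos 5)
Your outline for (i) and (iii) is broadly in the spirit of the paper, though the paper does not re-verify the saturation axioms by hand: it observes that $\FF_{\Delta}(\loct)$ is $\Delta$-generated and $\Delta$-saturated and that, by the definition of a good extension, $\Delta$ contains all $\FF_{\Delta}(\loct)$-centric radicals, so saturation is immediate from \cite[Theorem A]{BCGLO1} (or \cite[Proposition 3.6]{OV}); the identification of $|\loct|\pcom$ with a classifying space then goes through a zigzag $|\loct| \simeq |\TT_{\Delta}(\loct)| \leftarrow |\TT_{\hh}| \to |\LL_{\hh}| \to |\LL|$ using Theorem \ref{equivnerves} and \cite[Proposition 4.6]{OV}. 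For (iii) the paper checks Aschbacher's conditions (N1)--(N4) directly via the conjugation formula of Remark \ref{prodHi}; your sketch is consistent with this, but note that the paper must first prove $\FF_{\Delta}(\loct) = \FF_S(\locl)$ (its Step 3, via the topological construction $\ee(\mu)$ of \cite[Section 7]{BLO2}), a step your proposal omits entirely and which is needed even to interpret the statement of (iii).

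The genuine gap is in (ii). Your proposed mechanism --- discarding objects of $\locl$ outside $\Delta$ by the filtration and higher-limit arguments of \cite{BLO2} --- does not apply here, because $\locl$ is not a transporter system (or locality) over some larger collection of subgroups containing $\Delta$. It is the total partial group of the fibre bundle, and in general it fails condition (O1): it contains words that conjugate no chain of subgroups of $S$ at all, so there is no ``collection of objects'' to prune and no orbit-category functor whose higher limits one could compute. The paper's actual argument is a spectral-sequence comparison: the mod $p$ Lyndon--Hochschild--Serre spectral sequence of the fibre bundle $\locl' \to \locl \to \locl''$ on one side, and on the other side D{\'\i}az's spectral sequence for the saturated fusion system $\FF_{\Delta}(\loct)$ relative to the strongly $\FF_{\Delta}(\loct)$-closed subgroup $S'$ (Corollary \ref{propext3-2}), converging to $H^{\ast}(\FF_{\Delta}(\loct);\F_p) \cong H^{\ast}(\loct;\F_p)$ by part (i) and \cite[Theorem 5.8]{BLO2}. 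A cohomological Mackey functor in the sense of \cite{Diaz} identifies the two $E_2$-pages, giving $H^{\ast}(\locl;\F_p)\cong H^{\ast}(\loct;\F_p)$ induced by the inclusion. Without some substitute for this comparison --- which crucially exploits the bundle structure rather than any transporter-system structure on $\locl$ --- your proof of (ii) does not go through.
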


We also give sufficient conditions for an extension to be good in the above sense. To this end, we define the concepts of \emph{rigid} extension (to make sure that $\locl' \subseteq \loct$) and \emph{admissible} extension (to make sure that $\Delta$ contains all centric radical subgroups). In particular, the latter is inspired in and generalizes the concept introduced in \cite{OV}. See \ref{rigidext} for the exact definition.

\begin{Th}\label{ThmD}

If an extension $\locl' \to \locl \to \locl''$ is both rigid and admissible, then it is good.

\end{Th}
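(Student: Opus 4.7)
The plan is to unpack the definitions of \emph{rigid} and \emph{admissible} (see \ref{rigidext}) and to verify, one by one, the conditions that appear in the definition of a \emph{good} extension. The saturation of $(\locl', \Delta', S')$ and $(\locl'', \Delta'', S'')$ is part of the standing set-up in which $\loct$ is produced from $\locl$ by Proposition \ref{propext3-1}, so only two things are left to prove: that $\locl'$ sits inside $\loct$ as a normal partial subgroup, and that $\Delta$ contains every $\FF$-centric $\FF$-radical subgroup of the fusion system $\FF$ associated to $\loct$.

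First I would treat the inclusion $\locl' \subseteq \loct$ and its normality. By construction of $\loct$, its Sylow $S$ fits into a short exact sequence $S' \to S \to S''$, and rigidity is designed precisely so that the inclusion $S' \hookrightarrow S$ extends to an inclusion of partial groups $\locl' \hookrightarrow \loct$ compatible with the partial products. Granted this, normality of $\locl'$ in $\loct$ reduces to showing that conjugation by any element of $\loct$ preserves $\locl'$ and induces a morphism of partial groups; this will follow from the observation that $\locl'$ is the fibre of the projection $\locl \to \locl''$, so that $\loct$ (being a preimage of a partial subgroup of $\locl''$) automatically normalises it set-theoretically, and rigidity then provides the compatibility with the partial multiplication required by the partial-subgroup axioms.

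Next I would treat the containment of centric radicals in $\Delta$. The argument, modelled on \cite{OV}, proceeds by descent: an $\FF$-centric $\FF$-radical subgroup $P \leq S$ projects under $S \to S''$ to an $\FF''$-centric $\FF''$-radical subgroup $\bar P \leq S''$, using saturation of $(\locl'', \Delta'', S'')$ to control centric-radical subgroups in the quotient fusion system, and intersects $S'$ in a subgroup controlled by $\Delta'$ via saturation of $(\locl', \Delta', S')$. Admissibility is precisely the hypothesis ensuring that preimages in $S$ of $\FF''$-centric $\FF''$-radicals belong to $\Delta$, so combining the two observations gives $P \in \Delta$.

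The main obstacle I expect is the verification of normality of $\locl'$ in $\loct$. Unlike in classical group theory, normality of a partial subgroup is a condition on every element of the domain of the partial product, and so one must carefully track how rigidity propagates from the Sylow level through the defining sets of $\Delta$ into arbitrary words of $\loct$. Reducing this combinatorial bookkeeping to the already-established structural features of the extension given by Proposition \ref{propext3-1}, and thereby avoiding a case-by-case analysis on words, is where I would concentrate most of the effort.
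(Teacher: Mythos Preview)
Your plan misidentifies the content of both ``rigid'' and ``admissible'' in Definition~\ref{rigidext}, and as a result both halves of the argument, as sketched, would not go through.

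\textbf{On the rigidity step.} You say rigidity is designed so that the inclusion $S'\hookrightarrow S$ extends to $\locl'\hookrightarrow\loct$, but you do not explain how. The difficulty is that a word $[x_1|\ldots|x_n]\in\locl'$ is witnessed by some sequence $P_0',\ldots,P_n'\in\Delta'$, and there is no reason a priori for $P_i'\in\Delta$: indeed $\tau(P_i')=\{1\}$, which need not lie in $\Delta''$. The paper's actual mechanism is concrete: set $P_i=P_i'\cdot C_S(\locl')$; then $P_i\cap S'\geq P_i'\in\Delta'$, and $\tau(P_i)=\tau(C_S(\locl'))=S_0''$, which lies in $\Delta''$ precisely by the rigidity hypothesis $S_0''\in\Delta''$. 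Hence $P_i\in\Delta$ and the same word lies in $\dd_\Delta=\loct$. Once $\locl'\subseteq\loct$ is established this way, normality is immediate from normality in $\locl$, not from any further combinatorial bookkeeping; your identification of normality as the ``main obstacle'' is thus misplaced.

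\textbf{On the admissibility step.} Your description of admissibility is simply not what Definition~\ref{rigidext}(ii) says. Admissibility does \emph{not} guarantee that preimages of $\FF''$-centric $\FF''$-radicals lie in $\Delta$; it says that if $P''\leq S''$ is fully $\FF''$-centralized and satisfies $C_{S_1''}(P'')\leq P''$, then $P''\in\Delta''$. Moreover, the claim that an $\FF_\Delta(\loct)$-centric radical $P$ projects to an $\FF''$-centric $\FF''$-radical $P''$ is false in general. The paper's route is via Proposition~\ref{aux4}: for such $P$ (after replacing it within its conjugacy class using Proposition~\ref{aux3}) one proves that $P'=P\cap S'$ is $\FF'$-centric, hence lies in $\Delta'$ by Hypothesis~\ref{hyp3}(a), and that $C_{S_1''}(P'')\leq P''$. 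Admissibility then gives $P''\in\Delta''$, and the two together yield $P\in\Delta$. Both conclusions in Proposition~\ref{aux4} are obtained by the standard ``if not, exhibit a nontrivial normal $p$-subgroup of $\Out_{\FF_\Delta(\loct)}(P)$'' argument, exploiting elements that act trivially on both $P'$ and $P''$; this is the step you are missing entirely.
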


As an immediate consequence we deduce the following.

\begin{Coro}\label{CorE}

Every extension of finite groups gives rise to a good extension of localities.

\end{Coro}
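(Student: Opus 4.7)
The plan is to deduce Corollary \ref{CorE} directly from Theorem \ref{ThmD}. Given a short exact sequence of finite groups $1 \to G' \to G \to G'' \to 1$, the strategy is to functorially construct an associated extension of localities $\locl' \to \locl \to \locl''$ and then verify that this extension is both rigid and admissible in the sense of \ref{rigidext}. Since the paper interprets groups as a special kind of partial group (those whose simplicial set of composable tuples is the full nerve), the construction will rest on producing compatible Sylow data on the three groups and invoking the standard centric linking systems.

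Concretely, I would begin by choosing a Sylow $p$-subgroup $S \leq G$ and setting $S' = S \cap G'$ and $S'' = S/S'$, which are then Sylow $p$-subgroups of $G'$ and $G''$ respectively. I would then form the centric linking systems $(\locl', \Delta', S')$, $(\locl, \Delta, S)$, and $(\locl'', \Delta'', S'')$ attached to $G'$, $G$, and $G''$, where $\Delta', \Delta, \Delta''$ are the collections of $\FF$-centric subgroups. These are saturated localities by the classical theory. The group-theoretic extension $G' \hookrightarrow G \twoheadrightarrow G''$ induces, at the level of nerves, a simplicial fibre bundle between the associated simplicial sets (this is just the nerve of the short exact sequence), so the resulting morphisms $\locl' \to \locl \to \locl''$ form an extension of partial groups in the sense of Theorem \ref{ThmA}. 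Rigidity follows because $\locl'$, being built literally from $G' \leq G$, embeds into the intermediate locality $\loct$ of Proposition \ref{propext3-1} (the candidate words in $\locl'$ are composable in $\locl'$ already, and lift to composable words in $\locl$). Admissibility is immediate since $\Delta$ is taken to consist of all $\FF_S(G)$-centric subgroups, which tautologically contains every centric radical subgroup of $\FF_S(G)$; this is the precise condition abstracted (after \cite{OV}) in the definition of admissible extension.

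The main obstacle is bookkeeping rather than a conceptual difficulty: one must check that, with the natural choices of $S, S', S''$ and of the collections $\Delta', \Delta, \Delta''$, the sequence $\locl' \to \locl \to \locl''$ actually satisfies the simplicial fibre-bundle condition used to \emph{define} an extension of partial groups, and not merely that the partial-group morphisms form a set-theoretic short exact sequence. Once that compatibility is in place the hypotheses of Theorem \ref{ThmD} are met, and goodness of the extension follows. No cohomological computation is required: the content of the corollary is that the group-theoretic extension mechanism realises, on the level of associated localities, the most stringent notion of extension introduced in the paper.
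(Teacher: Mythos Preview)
Your proposal has a genuine gap: you have confused the \emph{conclusions} of Theorem~\ref{ThmD} with its \emph{hypotheses}. Rigidity and admissibility (Definition~\ref{rigidext}) are conditions on the \emph{base} collection $\Delta''$, not on the middle or on the fibre. Rigidity requires that the subgroup $S_0'' = \Ker(S'' \to \Out(\locl';S'))$ lie in $\Delta''$; admissibility requires that every fully $\FF''$-centralized $P'' \leq S''$ with $C_{S_1''}(P'') \leq P''$ lie in $\Delta''$. With your choice of $\Delta''$ as the $\FF''$-centric subgroups of $S''$, neither condition is in general satisfied: there is no reason $S_0''$ should be $\FF''$-centric, and the admissibility criterion does not force $P''$ to be centric. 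Your argument that ``$\locl'$ embeds into $\loct$'' is the \emph{conclusion} of rigidity (Theorem~\ref{propext6}(i)), not a proof of it; and your observation that ``$\Delta$ contains all centric radicals'' confuses condition~(ii) of Definition~\ref{defigood} with admissibility, and moreover uses the wrong $\Delta$ (the paper's $\Delta$ is the one built in~(\ref{collectionS}) from $\Delta'$ and $\Delta''$, not the centric collection for $G$).

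There is also a structural problem: the linking systems $\locl' \to \locl \to \locl''$ attached to $G', G, G''$ do not obviously form a fibre bundle in the sense of Definition~\ref{FB}. The paper avoids all of this by a different and much simpler choice: for the base it takes the locality $(BQ, \Delta_Q, S_Q)$ where $\Delta_Q$ is the collection of \emph{all} subgroups of $S_Q$ (so $\locl'' = BQ$ is just the nerve of the quotient group), and for the fibre it takes the centric locality of $K$. The classifying map $BQ \to B\underline{\aut}(BK) \to B\underline{\aut}(\locl_K;S_K)$ then produces an isotypical extension $\locl_K \to \locl \to BQ$, and rigidity and admissibility are automatic precisely because $\Delta_Q$ contains every subgroup of $S_Q$ (Remark~\ref{perfect1}(iii)). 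This is the content of Example~\ref{Expl1}.
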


The following result, partially communicated to the author by R. Levi, is another application of our results, which also illustrates a different point of view for our results. Let $F \to X \to B$ be a fibration where both $F$ and $B$ are $p$-good in the sense of \cite{BK}. In general it is not known whether $X$ is also $p$-good, since $p$-completion does not preserve fibrations. In this sense, the above result gives sufficient conditions for the space $X$ to be $p$-good, in the particular case where both $F$ and $B$ are nerves of partial groups.

\begin{Coro}\label{CorF}

Let $F \to X \to B$ be a fibration where both $B$ and $F$ are homotopy equivalent to classifying spaces of $p$-local finite groups. Then, $X$ is homotopy equivalent to the classifying space of a $p$-local finite group. Moreover, there exist proper localities $(\locl_F, \Delta_F, S_F)$ and $(\locl_B, \Delta_B, S_B)$, and a commutative diagram of fibre bundles
$$
\xymatrix{
F \ar[r] & X \ar[r] & B \\
|\locl_F| \ar[u] \ar[r] & |\locl| \ar[r] \ar[u] & |\locl_B| \ar[u]
}
$$
where the bottom row is (the realization of) a good extension and all the vertical arrows are equivalences after $p$-completion.

\end{Coro}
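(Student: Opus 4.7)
The plan is to reconstruct $X$ algebraically, by lifting the fibration $F\to X\to B$ to an extension of the underlying partial groups and then invoking Theorems \ref{ThmC}, \ref{ThmD} and Corollary \ref{CorB}. Write $F\simeq |\locl_F|^\wedge_p$ and $B\simeq |\locl_B|^\wedge_p$ for proper localities $(\locl_F,\Delta_F,S_F)$ and $(\locl_B,\Delta_B,S_B)$. The fibration is classified by a map $\phi\colon B\to B\aut(F)$. Using the description of self-equivalences of classifying spaces of $p$-local finite groups due to Broto--Levi--Oliver, $\pi_0\aut(F)$ is the finite group $\Out(\locl_F)$ and each component is $p$-equivalent to $BZ(\locl_F)$. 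Combining this with the partial-group description of $\pi_1(|\locl_B|^\wedge_p)$, the map $\phi$ distils into an outer action $\varepsilon\colon \locl_B\to \Out(\locl_F)$ in the sense of Corollary \ref{CorB}.

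Next, the existence of $X$ is a topological lift of $\phi$ and forces the topological obstruction in $H^3(B;Z(F))$ to vanish. Identifying $H^\ast(|\locl_B|;Z(\locl_F))$ with the partial group cohomology $H^\ast(\locl_B;Z(\locl_F))$ through the simplicial model underlying Theorem \ref{ThmA}, the algebraic obstruction $[\kappa]$ of Corollary \ref{CorB}(i) also vanishes. We then select an extension $\locl_F\to\locl\to\locl_B$ of partial groups realising $\varepsilon$ whose class in $H^2$ from Corollary \ref{CorB}(ii) matches the one determined by $X$. To apply Theorem \ref{ThmC} we verify that this extension is good: by Theorem \ref{ThmD} it suffices to check the rigidity and admissibility conditions of \ref{rigidext}. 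Rigidity holds essentially by construction, since the Sylow subgroup of $\locl_F$ embeds compatibly with the projection to $\locl_B$; admissibility follows after (if necessary) enlarging $\Delta$ to contain all $\FF$-centric radicals of the fusion system associated with $\loct$, which is possible because $\locl_F$ and $\locl_B$ are already proper. Theorem \ref{ThmC} then produces a proper locality with $|\loct|^\wedge_p\simeq |\locl|^\wedge_p$ equivalent to the classifying space of a $p$-local finite group, and Theorem \ref{ThmA} ensures that $|\locl_F|\to|\locl|\to|\locl_B|$ is a fibre bundle.

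It remains to identify $|\locl|^\wedge_p$ with $X$. Comparing the two fibre bundles via the given $p$-equivalences on base and fibre yields a map $|\locl|\to X$ that is an $\F_p$-homology equivalence, by a Serre spectral sequence comparison. The \emph{main obstacle} is to promote this to a genuine $p$-equivalence, which requires $X$ to be $p$-good. This we obtain from the Bousfield--Kan fibre lemma: the monodromy of $F\to X\to B$ factors through the finite group $\Out(\locl_F)$ via $\varepsilon$, and $F$ satisfies the requisite nilpotence over $\F_p$ as a standard consequence of the saturated fusion system structure on $\FF_F$; hence $\pi_1(B)$ acts nilpotently on $H_\ast(F;\F_p)$, so $X$ is $p$-good and $|\locl|^\wedge_p\simeq X$. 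The commutative diagram in the statement is then the comparison of fibre bundles after $p$-completion, with the bottom row the good extension provided by Theorem \ref{ThmD}.
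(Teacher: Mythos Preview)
The central gap is in your verification that the extension $\locl_F \to \locl \to \locl_B$ is good. You claim that ``rigidity holds essentially by construction, since the Sylow subgroup of $\locl_F$ embeds compatibly'' and that ``admissibility follows after (if necessary) enlarging $\Delta$''. Neither assertion is correct, and this is precisely where the real content of the proof lies.

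Rigidity (Definition \ref{rigidext}) requires that $S_0'' = \Ker\big(S_B \to \Out(\locl_F;S_F)\big)$ belong to $\Delta_B$. This is a condition on the outer action $\varepsilon$ and on the specific collection $\Delta_B$; it has nothing to do with how $S_F$ sits inside $S$. If, for instance, $\varepsilon|_{S_B}$ were injective, rigidity would force $\{1\} \in \Delta_B$, i.e.\ $\locl_B$ would have to be an actual group. As for admissibility, the collection $\Delta$ is \emph{defined} as $\{P \leq S : P \cap S_F \in \Delta_F \text{ and } \tau(P) \in \Delta_B\}$; you cannot enlarge it independently of $\Delta_F$ and $\Delta_B$, and enlarging $\Delta_B$ while keeping $(\locl_B, \Delta_B, S_B)$ saturated is exactly the issue.

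The paper's proof (Theorem \ref{app2}) resolves this by filtering on the connectivity of $B$. If $B$ is $2$-connected, the classifying map $B \to B\underline{\aut}(F)$ is null because $\pi_i(B\underline{\aut}(F)) = 0$ for $i \geq 3$, so $\varepsilon$ is trivial and Corollary \ref{propext6-1} applies directly. If $B$ is $1$-connected, one passes to the $2$-connected cover $D \to B$, which is a central extension by the finite $p$-group $H_2(B;\Z^\wedge_p)$ and hence again a $p$-local finite group by \cite{BCGLO2}; surjectivity of $\locl_D \to \locl_B$ then forces $\varepsilon$ to be trivial. In the general case one uses the universal cover $C \to B$: since $\pi_1(B)$ is a finite $p$-group, one can choose $\Delta_B$ so that $S_C \in \Delta_B$, and the simply connected case gives $S_C \leq S_0''$, whence rigidity. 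Goodness is then established by identifying $\Delta$ with a collection $\Delta_X$ assembled from a chain of good extensions already known to contain all centric radicals.

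A secondary point: your Bousfield--Kan argument for the $p$-goodness of $X$ requires $\pi_1(B)$ to act nilpotently on $H_\ast(F;\F_p)$. This is true, but the reason is that $\pi_1(B)$ is a finite $p$-group (by \cite[Proposition 1.12]{BLO2}) acting on an $\F_p$-module, not any feature of the fusion structure on $F$ as you suggest.
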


Recently, Chermak has announced a construction of quotients of localities by partial normal subgroups, in \cite{Chermak2, Chermak3}. His constructions seem to describe the converse process to our extension construction for localities, and we plan to study the relation between Chermak's work and the present work as a sequel in collaboration with O. Garaialde.

\emph{Organization of the paper.} The paper is organized as follows. In Section \ref{Sploc} we briefly review some basic facts about $p$-local finite groups. In Section \ref{partial} we review the theory of partial groups. Section \ref{simplicial} is a review on simplicial sets. In Section \ref{Spgsimp} we describe partial groups as simplicial sets. Section \ref{Maps} analyzes morphisms of partial groups from both an algebraic and a simplicial points of view. In Section \ref{FBPG} we introduce extensions of partial groups (as fibre bundles), and prove Theorem \ref{ThmA} and Corollary \ref{CorB}. In Section \ref{isoext} we specialize to extensions of localities, culminating in the proof of Theorems \ref{ThmC} and Theorem \ref{ThmD}. Section \ref{Sapp} contains the proofs of Corollaries \ref{CorE} and \ref{CorF}. Finally, we include Appendix \ref{AppA}, where we analyze the homotopy type of the nerve of a transporter system and the associated locality.

\emph{A note to the reader.} The duality in nature of partial groups, algebraic and simplicial, and  extension theory for partial groups described in these notes is joint work with Carles Broto. In this sense, the first part of this paper overlaps undergoing work of the author with Broto. This leads to some differences in notation, which still do not represent any difference in the results common to both papers.

\emph{Acknowledgements.} The author is extremely thankful to Carles Broto, whose inspiring ideas originated and shaped this paper, and to Andy Chermak, for several discussions and a seminar that helped consolidate our results. The author would like to thank also Oihana Garaialde for a careful reading of the notes and many fruitful conversations.

%%%%%%%%%%%%%%%%%%%%%%%%%%%%%%%%%%%%%%%%%%%%%%%%%%%%%%%%%%%%%%%%%%%%%%%%%%%%%%%%%%%%

\section{Background on $p$-local finite groups}\label{Sploc}

In this section we review some basic facts about $p$-local finite groups that we will use throughout this paper. The reader is referred to \cite{BLO2} for further reference.

\begin{defi}

A \emph{fusion system} over a finite $p$-group $S$ is a category $\FF$ whose object set is the collection of all subgroups of $S$ and whose morphism sets satisfy the following conditions:
\begin{enumerate}[(i)]

\item $\Hom_S(P,Q) \subseteq \Hom_{\FF}(P,Q) \subseteq \Inj(P,Q)$  for all $P, Q \in \Ob(\FF)$; and

\item every morphism in $\FF$ factors as an isomorphism in $\FF$ followed by an inclusion.

\end{enumerate}

\end{defi}

Given a fusion system $\FF$, we say that $P,Q \in \Ob(\FF)$ are \emph{$\FF$-conjugate} if they are isomorphic as objects in $\FF$. The $\FF$-conjugacy class of an object $P$ is denoted by $P^{\FF}$.

\begin{defi}\label{defisat}

Let $\FF$ be a fusion system over a finite $p$-group $S$, and let $P \leq S$.
\begin{itemize}

\item $P$ is \emph{fully $\FF$-centralized} if $|C_S(P)| \geq |C_S(Q)|$ for all $Q \in P^{\FF}$.

\item $P$ is \emph{fully $\FF$-normalized} if $|N_S(P)| \geq |N_S(Q)|$ for all $Q \in P^{\FF}$.

\end{itemize}
The fusion system $\FF$ is \emph{saturated} if the following three conditions hold.
\begin{enumerate}[(I)]

\item For each $P \leq S$ which is fully $\FF$-normalized, $P$ is fully $\FF$-centralized, $\Out_{\FF}(P)$ is finite and $\Out_S(P) \in \Syl_p(\Out_{\FF}(P))$.

\item If $P \leq S$ and $f \in \Hom_{\FF}(P,S)$ are such that $f(P)$ is fully $\FF$-centralized, and if we set
$$
N_f = \{g \in N_S(P) \,\, | \,\, f \circ c_g \circ f^{-1} \in \Aut_S(f(P))\},
$$
there there is $\widetilde{f} \in \Hom_{\FF}(N_f, S)$ such that $\widetilde{f}|_P = f$.

\end{enumerate}

\end{defi}

\begin{rmk}

The above definition of saturation is taken from \cite{BLO2}, although several other definitions of saturation exist in the literature, for example see \cite{AKO} or \cite{Chermak}. All these definitions have been proved to be equivalent, and the reader can switch to her or his favorite definition, since this does not make any difference in this paper.

\end{rmk}

\begin{defi}\label{definormalA}

Let $\FF$ be a saturated fusion system over a finite $p$-group $S$.
\begin{itemize}

\item A subgroup $P \leq S$ is \emph{$\FF$-centric} if $C_S(Q) = Z(Q)$ for all $Q \in P^{\FF}$.

\item A subgroup $P \leq S$ is \emph{$\FF$-radical} if $\Out_{\FF}(P)$ contains no nontrivial normal $p$-subgroup.

\item A subgroup $A \leq S$ is \emph{weakly $\FF$-closed} if $f(A) = A$ for all $f \in \Hom_{\FF}(A, S)$.

\item A subgroup $A \leq S$ is \emph{strongly $\FF$-closed} if, for all $P \leq S$ and all $f \in \Hom_{\FF}(P,S)$, $f(P \cap A) \leq A$.

\item A subgroup $A \leq S$ is \emph{$\FF$-normal} if, for all $P \leq S$ and all $f \in \Hom_{\FF}(P,S)$, there is $\gamma \in \Hom_{\FF}(P\cdot A, S)$ such that $\gamma|_P = f$ and $\gamma|_A \in \Aut_{\FF}(A)$.

\item A subgroup $A \leq S$ is \emph{$\FF$-central} if $A$ is $\FF$-normal and $\Aut_{\FF}(A) = \{\Id\}$.

\end{itemize}
The \emph{center} of $\FF$, denoted by $Z(\FF)$, is the maximal subgroup of $Z(S)$ that is $\FF$-central.

\end{defi}

Given a saturated fusion system $\FF$ over a finite $p$-group $S$, we denote by $\FF^c$ and $\FF^r$ the full subcategories of $\FF$ with object sets the collections of $\FF$-centric and $\FF$-radical subgroups, respectively. We also set $\FF^{cr} \subseteq \FF$ for the full subcategory of $\FF$-centric $\FF$-radical subgroups. It is an easy exercise to check that an $\FF$-normal subgroup is always strongly $\FF$-closed, and a strongly $\FF$-closed subgroup is also weakly $\FF$-closed. In particular, if $A \leq S$ satisfies any of these three properties then $A$ is a normal subgroup of $S$. Regarding the definition of $\FF$-central subgroup, note that if $A$ is $\FF$-central then $A$ must be abelian, since $\Aut_{\FF}(A) = \{\Id\}$.

Next we review Aschbacher's definition of normal subsystem, as stated in \cite{Aschbacher}.

\begin{defi}\label{normalF}

Let $\FF$ be a saturated fusion system over a finite $p$-group $S$, and let $\varepsilon \subseteq \FF$ be a subsystem over a subgroup $R \leq S$. Then, $\varepsilon$ is a \emph{normal subsystem} of $\FF$ if the following conditions are satisfied.
\begin{itemize}

\item[(N1)] $\varepsilon$ is a saturated fusion system over $R$.

\item[(N2)] $R$ is strongly $\FF$-closed.

\item[(N3)] For each $P \leq Q \leq R$ and each $\gamma \in \Hom_{\FF}(Q,S)$, the map sending $f \in \Hom_{\varepsilon}(P,Q)$ to $\gamma \circ f \circ \gamma^{-1}$ is a bijection between the sets $\Hom_{\varepsilon}(P,Q)$ and $\Hom_{\varepsilon}(\gamma(P), \gamma(Q))$.

\item[(N4)] For each $f \in \Aut_{\varepsilon}(R)$ there is some extension $\widetilde{f} \in \Aut_{\FF}(R \cdot C_S(R))$ such that
$$
[\widetilde{f}, C_S(R)] \defin \{\widetilde{f}(x) \cdot x^{-1} \, \big| \, x \in C_S(R)\} \leq Z(R).
$$

\end{itemize}

\end{defi}

The concept of transporter system associated to a fusion system was originally introduced in \cite{OV}. Let $G$ be a group and let $\hh$ be a family of subgroups of $G$ which is invariant under $G$-conjugacy and overgroups. The transporter category of $G$ with respect to $\hh$ is the category $\TT_{\hh}(G)$ with object set $\hh$ and morphism sets
$$
\Mor_{\TT_{\hh}(G)}(P,Q) = \{x \in G \,\, | \,\, x \cdot P \cdot x^{-1} \leq Q\}
$$
for each pair of subgroups $P,Q \in \hh$.

\begin{defi}\label{defitransporter}

Let $\FF$ be a fusion system over a finite $p$-group $S$. A \emph{transporter system} associated to $\FF$ is a nonempty category $\TT$ such that $\Ob(\TT) \subseteq \Ob(\FF)$ is closed under $\FF$-conjugacy and overgroups, together with a pair of functors
$$
\TT_{\Ob(\TT)}(S) \Right4{\varepsilon} \TT \qquad \mbox{and} \qquad \TT \Right4{\rho} \FF
$$
satisfying the following conditions.
\begin{itemize}

\item[(A1)] The functor $\varepsilon$ is the identity on objects and an inclusion on morphism sets, and the functor $\rho$ is the inclusion on objects and a surjection on morphism sets.

\item[(A2)] For each $P, Q \in \Ob(\TT)$, the kernel
$$
E(P) \defin \Ker \big[\rho_P \colon \Aut_{\TT}(P) \Right2{} \Aut_{\FF}(P) \big]
$$
acts freely on $\Mor_{\TT}(P,Q)$ by right composition, and $\rho_{P,Q}$ is the orbit map of this action. Also, $E(Q)$ acts freely on $\Mor_{\TT}(P,Q)$ by left composition.

\item[(B)] For each $P,Q \in \Ob(\TT)$, $\varepsilon_{P,Q} \colon N_S(P,Q) \to \Mor_{\TT}(P,Q)$ is injective, and the composite $\rho_{P,Q} \circ \varepsilon_{P,Q}$ sends $g \in N_S(P,Q)$ to $c_g \in \Hom_{\FF}(P,Q)$.

\item[(C)] For all $\varphi \in \Mor_{\TT}(P,Q)$ and all $g \in P$, the diagram
$$
\xymatrix{
P \ar[r]^{\varphi} \ar[d]_{\varepsilon_P(g)} & Q \ar[d]^{\varepsilon_Q(\rho(\varphi)(g))} \\
P \ar[r]_{\varphi} & Q
}
$$
commutes in $\TT$.

\item[(I)] Each $\FF$-conjugacy class of subgroups in $\Ob(\TT)$ contains a subgroup $P$ such that $\varepsilon_P(N_S(P)) \in \Syl_p(\Aut_{\TT}(P))$; that is, such that $[\Aut_{\TT}(P)\colon \varepsilon(N_S(P))]$ is finite and prime to $p$.

\item[(II)] Let $\varphi \in \Iso_{\TT}(P,Q)$, $P \lhd \widetilde{P} \leq S$ and $Q \lhd \widetilde{Q} \leq S$ be such that $\varphi \circ \varepsilon_P(\widetilde{P}) \circ \varphi^{-1} \leq \varepsilon_Q(\widetilde{Q})$. Then there is some $\widetilde{\varphi} \in \Mor_{\TT}(\widetilde{P}, \widetilde{Q})$ such that $\widetilde{\varphi} \circ \varepsilon_{P, \widetilde{P}}(1) = \varepsilon_{Q, \widetilde{Q}}(1) \circ \varphi$.

\end{itemize}

A \emph{centric linking system} associated to a saturated fusion system $\FF$ is a transporter system $\LL$ such that $\Ob(\LL)$ is the collection of all $\FF$-centric subgroups of $S$ and $E(P) = Z(P)$ for all $P \in \Ob(\LL)$.

\end{defi}

\begin{defi}

A \emph{$p$-local finite group} is a triple $\g = \ploc$, where $S$ is a finite $p$-group, $\FF$ is a saturated fusion system over $S$, and $\LL$ is a centric linking system associated to $\FF$. The \emph{classifying space} of a $p$-local compact group $\g$ is the $p$-completed nerve of $\LL$, denoted by $B\g = |\LL|^{\wedge}_p$. The \emph{center} of $\g$ is the center of the fusion system $\FF$, and is denoted by $Z(\g)$.

\end{defi}

\begin{rmk}

Given a saturated fusion system $\FF$ over a finite $p$-group $S$, Chermak and Oliver proved, in \cite{Chermak} and \cite{Oliver} respectively, that there is an essentially unique centric linking system $\LL$ associated to $\FF$.

\end{rmk}

Eventually, we deal in this paper with fibrations involving classifying spaces of $p$-local finite groups. To this end, we recall the description of the topological monoid of self-equivalences of the classifying space of a $p$-local finite group.

\begin{defi}

Let $\g = \ploc$ be a $p$-local finite group. An automorphism $\Psi \colon \LL \Right2{\cong} \LL$ is \textit{isotypical} if $\Psi(\varepsilon_P(P)) = \varepsilon_{\Psi(P)}(\Psi(P))$ for each $P \in \Ob(\LL)$.

\end{defi}

Let $\Aut_{\typ}^I(\LL)$ be the collection of isotypical automorphisms of $\LL$ which send inclusions to inclusions. That is, $\Psi \in \Aut_{\typ}^I(\LL)$ if $\Psi(\varepsilon_{P,Q}(1)) = \varepsilon_{\Psi(P), \Psi(Q)}(1)$ whenever $P \leq Q$. This collection turns out to be a group by \cite[Lemma 1.14]{AOV}.

The elements of $\Aut_{\LL}(S)$ induce isotypical automorphisms of $\LL$ by conjugation, as follows. Fix $\varphi \in \Aut_{\LL}(S)$, and define $c_{\varphi}$ by
$$
c_{\varphi}(P) = \rho(\varphi)(P) \qquad \mbox{and} \qquad c_{\varphi}(\psi) = (\varphi|_{Q, c_{\varphi}(Q)}) \circ \psi \circ (\varphi^{-1}|_{c_{\varphi}(P), P})
$$
for each $P,Q \in \Ob(\LL)$ and all $\psi \in \Mor_{\LL}(P,Q)$. Notice that $c_{\varphi} \in \Aut_{\typ}^I(\LL)$ by construction. Actually, $\{c_{\varphi} \,| \, \varphi \in \Aut_{\LL}(S)\}$ is a normal subgroup of $\Aut_{\typ}^I(\LL)$, so we can define $\Out_{\typ}(\LL) \defin \Aut_{\typ}^I(\LL)/\{c_{\varphi} \,| \, \varphi \in \Aut_{\LL}(S)\}$. The following is a simplification of \cite[Theorem 8.1]{BLO2}.

\begin{prop}\label{auttyp}

Let $\g = \ploc$ be a $p$-local finite group. Then,
$$
\pi_i(\underline{\aut}(B\g)) \cong \left\{
\begin{array}{ll}
\Out_{\typ}(\LL) & i = 0 \\
Z(\g) & i = 1 \\
\{0\} & i \geq 2 \\
\end{array}
\right.
$$

\end{prop}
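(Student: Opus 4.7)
The strategy is to deduce the proposition directly from \cite[Theorem 8.1]{BLO2}, which provides a complete description of the topological monoid $\underline{\aut}(B\g)$ of self-homotopy equivalences of $B\g = |\LL|^{\wedge}_p$ in terms of the group $\Aut_{\typ}^I(\LL)$ together with the conjugation action of $\Aut_{\LL}(S)$ recalled above. All three homotopy group computations should fall out once the right identifications are in place.

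For $\pi_0$, the content of \cite[Theorem 8.1]{BLO2} is that every self-homotopy equivalence of $B\g$ is induced, up to homotopy, by some $\Psi \in \Aut_{\typ}^I(\LL)$, and that two such automorphisms induce homotopic maps on $B\g$ if and only if they differ by precomposition with some $c_\varphi$ for $\varphi \in \Aut_{\LL}(S)$. By definition, the quotient of $\Aut_{\typ}^I(\LL)$ by the normal subgroup $\{c_\varphi \,|\, \varphi \in \Aut_{\LL}(S)\}$ is $\Out_{\typ}(\LL)$, so $\pi_0(\underline{\aut}(B\g)) \cong \Out_{\typ}(\LL)$.

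For $\pi_i$ with $i \geq 1$, observe that $\underline{\aut}(B\g)$ is a grouplike topological monoid (its $\pi_0$ is a group), so translation by any invertible element is a homotopy equivalence between connected components, and it is enough to compute the higher homotopy on the component of the identity. That component coincides with the identity component of $\map(B\g, B\g)$, which by the mapping space theorem of \cite{BLO2} (applied in the case where source and target are both $B\g$, giving the self-centralizer $Z(\FF) = Z(\g)$) is homotopy equivalent to $(BZ(\g))^{\wedge}_p$. Since $Z(\g)$ is a finite abelian $p$-group, this space is $BZ(\g)$, from which one reads off $\pi_1 = Z(\g)$ and $\pi_i = 0$ for $i \geq 2$. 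The only real obstacle in this plan is bookkeeping — tracking the various models for $\underline{\aut}(B\g)$ used in \cite{BLO2} and verifying that the identity component of the space of self-maps is indeed a component of $\underline{\aut}(B\g)$ — but both points are standard and already present in \cite{BLO2}.
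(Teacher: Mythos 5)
Your proposal is correct and follows essentially the same route as the paper: the paper gives no proof at all, introducing the proposition as ``a simplification of [BLO2, Theorem 8.1]'', and that theorem already contains both the identification $\pi_0(\underline{\aut}(B\g)) \cong \Out_{\typ}(\LL)$ and the statement that each component of $\underline{\aut}(B\g)$ is aspherical with fundamental group $Z(\FF) = Z(\g)$. The one small caveat is that the mapping space theorem of \cite{BLO2} computes $\map(BQ, B\g)$ only for $Q$ a finite $p$-group, so it does not literally apply with source $B\g$; you should instead cite the identity-component clause of Theorem 8.1 itself, after which your argument coincides with the paper's.
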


%%%%%%%%%%%%%%%%%%%%%%%%%%%%%%%%%%%%%%%%%%%%%%%%%%%%%%%%%%%%%%%%%%%%%%%%%%%%%%%%%%%%

\section{Background on partial groups and localities}\label{partial}

In this section we review the definitions of partial groups and localities as they appeared in the seminal paper \cite{Chermak}. Roughly speaking, a partial group is a set with an associative operation (multiplication), a unit and an inversion (just like a group), but where some of the possible products are undefined. From this point of view, a partial group is formed by four pieces of data: a set of basic elements $\mm$, a subset $\dd(\mm)$ of the free monoid on $\mm$, a multiplication function $\Pi \colon \dd(\mm) \to \mm$, and an inversion $(-)^{-1}$.

A note of warning about this section. The convention in \cite{Chermak} is that maps apply \emph{from the right}, whereas our convention is that maps apply \emph{from the left}. For the sake of coherence, we present partial groups with the original notation and convention from \cite{Chermak}. In later sections we will adapt notation to make both conventions compatible with each other.

Let us start by fixing some notation. Let $\mm$ be a set, not necessarily finite, and let $\ww(\mm)$ be the free monoid on the set $\mm$. An element in $\ww(\mm)$ (also called a \emph{word in} $\ww(\mm)$) is a finite sequence of elements of $\mm$. The word $u \in \ww(\mm)$ formed by the letters $x_1, \ldots, x_n \in \mm$ will be represented by the symbol
$$
u = (x_1, \ldots, x_n).
$$
More generally, given words $u = (x_1, \ldots, x_n)$ and $v= (y_1, \ldots, y_m)$ in $\ww(\mm)$, its concatenation will be abbreviated by $u \circ v = (x_1, \ldots, x_n, y_1, \ldots, y_m)$.

Associated to $\ww(\mm)$ there is a \emph{length function} $l\colon \ww(\mm) \to \N$ which sends each element $w \in \ww(\mm)$ to the length of the sequence of elements of $\mm$ which forms $w$. The \emph{empty word}, represented by $(\emptyset)$, is the unique word of length $0$. When there is no place for confusion we may abbreviate the notation by writing $\ww$ instead of $\ww(\mm)$.

\begin{defi}\label{pgroup}

Let $\mm$ be a set, and let $\dd(\mm) \subseteq \ww(\mm)$ be a subset such that
\begin{enumerate}[(i)]

\item $\mm \subseteq \dd(\mm)$;

\item $u \circ v \in \dd(\mm) \Longrightarrow u, v \in \dd(\mm)$

\end{enumerate}
(in particular, $(\emptyset) \in \dd(\mm)$). A mapping $\Pi \colon \dd(\mm) \to \mm$ is a \emph{product} if
\begin{itemize}

\item[(P1)] $\Pi$ restricts to the identity map on $\mm$;

\item[(P2)] if $u \circ v \circ w \in \dd(\mm)$ then $u \circ \Pi(v) \circ w \in \dd(\mm)$ and
$$
\Pi(u \circ v \circ w) = \Pi(u \circ \Pi(v) \circ w).
$$

\end{itemize}
The \emph{unit} of a product $\Pi$ is defined as $1 = \Pi(\emptyset)$. A \emph{partial monoid} is a triple $(\mm, \dd(\mm), \Pi)$, where $\Pi$ is a product defined on $\dd(\mm)$.

An \emph{inversion} on $\mm$ is an involutory bijection $x \mapsto x^{-1}$ on $\mm$ together with the mapping $u \mapsto u^{-1}$ on $\ww(\mm)$ given by
$$
u = (x_1, \ldots, x_n) \mapsto (x_n^{-1}, \ldots, x_1^{-1}) = u^{-1}.
$$
A \emph{partial group} is a tuple $(\mm, \dd(\mm), \Pi, (-)^{-1})$ where $\Pi$ is a product on $\dd(\mm)$, and $(-)^{-1}$ is an inversion on $\mm$ satisfying
\begin{itemize}

\item[(I1)] if $u \in \dd(\mm)$ then $(u^{-1},u) \in \dd(\mm)$ and $\Pi(u^{-1}, u) = 1$.

\end{itemize}

\end{defi}

To simplify the notation, we will use $\mm$ to refer to a partial group $(\mm, \dd(\mm), \Pi, (-)^{-1})$ (or partial monoid) if the rest of the data is understood. The following is a summary of the basic properties of partial groups, as stated in \cite{Chermak}.

\begin{lmm}\label{2.2Ch}

Let $\mm$ be a partial group. Then,
\begin{enumerate}[(i)]

\item $\Pi$ is $\dd(\mm)$-multiplicative. That is, if $u \circ v \in \dd(\mm)$, then $\Pi(u)\circ \Pi(v)\in \dd(\mm)$, and
$$
\Pi(u \circ v) = \Pi(\Pi(u) \circ \Pi(v));
$$

\item $\Pi$ is $\dd(\mm)$-associative. That is, if $u \circ v \circ w \in \dd(\mm)$, then
$$
\Pi(\Pi(u \circ v) \circ w) = \Pi(u \circ \Pi(v \circ w));
$$

\item if $u \circ v \in \dd(\mm)$, then $u \circ 1 \circ v \in \dd(\mm)$ and
$$
\Pi(u \circ 1 \circ v) = \Pi(u \circ v);
$$

\item if $u \circ v \in \dd(\mm)$, then $w = u^{-1} \circ u \circ v, w' = u \circ v \circ v^{-1} \in \dd(\mm)$, and
$$
\begin{array}{ccc}
\Pi(w) = \Pi(v) & \mbox{ and } & \Pi(w') = \Pi(u);\\
\end{array}
$$

\item the left cancellation rule (similarly for right cancellation): if $u \circ v, u \circ w \in \dd(\mm)$ and we have $\Pi(u \circ v) = \Pi(u \circ w)$, then
$$
\Pi(v) = \Pi(w);
$$

\item if $u \in \dd(\mm)$, then $u^{-1} \in \dd(\mm)$, and $\Pi(u^{-1}) = \Pi(u)^{-1}$. In particular, $(1)^{-1} = 1$;

\item the left uncancellation rule (similarly for right uncancellation): let $u,v,w \in \dd(\mm)$ and suppose that $a = u \circ v, b = u \circ w \in \dd(\mm)$ with $\Pi(v) = \Pi(w)$. Then,
$$
\Pi(a) = \Pi(b).
$$

\end{enumerate}

\end{lmm}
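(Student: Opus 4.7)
The plan is to derive each statement directly from the four axioms (P1), (P2), (I1), together with the closure property $u\circ v\in\dd(\mm)\Rightarrow u,v\in\dd(\mm)$. Two tactics recur throughout: applying (P2) with either the prefix or the suffix chosen to be the empty word $(\emptyset)$ (recall $\Pi(\emptyset)=1$), and applying (I1) to a compound word $u$ and then using closure to extract desired sub-words from $u^{-1}\circ u$.

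I would start with (iii), the easiest: since $u\circ v=u\circ(\emptyset)\circ v$, (P2) applied with middle $(\emptyset)$ yields both the membership $u\circ 1\circ v\in\dd(\mm)$ and the product equality. For (i), two successive applications of (P2) starting from $u\circ v\in\dd(\mm)$---first viewed as $(\emptyset)\circ u\circ v$, then as $\Pi(u)\circ v\circ(\emptyset)$---produce $\Pi(u)\circ\Pi(v)\in\dd(\mm)$ with the required equality. For (ii), the same word $u\circ v\circ w$ may be grouped as either $(\emptyset)\circ(u\circ v)\circ w$ or $u\circ(v\circ w)\circ(\emptyset)$, and (P2) in each grouping yields one side of the associativity identity; both sides equal $\Pi(u\circ v\circ w)$.

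Part (iv) is the technical heart. Apply (I1) to $u\circ v$: the full word $v^{-1}\circ u^{-1}\circ u\circ v$ lies in $\dd(\mm)$ with product $1$. Closure then forces $u^{-1}\circ u\circ v\in\dd(\mm)$, and (P2) with middle $u^{-1}\circ u$ (whose product is $1$), combined with (iii), gives $\Pi(u^{-1}\circ u\circ v)=\Pi(v)$. For the symmetric statement $u\circ v\circ v^{-1}\in\dd(\mm)$, first observe that $u^{-1}\in\dd(\mm)$ whenever $u\in\dd(\mm)$ (by (I1) and closure); then applying (I1) to $(u\circ v)^{-1}$ yields $u\circ v\circ v^{-1}\circ u^{-1}\in\dd(\mm)$, whence closure extracts $u\circ v\circ v^{-1}$, with product $\Pi(u)$ computed analogously.

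For (v), use (i) to reduce to the case of a single-letter $u=x$; then prepend $x^{-1}$ via (iv) and read off
$\Pi(v)=\Pi(x^{-1}\circ x\circ v)=\Pi(x^{-1}\circ \Pi(x\circ v))=\Pi(x^{-1}\circ\Pi(x\circ w))=\Pi(w)$
using (ii) and the hypothesis; the right-cancellation analogue is parallel. For (vi), $u^{-1}\in\dd(\mm)$ is immediate from (I1) and closure, and applying (i) to $u^{-1}\circ u$ yields $\Pi(\Pi(u^{-1})\circ\Pi(u))=1$, which paired with (I1) for the single letter $\Pi(u)$ and right cancellation forces $\Pi(u^{-1})=\Pi(u)^{-1}$. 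Finally (vii) is immediate from (i), since $\Pi(u\circ v)=\Pi(\Pi(u)\circ\Pi(v))$ depends on $v$ only through $\Pi(v)$. The main difficulty is keeping the dependency order clean: (iv) must be proved using only (i)--(iii) and the axioms, avoiding any appeal to cancellation, so that (v) and (vi) may then legitimately build on (iv) without circularity; beyond this, the arguments are careful bookkeeping of which axiom is invoked at which point.
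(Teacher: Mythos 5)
Your proof is correct, and the dependency order you impose (proving (iv) from the axioms and (i)--(iii) alone, then deriving cancellation and the inversion formula from it) is exactly what is needed to avoid circularity. The paper itself offers no proof of this lemma --- it is stated as a summary of results from Chermak's paper and proved there --- and your derivation follows the same standard lines as that source, so there is nothing substantive to compare; the only cosmetic remark is that in (v) the reduction to a single-letter $u$ uses the intermediate step $\Pi(u\circ v)=\Pi(\Pi(u)\circ v)$ from your proof of (i) rather than the statement of (i) itself.
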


\begin{rmk}\label{2.3Ch}

Let $\mm$ be a partial group. If $\dd(\mm) = \ww(\mm)$, then $\mm$ is an actual group via the binary operation $(u,v) \mapsto \Pi(u \circ v)$.

\end{rmk}

Let $\mm$ be a partial group, and let $\hh \subseteq \mm$ be a non-empty subset. Set also $\dd(\hh) = \dd(\mm) \cap \ww(\hh)$. Then, $\hh$ is called a \emph{partial subgroup} of $\mm$ if $\hh$ is closed under inversion and with respect to products: if $w \in \dd(\hh)$, then $\Pi(w) \in \hh$. If in addition, $\dd(\hh) = \ww(\hh)$, then $\hh$ is a \emph{subgroup} of $\mm$. We will use calligraphic letters $\hh, \kk, \ldots$ to denote partial subgroups, and straight letters $H, K, \ldots$ to denote subgroups of a given partial group $\mm$.

Let $u \in \mm$, and set $\dd(u) \defin \{x \in \mm \spa | \spa (u^{-1}, x, u) \in \dd(\mm)\}$. There is an obvious mapping
\begin{equation}\label{conjugation}
\xymatrix@R=1mm{
\dd(u) \ar[rr]^{c_u} & & \mm \\
x \ar@{|->}[rr] & & (x)c_u = \Pi(u^{-1},x, u) \\
}
\end{equation}
We also adopt the following convention: given elements $u, x \in \mm$, the symbol $x^u$ stands for $(x)c_u = \Pi(u, x, u^{-1})$, and in particular includes the assumption that $x \in \dd(u)$. Notice that even if $x, y \in \dd(u)$ and $(x, y) \in \dd(\mm)$, it is possible that $(u^{-1},x \cdot y, u) \notin \dd(\mm)$.

\begin{lmm}\label{2.5Ch}

Let $\mm$ be a partial group and let $u \in \mm$. Then,
\begin{enumerate}[(i)]

\item $1 \in \dd(u)$ and $(1)c_u = 1$;

\item $\dd(u)$ is closed under inversion and $(x^{-1})c_u = ((x)c_u)^{-1}$ for all $x \in \dd(u)$;

\item $c_u$ is a bijection $\dd(u) \to \dd(u^{-1})$, and $c_{u^{-1}} = (c_u)^{-1}$;

\item $\mm = \dd(1)$ and $(x)c_1 = x$ for all $x \in \mm$.

\end{enumerate}

\end{lmm}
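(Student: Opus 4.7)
The plan is to prove each of the four assertions by direct word manipulation, using the axioms of Definition \ref{pgroup} together with the basic toolkit of Lemma \ref{2.2Ch}. The recurring technique is to produce an enlarged word that is known to lie in $\dd(\mm)$ and then use axiom (P2) to collapse designated subwords to their products (in particular, collapsing $(u,u^{-1})$ or $(u^{-1},u)$ to $1$). Parts (i), (ii) and (iv) are essentially immediate; the real work is in part (iii), where the ``windowed'' definition of $\dd(u)$ has to be transported across an application of $c_u$.

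For (i), the singleton $(u)$ lies in $\dd(\mm)$, so axiom (I1) yields $(u^{-1},u)\in\dd(\mm)$ with $\Pi(u^{-1},u)=1$; inserting a unit via Lemma \ref{2.2Ch}(iii) gives $(u^{-1},1,u)\in\dd(\mm)$ and $\Pi(u^{-1},1,u)=1$, which reads $1\in\dd(u)$ and $(1)c_u=1$. For (ii), the formal inverse of $v=(u^{-1},x,u)$ is $v^{-1}=(u^{-1},x^{-1},u)$; Lemma \ref{2.2Ch}(vi) then provides both $x^{-1}\in\dd(u)$ and $(x^{-1})c_u=((x)c_u)^{-1}$. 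For (iv), two applications of Lemma \ref{2.2Ch}(iii) to the singleton word $(x)\in\dd(\mm)$ yield $(1,x,1)\in\dd(\mm)$ and $\Pi(1,x,1)=x$, which is exactly $\mm=\dd(1)$ together with $(x)c_1=x$.

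The main argument is for (iii). Starting from $(u^{-1},x,u)\in\dd(\mm)$, I first enlarge the word in two steps. Applying Lemma \ref{2.2Ch}(iv) to the splitting $(u^{-1})\circ(x,u)$ prepends a $u$ and gives $(u,u^{-1},x,u)\in\dd(\mm)$; the right-hand version of the same lemma, applied to the splitting $(u,u^{-1},x)\circ(u)$, then appends a $u^{-1}$ to produce $(u,u^{-1},x,u,u^{-1})\in\dd(\mm)$. Set $y=(x)c_u=\Pi(u^{-1},x,u)$. Axiom (P2), applied with the middle subword $v=(u^{-1},x,u)$, shows that $(u,y,u^{-1})\in\dd(\mm)$, which is precisely the statement $y\in\dd(u^{-1})$; hence $c_u$ lands in $\dd(u^{-1})$. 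Two further applications of (P2), collapsing the pairs $(u,u^{-1})$ at either end to $1$, give
\[
\Pi(u,y,u^{-1})=\Pi(u,u^{-1},x,u,u^{-1})=\Pi(1,x,1)=x,
\]
and the left-hand side is by definition $c_{u^{-1}}(y)$ (using $(u^{-1})^{-1}=u$). This yields $c_{u^{-1}}\circ c_u=\Id_{\dd(u)}$. Swapping the roles of $u$ and $u^{-1}$ in the same argument, and again invoking involutivity of inversion, gives $c_u\circ c_{u^{-1}}=\Id_{\dd(u^{-1})}$, so $c_u\colon\dd(u)\to\dd(u^{-1})$ is a bijection with inverse $c_{u^{-1}}$, completing the proof.
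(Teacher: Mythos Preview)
Your proof is correct and self-contained. Note, however, that the paper does not give its own proof of this lemma: it is stated without proof as one of the basic properties of partial groups taken directly from \cite{Chermak}. So there is no argument in the paper to compare yours against; you have simply supplied the routine verification that the paper omits.
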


If $H \leq \mm$ is a subgroup and $H \subseteq \dd(u)$ for some $u \in \mm$, we write $H^u = \{(x)c_u \, | \, x \in H\}$. More generally, given subgroups $H, K \leq \mm$, define
$$
\begin{array}{l}
N_{\mm}(H, K) \defin \{u \in \mm \, | \, H \subseteq \dd(u) \mbox{ and } H^u \leq K\} \\
N_{\mm}(H) \defin \{u \in \mm \, | \, H \subseteq \dd(u) \mbox{ and } H^u \leq H\} \\
C_{\mm}(H) \defin \{u \in \mm \, | \, x^u = x \mbox{ for all } x \in H\}
\end{array}
$$
Notice that, $H^u$ need not be a group with respect to $\Pi$, even if $H$ is! Furthermore, the induced map $c_u \colon H \to K$, mapping $h \mapsto h^u$, is not in general a group homomorphism.

\begin{defi}\label{morphpg}

Let $\mm, \mm'$ be partial groups, with $\dd(\mm), \dd(\mm)'$, multiplications $\Pi, \Pi'$ and units $1, 1'$ respectively. A mapping $\beta\colon \mm \Right1{} \mm'$ is called a \emph{morphism of partial groups} if
\begin{enumerate}[(i)]

\item $(\dd(\mm))\beta^{\ast} \subseteq \dd(\mm)'$; and

\item $(\Pi(u))\beta = \Pi'((u)\beta^{\ast})$ for all $u \in \dd(\mm)$,

\end{enumerate}
where $\beta^{\ast}\colon \ww(\mm) \to \ww(\mm')$ is the map induced by $\beta$. The morphism $\beta$ is an \emph{isomorphism} if there is a homomorphism $\beta'\colon \mm' \to \mm$ such that $\beta \circ \beta' = \Id_{\mm}$ and $\beta' \circ \beta = \Id_{\mm'}$.

\end{defi}

Homomorphisms of partial groups satisfy similar properties to those of group homomorphisms (see \cite[Lemma 3.2]{Chermak} for further details). Namely, if $\beta\colon \mm \to \mm'$ is a homomorphism of partial groups, then
\begin{enumerate}[(a)]

\item $(1)\beta = 1'$; and

\item $(u^{-1})\beta = ((u)\beta)^{-1}$ for all $u \in \mm$.

\end{enumerate}
With this notion of morphism, partial groups form a category.

\begin{defi}\label{npg}

Let $\mm$ be a partial group, and let $\nn \leq \mm$ be a partial subgroup. We say that $\nn$ is a \emph{partial normal subgroup} of $\mm$ if $x^g \in \nn$ whenever $x \in \nn \cap \dd(g)$. 

\end{defi}

\begin{lmm}\label{npg1}

Let $\beta \colon \mm \to \mm'$ be a morphism of partial groups, and define the \emph{kernel of $\beta$} as $\Ker(\beta) = \{x \in \mm \, | \, (x)\beta = 1'\}$. Then $\Ker(\beta)$ is a partial normal subgroup of $\mm$.

\end{lmm}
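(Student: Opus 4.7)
The plan is to verify the two conditions separately: first that $\Ker(\beta)$ is a partial subgroup of $\mm$ (nonempty, closed under inversion, closed under the product $\Pi$ along admissible words), and then that it is preserved by conjugation in the sense of Definition \ref{npg}. Throughout I will use the two basic consequences of being a morphism already recorded after Definition \ref{morphpg}: namely $(1)\beta = 1'$ and $(x^{-1})\beta = ((x)\beta)^{-1}$ for $x \in \mm$, together with Lemma \ref{2.2Ch} and the axioms of Definition \ref{pgroup}.

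For the partial subgroup part, nonemptyness follows since $1 \in \Ker(\beta)$, and closure under inversion is immediate from $(x^{-1})\beta = ((x)\beta)^{-1} = (1')^{-1} = 1'$. The only nontrivial verification is closure under products: given a word $w = (x_1, \ldots, x_n) \in \dd(\mm) \cap \ww(\Ker(\beta))$, I apply the morphism axioms to write
$$
(\Pi(w))\beta = \Pi'((w)\beta^{\ast}) = \Pi'(\underbrace{1', \ldots, 1'}_{n}).
$$
I then reduce the right hand side to $1'$ by iterated use of Lemma \ref{2.2Ch}(iii): starting from $\Pi'(\emptyset) = 1'$ (axiom (P1)) and using that $1' \in \mm' \subseteq \dd(\mm')$, repeated insertion of $1'$ shows $\Pi'(1', \ldots, 1') = 1'$, so that $\Pi(w) \in \Ker(\beta)$.

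For partial normality, suppose $g \in \mm$ and $x \in \Ker(\beta) \cap \dd(g)$, so that by definition of $\dd(g)$ we have $(g^{-1}, x, g) \in \dd(\mm)$ and $x^g = \Pi(g^{-1}, x, g)$. Applying the morphism axioms together with $(g^{-1})\beta = ((g)\beta)^{-1}$ and $(x)\beta = 1'$ yields
$$
(x^g)\beta \;=\; \Pi'\bigl(((g)\beta)^{-1},\, 1',\, (g)\beta\bigr).
$$
The central $1'$ is absorbed by Lemma \ref{2.2Ch}(iii), leaving $\Pi'(((g)\beta)^{-1}, (g)\beta)$, which equals $1'$ by axiom (I1) applied to the length-one word $((g)\beta) \in \dd(\mm')$. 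Thus $x^g \in \Ker(\beta)$, completing the argument.

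This proof is essentially a routine unpacking of the definitions, so I do not anticipate a genuine obstacle; the only mildly delicate point is the identity $\Pi'(1', \ldots, 1') = 1'$, which is needed because the closure condition for a partial subgroup quantifies over words of arbitrary length in $\dd(\mm)$ rather than just binary products, so the bare unit law from (P1) does not suffice and must be bootstrapped via Lemma \ref{2.2Ch}(iii).
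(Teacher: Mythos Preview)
Your proof is correct; it is a clean direct verification of the partial-subgroup and normality conditions. The paper itself does not give a proof at all but simply cites \cite[Lemma~3.3]{Chermak}, so your argument is in fact more detailed than what appears in the paper, and is exactly the routine unpacking one would expect Chermak's lemma to contain.
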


\begin{proof}

This is \cite[Lemma 3.3]{Chermak}.
\end{proof}

\begin{defi}\label{opgroup}

Let $\mm$ be a partial group, and let $\Delta$ be a collection of subgroups of $\mm$. Define also
$$
\dd_{\Delta} \defin \{w = (u_1, \ldots, u_n) \in \ww(\mm) \, | \, \exists X_0, \ldots, X_n \in \Delta \colon X_{i-1}^{u_i} = X_i, \, i = 1, \ldots, n\}.
$$
The pair $(\mm, \Delta)$ is an \emph{objective partial group} if
\begin{itemize}

\item[(O1)] $\dd(\mm) = \dd_{\Delta}$; and

\item[(O2)] if $X,Z \in \Delta$, $Y \leq Z$, and $u \in \mm$ are such that $X^u \subseteq Y$, then $N_Y(X^u) \in \Delta$. In particular, $X^u \in \Delta$.

\end{itemize}

\end{defi}

\begin{defi}\label{asscat}

Given an objective partial group $(\mm, \Delta)$, we can form the following associated categories.
\begin{itemize}

\item The \emph{transporter category} of $(\mm, \Delta)$, $\cat = \cat_{\Delta}(\mm)$, whose set of objects is $\Delta$ and with morphism sets $\Mor_{\cat}(X,Y) = N_{\mm}(X,Y)$ for each pair $X,Y \in \Delta$. Whenever $X \leq Y$, the morphism $1 \in \Mor_{\cat}(X,Y)$ is called an \emph{inclusion morphism}. Note that every morphism in $\cat$ factors uniquely as an inclusion morphism followed by an isomorphism.

\item The \emph{fusion category} of $(\mm, \Delta)$, $\FF = \FF_{\Delta}(\mm)$, whose objects are the groups $U$ such that $U \leq X$ for some $X \in \Delta$, and whose morphisms are compositions of restrictions of conjugation homomorphisms $c_u\colon X \to Y$ between objects in $\Delta$ (note that morphisms in $\FF$ are group homomorphisms).

\end{itemize}

\end{defi}

We are ready now to define localities. A (finite) group $G$ is of \emph{characteristic $p$} if $C_G(O_p(G)) \leq O_p(G)$. Roughly speaking, a locality is a partial group with a Sylow $p$-subgroup.

\begin{defi}\label{locality}

A \emph{locality} is a triple $\loc$, where $\LL$ is a finite partial group (i.e., $\LL$ is a finite set and has the structure of a partial group), $S$ is a finite $p$-subgroup of $\LL$, and $\Delta$ is a collection of subgroups of $S$, with $S \in \Delta$, and subject to the following conditions.
\begin{itemize}

\item[(L1)] $(\mm, \Delta)$ is an objective partial group; and

\item[(L2)] $S$ is maximal in the poset (ordered by inclusion) of $p$-subgroups of $\mm$.

\end{itemize}
A locality $\loc$ is \emph{proper} if it satisfies the following properties.
\begin{itemize}

\item[(PL1)] $\Delta$ contains all the $\FF$-centric $\FF$-radical subgroups.

\item[(PL2)] For each $P \in \Delta$, the group $N_{\LL}(P)$ is of characteristic $p$.

\end{itemize}
More generally, a locality is \emph{saturated} if it satisfies condition (PL1) above.

\end{defi}

Note that, in particular, if $\loc$ is a saturated locality, then $\FF_{\Delta}(\locl)$ is a saturated fusion system fby \cite[Theorem A]{BCGLO1}. In \cite[Appendix A]{Chermak} the author describes a bijective correspondence between isomorphism classes of localities and isomorphism classes of transporter systems, which  specializes to a correspondence between proper localities and linking systems. The reader is referred there for further details.

\begin{lmm}

Let $(\LL, \Delta, S)$ be a locality. Then, for each $(x_1, \ldots, x_n) \in \dd(\LL)$ and each $(s_1, \ldots, s_m) \in \ww(N_{\LL}(S))$, we have
\begin{enumerate}[(i)]

\item $(x_1, \ldots, x_n, s_1, \ldots, s_m) \in \dd(\LL)$; and

\item $(s_1, \ldots, s_m, x_1, \ldots, x_n) \in \dd(\LL)$.

\end{enumerate}
In particular, $\LL$ is an $(S,S)$-biset.

\end{lmm}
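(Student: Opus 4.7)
The strategy exploits the objectivity axiom (O1): membership in $\dd(\LL)$ reduces to exhibiting a witness sequence in $\Delta$. The starting observation, used repeatedly, is that for any $s \in N_{\LL}(S)$, the conjugate $S^s$ is contained in $S$ and has the same cardinality as $S$ (conjugation is a bijection by Lemma \ref{2.5Ch}), so the maximality condition (L2) forces $S^s = S$; in particular, $s^{-1}$ also lies in $N_{\LL}(S)$. For part (i), pick witnesses $X_0, \ldots, X_n \in \Delta$ of $(x_1, \ldots, x_n)$, so that $X_{i-1}^{x_i} = X_i$, and set $Y_0 = X_n$. The plan is to show inductively that $Y_j := Y_{j-1}^{s_j}$ lies in $\Delta$ and is contained in $S$ for $j = 1, \ldots, m$: indeed $Y_{j-1} \leq S$ by induction, and $s_j \in N_{\LL}(S)$ gives $Y_{j-1}^{s_j} \subseteq S^{s_j} = S$, so axiom (O2) applied with $X = Y_{j-1}$, $Z = Y = S$, $u = s_j$ places $Y_j \in \Delta$. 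The concatenated sequence $X_0, \ldots, X_n = Y_0, Y_1, \ldots, Y_m$ is then a valid object-sequence for $(x_1, \ldots, x_n, s_1, \ldots, s_m)$, so this word belongs to $\dd_{\Delta} = \dd(\LL)$.

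Part (ii) is handled by a symmetric backwards construction: set $Z_m = X_0$ and define $Z_{j-1} := Z_j^{s_j^{-1}}$ for $j = m, m-1, \ldots, 1$. Since $s_j^{-1} \in N_{\LL}(S)$, the same application of (O2) produces $Z_{j-1} \in \Delta$ with $Z_{j-1} \leq S$ and $Z_{j-1}^{s_j} = Z_j$. Concatenating yields a witness sequence $Z_0, Z_1, \ldots, Z_m = X_0, X_1, \ldots, X_n$ showing $(s_1, \ldots, s_m, x_1, \ldots, x_n) \in \dd(\LL)$.

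Finally, $S$ is itself a subgroup of $\LL$ and normalizes itself trivially, so $S \subseteq N_{\LL}(S)$. Parts (i) and (ii) applied with $m = 1$ therefore show that $(s, x)$ and $(x, s)$ lie in $\dd(\LL)$ for every $s \in S$ and $x \in \LL$, allowing us to define $s \cdot x := \Pi(s, x)$ and $x \cdot s := \Pi(x, s)$. Unitality, associativity of each action, and the commutativity of the two actions follow from Lemma \ref{2.2Ch} together with the fact that longer composite words such as $(s_1, s_2, x)$ and $(s, x, t)$ also lie in $\dd(\LL)$ by further applications of (i) and (ii). The only subtle point is the inductive verification that each invocation of (O2) is legitimate — specifically, that the witness objects $Y_j$ and $Z_j$ remain inside $S$ so that the hypothesis $X^u \subseteq Y$ of (O2) is satisfied at every stage, and this is precisely where the maximality of $S$ as a $p$-subgroup is used.
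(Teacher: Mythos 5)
Your proof is correct and follows essentially the same route as the paper's: extend the witness sequence $X_0,\ldots,X_n\in\Delta$ for $(x_1,\ldots,x_n)$ by successively conjugating the endpoint by the $s_j$, using $S^{s_j}=S$ to keep the new objects inside $S$ and hence (via (O2)) inside $\Delta$. You are in fact somewhat more thorough than the paper, which only treats (i) and asserts the new objects lie in $\Delta$ without citing (O2), and which leaves (ii) and the biset claim to the reader.
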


\begin{proof}

We prove (i) and leave (ii) to the reader. Since $(x_1, \ldots, x_n) \in \dd(\LL)$, there exist $H_0, \ldots, H_n \in \Delta$ such that $H_{i-1}^{x_i} = H_i$ for all $i = 1, \ldots, n$. Note also that $(s_1, \ldots, s_m) \in \dd(\LL)$ via the subgroup $S \in \Delta$, since $S^{s_j} = S$ for all $j = 1, \ldots, m$. Thus, $(x_1, \ldots, x_n, s_1, \ldots, s_m) \in \dd(\LL)$ via the sequence $H_0, \ldots, H_n, H_{n+1}, \ldots, H_{n+m} \in \Delta$, where $H_{n+j} = ((H_n^{s_1})^{s_2} \ldots)^{s_j}$ for all $j = 1, \ldots, m$.
\end{proof}

Let $\loc$ be a locality, and let $\omega = (x_1, \ldots, x_n) \in \ww(\LL)$ be a word. Define
\begin{equation}\label{SwSu0}
\begin{array}{l}
R_{\omega} = \{g_0 \in S \, | \, \exists g_0, \ldots, g_n \in S \mbox{ such that } (g_{i-1})^{x_i} = g_i \mbox{ for all } i = 1, \ldots, n\} \\
L_{\omega} = \{(((g_0)^{x_1})^{x_2} \ldots)^{x_n} \, | \, g_0 \in R_w\}.
\end{array}
\end{equation}
Notice that $L_{\omega} = R_{\omega^{-1}}$, where $\omega^{-1} = (x_n^{-1}, \ldots, x_1^{-1})$. By \cite[Lemma 2.14]{Chermak}, $R_{\omega}$ and $L_{\omega}$ are subgroups of $S$, and $\omega \in \dd(\LL)$ if and only if $R_{\omega}, L_{\omega} \in \Delta$. A note the the reader, the original notation for $R_{\omega}$ in \cite{Chermak} was $S_{\omega}$. We opted for altering this in order to accommodate the left/right conjugation conventions mentioned at the beginning of this section. This way, $R_{\omega}$ stands for the biggest subgroup of $S$ that can be \emph{right conjugated} by $\omega$ into $S$, while $L_{\omega}$ stands for the biggest subgroup of $S$ that can be \emph{left conjugated} by $\omega$ into $S$.

\begin{lmm}\label{SwSu}

Let $\loc$ be a locality, and let $\omega \in \ww(\LL)$, with $\omega = u \circ v$. Then,
\begin{enumerate}[(i)]

\item $R_{\omega} = R_u \cap (L_u \cap R_v)^{u^{-1}}$; and

\item if $\omega \in \dd(\LL)$, then $R_{\omega} \leq R_{\Pi(\omega)}$.

\end{enumerate}

\end{lmm}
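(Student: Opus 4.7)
The plan is to unpack the combinatorial definitions of $R_\omega$ and $L_\omega$ in terms of chains of elements of $S$, and to manipulate such chains by splitting them (for (i)) or by inserting them into a suitable $\Delta$-chain of subgroups (for (ii)).

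For part (i), write $u = (x_1, \ldots, x_k)$ and $v = (x_{k+1}, \ldots, x_n)$. If $g_0 \in R_\omega$, pick a chain $g_0, g_1, \ldots, g_n \in S$ with $g_i = g_{i-1}^{x_i}$. The first $k+1$ entries witness that $g_0 \in R_u$ and $g_k \in L_u$, while the last $n-k+1$ entries witness that $g_k \in R_v$; thus $g_k \in L_u \cap R_v$, and since $g_k = g_0^u$ we get $g_0 \in (L_u \cap R_v)^{u^{-1}}$. Conversely, given $g_0 \in R_u \cap (L_u \cap R_v)^{u^{-1}}$, a chain witnessing $g_0 \in R_u$ ends at $g_0^u \in L_u \cap R_v$, and concatenating with a chain witnessing $g_0^u \in R_v$ yields a chain in $S$ that witnesses $g_0 \in R_\omega$.

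For part (ii), my plan is to show that $\omega^{-1} \circ (g_0) \circ \omega$ lies in $\dd(\LL)$ by exhibiting an explicit $\Delta$-chain, after which the multiplicativity of $\Pi$ (Lemma \ref{2.2Ch}(i)) and axiom (P2) do the rest. Since $\omega \in \dd(\LL)$, \cite[Lemma 2.14]{Chermak} gives $R_\omega \in \Delta$; set $H_0 = R_\omega$ and inductively $H_i = H_{i-1}^{x_i}$. Each $H_i$ is a subgroup of $S$ by definition and lies in $\Delta$ by iterated application of axiom (O2) of Definition \ref{opgroup} (taking the ambient group to be $S \in \Delta$). By construction $H_0, H_1, \ldots, H_n$ is a $\Delta$-chain for $\omega$, and since $g_0 \in H_0$ induction on $i$ gives $g_i \in H_i$. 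Then the sequence
$$
H_n, H_{n-1}, \ldots, H_0, H_0, H_1, \ldots, H_n
$$
is a $\Delta$-chain witnessing $\omega^{-1} \circ (g_0) \circ \omega \in \dd_\Delta = \dd(\LL)$: the transitions along $\omega^{-1}$ and along $\omega$ are just the chain for $\omega$ read in the two directions, and the central transition works because $g_0 \in H_0 \leq S$ so conjugation by $g_0$ fixes $H_0$. Applying Lemma \ref{2.2Ch}(i) and (vi) to split off $\Pi(\omega^{-1}) = \Pi(\omega)^{-1}$ and $\Pi(\omega)$, we conclude that $(\Pi(\omega)^{-1}, g_0, \Pi(\omega)) \in \dd(\LL)$ with value $g_0^{\Pi(\omega)}$; meanwhile, applying (P2) inside out to the original triple computes $\Pi(\omega^{-1} \circ (g_0) \circ \omega) = g_n \in S$. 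Hence $g_0^{\Pi(\omega)} = g_n \in S$ and $g_0 \in R_{\Pi(\omega)}$.

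The only delicate step is the construction of the $\Delta$-chain for $\omega^{-1} \circ (g_0) \circ \omega$. Everything else is formal manipulation once one has in hand the multiplicativity and associativity identities of Lemma \ref{2.2Ch}. The key geometric point is that $R_\omega$ itself is available as a member of $\Delta$, which allows the middle letter $g_0$ to be absorbed without leaving $\Delta$.
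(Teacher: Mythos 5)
Your proof is correct; the paper itself offers no argument beyond ``This is immediate by definition,'' and your write-up supplies exactly the details being left to the reader. In particular, the only nontrivial point is part (ii), and your construction of the $\Delta$-chain $H_n,\ldots,H_0,H_0,\ldots,H_n$ for $\omega^{-1}\circ(g_0)\circ\omega$ (using that $g_0$ normalizes the subgroup $H_0=R_\omega$ and that (O2) keeps each $H_i$ in $\Delta$), followed by collapsing with (P2) and Lemma \ref{2.2Ch}, is sound and is the standard Chermak-style argument.
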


\begin{proof}

This is immediate by definition.
\end{proof}

%%%%%%%%%%%%%%%%%%%%%%%%%%%%%%%%%%%%%%%%%%%%%%%%%%%%%%%%%%%%%%%%%%%%%%%%%%%%%%%%%%%%

\section{Background on simplicial sets}\label{simplicial}

Let $\Delta$ denote the category of finite ordered sets. A \emph{simplicial set} is a functor
$$
X\colon \Delta \op \longrightarrow \Setsc
$$

Usually, we will replace $\Delta$ by the skeletal subcategory which whose objects are the sets $[n] = \{0, 1, \ldots, n\}$, for $n \geq 0$, and we will refer to this subcategory also as $\Delta$. Hence, we can think of a simplicial set as a sequence of sets $X_n = X([n])$, together with maps among them induced by non-decreasing functions $\varphi\colon [n] \to [m]$. Simplicial sets form a category whose morphisms are natural transformations among functors.

If $\Delta_n$ denotes the standard Euclidean $n$-simplex, then the assignment $[n] \in \Delta \mapsto \Delta_n \in \topo$ forms a functor by assigning to each morphism in $\Delta$ the corresponding sequence of inclusions and/or collapsing of faces in the usual way. The geometric realization of a simplicial set is the CW-complex defined as
$$
|X| = \left(\coprod_n \Delta_n \times X_n\right)/\sim
$$
where for each non-decreasing function $\varphi\colon [n] \to [m]$, if $t \in \Delta_m$ and $x \in X_n$, we identify $(\varphi(t),x)$ with $(t, X(\varphi(x)))$. Conversely, the set of singular simplices $\sigma: \Delta_n \to T$ of a topological space $T$ forms a simplicial set $\Sing(T)$. The geometric realization and the singular simplicial set form a pair of adjoint functors defining an equivalence between the homotopy categories of topological spaces and of simplicial sets. In this way, simplicial sets become convenient combinatorial models for topological spaces.

Finally, let us fix some notation. Given simplicial sets $X$ and $Y$,
\begin{itemize}

\item $\underline{\mathrm{hom}}(X,Y)$ is the simplicial set of maps from $X$ to $Y$: $(\underline{\mathrm{hom}}(X,Y))_n$ is the set of simplicial maps $\Delta[n] \times X \to Y$;

\item $\underline{\aut}(X)$ is the maximal subgroup inside the function complex $\underline{\mathrm{hom}}(X,X)$; and

\item $\Out(X) \defin \pi_0(\underline{\aut}(X))$.

\end{itemize}
The face and degeneracy operators on $\Delta^{\op}$ naturally induce on $\underline{\mathrm{hom}}(X,Y)$ the structure of a simplicial set. Furthermore, when $X = Y$, we can define in addition a multiplication in $\underline{\mathrm{hom}}(X,Y)$
$$
(\Delta[n] \times X \stackrel{\sigma} \to X) \cdot (\Delta[n] \times X \stackrel{\tau} \to X) = (\Delta[n] \times X \stackrel{\mathrm{pr}_1 \times \sigma} \longrightarrow \Delta[n] \times X \stackrel{\tau} \to X).
$$
There is an obvious group homomorphism $\Aut(X) \to \underline{\aut}(X)$ which sends $f \in \Aut(X)$ to the corresponding vertex in $\underline{\aut}(X)$, so in fact $\Aut(X)$ is a discretization of $\underline{\aut}(X)$.

Let us formalize certain operations that one can perform on the collection of simplices of any simplicial set, such as ``extracting'' the $r$-th front face of the $s$-th back face, or listing the edges. These operations will play a crucial role when studying partial groups from a simplicial point of view. To simplify notation, $d_m^k$ will denote the $k$-th iteration of the face operator $d_m$ on a given simplicial set.

\begin{defi}\label{frontbackop}

Let $X$ be a simplicial set, and let $r \in \N$. The \emph{$r$-front face operator} on $X$, $F_r$, is the collection of all set maps $F_{r,n}$, $n \geq r$, defined by 
$$
\xymatrix@R=1mm{
X_n \ar[rr]^{F_{r,n}} & & X_r \\
(\Delta[n] \stackrel{\sigma} \to X) \ar@{|->}[rr] & & (\Delta[r] \stackrel{F^r} \to \Delta[n] \stackrel{\sigma} \to X)
}
$$
where $F^r(i) = i$ for all $i \in \Delta[r]$. Similarly, for $s \in \N$, the \emph{$s$-back face operator} on $X$, $B_s$, is the collection of all set maps $B_{s,n}$, $n \geq s$, defined by
$$
\xymatrix@R=1mm{
X_n \ar[rr]^{B_{s,n}} & & X_s \\
(\Delta[n] \stackrel{\sigma} \to X) \ar@{|->}[rr] & & (\Delta[s] \stackrel{B^s} \to \Delta[n] \stackrel{\sigma} \to X)\\
}
$$
where $B^s(j) = n-s+j$ for all $j \in \Delta[s]$.

\end{defi}

Clearly, these operators are meaningless when applied to simplices of too small dimension. One could define them to be the identity in these situations, but this will not make a difference here.

Whenever $r + s = n$, these operators combine as $D_{r,s} = (F_r, B_s)\colon X_n \to X_r \times X_s$. This can be iterated in different ways to define operators $D_{r_1, r_2, \ldots, r_l}\colon X_n \to X_{r_1} \times X_{r_2} \times \ldots \times X_{r_l}$, which are of not much relevance in this paper in general. There is, however, a particular case which is of interest to us.

\begin{lmm}\label{ident1}

Let $X$ be a simplicial set, and let $\omega \in X_n$. Then,
\begin{enumerate}[(i)]

\item for all $j = 0, \ldots, n$,
$$
\begin{array}{ccc}
F_{n-j}(\omega) = d_n^j(\omega) & \mbox{ and } & B_{n-j}(\omega) = d_0^j(\omega) \\
\end{array}
$$

\item for all $i = 1, \ldots, n-1$,
$$
F_k(d_i(\omega)) = \left\{
\begin{array}{ll}
d_i(F_k(\omega)) & \mbox{, if } k> i\\
d_i(F_{k+1}(\omega)) & \mbox{, if } k = i\\
\end{array}
\right.
$$

\item for all $i = 1, \ldots, n-1$,
$$
B_k(d_i(\omega)) = \left\{
\begin{array}{ll}
d_i(B_k(\omega)) & \mbox{, if } k< i\\
d_i(B_{k+1}(\omega)) & \mbox{, if } k = i\\
\end{array}
\right.
$$

\end{enumerate}

\end{lmm}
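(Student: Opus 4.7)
The plan is to translate every identity into the combinatorics of the simplex category $\Delta$, so that each equality becomes a routine comparison of order-preserving maps whose image under the contravariant functor $X$ gives the asserted identity of simplices.

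For part (i), I would unpack the definition of $F_{n-j}$: it is induced by the inclusion $F^{n-j}\colon[n-j]\hookrightarrow[n]$, $i\mapsto i$. This inclusion factors in $\Delta$ as the composition $\delta_n\circ\delta_{n-1}\circ\cdots\circ\delta_{n-j+1}$, each $\delta_m$ being the face map missing the top vertex at its stage. Applying $X$ reverses the order of composition and converts the iteration into $j$ applications of the top-face operator, yielding $F_{n-j}(\omega)=d_n^j(\omega)$. A symmetric argument with $B^{n-j}\colon j'\mapsto j+j'$ factoring through $j$ copies of the bottom face map $\delta_0$ gives the analogous identity $B_{n-j}(\omega)=d_0^j(\omega)$.

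For parts (ii) and (iii), I would write the composite $F_k\circ d_i$ as the morphism $[k]\xrightarrow{F^k}[n-1]\xrightarrow{\delta_i}[n]$ in $\Delta$, which sends $j\mapsto j$ for $j<i$ and $j\mapsto j+1$ for $j\geq i$. Refactoring this composite through $[k+1]$ as $F^{k+1}\circ\alpha$ for some order-preserving $\alpha\colon[k]\to[k+1]$ whose image misses a specific index determined by comparing $k$ with $i$, I recognise $\alpha$ as a face map $\delta_?$ and read off the stated identity after applying $X$. The $B$-version of (iii) is handled by the mirror computation, keeping track of the shift $j'\mapsto n-1-k+j'$ built into $B^k$ and using the same kind of refactorization.

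The main obstacle, and the only real content of the proof, is the careful bookkeeping of indices in the case split of (ii) and (iii): one has to verify in each case that the candidate factorization really is an equality of order-preserving maps $[k]\to[n]$, rather than just an equality of images, and that all indices remain within the allowed ranges. Once this is done, the standard simplicial identity $d_id_j=d_{j-1}d_i$ for $i<j$ together with part (i) delivers the remaining identities mechanically; no simplicial homotopy input is required.
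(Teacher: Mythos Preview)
The paper states this lemma without proof, so there is no argument to compare against; your proposed reduction to identities of order-preserving maps in $\Delta$ is the standard way to handle such statements and is entirely appropriate here.

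That said, when you carry out the bookkeeping you will run into a genuine inconsistency in the statement of parts (ii) and (iii) as printed. In (ii), the left-hand side $F_k(d_i(\omega))$ lies in $X_k$, whereas the right-hand side in the case $k>i$, namely $d_i(F_k(\omega))$, lies in $X_{k-1}$; the two cannot be equal. The computation you outline in fact shows that for every $k\geq i$ one has
\[
\delta_i\circ F^k \;=\; F^{k+1}\circ \delta_i \colon [k]\to[n],
\]
and hence $F_k(d_i(\omega))=d_i(F_{k+1}(\omega))$ in both cases $k>i$ and $k=i$, so the case distinction is spurious and the first line of (ii) should also read $d_i(F_{k+1}(\omega))$. (For $k<i$ one gets instead $F_k(d_i(\omega))=F_k(\omega)$, which the lemma does not record.) The same dimension mismatch occurs in (iii): in the case $k<i$ the operator $d_i$ is not even defined on $X_k$, so $d_i(B_k(\omega))$ makes no sense; the correct uniform identity for the relevant range is $B_k(d_i(\omega))=d_{i-(n-1-k)}(B_{k+1}(\omega))$ after the obvious reindexing, or more transparently $B_k(d_i(\omega))=B_k(\omega)$ when $i<n-1-k$. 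You should note this typo when writing up, but it does not affect the validity of your method.
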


\begin{defi}\label{enumop}

Let $X$ be a simplicial set. The \emph{enumerating operator}, $E$, is the collection of set maps $E_n$, $n \geq 1$, defined by
\begin{equation}\label{En}
E_n = D_{1, 1, \ldots, 1} \colon X_n \Right8{(F_{n-1},B_1)} X_{n-1} \times X_1 \Right8{(F_{n-2}, B_1)\times \Id} \ldots \Right8{(F_1, B_1) \times \Id} \underbrace{X_1 \times \ldots \times X_1}_{\mbox{$n$ times}}.
\end{equation}

\end{defi}

The name of the above operator comes from the fact that, for each $n$, the operator $E_n$ enumerates the edges of an arbitrary $n$-simplex in $X$. To simplify the notation we will write $E(\sigma)$ instead of $E_n(\sigma)$ for an $n$-simplex $\sigma \in X$ whenever its dimension is understood.

\begin{rmk}\label{enumop2}

Notice that, for each $n$, the operator $E_n$ can actually be defined in several ways, all giving the same result. For instance, for $n = 3$ we have
$$
\xymatrix{
 & & X_2 \times X_1 \ar[rrd]^{(F_1, B_1) \times \Id} & & \\
X_3 \ar[rrrr]|{E_3} \ar[rru]^{(F_2, B_1)} \ar[rrd]_{(F_1, B_2)} & & & & X_1 \times X_1 \times X_1 \\
 & & X_1 \times X_2 \ar[rru]_{\Id \times (F_1, B_1)} & & \\
}
$$
We will not enumerate here all the different decompositions of $E_n$ for a general $n$ since this would require an unnecessary amount of combinatorics, but we will use any equivalent definition of $E$ without further mention.

\end{rmk}

\begin{lmm}\label{propE}

The operator $E$ satisfies the following properties.
\begin{enumerate}[(i)]

\item Let $X$ be a simplicial set. Then, for each $n$ and each $0 \leq i \leq n$ there is a commutative diagram
$$
\xymatrix@R=15mm@C=15mm{
X_n \ar[r]^{s_i} \ar[d]_{E} & X_{n+1} \ar[d]^{E} \\
\hbox{$\underbrace{{X_1\times\dots \times X_1}}_{\text{$n$ times}}$} \ar[r]_{s_i} & \hbox{$\underbrace{{X_1\times\dots \times X_1}}_{\text{$n+1$ times}}$} \\
}
$$
where the bottom horizontal map is defined as
$$
s_i(x_1,\ldots, x_i, \ldots, x_n) = \left\{
\begin{array}{ll}
(s_0(d_1(x_1)), x_1, \ldots, x_n) & i = 0;\\
(x_1, \ldots, x_i, s_0(d_0(x_i)), x_{i+1}, \ldots, x_n) & 1 \leq i \leq n.
\end{array}
\right.
$$

\item Let $f\colon X \to Y$ be a simplicial map. Then, for each $n$ there is a commutative diagram
$$
\xymatrix@R=15mm@C=15mm{
X_n \ar[r]^{f} \ar[d]_{E} & Y_n \ar[d]^{E} \\
\hbox{$\underbrace{{X_1\times\dots \times X_1}}_{\text{$n$ times}}$} \ar[r]_{f \times \ldots \times f} & \hbox{$\underbrace{{Y_1\times\dots \times Y_1}}_{\text{$n$ times}}$}
}
$$

\end{enumerate}

\end{lmm}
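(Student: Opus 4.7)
The plan is to treat parts (i) and (ii) separately, with (ii) being a direct naturality argument and (i) requiring an explicit geometric description of $E$.

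For part (ii), I would observe that both $F_r$ and $B_s$ are defined by precomposing a simplex $\sigma \colon \Delta[n] \to X$ with a fixed cosimplicial map (namely $F^r$ or $B^s$). Since a simplicial map $f \colon X \to Y$ is a natural transformation, postcomposition with $f$ commutes with such precompositions: $f \circ F_r(\sigma) = F_r(f \circ \sigma)$ and $f \circ B_s(\sigma) = B_s(f \circ \sigma)$. Because $E_n$ is an iterated composition of $(F_k, B_1)$-pairs (see \eqref{En} and Remark \ref{enumop2}), these naturality identities assemble into the desired commutative square.

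For part (i), the key step is to give an explicit description of $E$ and then compute. Unwinding the definition \eqref{En}, for $\omega \in X_n$ viewed as a simplicial map $\omega \colon \Delta[n] \to X$, the $k$-th coordinate of $E(\omega) = (e_1, \ldots, e_n) \in (X_1)^n$ is exactly the restriction of $\omega$ to the $1$-subsimplex on the vertices $\{k-1, k\}$. To compute $E(s_i(\omega))$, I interpret $s_i(\omega) \in X_{n+1}$ as the composite $\Delta[n+1] \stackrel{s^i}{\to} \Delta[n] \stackrel{\omega}{\to} X$, where $s^i$ is the standard surjection repeating the vertex $i$. Enumerating the edges of $\Delta[n+1]$ and tracing them through $s^i$, one finds:
\begin{itemize}
\item edges $\{k-1,k\}$ of $\Delta[n+1]$ with $k \leq i$ are sent to the edge $\{k-1,k\}$ of $\Delta[n]$, contributing $e_k$;
\item the edge $\{i, i+1\}$ of $\Delta[n+1]$ is sent to the degenerate edge at vertex $i$ of $\Delta[n]$, contributing $s_0(v_i)$, where $v_i$ denotes the $i$-th vertex of $\omega$;
\item edges $\{k-1,k\}$ of $\Delta[n+1]$ with $k > i+1$ are sent to the edge $\{k-2,k-1\}$ of $\Delta[n]$, contributing $e_{k-1}$.
\end{itemize}
Finally, $v_i$ is recovered as $d_0(e_i)$ when $i \geq 1$ and as $d_1(e_1)$ when $i = 0$, and this matches the formula in the statement.

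The main obstacle is the combinatorial bookkeeping in part (i): carefully verifying the three cases above for $s^i$ acting on edges, and identifying $v_i$ with the correct face of the correct edge according to whether $i$ is zero or positive. An alternative, more hands-on route would use the simplicial identities relating $F_r$, $B_s$ and $s_i$ (an extension of Lemma \ref{ident1}) and induct on $n$; I would pick whichever presentation meshes best with the rest of the paper's style.
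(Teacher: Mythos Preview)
Your proposal is correct. The paper does not actually supply a proof of this lemma; it is stated without proof and the text moves directly to the next definition, treating both parts as routine consequences of the definitions. Your argument fills in exactly the details one would expect: for (ii) the naturality of precomposition operators with respect to postcomposition by a simplicial map, and for (i) the explicit identification of the $k$-th coordinate of $E(\omega)$ with the edge of $\omega$ on vertices $\{k-1,k\}$, followed by the straightforward edge-by-edge analysis of $s^i \colon \Delta[n+1] \to \Delta[n]$. The identification of the degenerate edge $s_0(v_i)$ with $s_0(d_0(e_i))$ for $i \geq 1$ and $s_0(d_1(e_1))$ for $i=0$ is the one point requiring care, and you handle it correctly.
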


\begin{defi}\label{prodop}

Let $X$ be a simplicial set. The \emph{product operator}, $\Pi$, is the collection of maps $\Pi_n$, $n \geq 0$, defined by
$$
\xymatrix@R=1mm{
X_n \ar[rr]^{\Pi_n} & & X_1 \\
(\Delta[n] \stackrel{f} \to X) \ar@{|->}[rr] & & (\Delta[1] \stackrel{\pi^n} \to \Delta[n] \stackrel{f} \to X)
}
$$
where $\pi^n(0) = 0$ and $\pi^n(1) = n$.

\end{defi}

\begin{rmk}

Note that, for each $n$, the map $\Pi_n$ is $d_1^{n-1}$, the face operator $d_1$ iterated $(n-1)$ times.

\end{rmk}

Eventually we may also use a variation of the product operator, which we call the \emph{$(r+s)$-product operator}, $\Pi_{r,s}$. This is just the collection of maps $\Pi_{r,s,n}$, $n \geq 0$, defined by
$$
\xymatrix@R=1mm{
X_n \ar[rr]^{\Pi_{r,s,n}} & & X_2 \\
(\Delta[n] \stackrel{f} \to X) \ar@{|->}[rr] & & (\Delta[2] \stackrel{\pi^{r,s}} \to \Delta[n] \stackrel{f} \to X)
}
$$
where $\pi^{r,s}(0) = 0$, $\pi^{r,s}(1) = r$ and $\pi^{r,s}(2) = n$.

\begin{lmm}\label{simprop}

Let $X$ be a simplicial set. Then, for each $n = r + s$, $\Pi = \Pi \circ \Pi_{r,s}$, and there is a commutative diagram
$$
\xymatrix@R=15mm@C=15mm{
X_n \ar@/^2pc/[rr]^\Pi \ar[r]^{\Pi_{r,s}} \ar[d]_{D_{r,s}} & X_2 \ar[r]^{\Pi} \ar[d]^{E} & X_1 \\
X_r \times X_s \ar[r]_{\Pi \times \Pi} & X_1 \times X_1
}
$$

\end{lmm}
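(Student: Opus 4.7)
The plan is to reduce both commutativities to identities among morphisms in the simplex category $\Delta$, since every operator appearing in the statement is of the form $\sigma \mapsto \sigma \circ \alpha$ for some specific morphism $\alpha \colon [k] \to [n]$. Once each operator is written this way, the two claimed commutativities become equalities of morphisms $[1] \to [n]$ (for the outer triangle) and pairs of such morphisms (for the square), which can be checked vertex-by-vertex.

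For the outer triangle $\Pi = \Pi \circ \Pi_{r,s}$, I would observe that $\Pi_{r,s}$ is precomposition with $\pi^{r,s}\colon [2] \to [n]$ and that $\Pi$ on $X_2$ is precomposition with $\pi^2\colon [1] \to [2]$. Their composite $\pi^{r,s} \circ \pi^2$ sends $0 \mapsto 0 \mapsto 0$ and $1 \mapsto 2 \mapsto n$, which is exactly $\pi^n$. Hence $\Pi \circ \Pi_{r,s} = \Pi$ on $X_n$.

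For the square, it suffices to verify $F_1 \circ \Pi_{r,s} = \Pi \circ F_r$ and $B_1 \circ \Pi_{r,s} = \Pi \circ B_s$ separately, since $E$ on $X_2$ is the pair $(F_1, B_1)$ and $D_{r,s} = (F_r, B_s)$. The first reduces to checking that $\pi^{r,s} \circ F^1$ and $F^r \circ \pi^r$ are equal as maps $[1] \to [n]$; both send $0 \mapsto 0$ and $1 \mapsto r$, using $F^1(i) = i$, $F^r(i) = i$, and $\pi^r(0) = 0$, $\pi^r(1) = r$. The second reduces to checking that $\pi^{r,s} \circ B^1$ and $B^s \circ \pi^s$ agree; both send $0 \mapsto r$ and $1 \mapsto n = r+s$, using $B^1(0) = 1$, $B^1(1) = 2$ and $B^s(j) = r + j$.

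There is no substantive obstacle here; the only care required is in keeping the different indexing conventions of $F_r$, $B_s$, $\pi^n$, $\pi^r$, $\pi^s$, and $\pi^{r,s}$ straight so that the vertex computations line up. Because the diagram involves only maps induced by morphisms in $\Delta$, no naturality arguments or additional hypotheses on $X$ are needed.
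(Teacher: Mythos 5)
Your proof is correct and is essentially the paper's argument made explicit: the paper simply declares the lemma ``immediate by definition,'' and your vertex-by-vertex check of the identities $\pi^{r,s}\circ\pi^2=\pi^n$, $\pi^{r,s}\circ F^1=F^r\circ\pi^r$, and $\pi^{r,s}\circ B^1=B^s\circ\pi^s$ in $\Delta$ is exactly the verification that claim leaves to the reader.
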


\begin{proof}

This is immediate by definition.
\end{proof}

The rest of this section consists of a brief review of the necessary definitions and results about fibre bundles. For further reference the reader is directed to \cite{BGM}, \cite{BLO6}, \cite{Curtis} and \cite{GoJa}.

\begin{defi}\label{FB}

A simplicial map $\tau\colon X \to B$ is called a \emph{fibre bundle} if $\tau$ is onto and, for each simplex $b \in B$ there is an equivalence $\alpha(b)$ between the fibre over $b$ and a fixed simplicial set $F$ (which does not depend on $b$), making the following diagram strictly commutative
$$
\xymatrix{
F \times \Delta[n] \ar[r]^{\alpha(b)} \ar[d] & X^b \ar[r] \ar[d]^{\tau^b} & X \ar[d]^{\tau} \\
\Delta[n] \ar[r]_{\Id} & \Delta[n] \ar[r]_{b} & B
}
$$
The space $F$ is called the \emph{fibre of $\tau$}, and the collection $\{\alpha(b)\}_{b \in B}$ is called an \emph{atlas of $\tau$}.

\end{defi}

Let $\tau \colon X \to B$ be a fibre bundle with fibre $F$. Note that there may be more than one choice of an atlas for $\tau$. However, given atlases $\{\alpha(b)\}$ and $\{\beta(b)\}$, it is immediate that, for each $b \in B_n$,
$$
\alpha(b)^{-1} \cdot \beta(b) \in \underline{\aut}_n(F).
$$

\begin{defi}\label{TCP}

Given simplicial sets $B$ and $F$, a twisting function amounts to a collection $\{\phi_n \colon B_n \to \underline{\aut}_{n-1}(F)\}_{n \geq 1}$ such that, for all $n \geq 1$ and all $b \in B_n$, the following equations hold
\begin{itemize}

\item[($\tau_1$)] $\phi_{n-1}(d_i(b)) = d_{i-1}(\phi_n(b))$ for $2 \leq i \leq n$;

\item[($\tau_2$)] $\phi_{n-1}(d_1(b)) = d_0(\phi_n(b)) \cdot \phi_{n-1}(d_0(b))$;

\item[($\tau_3$)] $\phi_{n+1}(s_i(b)) = s_{i-1}(\phi_n(b))$ for $i \geq 1$; and

\item[($\tau_4$)] $\phi_{n+1}(s_0(b)) = 1 \in \underline{\aut}_n(F)$.

\end{itemize}
Given a twisting function $\{\phi_n \colon B_n \to \underline{\aut}_{n-1}(F)\}_{n \geq 1}$, the associated \emph{regular twisted Cartesian product} (RTCP for short) $X = F \times_{\phi} B$ is the simplicial set with $X_n = F_n \times B_n$, $n \geq 0$, and such that, for all $g \in F_n$, $b \in B_n$,
\begin{enumerate}[(i)]

\item $d_i(g,b) = (d_i(g), d_i(b))$, for all $n>0$ and all $0 < i \leq n$; and

\item $d_0(g,b) = (\phi(g,b) \cdot d_0(g), d_0(b))$.

\end{enumerate}

\end{defi}

By \cite[Proposition IV.5.3]{BGM}, an RTCP is a fibre-bundle via the projection map onto the second coordinate. Every RTCP in this paper will be considered as a fibre bundle this way, even if the projection map is not explicitly mentioned.

\begin{defi}

A \emph{map of fibre bundles} from $\tau \colon X \to B$ to $\chi \colon Y \to C$ is a pair of maps $f \colon X \to Y$ and $g \colon B \to C$ such that
$$
g \circ \tau = \chi \circ f.
$$
This way, an \emph{equivalence} of fibre bundles is a pair $(f,g)$ where both $f$ and $g$ are equivalences, and the pair $(f,g)$ is a \emph{strong equivalence} if in addition $g = \Id$.

\end{defi}

Let $\tau \colon X \to B$ be a fibre bundle with fibre $F$. Let also $\{\alpha(b)\}_{b \in B}$ be a choice of an atlas  for $\tau$. For each $b \in B_n$ and each $i = 1, \ldots, n$, there is an isomorphism $\alpha_i(b) \colon F \times \Delta[n-1] \to X^{d_i(b)}$ making the following diagram commutative
$$
\xymatrix{
F \times \Delta[n] \ar[r]^{\alpha(b)} & X^b \ar[r] & X \\
F \times \Delta[n-1] \ar[r]_{\alpha_i(b)} \ar[u]^{\Id \times s_i} & X^{d_i(b)} \ar[r] \ar[u] & X \ar@{=}[u]\\
}
$$
However, it is not true in general that $\alpha_i(b) = \alpha(d_i(b))$, and hence we can define the \emph{transformation elements}
$$
\psi_i(b) = (\alpha(d_i(b)))^{-1} \cdot \alpha_i(b).
$$
It is easy to check that $\psi_i(b) \in \underline{\aut}_{n-1}(F)$ for all $b \in B_n$ and all $i = 1, \ldots, n$.

Let $\Gamma \leq \underline{\aut}(F)$. An atlas $\{\alpha(b)\}$ for $\tau$ is a \emph{$\Gamma$-atlas} if all of its transformation elements live in $\Gamma$. Two atlases $\{\alpha(b)\}, \{\beta(b)\}$ for $\tau$ are \emph{$\Gamma$-equivalent} if, for each $b \in B$,
$$
\beta(b) = \alpha(b) \circ \gamma(b)
$$
for some $\gamma(b) \in \Gamma \leq \underline{\aut}(F)$. Clearly this is an equivalence relation on the set of all atlases of $\tau$. The following is \cite[Definition IV.2.4]{BGM}.

\begin{defi}\label{GFB}

Let $\Gamma \leq \underline{\aut}(F)$. A fibre bundle with fibre $F$ together with a given $\Gamma$-equivalence class of $\Gamma$-atlases is called a \emph{$\Gamma$-bundle}.

\end{defi}

Similarly, we may talk about $\Gamma$-RTCPs. In particular, if $\Gamma = \underline{\aut}(F)$ then we will simply refer to fibre bundles and RTCPs. Let $\tau \colon X \to B$ and $\chi \colon Y \to C$ be $\Gamma$-bundles. A map $(f, g) \colon (X,B) \to (Y,C)$ is a \emph{$\Gamma$-map} if for each $b \in B$
$$
(\beta(g(b)))^{-1} \circ f \circ \alpha(b) \in \Gamma
$$
provided that $\{\alpha(b)\}$ and $\{\beta(b)\}$ belong to the given $\Gamma$-equivalence classes of atlases of $\tau$ and $\chi$ respectively. In particular we may talk about \emph{$\Gamma$-equivalences} and \emph{strong $\Gamma$-equivalences}.

\begin{prop}\label{strongequiv}

Every $\Gamma$-bundle is strongly equivalent to a $\Gamma$-RTCP.

\end{prop}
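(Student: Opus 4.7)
The strategy is to use the transformation elements of the given $\Gamma$-atlas to manufacture a twisting function, and then to exhibit a simplex-by-simplex identification between $X$ and the resulting RTCP.

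Fix a $\Gamma$-atlas $\{\alpha(b)\}_{b\in B}$ representing the given $\Gamma$-equivalence class of $\tau\colon X\to B$, and recall the transformation elements $\psi_i(b)=\alpha(d_i(b))^{-1}\cdot\alpha_i(b)\in\Gamma\leq\underline{\aut}_{n-1}(F)$ for $b\in B_n$ and $1\leq i\leq n$. My plan is to define a candidate twisting function by
\[
\phi_n(b):=\psi_0(b)\in\Gamma\leq\underline{\aut}_{n-1}(F),\qquad b\in B_n,\ n\geq 1,
\]
and then to verify axioms $(\tau_1)$–$(\tau_4)$ of Definition \ref{TCP} one by one by expanding both sides in terms of the $\alpha(b)$'s. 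Axioms $(\tau_1)$ and $(\tau_3)$ follow from comparing the compositions $\Id\times s_i$ on the source $F\times\Delta[n-1]$ with the corresponding compositions on $X$ via the atlas; axiom $(\tau_4)$ reduces to the fact that $\alpha(s_0(b))$ and $\alpha_0(s_0(b))$ agree because $d_0s_0=\mathrm{Id}$. The cocycle-type identity $(\tau_2)$ is the crucial one and amounts to composing two different identifications of the fibre over $d_0d_1(b)=d_0d_0(b)$, using the simplicial identity $d_0d_1=d_0d_0$ on $B$.

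Once the twisting function is in place, I construct a comparison map $f\colon F\times_\phi B\to X$ by sending $(g,b)\in F_n\times B_n$ to $\alpha(b)(g,\iota_n)\in X^b\subseteq X_n$, where $\iota_n\in\Delta[n]_n$ is the top simplex. Evidently $\tau\circ f=\pr_2$, so $f$ covers $\Id_B$, and on each geometric fibre $f$ is the isomorphism $\alpha(b)$ restricted to $F\times\{\iota_n\}$. Simpliciality of $f$ along the operators $d_i$ with $i\geq 1$ and all $s_i$ is immediate from the definition of $\alpha_i(b)$ together with $\psi_i(b)=\Id$ (the shape of the $\Gamma$-RTCP's face/degeneracy operators away from the $0$-th face is untwisted, and I will simultaneously verify that $f$ carries those untwisted operators to the correct simplicial operators of $X$). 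The only case that genuinely uses the twisting function is $d_0$: by construction of $\phi$, the twisted formula $d_0(g,b)=(\phi_n(b)\cdot d_0(g),d_0(b))$ is exactly what is needed to match $\alpha(d_0(b))(\phi_n(b)\cdot d_0(g),\iota_{n-1})=\alpha_0(b)(d_0(g),\iota_{n-1})=d_0(\alpha(b)(g,\iota_n))$, by the very definition $\psi_0(b)=\alpha(d_0(b))^{-1}\alpha_0(b)$.

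Finally, I equip $F\times_\phi B$ with the canonical atlas in which $\alpha_{\mathrm{RTCP}}(b)$ is identity-like on fibres; its transformation elements at $d_0$ are precisely $\phi_n(b)=\psi_0(b)\in\Gamma$ and are trivial at $d_i$, $i\geq 1$. Since $f$ intertwines these two atlases fibrewise by the identity on $F$, it is a strong $\Gamma$-equivalence of $\Gamma$-bundles. The only real obstacle I anticipate is the bookkeeping of Step~2—particularly the $(\tau_2)$ identity—where one must carefully distinguish the two natural identifications of the twice-restricted fibre and invoke the simplicial identity $d_0d_1=d_0d_0$; but this is bookkeeping rather than a conceptual difficulty, and the argument is really forced by the identity $\psi_0(b)=\alpha(d_0(b))^{-1}\alpha_0(b)$.
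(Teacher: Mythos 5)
The paper does not actually prove this proposition: it simply cites \cite[Proposition IV.3.2]{BGM}, so you are supplying an argument where the paper supplies none. Your overall architecture (extract a twisting function from the transformation elements at the $0$-th face, then build the comparison map $(g,b)\mapsto \alpha(b)(g,\iota_n)$) is the right shape and is essentially how the cited result is proved. But as written there is a genuine gap.

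The gap is that you treat the identities $\psi_i(b)=\Id$ for $i\geq 1$, and the compatibility of the atlas with degeneracies, as if they were automatic. They are not. Definition \ref{FB} imposes no relation whatsoever between $\alpha(b)$ and $\alpha(d_i(b))$ or $\alpha(s_i(b))$ for different simplices $b$; the atlas is an arbitrary simplex-by-simplex choice of trivialization. Consequently: (1) your map $f$ satisfies $d_i\bigl(f(g,b)\bigr)=\alpha_i(b)(d_i g,\iota_{n-1})=f\bigl(\psi_i(b)\cdot d_i g,\,d_i b\bigr)$, which matches the untwisted face operator of the RTCP only when $\psi_i(b)=\Id$; (2) your verification of $(\tau_4)$ claims $\alpha_0(s_0(b))=\alpha(b)$ "because $d_0s_0=\Id$", but $\alpha_0(s_0(b))$ is a restriction of $\alpha(s_0(b))$, and nothing forces $\alpha(s_0(b))$ to restrict to $\alpha(b)$; and (3) $(\tau_1)$ and $(\tau_3)$ fail for the same reason, since $\psi_0(d_i b)$ and $d_{i-1}\psi_0(b)$ compare restrictions of \emph{different} atlas elements. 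The missing step — which is the actual content of the BGM proof — is a normalization of the atlas: one must first show, by induction on skeleta (equivalently, via a pseudo-cross-section), that the given $\Gamma$-atlas is $\Gamma$-equivalent to one in \emph{normal form}, i.e.\ one for which $\psi_i(b)=\Id$ for all $i\geq 1$ and $\alpha(s_i(b))$ is the degenerate extension of $\alpha(b)$. Only after this normalization does $\phi_n(b):=\psi_0(b)$ satisfy $(\tau_1)$--$(\tau_4)$ and does your comparison map become a simplicial isomorphism over $\Id_B$; and one must also check that the normalization stays within the given $\Gamma$-equivalence class so that the resulting strong equivalence is a $\Gamma$-equivalence. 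With that step inserted, the rest of your outline goes through.
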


\begin{proof}

This is \cite[Proposition IV.3.2]{BGM}.
\end{proof}

\begin{rmk}

Let $\tau \colon X \to B$ be a $\Gamma$-bundle with fibre $F$. By \cite[Theorem B and Corollary 0.6]{Weingram}, the realization of $\tau$ is a Serre fibration $|\tau| \colon |X| \to |B|$ with fibre $|F|$ and structural group $|\Gamma|$. We will use this property without any further mention.

\end{rmk}

%%%%%%%%%%%%%%%%%%%%%%%%%%%%%%%%%%%%%%%%%%%%%%%%%%%%%%%%%%%%%%%%%%%%%%%%%%%%%%%%%%%%

\section{Partial groups from a simplicial point of view}\label{Spgsimp}

In this section we present a rather natural construction that allows one to consider partial groups as simplicial sets. The main goal of this section is to characterize all simplicial sets associated to partial groups. Recall that a simplicial set $X$ is \emph{reduced} if it has a single vertex.

\begin{defi}\label{BNsimpsets}

Let $X$ be a simplicial set. We say that $X$ is an \emph{$N$-simplicial set} if $X$ satisfies
\begin{itemize}

\item[(N)] the enumerating operator $E \colon X_n \to X_1 \times \ldots \times X_1$ is injective for all $n \geq 1$.

\end{itemize}
If in addition $X$ is a reduced $N$-simplicial set, with $X_0 = \{v\}$, then the \emph{unit} of $X$ is the $1$-simplex $1_X =s_0(v)$ obtained by applying the degeneracy operator to $v$.

\end{defi}

Notice that the nerve of every small category is an $N$-simplicial set, hence the name. A particular example is the nerve of a group (i.e., a category with one object all of whose morphisms are invertible), whose nerve is the so-called \emph{bar construction}. We borrow from this example the notation for simplices in an $N$-simplicial set. More precisely, let $X$ be an $N$-simplicial set, and let $\omega \in X_n$ be a simplex. By definition $\omega$ is determined by its image through the enumerating operator, $E(\omega) = (x_1, \ldots, x_n)$, and we adopt the following notation
$$
\omega = [x_1|\ldots|x_n]
$$
for a simplex $\omega \in X_n$ with $E(\omega) = (x_1, \ldots, x_n)$. With this notation, the action of the product operator is expressed by
$$
\Pi(\omega) = [x_1 \cdot \ldots \cdot x_n].
$$

\begin{rmk}

Note that not every tuple $(x_1, \ldots, x_n) \in X_1 \times \ldots \times  X_1$ defines an $n$-simplex.

\end{rmk}

Next we define a notion of \emph{inversion} for reduced $N$-simplicial sets. Consider the mapping $\chi_n\colon [n] \to [n]$ determined by $i \mapsto n-i$, and also let $\mathrm{op}: \Delta \to \Delta$ be the identity on objects. On morphisms,
$$
(f\colon [n] \to [m]) \mapsto \big((\chi_m \circ f)\colon [n] \to [m]\big).
$$
Given a simplicial set $X: \Delta\op \to \Setsc$, define the \emph{opposite simplicial set} $X\op$ as the composition $X\op: \Delta\op \Right1{\op} \Delta\op \Right1{X} \Setsc$. By definition, $(X\op)_n = X_n$, and $(d\op)_i = d_{n-i}, (s\op)_i = s_{n-i}$ for $0 \leq i \leq n$.

An \emph{anti-involution} of a simplicial set $X$ is a simplicial map $\nu: X \to X\op$ such that $\nu^2 = \nu \circ \nu\op = \Id_X$. Note that for each $n$, $(X\op)_n = X_n$, and $\nu_n = (\nu\op)_n$ as maps of sets. In particular, if $\nu$ as above exists, it has to be an isomorphism of simplicial sets.

\begin{defi}\label{inversion}

Let $X$ be a reduced $N$-simplicial set. An \emph{inversion} in $X$ is an anti-involution $\nu: X \to X\op$ such that for any $n \geq 1$ and any $x \in X_n$, $L(x) = [\nu(x)|x]$ is a simplex in $X_{2n}$ and $\Pi(L(x)) = 1_X$.

\end{defi}

The rest of this section is devoted to show that the category of partial groups is equivalent to the full subcategory of $\Sset$ of reduced $N$-simplicial sets with inversion. This is done in Theorem \ref{isocat} below.

\begin{lmm}

Let $X$ be a reduced $N$-simplicial set, with unit $1_X$, and let $\omega = [x_1|\ldots|x_n] \in X_n$.
\begin{enumerate}[(i)]

\item For all $0 \leq r \leq n$ we have $\sigma_r = [x_1\cdot \ldots \cdot x_r| x_{r+1} \cdot \ldots \cdot x_n] \in X_2$ and $\Pi(\sigma_r) = \Pi(\omega)$.

\item For all $i = 0, \ldots, n$ we have $\omega_i = [x_1|\ldots|x_i|1_X|x_{i+1}|\ldots|x_n] \in X_{n+1}$ and $\Pi(\omega_i) = \Pi(\omega)$.

\end{enumerate}

\end{lmm}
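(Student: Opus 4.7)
The plan is to produce the required simplices as images of $\omega$ under simplicial operators from Section \ref{simplicial}, compute their enumerations, and invoke the $N$-condition (injectivity of $E$) together with Lemma \ref{simprop} to conclude.

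For (i), set $\sigma_r \defin \Pi_{r,n-r}(\omega) \in X_2$. The commutative diagram in Lemma \ref{simprop} simultaneously yields
\[
\Pi(\sigma_r) \;=\; \Pi\big(\Pi_{r,n-r}(\omega)\big) \;=\; \Pi(\omega)
\]
and
\[
E(\sigma_r) \;=\; (\Pi \times \Pi)\big(D_{r,n-r}(\omega)\big) \;=\; \big(\Pi(F_r(\omega)),\, \Pi(B_{n-r}(\omega))\big).
\]
In the bar notation, $F_r(\omega) = [x_1|\ldots|x_r]$ and $B_{n-r}(\omega) = [x_{r+1}|\ldots|x_n]$, so this tuple reads $(x_1 \cdot \ldots \cdot x_r,\, x_{r+1} \cdot \ldots \cdot x_n)$. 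By the $N$-condition, $\sigma_r$ is the unique $2$-simplex with this enumeration, which justifies the asserted equality. The boundary cases $r = 0$ and $r = n$ are absorbed by the convention that the empty product equals $1_X$: because $X$ is reduced, $F_0(\omega) = v$ and $\Pi(v) = s_0(v) = 1_X$, and analogously for $B_0(\omega)$.

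For (ii), set $\omega_i \defin s_i(\omega) \in X_{n+1}$. Part (i) of Lemma \ref{propE} computes $E(s_i(\omega)) = s_i(E(\omega))$ through an explicit formula involving $s_0(d_1(x_1))$ when $i=0$, and $s_0(d_0(x_i))$ when $1 \leq i \leq n$. Since $X_0 = \{v\}$, each of these reduces to $s_0(v) = 1_X$, so $E(\omega_i) = (x_1, \ldots, x_i, 1_X, x_{i+1}, \ldots, x_n)$; the $N$-condition then identifies $\omega_i$ with the claimed simplex. For $\Pi(\omega_i) = \Pi(\omega)$, I would unpack the definitions: viewing $\omega$ as a simplicial map $\Delta[n] \to X$, one has $\Pi(s_i(\omega)) = \omega \circ (s_i \circ \pi^{n+1})$, and the composite $s_i \circ \pi^{n+1} \colon [1] \to [n]$ sends $0 \mapsto 0$ and $1 \mapsto s_i(n+1) = n$ (because $n+1 > i$ for every admissible $i$), hence equals $\pi^n$. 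Thus $\Pi(\omega_i) = \omega \circ \pi^n = \Pi(\omega)$.

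The only mildly technical aspect is the empty-product convention in part (i), which relies on the reducedness of $X$ so that $\Pi$ applied to the unique vertex equals the chosen unit $1_X$; everything else is routine simplicial bookkeeping using the operators and lemmas already established in Section \ref{simplicial}.
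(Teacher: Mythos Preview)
Your proof is correct and follows precisely the route indicated in the paper: part (i) via Lemma \ref{simprop} with $\sigma_r = \Pi_{r,n-r}(\omega)$, and part (ii) via $\omega_i = s_i(\omega)$ together with Lemma \ref{propE}. The paper's own proof is a two-line sketch pointing to exactly these facts; you have simply unpacked the details, including the boundary cases $r=0,n$ and the verification $s^i \circ \pi^{n+1} = \pi^n$, all of which are correct.
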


\begin{proof}

Both (i) and (ii) follow easily from the simplicial structure of $X$ and the definition of (reduced) $N$-simplicial set. In particular, (i) follows from Lemma \ref{simprop}, and (ii) follows since $\omega_i = s_i(\omega)$.
\end{proof}

\begin{rmk}

In particular, for each $[x_1|x_2|x_3] \in X_3$ and each $[y] \in X_1$, we have the following identities
$$
\Pi[x_1 \cdot x_2|x_3] = \Pi[x_1|x_2|x_3] = \Pi[x_1|x_2 \cdot x_3] \qquad \mbox{and} \qquad \Pi[1_X|y] = [y] = \Pi[y|1_X].
$$

\end{rmk}

\begin{lmm}\label{maps}

Let $Y$ be a reduced $N$-simplicial set (with inversion), and let $f\colon X \to Y$ be a simplicial map. Then, $f$ is determined by its restriction to the $1$-skeleton of $X$.

\end{lmm}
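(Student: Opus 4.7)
The plan is very short because the lemma is essentially immediate from the defining property of $N$-simplicial sets together with naturality of the enumerating operator. First I would note that on $0$-simplices and $1$-simplices there is nothing to prove, since $X_0 \cup X_1$ is precisely the $1$-skeleton of $X$. So the content of the statement is that, for every $n \geq 2$ and every $\omega \in X_n$, the image $f(\omega) \in Y_n$ is already determined by the values of $f$ on the edges of $\omega$.

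To see this, let $\omega \in X_n$ and write $E(\omega) = (e_1, \ldots, e_n) \in X_1 \times \cdots \times X_1$. By Lemma \ref{propE}(ii) applied to $f$, the diagram
$$
\xymatrix@R=10mm@C=15mm{
X_n \ar[r]^{f} \ar[d]_{E} & Y_n \ar[d]^{E} \\
X_1 \times \cdots \times X_1 \ar[r]_{f \times \cdots \times f} & Y_1 \times \cdots \times Y_1
}
$$
commutes, so $E(f(\omega)) = (f(e_1), \ldots, f(e_n))$. Since $Y$ is an $N$-simplicial set, the enumerating operator $E \colon Y_n \to Y_1 \times \cdots \times Y_1$ is injective, and therefore $f(\omega)$ is uniquely determined by the tuple $(f(e_1), \ldots, f(e_n))$, which in turn is determined by $f|_{X_1}$.

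There is no real obstacle here; the only thing to verify is that the edges $e_i = E_i(\omega)$ do lie in $X_1$, which is built into the definition of the enumerating operator, and that the hypothesis ``with inversion'' on $Y$ plays no role (which is why I have placed it in parentheses in the hypothesis). Consequently $f$ is completely determined by its restriction to $X_0 \cup X_1$, which is the $1$-skeleton of $X$, as claimed.
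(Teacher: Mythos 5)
Your proof is correct and follows essentially the same route as the paper's: apply the naturality of the enumerating operator (Lemma \ref{propE}(ii)) to get $E(f(\omega)) = (f(e_1),\ldots,f(e_n))$, then invoke condition (N) on $Y$ to conclude that this tuple pins down $f(\omega)$. Your remarks that the inversion hypothesis is unused and that the values on vertices are part of the given $1$-skeleton data are both accurate.
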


\begin{proof}

First note that since $Y$ is reduced, $Y_0 = \{e\}$, and thus $f(v) = e$ for any vertex $v \in X_0$. Assume also that we know the values of $f(x)$, for all $x \in X_1$, and let $\omega \in X_n$, with $E(\omega) = (x_1, \ldots, x_n) \in X_1 \times \ldots \times X_1$. Since $f$ is simplicial, it follows from Lemma \ref{propE} (ii) that
$$
E(f(\omega)) = f^n(E(\omega)) = (f(x_1), \ldots, f(x_n)) \in Y_1 \times \ldots \times Y_1.
$$
Thus, $f(\omega) = [f(x_1)|\ldots |f(x_n)]$. Finally, since $Y$ is and $N$-simplicial set, there can be a most one $n$-simplex $\sigma \in Y_n$ with $E(\sigma) = (f(x_1), \ldots, f(x_n))$. Hence, $f^n(E(\omega))$ determines $f(\omega)$.
\end{proof}

\begin{rmk}

In the situation above, a map $f_1\colon X^1 \to Y^1$ between the corresponding $1$-skeleton does not necessarily extend to a map $f\colon X \to Y$.

\end{rmk}

\begin{thm}\label{isocat}

The category of partial groups is equivalent to the full subcategory of $\Sset$ of reduced $N$-simplicial sets with inversion.

\end{thm}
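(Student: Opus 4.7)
I would establish the equivalence by constructing an explicit pair of functors in both directions and verifying that they are mutually inverse. The underlying idea is that a reduced $N$-simplicial set is combinatorially the same datum as a set $\mm$ (the 1-simplices) together with a distinguished collection of ``composable'' tuples (the higher simplices, via the enumerating operator $E$) and a product on them (the product operator $\Pi$), with the unit coming from the single vertex and inversion from the anti-involution.

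First, I would build a functor $N \colon \textbf{PGrp} \to \Sset$. For a partial group $(\mm, \dd(\mm), \Pi, (-)^{-1})$, set $N(\mm)_0 = \{\ast\}$, $N(\mm)_1 = \mm$, and for $n \geq 2$
\[
N(\mm)_n = \dd(\mm) \cap \ww_n(\mm) = \{[x_1|\ldots|x_n]\mid (x_1,\ldots,x_n)\in\dd(\mm)\}.
\]
Define face maps $d_0,d_n$ as dropping the first/last letter (well-defined by condition (ii) of Definition \ref{pgroup}), inner faces $d_i[x_1|\ldots|x_n]=[x_1|\ldots|\Pi(x_i,x_{i+1})|\ldots|x_n]$ (well-defined by (P2)), and degeneracies $s_i$ as insertion of the unit $1=\Pi(\emptyset)$ (well-defined and associative-neutral by Lemma \ref{2.2Ch}(iii)). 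The simplicial identities reduce either to trivial list manipulations for the outer faces and degeneracies, or to the $\dd(\mm)$-associativity in Lemma \ref{2.2Ch}(ii) for the inner faces. By construction $N(\mm)$ is reduced and $E([x_1|\ldots|x_n])=(x_1,\ldots,x_n)$ is tautologically injective, so $N(\mm)$ is an $N$-simplicial set. The map $\nu[x_1|\ldots|x_n]=[x_n^{-1}|\ldots|x_1^{-1}]$ is an anti-involution (check on each simplicial operator) and axiom (I1) plus Lemma \ref{2.2Ch}(i) yields $\Pi([\nu(x)|x])=1$, so it is an inversion in the sense of Definition \ref{inversion}. Morphisms of partial groups send $\dd(\mm)$ into $\dd(\mm')$ and commute with $\Pi$, so they yield simplicial maps.

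Second, I would define $\Gamma$ in the reverse direction. Given a reduced $N$-simplicial set $X$ with inversion $\nu$, set $\Gamma(X)=X_1$, with $\dd(\Gamma(X))$ the set of tuples $(x_1,\ldots,x_n)$ that lie in the image of $E\colon X_n\hookrightarrow X_1^n$ (together with $(\emptyset)$), product $\Pi(x_1,\ldots,x_n)=\Pi_n(E^{-1}(x_1,\ldots,x_n))$ (well-defined by injectivity of $E$), and inversion $x^{-1}=\nu(x)$. The closure $\mm\subseteq\dd(\mm)$ holds because 1-simplices are their own enumeration, and $u\circ v\in\dd(\mm)\Rightarrow u,v\in\dd(\mm)$ follows by applying the front and back face operators and Lemma \ref{ident1}. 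Axiom (P1) is the trivial identity $\Pi_1=\mathrm{id}$. Axiom (P2) is the content of Lemma \ref{simprop} together with the $N$-condition: if $u\circ v\circ w$ corresponds to a simplex $\omega$, iterated application of the inner face $d_i$ collapses the middle block to $\Pi(v)$ while preserving $\Pi(\omega)$. Axiom (I1) is precisely the condition built into Definition \ref{inversion}. Morphisms of reduced $N$-simplicial sets with inversion restrict by Lemma \ref{maps} to a map $X_1\to Y_1$, and by Lemma \ref{propE}(ii) this restriction takes $\dd(\Gamma(X))$ into $\dd(\Gamma(Y))$ and commutes with the product.

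Third, I would verify that $\Gamma\circ N=\Id$ on the nose, since both the underlying set, the domain, the product and the inversion of $\Gamma(N(\mm))$ are given by the defining data of $\mm$. Conversely $N(\Gamma(X))\to X$ is the simplicial map sending $[x_1|\ldots|x_n]$ to $E^{-1}(x_1,\ldots,x_n)$; this is a natural isomorphism because by the $N$-condition it is a bijection in each degree, and by Lemma \ref{propE} together with the compatibility of degeneracies from Lemma \ref{propE}(i) and the face computations of Lemma \ref{ident1} it commutes with all simplicial operators. Fully faithful then follows from Lemma \ref{maps} in one direction and from the fact that a morphism of partial groups is determined by its underlying set map in the other.

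The main obstacle I anticipate is the careful bookkeeping required to check that the inner face maps in $N(\mm)$ verify all simplicial identities; this amounts to matching the associativity and unit behaviour of $\Pi$ on $\dd(\mm)$ (Lemma \ref{2.2Ch}(ii)(iii)) against the identities $d_id_j=d_{j-1}d_i$ and the mixed relations with degeneracies, and the key observation is precisely Lemma \ref{simprop} which shows that the $\Pi$-operator splits coherently through $D_{r,s}$. Once this compatibility is in place, every remaining verification is formal.
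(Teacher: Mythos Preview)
Your proposal is correct and follows essentially the same approach as the paper: both build the simplicial set from a partial group by taking $n$-simplices to be the length-$n$ words in $\dd(\mm)$ with the evident face and degeneracy maps, and recover the partial group from a reduced $N$-simplicial set via its $1$-simplices and the operators $E$ and $\Pi$, with Lemma~\ref{maps} handling morphisms. Your write-up is considerably more detailed than the paper's (which omits the verification of the simplicial identities and the explicit inverse equivalence), but the strategy is identical.
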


\begin{proof}

Let $(\mm,\dd(\mm), \Pi, (-)^{-1})$ be a partial group with identity $1$, and define a simplicial set $\locm$ as follows. The set $\locm_n$ contains one simplex $\omega = [x_1|\ldots|x_n] \in \locm$ for each word $(x_1, \ldots, x_n) \in \dd(\mm)$ of length $n$. The face operators are defined by
$$
d_i(\omega) = \left\{\begin{array}{ll}
[x_2|\ldots|x_n] & i = 0\\

[x_1| \ldots | x_i \cdot x_{i+1} | x_{i+2} | \ldots |x_n] & 1 \leq i \leq n-1 \\

[x_1|\ldots|x_{n-1}] & i = n\\
\end{array}
\right.
$$
while the degeneracy operators are defined, for $i = 0, \ldots, n$, by
$$
s_i(\omega) = [x_1|\ldots|x_i|1|x_{i+1}|\ldots |x_n].
$$
This makes $\locm$ into a simplicial set. Furthermore, the inversion in $\mm$ induces naturally an inversion of this simplicial set, and now it is obvious that $\locm$ is a standard $N$-simplicial set with inversion. Conversely, if $X$ is a standard $N$-simplicial set with inversion, then set $\mm = X_1$, the collection of $1$-simplices of $X$. It is clear that the operator $\Pi$ on $X$ defines a product for $\mm$, and the inversion on $X$ induces an inversion on $\mm$.

As a consequence of Lemma \ref{maps}, a morphism between partial groups determines and is determined by a simplicial map between the induced simplicial sets:  a simplicial map $f\colon X \to Y$ between reduced $N$-simplicial sets with inversion determines and is determined by a multiplicative map between the partial monoids in degree $1$. 
\end{proof}

\begin{rmk}\label{autsMautM}

For simplicity, it is convenient to drop the partial group notation from Section \ref{partial}, and work only with simplicial sets. Thus, if $(\mm, \dd(\mm), \Pi, (-)^{-1})$ is a partial group and $\locm$ is the corresponding simplicial set, we will refer to $\locm$ as a partial group, assuming without further mention the equivalence proved above. Actually, this identification goes beyond the partial group itself: the group of automorphisms $\Aut(\mm)$ is clearly isomorphic to the group of invertible simplicial maps from $\locm$ to itself, $\Aut(\locm)$, and we also identify these two groups.

\end{rmk}

%%%%%%%%%%%%%%%%%%%%%%%%%%%%%%%%%%%%%%%%%%%%%%%%%%%%%%%%%%%%%%%%%%%%%%%%%%%%%%%%%%%%

\section{Homomorphisms of partial groups}\label{Maps}

The goal of this section is to describe the automorphism simplicial set of a given partial group in terms of the algebraic structure of the partial group. In general, describing the automorphism complex of a simplicial set is a rather unaccessible problem, but in the case of partial groups the algebraic structure provides a beautifully simple description of the automorphism complex. Thus, given a partial group $\locm$, Theorem \ref{AutX0} provides a description of $\underline{\mathrm{aut}}(\locm)$ in terms of $\locm$ and $\Aut(\locm)$, while Theorem \ref{autX} describes some homotopical properties of $\underline{\mathrm{aut}}(\locm)$ again in terms of $\locm$ and $\Aut(\locm)$.

In this section we adopt again the convention that morphisms apply from the left. This affects essentially to conjugation by elements in a partial group, where we talk of \emph{left conjugation} by $[g]$, meaning ``$[g \cdot a \cdot g^{-1}]$'', and \emph{right conjugation} by $[g]$, meaning ``$[g^{-1} \cdot a \cdot g]$'', and which is the convention of \cite{Chermak}. To this end we also introduce the notation
$$
[\9{g}a] = [g \cdot a \cdot g^{-1}],
$$
to avoid confusion with (\ref{conjugation}).

Given a partial group $\locm$, we will denote by $v_{\locm}$ the single vertex of the associated simplicial set. Also, $\Delta[n]$ will denote the standard $n$-simplex. More specifically, $\Delta[n]$ is the nerve of the category $\Delta_n$ with objects $\bullet_0, \bullet_1, \ldots, \bullet_n$, and such that
$$
\Mor_{\Delta_n}(k-1, k) = \{\iota_k \colon k-1 \to k\}
$$
for $k = 1, \ldots, n$. Thus, $\iota_1, \ldots, \iota_n$ also denote the non-degenerate $1$-simplices of $\Delta[n]$. For instance, $\Delta[1]$ will be represented by
$$
\bullet_0 \Right4{\iota_1} \bullet_1.
$$

\begin{lmm}\label{homotopies}

Let $f, g\colon \locm \to \locm'$ be two simplicial maps between partial groups. A simplicial homotopy $F: \locm \times \Delta[1] \to \locm'$ with $f = F|_{\locl \times \{\bullet_0\}}$ and $g = F|_{\locl \times \{\bullet_1\}}$ is completely determined by the element
$$
[\eta] \defin F(v_\locm, \iota_1) \in \locm'_1.
$$

\end{lmm}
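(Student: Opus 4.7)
The plan is to exploit the $N$-property of the target $\locm'$ (injectivity of the enumerating operator $E$) to first reduce the determination of $F$ to its values on $1$-simplices of $\locm \times \Delta[1]$, and then to pin these values down by means of a carefully chosen $2$-simplex.

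Since $\locm'$ is a reduced $N$-simplicial set, Lemma \ref{maps} applies directly to $F$: its values on arbitrary simplices are recovered from those on the $1$-skeleton of $\locm \times \Delta[1]$ via $E(F(\omega)) = F^n(E(\omega))$. A $1$-simplex of the product is a pair $(x, \tau)$ with $x \in \locm_1$ and $\tau \in \Delta[1]_1 = \{s_0(\bullet_0),\, \iota_1,\, s_0(\bullet_1)\}$. The boundary hypotheses handle the two degenerate cases for $\tau$:
$$
F(x, s_0(\bullet_0)) = f(x) \qquad \text{and} \qquad F(x, s_0(\bullet_1)) = g(x),
$$
so the only remaining unknown is $F(x, \iota_1)$ for $x \in \locm_1$.

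To pin this down, I would examine the $2$-simplex $\alpha = ([1_{\locm}|x],\, s_1(\iota_1)) \in (\locm \times \Delta[1])_2$, which is genuine since $(1_{\locm}, x) \in \dd(\mm)$ by Lemma \ref{2.2Ch}(iii). A direct unpacking of the faces of $s_1(\iota_1) = (0,1,1) \in \Delta[1]_2$ yields
$$
d_0(\alpha) = (x, s_0(\bullet_1)), \qquad d_1(\alpha) = (x, \iota_1), \qquad d_2(\alpha) = (1_{\locm}, \iota_1),
$$
which $F$ sends to $g(x)$, $F(x, \iota_1)$ and $[\eta]$ respectively. Applying the enumerating operator to $F(\alpha) \in \locm'_2$ therefore identifies $F(\alpha) = [\eta \,|\, g(x)]$, and reading off its $d_1$-face forces $F(x, \iota_1) = \Pi[\eta \,|\, g(x)]$, a $1$-simplex expressed purely in terms of $g$ and $[\eta]$. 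A parallel computation with $\alpha' = ([x|1_{\locm}],\, s_0(\iota_1))$ produces the companion identity $F(x, \iota_1) = \Pi[f(x) \,|\, \eta]$ (both formulas must coincide; the resulting compatibility is a constraint one expects $[\eta]$ to satisfy for the homotopy to exist, but is not required for the present statement).

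The main technical nuisance will be the careful combinatorial identification of the faces in the product $\locm \times \Delta[1]$ and the verification that the formulas above are consistent with the remaining simplicial identities. Once these face computations are laid out, the $N$-property of $\locm'$ promotes the agreement of two such homotopies on the $1$-skeleton to agreement on all simplices, and hence proves that $F$ is completely determined by $[\eta]$.
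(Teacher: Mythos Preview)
Your proof is correct and essentially identical to the paper's: both use Lemma \ref{maps} to reduce to the $1$-skeleton, and your $2$-simplex $\alpha = ([1_{\locm}|x], s_1(\iota_1))$ is exactly the simplex the paper labels $a$ in its analysis of $\Delta[1]\times\Delta[1]$ (pulled back along $x$), yielding the same formula $F(x,\iota_1)=\Pi[\eta\,|\,g(x)]=[\eta\cdot g(x)]$. The only difference is organizational --- the paper goes on to spell out the higher-dimensional formula \eqref{homotopyn} for later use, whereas you stop (correctly, for the bare statement) once the $1$-skeleton is determined.
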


As we show in the proof of the statement above, we can interpret the action of $[\eta]$ as conjugating $f(\locm)$ to $g(\locm)$ (in a rather loose sense), so we can think of the homotopy itself as ``(left) conjugation by the element $[\eta]$''.

\begin{proof}

For each simplex $\omega \in \locm_n$, let $F_{\sigma} = F|_{\Delta[n] \times \Delta[1]} = F \circ (\omega \times \Id)$:
$$
F_{\omega}\colon \Delta[n] \times \Delta[1] \to X \times \Delta[1] \to Y.
$$
Since $\locm$ is the colimit of its simplices, $F$ is determined by $\{F_{\omega}\}_{\omega \in \locm}$, subject to the corresponding simplicial restrictions.

For $n = 0$, $F$ is completely determined since $\locm_0 = \{v_{\locm}\}$, $\locm'_0 = \{v_{\locm'}\}$. Indeed, there is essentially one map $v_{\locm}: \Delta[0] \to \locm$, and the map $F_{v_{\locm}}: \Delta[0] \times \Delta[1] \to \locm'$ is determined by the element $[\eta] = F(v_{\locm}, \iota_1)$.

Let now $n = 1$, and let $\omega\colon \Delta[1] \to \locm$. The product $\Delta[1] \times \Delta[1]$ can be thought of as a square, and as such its sets of vertices and edges are, respectively,
$$
V = \{\bullet_{(0,0)}, \bullet_{(0,1)}, \bullet_{(1,0)}, \bullet_{(1,1)}\} \qquad E = \{(0, \iota_1), (1, \iota_1), (\iota_1,0),(\iota_1,1),(\iota_1, \iota_1)\}.
$$
The map $F_{\omega}$ has to send all the vertices of $\Delta[1] \times \Delta[1]$ to the unique vertex $v_{\locm'} \in \locm'_0$. Also, it is easy to see that $F_{\omega}$ has the following effect
$$
\xymatrix@R=1mm{
(0,\iota_1) \ar@{|->}[rr] & & [\eta] & & (\iota_1, 0) \ar@{|->}[rr] & & [f(\omega)] \\
(1,\iota_1) \ar@{|->}[rr] & & [\eta] & & (\iota_1, 1) \ar@{|->}[rr] & & [g(\omega)] \\
}
$$
where the left column follows from the case $n = 0$. The following diagram represents the map $F_{\omega} \colon \Delta[1] \times \Delta[1] \to \locm'$ in dimensions $0$ and $1$.
$$
\xymatrix{
\bullet_{(0,1)} \ar[rr]^{(\iota_1,1)} & & \bullet_{(1,1)} & & & & v_{\locm'} \ar@{-}[dd]_{[\eta]} & & & v_{\locm'} \ar@{-}[lll]_{[g(\omega)]} \ar@{-}[dd]^{[\eta]} \ar@{-}[llldd]|{F(\iota_1,\iota_1)}\\
 & & & \ar@{|->}[rr]^{F_{\omega}} & & & & & & \\
\bullet_{(0,0)} \ar[rr]_{(\iota_1,0)} \ar[uu]^{(0,\iota_1)} \ar[rruu]|{(\iota_1,\iota_1)} & & \bullet_{(0,1)} \ar[uu]_{(1, \iota_1)} & & & & v_{\locm'} & & & v_{\locm'} \ar@{-}[lll]^{[f(\omega)]}\\
}
$$

Let now $a$ and $b$ be the non-degenerate $2$-simplices of $\Delta[1] \times \Delta[1]$, with edges $(0,\iota_1), (\iota_1, 1)$ and $(\iota_1,0), (1, \iota_1)$ respectively. Then, by Lemma \ref{maps},
$$
\xymatrix@R=1mm{
a \ar@{|->}[rr] & & [\eta|g(\omega)]  & & b \ar@{|->}[rr] & & [f(\omega)|\eta] \\
}
$$
and it follows that
\begin{equation}\label{conjeta}
\begin{aligned}\relax
[\eta \cdot g(\omega)] & = \Pi[\eta|g(\omega)] = d_1(F_{\omega}(a)) = F_{\omega}(d_1(a)) = \\
 & = F_{\omega}(\iota_1,\iota_1) = F_{\omega}(d_1(b)) = d_1(F_{\omega}(b)) = \Pi[f(\omega)|\eta] = [f(\omega) \cdot \eta].
\end{aligned}
\end{equation}

Finally, let $\omega = [x_1|\ldots|x_n] \in \locm$. The product $\Delta[n] \times \Delta[1]$ is subdivided into $n+1$ simplices, $a_0, \ldots, a_n$, of dimension $(n+1)$, and subject to obvious simplicial relations. The image of $F_{\omega}$ on each $a_i$ is easily determined by Lemma \ref{maps} as
\begin{equation}\label{homotopyn}
F_{\omega}(a_i) = [f(x_1)|\ldots|f(x_i)|\eta|g(x_{i+1})|\ldots|g(x_n)].
\end{equation}
Furthermore, the set $\{F_{\omega}(a_i)\}_{0 \leq i \leq n}$ satisfies $\Pi(F_{\omega}(a_0)) = \Pi(F_{\omega}(a_1)) = \ldots = \Pi(F_{\omega}(a_n))$, or equivalently
$$
\begin{aligned} \relax
[\eta \cdot g(x_1) \cdot \ldots \cdot g(x_n)] & = [f(x_1) \cdot \eta \cdot g(x_2) \cdot \ldots \cdot  g(x_n)] = \\
 & = [f(x_1) \cdot f(x_2) \cdot \eta \cdot g(x_3) \cdot \ldots \cdot g(x_n)] = \ldots = \\
 & = [f(x_1) \cdot \ldots \cdot f(x_n) \cdot \eta].
\end{aligned}
$$
This finishes the proof.
\end{proof}

It is now evident what we mean when we say that $[\eta]$ conjugates $f(\locm)$ to $g(\locm)$: the equation (\ref{conjeta}) is very close from the conjugation formula $``f(\omega) = \eta \cdot g(\omega) \cdot \eta^{-1}$''. Lemma \ref{homotopies} is a particular case of the following result, where we describe simplicial maps $F\colon \locm \times \Delta[m] \to \locm'$ for any $m$. Let $\bullet_0, \bullet_1, \ldots, \bullet_m$ be the set of vertices of $\Delta[m]$, and let $F\colon \locm \times \Delta[m] \to \locm'$ be a simplicial map. 

\begin{prop}\label{nhom}

Let $\locm$ and $\locm'$ be partial groups, and let $F\colon \locm \times \Delta[m] \to \locm'$ be a simplicial map. Then $F$ is completely determined by the following data
\begin{enumerate}[(i)]

\item the maps $f_i = F|_{\locm \times \{\bullet_i\}} \colon \locm \times \Delta[0] \Right4{\Id \times \{\bullet_i\}} \locm \times \Delta[m] \Right4{F} \locm'$, for $i = 0, \ldots, m$; and

\item the elements $[\eta_k] = F(v_{\locm},\iota_k) \in \locm'_1 , \quad k=1,\ldots,m$.

\end{enumerate}

\end{prop}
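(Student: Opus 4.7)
The plan is to reduce the problem, via Lemma \ref{maps}, to determining $F$ on the 1-skeleton of $\locm \times \Delta[m]$. Since $\locm'$ is a reduced $N$-simplicial set with inversion (by Theorem \ref{isocat}), any simplicial map into $\locm'$ is completely determined by its restriction to 1-simplices of the source. A 1-simplex of $\locm \times \Delta[m]$ is a pair $(x,(i,j))$ with $x \in \locm_1$ and $0 \le i \le j \le m$, so the goal will be to write each $F(x,(i,j))$ in terms of the data (i) and (ii).

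First, for the degenerate case $i = j$, the edge $(x,(j,j)) = (x, s_0(\bullet_j))$ lies in the image of $\locm \times \{\bullet_j\}$, so $F(x,(j,j)) = f_j(x)$. For the non-degenerate case $i < j$, I would consider the 2-simplex
\[
T = (s_0(x), (i,j,j)) \in (\locm \times \Delta[m])_2,
\]
whose faces are readily computed as $d_0 T = (x,(j,j))$, $d_1 T = (x,(i,j))$ and $d_2 T = (1_\locm,(i,j))$, using $d_2 s_0(x) = s_0 d_1(x) = 1_\locm$. Writing the image $F(T)$ in the $N$-simplicial notation as $[\alpha|\beta]$, the edge identities $\alpha = F(d_2 T)$, $\beta = F(d_0 T)$ and $[\alpha \cdot \beta] = F(d_1 T)$ would give
\[
F(x,(i,j)) = [F(1_\locm,(i,j)) \cdot f_j(x)].
\]

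It then remains to express $F(1_\locm,(i,j))$ in terms of the $[\eta_k]$'s, which I would do by induction on the length $j - i$. The base case $j = i+1$ follows from the identification $(i,i+1) = \iota_{i+1}$ together with the definition $F(1_\locm,\iota_{i+1}) = [\eta_{i+1}]$ from (ii). For the inductive step I would apply $F$ to the auxiliary 2-simplex
\[
T'' = ([1_\locm|1_\locm], (i,i+1,j)) \in (\locm \times \Delta[m])_2,
\]
whose faces are $d_2 T'' = (1_\locm,\iota_{i+1})$, $d_0 T'' = (1_\locm,(i+1,j))$ and $d_1 T'' = (1_\locm,(i,j))$. The same enumerating-operator reading as above would yield $F(1_\locm,(i,j)) = [\eta_{i+1} \cdot F(1_\locm,(i+1,j))]$, and iterating produces the closed form $F(1_\locm,(i,j)) = [\eta_{i+1} \cdot \eta_{i+2} \cdots \eta_j]$. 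Combined with the previous formula, this completely determines $F(x,(i,j))$ from the listed data.

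The main obstacle will be purely organizational: verifying carefully that the auxiliary 2-simplices $T$ and $T''$ have the claimed faces in the product simplicial set, and that the iterated products $[\eta_{i+1} \cdots \eta_j]$ are defined in $\locm'$ (which is automatic since they are the images under $F$ of genuine 1-simplices of $\locm \times \Delta[m]$). The case $m = 1$ of this argument reproduces Lemma \ref{homotopies}, and the only genuinely new content for arbitrary $m$ is the inductive reduction of the values $F(1_\locm,(i,j))$ to the consecutive generators $[\eta_{i+1}], \ldots, [\eta_j]$.
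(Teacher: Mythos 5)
Your proof is correct, and it reaches the conclusion by a genuinely different decomposition than the paper's. The paper fixes a simplex $\omega \in \locm_n$ and studies the restriction $F_{\omega}$ on the prism $\Delta[n] \times \Delta[m]$: it decomposes the prism into its $\binom{n+m}{n}$ non-degenerate top-dimensional simplices, indexed by lattice walks in the $n \times m$ grid, and observes that each such simplex maps to the $(n+m)$-simplex of $\locm'$ enumerated by the images of its consecutive edges, all of which are grid edges carrying the prescribed values $[f_i(x_j)]$ and $[\eta_i]$. You instead apply Lemma \ref{maps} once to $F$ itself, which forces you to evaluate $F$ on every $1$-simplex of $\locm \times \Delta[m]$, including the ``diagonal'' edges $(x,(i,j))$ with both coordinates non-degenerate --- precisely the edges absent from the paper's grid; your two families of degenerate auxiliary $2$-simplices handle these and yield the closed formula $F(x,(i,j)) = [\eta_{i+1} \cdots \eta_j \cdot f_j(x)]$, whose case $m=1$ indeed recovers (\ref{conjeta}). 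Your route avoids the walk combinatorics entirely and is arguably tighter; the paper's route keeps visible the compatibility relations $\Pi(F_{\omega}(a)) = \Pi(F_{\omega}(b))$ among the top simplices of the prism, which are reused later (e.g.\ in Definition \ref{definorm} and in the RTCP formulas). Both arguments rest on the same two pillars: Lemma \ref{maps} applied to a map into the reduced $N$-simplicial set $\locm'$, and the evaluation of $F$ on edges; note that Lemma \ref{maps} imposes no hypothesis on the source, so applying it to the product $\locm \times \Delta[m]$ is legitimate.
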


\begin{proof}

The map $F$ is determined by the collection of all restrictions $F_{\omega}$, where $\omega$ runs among all the simplices in $\locm$, $F_{\omega}\colon \Delta[n] \times \Delta[m] \Right2{\omega \times \Id} \locm \times \Delta[m] \Right2{F} \locm'$. To avoid confusion between $\Delta[n]$ and $\Delta[m]$, label the vertices of $\Delta[n]$ simply by $0, 1, \ldots, n$, and the non-degenerate $1$-simplices of $\Delta[n]$ by $\mu_1, \ldots, \mu_n$. The vertices of $\Delta[m]$ are denoted by $\bullet_0, \ldots, \bullet_m$, and its non-degenerate $1$-simplices by $\iota_1, \ldots, \iota_m$.

Fix some $\omega = [x_1|\ldots|x_n] \in \locm_n$, and consider $F_{\omega}$. Since $\locm'_0 = \{v\}$, the map $F_{\omega}$ sends all the vertices of $\Delta[n] \times \Delta[m]$ to $v$. Consider now the set of (non-degenerate) $1$-simplices of $\Delta[n] \times \Delta[m]$. Then $F_{\omega}(j,\iota_i) = [\eta_i]$ and $F_{\omega}(\mu_j, \bullet_i) = [f_i(x_j)]$.

Let now $\Gamma$ be the grid of size $n\times m$ determined by the $1$-skeleton of $\Delta[n] \times \Delta[m]$ (represented in the diagram below): the vertices of $\Gamma$ are the pairs $(j, \bullet_i)$, $0 \leq j \leq n$, $0 \leq i \leq m$, the horizontal edges of $\Gamma$ are the pairs $(\mu_j, \bullet_i)$, $1 \leq j \leq n$, $0 \leq i \leq m$, and the vertical edges of $\Gamma$ are the pairs $(j, \iota_i)$, $0 \leq j \leq n$, $1 \leq i \leq m$.

The number of $(n+m)$-simplices subdividing $\Delta[n] \times \Delta[m]$ is the binomial coefficient $\binom{n+m}{n}$, which also corresponds to the number of walks of $n+m$ steps in $\Gamma$ which start on the vertex $(0, \bullet_0)$ and end in the vertex $(n, \bullet_m)$.

\footnotesize
$$
\xymatrix@R=12mm@C=18mm{
*+[F]{(0,\bullet_m)} \ar@{-}[r]|-{\SelectTips{cm}{}\object@{>}}^{(\mu_1,\bullet_m)} & *+[F]{(1, \bullet_m)} \ar@{-}[r]|-{\SelectTips{cm}{}\object@{>}}^{(\mu_2,\bullet_m)} & \ldots \ar@{-}[r]|-{\SelectTips{cm}{}\object@{>}}^{(\mu_{n-1},\bullet_m)} & *+[F]{(n-1,\bullet_m)} \ar@{-}[r]|-{\SelectTips{cm}{}\object@{>}}^{(\mu_{n},\bullet_m)} & *+[F]{(n,\bullet_m)} \\
*+[F]{(0,\bullet_{m-1})}  \ar@{-}[r]|-{\SelectTips{cm}{}\object@{>}}^{(\mu_1,\bullet_{m-1})} \ar@{-}[u]|-{\SelectTips{cm}{}\object@{>}}^{(0, \iota_m)} & *+[F]{(1,\bullet_{m-1})} \ar@{-}[r]|-{\SelectTips{cm}{}\object@{>}}^{(\mu_2,\bullet_{m-1})} \ar@{-}[u]|-{\SelectTips{cm}{}\object@{>}}^{(1, \iota_m)} & \ldots \ar@{-}[r]|-{\SelectTips{cm}{}\object@{>}}^(.4){(\mu_{n-1}, \bullet_{m-1})} & *+[F]{(n-1,\bullet_{m-1})} \ar@{-}[r]|-{\SelectTips{cm}{}\object@{>}}^{(\mu_n,\bullet_{m-1})} \ar@{-}[u]|-{\SelectTips{cm}{}\object@{>}}^{(n-1, \iota_m)} & *+[F]{(n,\bullet_{m-1})} \ar@{-}[u]|-{\SelectTips{cm}{}\object@{>}}_{(n, \iota_m)}\\
\vdots \ar@{-}[u]|-{\SelectTips{cm}{}\object@{>}}^{(0, \iota_{m-1})} & \vdots \ar@{-}[u]|-{\SelectTips{cm}{}\object@{>}}^{(1, \iota_{m-1})} & \ddots & \vdots \ar@{-}[u]|-{\SelectTips{cm}{}\object@{>}}^{(n-1, \iota_{m-1})} & \vdots \ar@{-}[u]|-{\SelectTips{cm}{}\object@{>}}_{(n, \iota_{m-1})}\\
*+[F]{(0,\bullet_1)}  \ar@{-}[r]|-{\SelectTips{cm}{}\object@{>}}^{(\mu_1,\bullet_1)} \ar@{-}[u]|-{\SelectTips{cm}{}\object@{>}}^{(0, \iota_2)} & *+[F]{(1,\bullet_1)} \ar@{-}[r]|-{\SelectTips{cm}{}\object@{>}}^{(\mu_2,\bullet_1)} \ar@{-}[u]|-{\SelectTips{cm}{}\object@{>}}^{(1, \iota_2)} & \ldots \ar@{-}[r]|-{\SelectTips{cm}{}\object@{>}}^(.4){(\mu_{n-1}, \bullet_1)} & *+[F]{(n-1,\bullet_1)} \ar@{-}[r]|-{\SelectTips{cm}{}\object@{>}}^{(\mu_n,\bullet_1)} \ar@{-}[u]|-{\SelectTips{cm}{}\object@{>}}^{(n-1, \iota_2)} & *+[F]{(n,\bullet_1)} \ar@{-}[u]|-{\SelectTips{cm}{}\object@{>}}_{(n, \iota_2)}\\
*+[F]{(0,\bullet_0)}  \ar@{-}[r]|-{\SelectTips{cm}{}\object@{>}}^{(\mu_1,\bullet_0)} \ar@{-}[u]|-{\SelectTips{cm}{}\object@{>}}^{(0, \iota_1)} & *+[F]{(1,\bullet_0)} \ar@{-}[r]|-{\SelectTips{cm}{}\object@{>}}^{(\mu_2,\bullet_0)} \ar@{-}[u]|-{\SelectTips{cm}{}\object@{>}}^{(1, \iota_1)} & \ldots \ar@{-}[r]|-{\SelectTips{cm}{}\object@{>}}^(.4){(\mu_{n-1}, \bullet_0)} & *+[F]{(n-1,\bullet_0)} \ar@{-}[r]|-{\SelectTips{cm}{}\object@{>}}^{(\mu_n,\bullet_0)} \ar@{-}[u]|-{\SelectTips{cm}{}\object@{>}}^{(n-1, \iota_1)} & *+[F]{(n,\bullet_0)} \ar@{-}[u]|-{\SelectTips{cm}{}\object@{>}}_{(n, \iota_1)}\\
}
$$
\normalsize

Let $\gamma = ((0, \bullet_0) \Right2{y_1} \ldots \Right2{y_{n+m}} (n, \bullet_m))$ be any of these walks, where $y_k$ denotes the $k$-th step in the walk $\gamma$. By definition, $y_k$ is either of the form $(j, \iota_i)$ or $(\mu_j, \bullet_i)$. Let also $a \subseteq \Delta[n] \times \Delta[m]$ be the $(n+m)$-simplex associated to $\gamma$. The image of $F_{\omega}$ is known on all the edges of $\Gamma$, and hence by Lemma \ref{maps},
$$
F_{\omega}(a) = [F_{\omega}(y_1)|\ldots|F_{\omega}(y_{n+m})] \in \locm_{n+m}.
$$
For instance, we have
$$
\begin{aligned}
F_{\omega}\big((0,\bullet_0) \Right2{(\mu_1, \bullet_0)}  (1,\bullet_0) \Right2{(\mu_2, \bullet_0)} & \ldots \Right2{(\mu_n, \bullet_0)} (n,\bullet_0) \Right2{(n, \iota_1)} (n, \bullet_1) \Right2{(n, \iota_2)} \ldots \Right2{(n, \iota_m)} (n, \bullet_m) \big) = \\[6pt] 
 & = [f_0(x_1)|\ldots|f_0(x_n)|\eta_1 |\ldots|\eta_m] \\[6pt]
F_{\omega}\big((0,\bullet_0) \Right2{(0, \iota_1)} (0,\bullet_1) \Right2{(0, \iota_2)}  & \ldots \Right2{(0, \iota_m)} (0,\bullet_m) \Right2{(\mu_1, \bullet_m)} (1, \bullet_m) \Right2{(\mu_2, \bullet_m)} \ldots \Right2{(\mu_n, \bullet_m)} (n, \bullet_m) \big) = \\[6pt]
 & =  [\eta_1|\ldots|\eta_m|f_m(x_1)|\ldots|f_m(x_n)]
\end{aligned}
$$

Clearly, describing the image of $F_{\omega}$ on each of the (non-degenerate) $(n+m)$-simplices of $\Delta[n] \times \Delta[m]$ is enough to describe $F_{\omega}$, and this proves the statement. Notice that for any two $(n+m)$-simplices as above, namely $a$ and $b$, we have $\Pi(a) = \Pi(b)$.
\end{proof}

Given a discrete group, left or right conjugation by any element of the group produces a self-equivalence of its nerve, which in addition is homotopic to the identity. This situation motivates the following definition.

\begin{defi}\label{definorm}

Let $\locm$ be a partial group with nerve $\locm$. The \emph{normalizer} of $\locm$, denoted by $N(\locm)$, is the set of elements $[\eta] \in \locm$ satisfying the following properties.
\begin{enumerate}[(i)]

\item For each $[x] \in \locm$, we have $[\eta|x|\eta^{-1}] \in \locm$, and the mapping defined by $[x] \mapsto [\9{\eta}x] = [\eta \cdot x \cdot \eta^{-1}]$ extends to an automorphism of $\locm$.

\item For each simplex $\omega = [x_1|\ldots|x_n] \in \locm_n$, we have $\omega_i = [\9{\eta}x_1|\ldots|\9{\eta}x_i|\eta|x_{i+1}|\ldots|x_n] \in \locm$, for $i = 0, \ldots, n$, and
$$
\Pi(\omega_0) = \Pi(\omega_1) = \ldots = \Pi(\omega_n).
$$

\end{enumerate}
The \emph{center} of $\locm$ is defined as the subset $Z(\locm) \subseteq N(\locm)$ of elements $[\eta] \in N(\locm)$ such that $[\9{\eta}x] = [x]$ for all $[x] \in \locm$. Finally, we say that $\alpha \in \Aut(\locm)$ is an \emph{inner automorphism} of $\locm$ if there is a homotopy $F$ from $\alpha$ to $\Id_{\locm}$. We will denote the set of all inner automorphisms by $\Inn(\locm)$.

\end{defi}

\begin{lmm}\label{N1homotopies}

Let $\locm$ be a partial group with nerve $\locm$, and let $F\colon \locm \times \Delta[m] \to \locm$ be a simplex in $\underline{\aut}(\locm)_m$. Then, for $1 \leq i \leq m$,
$$
[\eta_i] = F(v_{\locm}, \iota_i) \in N(\locm).
$$

\end{lmm}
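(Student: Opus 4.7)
The strategy is to reduce to the case $m=1$ and then combine the homotopy structure of $F$ with the existence of its inverse $F^{-1}$ in $\underline{\aut}(\locm)$. Restricting $F$ along the inclusion $\Delta[1] \hookrightarrow \Delta[m]$ that picks out the edge $\iota_i$ produces a simplicial map $F_i\colon \locm \times \Delta[1] \to \locm$ with $F_i(v_\locm, \iota_1) = \eta_i$; since $\underline{\aut}(\locm)$ is a simplicial group (the maximal subgroup of the simplicial monoid $\underline{\mathrm{hom}}(\locm, \locm)$), face operators preserve invertibility, so $F_i$ lies in $\underline{\aut}(\locm)_1$ and in particular its two vertex restrictions $f_0 = F|_{\locm \times \{\bullet_{i-1}\}}$ and $f_1 = F|_{\locm \times \{\bullet_i\}}$ lie in $\underline{\aut}(\locm)_0 = \Aut(\locm)$. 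Henceforth we work with the reduced data $(F, \eta, f_0, f_1)$ in the case $m=1$ and set $\alpha = f_0 \circ f_1^{-1} \in \Aut(\locm)$.

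By formula \eqref{homotopyn} (established inside the proof of Lemma \ref{homotopies}), for each word $\omega = [x_1|\ldots|x_n] \in \locm_n$ and each $k = 0, \ldots, n$, the sequence
\[
[f_0(x_1)|\ldots|f_0(x_k)|\eta|f_1(x_{k+1})|\ldots|f_1(x_n)]
\]
is an $(n+1)$-simplex of $\locm$, and all such simplices (for fixed $\omega$ and varying $k$) share the same product. Substituting $f_1^{-1}(\omega)$ in place of $\omega$, which is a valid simplex since $f_1^{-1}$ is a simplicial automorphism, produces precisely $[\alpha(x_1)|\ldots|\alpha(x_k)|\eta|x_{k+1}|\ldots|x_n]$. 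The case $n=1$, comparing products for $k=0,1$, gives $\eta \cdot y = \alpha(y) \cdot \eta$ for every $y \in \locm_1$, so the conjugation map $[y] \mapsto [\9{\eta}y]$ agrees with $\alpha$ on $\locm_1$ and extends to the automorphism $\alpha$ of $\locm$. This settles condition (ii) of Definition \ref{definorm} and the automorphism half of (i).

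It remains to show $[\eta | x | \eta^{-1}] \in \locm_3$ for every $x \in \locm_1$, and this is the step that genuinely requires the inverse $F^{-1}$. Writing $\eta^{\dagger} = F^{-1}(v_\locm, \iota_1)$ and unravelling $(F \cdot F^{-1})(s_0 v_\locm, \iota_1) = s_0 v_\locm$ using the definition of the multiplication on $\underline{\mathrm{hom}}(\locm, \locm)$ together with the homotopy formula applied to $F^{-1}$, we obtain $\eta^{\dagger} \cdot f_1^{-1}(\eta) = 1$, whence $\eta^{\dagger} = f_1^{-1}(\eta^{-1})$. Applying the analysis of the previous paragraph to $F^{-1}$ with the word $[f_0(y)]$ of length one yields $[y | \eta^{\dagger}] \in \locm_2$ for every $y \in \locm_1$; transporting by the automorphism $f_1$ then delivers $[z | \eta^{-1}] \in \locm_2$ for every $z \in \locm_1$. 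Finally, condition (ii) (just proved) applied to the $2$-simplex $\omega = [x | \eta^{-1}]$ produces $\omega_0 = [\eta | x | \eta^{-1}] \in \locm_3$, completing the verification. The main obstacle is precisely this last step; the simplex $[\eta|x|\eta^{-1}]$ is invisible to the homotopy formula for $F$ alone, and only emerges after extracting and transporting the dual data encoded in $F^{-1}$.
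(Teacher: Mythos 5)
Your proof is correct. For comparison: the paper offers no real argument here --- it simply declares the lemma ``straightforward by definition of $N(\locm)$ and Proposition \ref{nhom}'' --- so your write-up is strictly more informative. The reduction to $m=1$ via the edge inclusions, and the derivation of condition (ii) of Definition \ref{definorm} by feeding $f_1^{-1}(\omega)$ into the prism formula of Lemma \ref{homotopies}, is exactly what the cited proposition is meant to deliver. The genuine content you add is the verification of the first half of condition (i), namely that $[\eta|x|\eta^{-1}]$ is actually a simplex of $\locm$: this does not follow from the homotopy data of $F$ alone, and your extraction of $\eta^{\dagger}=f_1^{-1}(\eta^{-1})$ from the identity $F\cdot F^{-1}=\mathrm{id}$ in the simplicial group $\underline{\aut}(\locm)$, followed by transporting $[y|\eta^{\dagger}]$ along $f_1$ and applying the already-established part of condition (ii) to $[x|\eta^{-1}]$, closes that gap cleanly. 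One cosmetic remark: in your first paragraph the simplices you produce are $[\alpha(x_1)|\ldots|\alpha(x_k)|\eta|x_{k+1}|\ldots|x_n]$ rather than literally $[\9{\eta}x_1|\ldots|\9{\eta}x_k|\eta|x_{k+1}|\ldots|x_n]$, so condition (ii) is only fully ``settled'' once you have shown at the end that $\9{\eta}x=\alpha(x)$; since you do establish this (via $\Pi[\eta|x]=\Pi[\alpha(x)|\eta]$ and cancellation), this is a matter of ordering the exposition, not a gap.
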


\begin{proof}

The proof is straightforward by definition of $N(\locm)$ and Proposition \ref{nhom}.
\end{proof}

\begin{lmm}

Let $\locm$ be a partial group. Then the following holds.
\begin{enumerate}[(i)]

\item $\Psi[\eta] \in N(\locm)$ for each $[\eta] \in N(\locm)$ and each $\Psi \in \Aut(\locm)$.

\item Let $[\eta] \in N(\locm)$ and let $\alpha \in \Aut(\locm)$ be the left conjugation automorphism induced by $[\eta]$. Then $[\eta]$ determines a homotopy from $\alpha$ to $\Id$.

\item $\Inn(\locm)$ is a normal subgroup of $\Aut(\locm)$.

\end{enumerate}

\end{lmm}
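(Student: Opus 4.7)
For (i), fix $[\eta] \in N(\locm)$ and $\Psi \in \Aut(\locm)$, and set $\eta' = \Psi([\eta])$. The plan is to verify both conditions of Definition \ref{definorm} for $\eta'$ by transporting the witnessing simplices for $\eta$ through $\Psi$. Since $\Psi$ is simplicial and a morphism of partial groups, it is bijective on each $\locm_n$ and preserves both the product and the inversion (so $\Psi(\eta^{-1}) = \Psi(\eta)^{-1}$). Given $[y] \in \locm_1$, let $[x] = \Psi^{-1}([y])$; applying $\Psi$ to $[\eta|x|\eta^{-1}] \in \locm_3$ yields $[\eta'|y|(\eta')^{-1}] \in \locm_3$, and the associated conjugation acts as $\Psi \circ c_{[\eta]} \circ \Psi^{-1}$, which is an automorphism of $\locm$. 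The same transport applied to each higher witness $\omega_i$ produces $\omega'_i \in \locm_{n+1}$ with equal products, verifying Definition \ref{definorm}(ii).

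For (ii), the plan is to invoke Proposition \ref{nhom} (in the case $m=1$) to construct $F \colon \locm \times \Delta[1] \to \locm$ from the data $(f_0, f_1, \zeta) = (\alpha, \Id, [\eta])$. The explicit formula (\ref{homotopyn}) in the proof of Lemma \ref{homotopies} then forces, for each $\omega = [x_1|\ldots|x_n] \in \locm_n$, that the non-degenerate $(n+1)$-simplices of $\Delta[n]\times\Delta[1]$ map under $F_\omega$ exactly to the simplices
\[
\omega_i = [{}^\eta x_1 | \ldots | {}^\eta x_i | \eta | x_{i+1} | \ldots | x_n],
\]
which by Definition \ref{definorm}(ii) lie in $\locm$ and share a common product. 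This is precisely the consistency required for $F$ to assemble into a well-defined simplicial map; by construction $F$ restricts to $\alpha$ at $\bullet_0$ and to $\Id$ at $\bullet_1$.

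For (iii), the cleanest approach is to observe that $\underline{\aut}(\locm)$ is the simplicial group of invertible elements inside the simplicial monoid $\underline{\hom}(\locm,\locm)$, and hence is a Kan complex. The set of vertices connected to $\Id$ by a $1$-simplex is then the path-component of $\Id$, which equals the kernel of the quotient $\Aut(\locm) \to \pi_0(\underline{\aut}(\locm))$ and is therefore a normal subgroup. By Definition \ref{definorm} this component is exactly $\Inn(\locm)$. If one prefers a more hands-on argument, the three pieces go as follows: the identity is inner via the trivial homotopy $\pr_1 \colon \locm \times \Delta[1] \to \locm$; given $F_1 \colon \alpha_1 \simeq \Id$ and $F_2 \colon \alpha_2 \simeq \Id$, the product $F_1 \cdot F_2$ in $\underline{\aut}(\locm)_1$ is a homotopy from $\alpha_2 \alpha_1$ to $\Id$, while the inverse edge $F_1^{-1}$ furnished by the simplicial group structure gives a homotopy $\alpha_1^{-1} \simeq \Id$; and normality follows either by conjugating $F$ by $\Psi$ in $\underline{\aut}(\locm)$, or equivalently from (i) and (ii) via the identity $\Psi c_{[\eta]} \Psi^{-1} = c_{\Psi([\eta])}$. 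The main technical point to watch is ensuring that the monoid/inverse operations in $\underline{\aut}(\locm)$ genuinely produce simplicial maps valued in $\locm$ (as opposed to merely in $\underline{\hom}(\locm,\locm)$), which is automatic from the definitions recalled in Section \ref{simplicial}.
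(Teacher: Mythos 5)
Your arguments for (i) and (ii) are correct and coincide with the paper's: (i) is proved by pushing the witnessing simplices of Definition \ref{definorm} through $\Psi$ (noting that conjugation by $\Psi[\eta]$ realizes $\Psi\circ c_{[\eta]}\circ\Psi^{-1}$), and (ii) by assembling the simplices $\omega_i$ guaranteed by Definition \ref{definorm}(ii) into a homotopy, reversing the computation of Lemma \ref{homotopies}.

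For (iii) your treatment is slightly different in scope and framing, and the comparison is worth recording. The paper's proof of (iii) only verifies normality, exactly via your ``equivalently'' clause: $\Psi\circ c_{[\eta]}\circ\Psi^{-1}=c_{\Psi[\eta]}$ together with parts (i) and (ii); closure of $\Inn(\locm)$ under composition and inversion is not argued there (it is effectively supplied later by Lemma \ref{norm-cent-inn} and the composition-of-homotopies lemma). Your hands-on version, which checks the subgroup axioms directly using the simplicial monoid structure, therefore fills in a step the paper leaves implicit --- a genuine improvement. Your ``clean'' version (identify $\Inn(\locm)$ with the path component of $\Id$ in the simplicial group $\underline{\aut}(\locm)$, hence with the kernel of $\Aut(\locm)\to\pi_0(\underline{\aut}(\locm))$) is attractive but hides one point you should not wave away: $\Inn(\locm)$ is defined via homotopies, i.e.\ edges of $\underline{\mathrm{hom}}(\locm,\locm)$ with invertible endpoints, and identifying these with edges of the maximal simplicial subgroup $\underline{\aut}(\locm)$ (so that the Kan/simplicial-group machinery applies) requires knowing such a homotopy is itself invertible in $\underline{\mathrm{hom}}_1(\locm,\locm)$. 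In the paper this is only really justified by the subsequent analysis (Lemma \ref{N1homotopies}, the inversion formula (\ref{inverse1}), Theorem \ref{AutX0}), so it is not quite ``automatic from Section \ref{simplicial}''; your hands-on argument, which exhibits the inverse homotopy explicitly from $[\eta^{-1}]$, is the safer route and is the one consistent with the paper's logical order.
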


\begin{proof}

Let $[\eta] \in N(\locm)$ and let $\Psi \in \Aut(\locm)$. Let also $\alpha \in \Aut(\locm)$ be the left conjugation automorphism induced by $[\eta]$. In particular, for each $[x] \in \locm$, we have $[\eta|x|\eta^{-1}] \in \locm$ and $\alpha[x] = [\eta \cdot x \cdot \eta^{-1}] = [\9{\eta}x]$. Applying $\Psi$ we get $[\Psi(\eta)|\Psi(x)|\Psi(\eta^{-1})] \in \locm$ and
$$
\Psi(\alpha[x]) = \Pi[\Psi(\eta)|\Psi(x)|\Psi(\eta)^{-1}].
$$
This implies that left conjugation by $\Psi[\eta]$ extends to an automorphism of $\locm$, and $\Psi[\eta]$ satisfies condition (i) in Definition \ref{definorm}. Furthermore, the above also shows that the left conjugation automorphism induced by $\Psi[\eta]$ corresponds to $\Psi \circ \alpha \circ \Psi^{-1}$.

To show that $\Psi[\eta]$ also satisfies condition (ii) in \ref{definorm}, let $\omega = [x_1|\ldots|x_n] \in \locm$. By definition, we have $\omega_i = [\9{\eta}x_1|\ldots|\9{\eta}x_i|\eta|x_{i+1}|\ldots|x_n] \in \locm$, for $i = 0, \ldots, n$, with $\Pi(\omega_0) = \ldots = \Pi(\omega_n)$. By applying $\Psi$ to $\omega_i$, we get
$$
\begin{aligned}
\Psi(\omega_i) & = [\Psi(\9{\eta}x_1)|\ldots|\Psi(\9{\eta}x_i)|\Psi(\eta)|\Psi(x_{i+1})|\ldots|\Psi(x_n)] = \\
 & = [\9{\Psi(\eta)}\Psi(x_1)|\ldots|\9{\Psi(\eta)}\Psi(x_i)|\Psi(\eta)|\Psi(x_{i+1})|\ldots|\Psi(x_n)] \in \locm,
\end{aligned}
$$
with $\Pi(\Psi(\omega_0)) = \ldots = \Pi(\Psi(\omega_n))$. Since $\Psi$ is an automorphism, condition (ii) in \ref{definorm} follows. Thus $\Psi[\eta] \in N(\locm)$.

Part (ii) in the statement follows easily from the properties of $N(\locm)$. Indeed, let $[\eta] \in N(\locm)$, and let $\alpha \in \Aut(\locm)$ be the left conjugation automorphism defined by $[\eta]$. Then, using property (ii) in Definition \ref{definorm}, it is clear that we can inductively construct a homotopy $F \colon \locm \times \Delta[1] \to \locm$ from $\alpha$ to $\Id$, following the reverse process that we followed in the proof of Lemma \ref{homotopies}.

Finally, part (iii) of the statement is a straightforward consequence of parts (i) and (ii). Let $\alpha \in \Inn(\locm)$ and let $\Psi \in \Aut(\locm)$. By definition of $\Inn(\locm)$, there exists a homotopy $F \colon \locm \times \Delta[1] \to \locm$ from $\alpha$ to $\Id$. As discussed above, the composition $\beta = \Psi \circ \alpha \circ \Psi^{-1}$ corresponds to conjugation $\Psi[\eta] \in N(\locm)$, and part (ii) implies that $\Psi[\eta]$ determines a homotopy from $\beta$ to $\Id$.
\end{proof}

\begin{defi}

The outer automorphism group of a partial group $\locm$ is the quotient $\Out(\locm) = \Aut(\locm)/\Inn(\locm)$.

\end{defi}

\begin{lmm}\label{norm-cent-inn}

Let $\locm$ be a partial group. Then, the following holds.
\begin{enumerate}[(i)]

\item $N(\locm)$ is a subgroup of $\locm$;

\item $Z(\locm)$ is an abelian subgroup of $\locm$; and

\item $\Inn(\locm) \cong N(\locm)/Z(\locm)$.

\end{enumerate}

\end{lmm}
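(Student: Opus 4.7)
\emph{Proof plan.} The plan is to prove the three assertions in order; (ii) and (iii) will follow with little effort from (i) together with the defining properties of $Z(\locm)$ and $\Inn(\locm)$, so the bulk of the work lies in (i).

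For (i), the unit $1_\locm$ trivially satisfies both conditions in Definition \ref{definorm}. For closure under products, given $[\eta_1],[\eta_2] \in N(\locm)$, condition (ii) of \ref{definorm} applied to the $1$-simplex $[\eta_1]$ and the element $[\eta_2]$ already gives $[\eta_2|\eta_1] \in \locm$; symmetrically $[\eta_1|\eta_2] \in \locm$, so the product $[\eta_1 \cdot \eta_2]$ is defined. Left conjugation by $\eta_1 \eta_2$ satisfies $[\9{\eta_1 \eta_2}x] = [\9{\eta_1}(\9{\eta_2}x)]$ and is therefore the composition of two automorphisms, hence itself an automorphism of $\locm$. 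The simplicial condition (ii) for $[\eta_1 \cdot \eta_2]$ can be obtained either by iterated application of (ii) first for $\eta_2$ and then for $\eta_1$, or, more conceptually, by invoking Proposition \ref{nhom} to glue the two homotopies associated with $[\eta_1]$ and $[\eta_2]$ into a $2$-simplex $F \colon \locm \times \Delta[2] \to \locm$ of $\underline{\aut}(\locm)$, whose face $d_1$ furnishes the required homotopy for $[\eta_1 \cdot \eta_2]$. Closure under inversion: left conjugation by $\eta^{-1}$ is the inverse automorphism of left conjugation by $\eta$. To show $[\eta^{-1}|x|\eta] \in \locm$ for each $[x]$, write $[x] = [\9{\eta}y]$, so that $[\eta|y|\eta^{-1}] \in \locm$ with product $[x]$; two applications of Lemma \ref{2.2Ch}(iv) yield $[\eta^{-1}|\eta|y|\eta^{-1}|\eta] \in \locm$, and then axiom (P2) on the middle substring $[\eta|y|\eta^{-1}]$ gives $[\eta^{-1}|x|\eta] \in \locm$. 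Condition (ii) for $\eta^{-1}$ follows by a parallel argument.

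For (ii), both elements of $Z(\locm)$ induce trivial conjugation, so for $[\eta_1],[\eta_2] \in Z(\locm)$ the product (defined by (i)) satisfies $[\eta_1 \cdot \eta_2] = [\9{\eta_1}\eta_2 \cdot \eta_1] = [\eta_2 \cdot \eta_1]$, giving commutativity; closure under products and inverses is immediate from the centrality conditions and (i). For (iii), define $\Phi \colon N(\locm) \to \Aut(\locm)$ by $\Phi([\eta]) = c_\eta$, the left conjugation automorphism; the identity $[\9{\eta_1 \eta_2}x] = [\9{\eta_1}(\9{\eta_2}x)]$ makes $\Phi$ a group homomorphism, condition (ii) of \ref{definorm} combined with Lemma \ref{homotopies} places its image in $\Inn(\locm)$, and $\ker \Phi = Z(\locm)$ by definition. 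For surjectivity, given $\alpha \in \Inn(\locm)$, choose a homotopy $F \colon \locm \times \Delta[1] \to \locm$ in $\underline{\aut}(\locm)_1$ with $F|_{\bullet_0} = \alpha$ and $F|_{\bullet_1} = \Id_\locm$. By Lemma \ref{N1homotopies}, $[\eta] = F(v_\locm, \iota_1) \in N(\locm)$, and equation (\ref{conjeta}) in the proof of Lemma \ref{homotopies} identifies $\alpha$ with $c_\eta = \Phi([\eta])$. The first isomorphism theorem then yields $\Inn(\locm) \cong N(\locm)/Z(\locm)$.

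The main obstacle is the verification of condition (ii) of Definition \ref{definorm} for products and inverses in part (i): the heuristic from composing and inverting homotopies is transparent and points directly to the answer, but translating it into the word-level language of partial groups is bookkeeping-heavy. I would therefore favour the simplicial route via Proposition \ref{nhom} over a direct algebraic verification, as the simplicial gluing packages the relevant axioms of Lemma \ref{2.2Ch} and (P2) once and for all.
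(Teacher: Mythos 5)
Your overall strategy coincides with the paper's: part (i) is verified directly from the two conditions of Definition \ref{definorm}, part (ii) follows from condition (ii) applied to $[\eta|\sigma]$ and $[\9{\eta}\sigma|\eta]$ together with centrality, and part (iii) is obtained by matching inner automorphisms with normalizer elements via Lemma \ref{homotopies}, Lemma \ref{N1homotopies} and the cancellation rule. Your closure-under-inversion argument for membership of $[\eta^{-1}|x|\eta]$ in $\locm$ (writing $[x]=[\9{\eta}y]$ and combining Lemma \ref{2.2Ch}(iv) with axiom (P2)) is a small but legitimate variant of the paper's route, which instead applies the anti-involution to the simplices $[\eta|y|\eta^{-1}]$; both work, and yours is arguably more explicit at the word level.

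One caution about the route you say you would favour: Proposition \ref{nhom} is a \emph{uniqueness} statement --- it says a simplicial map $\locm\times\Delta[m]\to\locm$ is determined by its vertex restrictions and the elements $[\eta_k]$ --- and does not assert that such a map \emph{exists} for a given pair of composable homotopies. Producing the $2$-simplex of $\underline{\aut}(\locm)$ whose $d_1$-face realizes $[\eta_1\cdot\eta_2]$ requires checking that the words $[\Psi_0(x_1)|\ldots|\Psi_0(x_i)|\eta_1|\eta_2|\Psi_2(x_{i+1})|\ldots|\Psi_2(x_n)]$ lie in $\locm$, which is exactly the bookkeeping you hoped to bypass; indeed the paper only proves this gluing statement (the lemma on composing homotopies) \emph{after} Lemma \ref{norm-cent-inn}, using the group structure on $N(\locm)$ already established there, so leaning on it here risks circularity. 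Your fallback --- iterated application of condition (ii) of Definition \ref{definorm}, first for $\eta_2$ (or $\eta_m$) and then working outward --- is precisely the paper's argument and is the one you should carry out; similarly, condition (ii) for $\eta^{-1}$ should be done explicitly (the paper does it by applying condition (ii) to $\mu=\alpha^{-1}(\omega^{-1})$ and then inverting the resulting simplices) rather than deferred to ``a parallel argument.''
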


\begin{proof}

Let us start by (i). By definition of partial group, $1_{\locm} \in N(\locm)$. We start by showing that, for each sequence $\eta_1, \ldots, \eta_m \in N(\locm)$, $\omega = [\eta_1|\ldots|\eta_m] \in \locm_m$ and $\Pi(\omega) \in N(\locm)$.

Let $\eta_1, \eta_2, \ldots, \eta_m \in N(\locm)$. By condition (ii) in Definition \ref{definorm}, we see inductively that $[\eta_1|\ldots|\eta_m]$ is a simplex in $\locm$, and thus we can consider $\sigma = \Pi[\eta_1|\ldots|\eta_m] \in \locm$. Let us check now that $\sigma \in N(\locm)$. For each $[x] \in \locm_1$ let
$$
[\9{\sigma}x] \defin \big[\, \9{\eta_1}(\ldots\9{\eta_{m-1}}(\9{\eta_m}x))\big].
$$
This way, it is clear that left conjugation by $\sigma$ induces an automorphism of $\locm$, which actually corresponds to $\alpha_1 \circ \ldots \circ \alpha_m$, where $\alpha_i$ is the left conjugation automorphism induced by $[\eta_i]$. Thus, $\sigma$ satisfies property (i) in Definition \ref{definorm}.

Fix now $\omega = [x_1|\ldots|x_n] \in \locm_n$. Since $\eta_1, \ldots, \eta_m \in N(\locm)$, by successively applying property (ii) in Definition \ref{definorm} we get simplices
$$
\begin{array}{l}
\omega_i^{(m)} = [\9{\eta_m}x_1|\ldots|\9{\eta_m}x_i|\eta_m|x_{i+1}|\ldots|x_n] \in \locm_{n+1} \\[10pt]
\omega_i^{(m-1)} = [\9{\eta_{m-1}}(\9{\eta_m}x_1)|\ldots|\9{\eta_{m-1}}(\9{\eta_m}x_i)|\eta_{m-1}|\eta_m|x_{i+1}|\ldots|x_n] \in \locm_{n+2} \\
\qquad \vdots \\
\omega_i^{(1)} = [\9{\sigma}x_1|\ldots|\9{\sigma}x_i|\eta_1|\ldots|\eta_m|x_{i+1}|\ldots|x_n] \in \locm_{n+m}
\end{array}
$$
and the appropriate application of $\Pi$ produces the simplex $\omega_i = [\9{\sigma}x_1|\ldots|\9{\sigma}x_i|\sigma|x_{i+1}|\ldots|x_n] \in \locm_{n+1}$. Furthermore, we have $\Pi(\omega_0) = \ldots = \Pi(\omega_m)$ by construction, and thus $\sigma \in N(\locm)$.

To finish the proof of part (i) in the statement, we have to show that $N(\locm)$ is closed by inversion. Fix some $[\eta] \in N(\locm)$, and let $\alpha \in \Aut(\locm)$ be the left conjugation automorphism induced by $[\eta]$. By property (i) in Definition \ref{definorm}, for each $[y] \in \locm$ we have $[\eta|y|\eta^{-1}] \in \locm$, and thus we also have $[\eta^{-1}|y^{-1}|\eta] \in \locm$. In particular, left conjugation by $[\eta^{-1}]$ induces the automorphism $\alpha^{-1}$, and thus $[\eta^{-1}]$ satisfies property (i) in \ref{definorm}.

To show that $[\eta^{-1}]$ also satisfies property (ii) in \ref{definorm}, fix some simplex $\omega = [x_1|\ldots|x_n] \in \locm$, and set
$$
\mu = [y_1|\ldots|y_n] \defin \alpha^{-1}(\omega^{-1}).
$$
In particular, $[y_i] = [\9{\eta^{-1}}(x_{n-i}^{-1}] = [(\9{\eta^{-1}}x_{n-1})^{-1}]$ for $i = 1, \ldots, n$. Now, since $[\eta] \in N(\locm)$, we can apply property (ii) in \ref{definorm} to get the simplices
$$
\mu_i = [\9{\eta}y_1|\ldots|\9{\eta}y_{n-i+1}|\eta|y_{n-i}|\ldots|y_n] = [x_n^{-1}|\ldots|x_{i+1}^{-1}|\eta|(\9{\eta^{-1}}x_i)^{-1}|\ldots |\9{\eta^{-1}}x_1)^{-1}] \in \locm_{n+1}.
$$
This way, we have $\omega_i = \mu_i^{-1} = [\9{\eta^{-1}}x_1|\ldots|\9{\eta^{-1}}x_i|\eta^{-1}|x_{i+1}|\ldots|x_n] \in \locm_{n+1}$ for each $i$. Moreover, it follows by construction that $\Pi(\omega_0) = \ldots = \Pi(\omega_n)$, and thus $[\eta^{-1}] \in N(\locm)$.

Restricting the above to elements of $N(\locm)$ which induce the identity on $\locm$ is a closed operation, and thus it follows that $Z(\locm)$ is a subgroup of $N(\locm)$. It is left to check that it is also abelian. Let $[\eta], [\sigma] \in Z(\locm)$ be any two elements. By condition (ii) in Definition \ref{definorm}, $[\eta| \sigma], [\9{\eta}\sigma| \eta]$ are both simplices in $\locm$, with $\Pi[\eta|\sigma] = \Pi[\9{\eta}\sigma|\eta]$. On the other hand, since $\eta \in Z(\locm)$, we have $[\9{\eta}\sigma] = [\sigma]$, and thus $\Pi[\sigma|\eta] = \Pi[\eta|\sigma]$. It follows that $Z(\locm)$ is abelian.

Finally, let us check (iii). First, we have to check that each inner automorphism of $\locm$ is the left conjugation automorphism induced by some $[\eta] \in N(\locm)$. Let $\alpha$ be an inner automorphism. By definition, there is a homotopy $F$ from $\alpha$ to $\Id_{\locm}$, and this homotopy is determined by a distinguished element $[\eta] \in N(\locm)$, by Lemma \ref{homotopies} and Lemma \ref{N1homotopies}. We claim that $\alpha$ is the left conjugation automorphism induced by $[\eta]$.

By Lemma \ref{N1homotopies}, for each $[x] \in \locm$ we have $[\alpha(x)|\eta], [\eta|x] \in \locm$, and $\Pi[\alpha(x)|\eta] = \Pi[\eta|x]$. On the other hand, since $[\eta] \in N(\locm)$, we have $[\9{\eta}x|\eta] \in \locm$, and $\Pi[\9{\eta}|x] = \Pi[\eta|x]$. Combining these two multiplications, and by the cancellation rule (Lemma \ref{2.2Ch} (v)) we deduce that $\alpha[x] = [\9{\eta}x]$ for all $[x] \in \locm_1$, and this proves the claim.

It is now easy to finish the proof of (iii). Let $[\eta] \in N(\locm)$, let $[z] \in Z(\locm)$, and let $[\sigma] = [z \cdot \eta] \in N(\locm)$. Let also $\alpha$ and $\beta$ be the left conjugation automorphisms induced by $[\eta]$ and $[\sigma]$ respectively. It is clear that $\alpha = \beta$. Conversely, if $[\sigma] \in N(\locm)$ induces the same left conjugation automorphism $\alpha$, then it is easy to check that $[\sigma \cdot \eta^{-1}] \in Z(\locm)$.
\end{proof}

We now focus on describing $\underline{\aut}(\locm)$ for a partial group $\locm$. Let $F \in \underline{\aut}(\locm)_n$ be an $n$-simplex. Let also $\Psi_0, \ldots, \Psi_n \in \Aut(\locm)$ be the automorphisms of $\locm$ obtained by restricting $F$ to the vertices of $\Delta[n]$, and let $[\eta_1], \ldots, [\eta_n] \in N(\locm)$ be the elements determined by $F$ by Lemma \ref{N1homotopies}. Notice that $F$ is completely determined by this data.

\begin{lmm}

For $i = 1,2$, let $F_i$ be a homotopy from $\Psi_i$ to $\Psi_{i-1}$, determined by an element $[\eta_i] \in N(\locm)$. Then, $[\eta_1 \cdot \eta_2] \in N(\locm)$ determines a homotopy $F$ from $\Psi_2$ to $\Psi_0$.

\end{lmm}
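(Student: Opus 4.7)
The plan is to glue the two given $\Delta[1]$-homotopies $F_1$ and $F_2$ into a single $\Delta[2]$-homotopy $G \colon \locm \times \Delta[2] \to \locm$, and then to extract $F$ by restricting $G$ along the face opposite the middle vertex of $\Delta[2]$.

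First, I would invoke Proposition \ref{nhom} with $m = 2$ to construct $G$ from the following data: the three vertex maps $G|_{\bullet_0} = \Psi_2$, $G|_{\bullet_1} = \Psi_1$, $G|_{\bullet_2} = \Psi_0$, together with the distinguished elements $G(v_\locm, \iota_1) = [\eta_2]$ and $G(v_\locm, \iota_2) = [\eta_1]$. With this choice the restriction of $G$ along the inclusion $d^2 \colon \Delta[1] \hookrightarrow \Delta[2]$ (the edge $\bullet_0 \to \bullet_1$) reproduces $F_2$, and the restriction along $d^0$ (the edge $\bullet_1 \to \bullet_2$) reproduces $F_1$, by Lemma~\ref{homotopies}.

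The main obstacle is verifying that this data actually assembles into a bona fide simplicial map. Following the recipe in the proof of Proposition~\ref{nhom}, for every $\omega = [x_1|\ldots|x_n] \in \locm$ and every walk in the grid $\Delta[n] \times \Delta[2]$ from $(0,\bullet_0)$ to $(n,\bullet_2)$, the associated $(n+2)$-simplex is sent by $G$ to a word of the form
$$
[\Psi_2(x_1)|\ldots|\Psi_2(x_i)|\eta_2|\Psi_1(x_{i+1})|\ldots|\Psi_1(x_j)|\eta_1|\Psi_0(x_{j+1})|\ldots|\Psi_0(x_n)],
$$
and one must check that each such word lies in $\dd(\locm)$ and that all of them share a common product. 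This is precisely where the hypothesis $[\eta_1], [\eta_2] \in N(\locm)$ is crucial: applying property~(ii) of Definition~\ref{definorm} first to $\Psi_0(\omega)$ with respect to $[\eta_1]$ (inserting $\eta_1$ at each of the $n+1$ possible positions, all yielding the same product), and then applying the same property to each of the resulting simplices with respect to $[\eta_2]$, produces the entire list of required simplices with a common product. This is essentially the iterated argument already used in the proof of Lemma~\ref{norm-cent-inn}(i) to show that $N(\locm)$ is closed under the partial product; the automorphisms $\Psi_0, \Psi_1$ do not interfere because property~(ii) is applied to the simplices $\Psi_0(\omega)$ and $\Psi_1(\omega)$, which lie in $\locm$ since $\Psi_0, \Psi_1$ are automorphisms.

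Once $G$ is in hand, set $F = G \circ (\Id_\locm \times d^1)$, where $d^1 \colon \Delta[1] \hookrightarrow \Delta[2]$ is the inclusion as the edge $\bullet_0 \to \bullet_2$. Then $F|_{\bullet_0} = \Psi_2$ and $F|_{\bullet_1} = \Psi_0$, so $F$ is a homotopy from $\Psi_2$ to $\Psi_0$. Its defining element is computed from the non-degenerate $2$-simplex $\sigma = G(v_\locm, -) \in \locm_2$: by the characterization of Theorem~\ref{isocat}, $\sigma$ is determined by its two non-degenerate edges $[\eta_2]$ and $[\eta_1]$, so $\sigma = [\eta_2|\eta_1]$ and hence $F(v_\locm, \iota_1) = d_1(\sigma) = \Pi[\eta_2|\eta_1]$ — this is the product element $[\eta_1 \cdot \eta_2]$ in the statement, which by Lemma~\ref{norm-cent-inn}(i) lies in $N(\locm)$, completing the proof.
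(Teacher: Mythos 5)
Your overall strategy --- assembling $F_1$ and $F_2$ into a single map $G\colon \locm\times\Delta[2]\to\locm$ and restricting to the long edge --- is sound, and it is essentially the paper's argument in disguise: the paper directly produces the simplices $[\Psi_0(x_1)|\ldots|\Psi_0(x_i)|\eta_1|\eta_2|\Psi_2(x_{i+1})|\ldots|\Psi_2(x_n)]$, which are exactly the images of the prism simplices of such a $G$ in which the two vertical steps are adjacent, and then applies $\Pi$ to merge $\eta_1|\eta_2$ into $\eta_1\cdot\eta_2$. However, your $2$-simplex is oriented backwards relative to the paper's conventions, and this is not a harmless relabelling. By Lemma \ref{homotopies}, formula (\ref{homotopyn}) and formula (\ref{action1}), a homotopy from $\Psi_i$ to $\Psi_{i-1}$ determined by $[\eta_i]$ restricts to $\Psi_{i-1}$ at $\bullet_0$ and to $\Psi_i$ at $\bullet_1$, and its prism simplices are $[\Psi_{i-1}(x_1)|\ldots|\eta_i|\ldots|\Psi_i(x_n)]$. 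The $2$-simplex you need therefore has $G|_{\bullet_j}=\Psi_j$ and $G(v_{\locm},\iota_k)=[\eta_k]$; its walks produce words $[\Psi_0(\cdots)|\eta_1|\Psi_1(\cdots)|\eta_2|\Psi_2(\cdots)]$ and its long edge carries $\Pi[\eta_1|\eta_2]=[\eta_1\cdot\eta_2]$. Your $G$ (with $G|_{\bullet_0}=\Psi_2$ and $G|_{\bullet_2}=\Psi_0$) instead requires words of the form $[\Psi_2(\cdots)|\eta_2|\Psi_1(\cdots)|\eta_1|\Psi_0(\cdots)]$, whose existence the hypotheses do not supply; and even granting its existence, its long-edge face is determined by $\Pi[\eta_2|\eta_1]=[\eta_2\cdot\eta_1]$. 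Your closing identification of this with $[\eta_1\cdot\eta_2]$ is false in general, since $N(\locm)$ need not be abelian --- indeed the whole point of the lemma is to justify the composition rule $(\Psi_0 \Left3{\eta_1}\Psi_1)\circ(\Psi_1\Left3{\eta_2}\Psi_2)=(\Psi_0\Left4{\eta_1 \eta_2}\Psi_2)$ with the factors in that order.

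There is a second, independent gap in the verification step. Property (ii) of Definition \ref{definorm} inserts $\eta$ at position $i$ and conjugates the \emph{prefix} by $\eta$: applied to $\Psi_0(\omega)$ with respect to $[\eta_1]$ it yields prefixes $\9{\eta_1}\Psi_0(x_k)$, not $\Psi_1(x_k)$ or $\Psi_2(x_k)$, so by itself it does not produce the mixed words required for $G$. To make this route work you must combine it with the relation $\Psi_{i-1}=c_{\eta_i}\circ\Psi_i$ extracted from Lemma \ref{homotopies} (so that property (ii) applied to $\Psi_2(\omega)$ with respect to $[\eta_2]$ recovers the $F_2$-simplices, and a second application of property (ii) with respect to $[\eta_1]$ to those simplices converts the prefix $\Psi_1(x_k)$ into $\9{\eta_1}\Psi_1(x_k)=\Psi_0(x_k)$), or, as the paper does, apply $\Psi_1^{-1}$ to the simplices produced by $F_1$ and then feed the result to the homotopy $F_2$, which outputs $[\Psi_0(x_1)|\ldots|\eta_1|\eta_2|\ldots|\Psi_2(x_n)]$ directly with all products equal.
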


\begin{proof}

For each simplex $\omega = [x_1|\ldots|x_n] \in \locm_n$, let
$$
F_{1,\omega} \colon \Delta[n] \times \Delta[1] \Right3{\omega \times \Id} \locm \times \Delta[1] \Right3{F_1} \locm.
$$
If we denote by $a_0, \ldots, a_n$ the $(n+1)$-simplices subdividing $\Delta[n] \times \Delta[1]$, then $F_{1,\omega}$ maps the simplex $a_i$ to
$$
\omega_i = [\Psi_0(x_1)|\ldots|\Psi_0(x_i)|\eta_1|\Psi_1(x_{i+1})|\ldots|\Psi_1(x_n)].
$$
If we apply now $\Psi_1^{-1}$ to $\omega_i$ we get
$$
\Psi_1^{-1}(\omega_i) = [(\Psi_1^{-1} \circ \Psi_0)(x_1)| \ldots | (\Psi_1^{-1} \circ \Psi_0)(x_i)| \Psi_1^{-1}(\eta_1)|x_{i+1}|\ldots|x_n].
$$
Finally, the homotopy $F_2$ applied on $\Psi_1^{-1}(\omega_i)$ produces, in particular, the simplex
$$
[\Psi_0(x_1)|\ldots|\Psi_0(x_i)|\eta_1|\eta_2|\Psi_2(x_{i+1})|\ldots|\Psi_2(x_n)],
$$
and thus $\sigma_i = [\Psi_0(x_1)|\ldots|\Psi_0(x_i)|\eta_1\cdot \eta_2|\Psi_2(x_{i+1})|\ldots|\Psi_2(x_n)] \in \locm$. Notice that $\Pi(\sigma_0) = \ldots = \Pi(\sigma_n)$ by definition. Using the formulae (\ref{homotopyn}) we see how $[\eta_1 \cdot \eta_2] \in N(\locm)$ determines a homotopy from $\Psi_2$ to $\Psi_0$.
\end{proof}

\begin{defi}\label{calautM}

Define the category $\autcat(\locm)$ as follows. Its object set is $\Ob(\autcat(\locm)) = \Aut(\locm)$, and its morphism sets are
$$
\Mor_{\autcat(\locm)}(\Phi, \Psi) = \{\eta \,\, \big| \,\, [\eta] = F(v, \iota_1) \in N(\locm) \mbox{ for a homotopy } F \mbox{ from } \Psi \mbox{ to } \Phi\}.
$$
A morphism in this category will be represented by $(\Phi \Left1{\eta} \Psi)$. With this notation the composition of morphisms in this category is given by multiplication in $\locm$:
$$
\big(\Psi_0 \Left3{\eta_1} \Psi_1\big) \circ \big(\Psi_1 \Left3{\eta_2} \Psi_2\big) = \big(\Psi_0 \Left6{\eta_1 \cdot \eta_2} \Psi_2\big).
$$

\end{defi}

Actually, the category $\autcat(\locm)$ is strict monoidal, as we describe next. Let $(\Psi_0 \Left1{\eta} \Psi_1)$ be a morphism in $\autcat(\locm)$. Seen as a homotopy $F \colon \locm \times \Delta[1] \to \locm$, the morphism $(\Psi_0 \Left1{\eta} \Psi_1)$ has the following effect on a $1$-simplex $[x] \in \locm$:
\begin{equation}\label{action1}
(\Psi_0 \Left3{\eta} \Psi_1) \cdot [x] = [\Psi_0(x) \cdot \eta] = [\eta \cdot \Psi_1(x)],
\end{equation}
where the rightmorst equality follows from Lemma \ref{homotopies}. If $(\Phi_0 \Left1{\omega} \Phi_1)$ is another morphism in $\autcat(\locm)$, the above formula implies
$$
\begin{aligned}
(\Phi_0 \Left3{\omega} \Phi_1) \cdot & \big((\Psi_0 \Left3{\eta}  \Psi_1) \cdot [x] \big) = \\
 & = (\Phi_0 \Left3{\omega} \Phi_1) \cdot [\Psi_0(x) \cdot \eta] = (\Phi_0 \Left3{\omega} \Phi_1) \cdot [\eta \cdot \Psi_1(x)] = \\
 & = [\Phi_0(\Psi_0(x) \cdot \eta) \cdot \omega] = [\omega \cdot \Phi_1(\eta \cdot \Psi_1(x))] = \\
 & = (\Phi_0 \circ \Psi_0 \LEFT6{\Phi_0(\eta) \omega = }{= \omega \Phi_1(\eta)} \Phi_1 \circ \Psi_1) \cdot [x].
\end{aligned}
$$
Using this formula we can now define an operation on $\autcat(\locm)$ as follows.
\begin{equation}\label{productdim1}
\xymatrix@R=1mm{
\autcat(\locm) \times \autcat(\locm) \ar[rr]^{\otimes} & & \autcat(\locm) \\
(\Phi, \Psi) \ar@{|->}[rr] & & \Phi \circ \Psi \\
\big((\Phi_0 \Left1{\omega} \Phi_1), (\Psi_0 \Left1{\eta} \Psi_1) \big) \ar@{|->}[rr] & & (\Phi_0 \circ \Psi_0 \LEFT6{\Phi_0(\eta) \omega = }{= \omega \Phi_1(\eta)} \Phi_1 \circ \Psi_1)
}
\end{equation}
The unit for this operation is $(\Id_{\locm} \Left1{1_{\locm}} \Id_{\locm})$, and the inverse is given by
\begin{equation}\label{inverse1}
(\Psi_0 \Left3{\eta} \Psi_1)^{-1} = (\Psi_0^{-1} \LEFT8{\Psi_0^{-1}(\eta^{-1}) = }{ = \Psi_1^{-1}(\eta^{-1})} \Psi_1^{-1}).
\end{equation}

In later sections we will need generalized versions of some of the formulas showed above, in particular of (\ref{action1}) and (\ref{inverse1}). Let $(\Psi_0 \Left1{\eta_1} \ldots \Left1{\eta_n} \Psi_n)$ be an $n$-simplex in $N \autcat(\locm)$. Formula (\ref{action1}) generalizes to
\begin{equation}\label{actionn}
\begin{aligned}
(\Psi_0 \Left1{\eta_1} \Psi_1 \Left1{\eta_2} \ldots \Left1{\eta_n} \Psi_n) \cdot [x_1|\ldots|x_n] & = [\Psi_0(x_1) \cdot \eta_1 | \Psi_1(x_2) \eta_2 |\ldots |\Psi_{n-1}(x_n) \cdot \eta_n] = \\
& = [\eta_1 \cdot \Psi_1(x_1)| \eta_2 \cdot \Psi_2(x_2)| \ldots |\eta_n \Psi_n(x_n)].
\end{aligned}
\end{equation}
while formula (\ref{inverse1}) generalizes as follows
\begin{equation}\label{inversen}
\begin{aligned}
(\Psi_0 \Left3{\eta_1} \Psi_1 & \Left3{\eta_2} \ldots \Left3{\eta_n} \Psi_n)^{-1} = \\
 & = (\Psi_0^{-1} \LEFT8{\Psi_1^{-1}(\eta_1^{-1}) = }{= \Psi_0^{-1}(\eta_1^{-1})} \Psi_1^{-1} \LEFT8{\Psi_2^{-1}(\eta_2^{-1}) = }{= \Psi_1^{-1}(\eta_2^{-1})} \ldots \LEFT8{\Psi_n^{-1}(\eta_n^{-1}) = }{= \Psi_{n-1}^{-1}(\eta_n^{-1})} \Psi_n^{-1}).
\end{aligned}
\end{equation}
For the sake of completeness we also include the formula for the product in $N_n \autcat(\locm)$, although it plays no relevant role in these notes.
$$
\begin{aligned}
(\Phi_0 \Left3{\omega_1} \Phi_1 & \Left3{\omega_2} \ldots \Left3{\omega_n} \Phi_n) \otimes (\Psi_0 \Left3{\eta_1} \Psi_1 \Left3{\eta_2} \ldots \Left3{\eta_n} \Psi_n) = \\[6pt]
 & = (\Phi_0 \circ \Psi_0 \LEFT8{\omega_1 \Phi_1(\eta_1) = }{ = \Phi_0(\eta_1) \omega_1} \Phi_1 \circ \Psi_1 \LEFT8{\omega_2 \Phi_2(\eta_2) = }{ = \Phi_1(\eta_2) \omega_2} \ldots \LEFT8{\omega_n \Phi_n(\eta_n) = }{ = \Phi_{n-1}(\eta_n) \omega_n} \Phi_n \circ \Psi_n).
\end{aligned}
$$

\begin{thm}\label{AutX0}

Let $\locm$ be a partial group. Then, there is an equivalence of simplicial groups
$$
N\autcat(\locm) \cong \underline{\mathrm{aut}}(\locm).
$$

\end{thm}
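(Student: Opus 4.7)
The plan is to construct an explicit levelwise bijection $\Phi \colon N\autcat(\locm) \to \underline{\mathrm{aut}}(\locm)$ by using Proposition \ref{nhom} as a dictionary between sequences of composable morphisms in $\autcat(\locm)$ and simplicial maps $\locm \times \Delta[n] \to \locm$. Given an $n$-simplex $(\Psi_0 \Left2{\eta_1} \Psi_1 \Left2{\eta_2} \cdots \Left2{\eta_n} \Psi_n)$ of $N\autcat(\locm)$, I will define $\Phi_n$ of this simplex to be the unique simplicial map $F \colon \locm \times \Delta[n] \to \locm$ whose restriction $F|_{\locm \times \{\bullet_i\}}$ is $\Psi_i$ and whose image on the edge $(v_\locm, \iota_k)$ is $[\eta_k]$. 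Uniqueness of $F$ given this data is Proposition \ref{nhom}; for existence, I would write down the explicit formula on every non-degenerate simplex of $\locm \times \Delta[n]$ (following the recipe in the proof of Proposition \ref{nhom}, cf.\ formula (\ref{actionn})) and verify that it respects face and degeneracy operators, using Lemma \ref{2.2Ch}, the axioms of partial groups, and crucially the fact that each $[\eta_k] \in N(\locm)$ (Definition \ref{definorm}) so all the words appearing in the formulas do lie in $\dd(\locm)$ and the requisite equalities among their products hold.

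Next, I would check that $F$ lands in the maximal subgroup $\underline{\mathrm{aut}}(\locm) \subseteq \underline{\mathrm{hom}}(\locm, \locm)$, i.e.\ that $F$ is invertible as an $n$-simplex of the function complex. This is built into the construction: the inverse $n$-simplex corresponds, under the same dictionary, to the sequence
\[
\big(\Psi_0^{-1} \LEFT6{\Psi_0^{-1}(\eta_1^{-1})}{} \Psi_1^{-1} \LEFT6{\Psi_1^{-1}(\eta_2^{-1})}{} \cdots \LEFT6{\Psi_{n-1}^{-1}(\eta_n^{-1})}{} \Psi_n^{-1}\big),
\]
and the verification that its image under $\Phi_n$ is inverse to $F$ is exactly formula (\ref{inversen}).

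For bijectivity of $\Phi_n$, injectivity is immediate from Proposition \ref{nhom}. For surjectivity, take any $F \in \underline{\mathrm{aut}}(\locm)_n$. Its vertex restrictions are automorphisms $\Psi_0, \ldots, \Psi_n \in \Aut(\locm)$ since $F$ is invertible, and the elements $[\eta_k] = F(v_\locm, \iota_k)$ lie in $N(\locm)$ by Lemma \ref{N1homotopies}. Moreover, restricting $F$ to $\locm \times \{\bullet_{k-1}, \bullet_k\} \simeq \locm \times \Delta[1]$ yields a homotopy from $\Psi_k$ to $\Psi_{k-1}$ determined by $[\eta_k]$, so $(\Psi_{k-1} \Left2{\eta_k} \Psi_k)$ is a morphism in $\autcat(\locm)$ for each $k$. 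These data assemble into an $n$-simplex of $N\autcat(\locm)$ mapping to $F$. To see that $\Phi$ is simplicial, I would observe that both face and degeneracy operators are described on each side by operations on the labels $\Psi_i$ and $\eta_k$ (on the nerve side by the standard simplicial structure of a nerve, on the automorphism-complex side using Lemma \ref{propE}(i)), and these agree by construction.

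Finally, to upgrade $\Phi$ to an isomorphism of simplicial groups, I need to check that it intertwines the monoidal product $\otimes$ on $\autcat(\locm)$ (extended levelwise to $N\autcat(\locm)$) with the multiplication on $\underline{\mathrm{aut}}(\locm)$ defined in Section \ref{simplicial}. In degree $1$ this is precisely the computation that led to the definition of $\otimes$ in (\ref{productdim1}); in arbitrary degree $n$ the analogous identity is obtained by iterating that computation, and matches the multiplication formula displayed just before the statement. The main obstacle, and where most of the work sits, is the existence of $F$ for arbitrary data from $N_n\autcat(\locm)$: verifying by induction on simplices of $\locm \times \Delta[n]$ that the prescribed formulas are simplicially coherent. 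Once this is done, everything else is bookkeeping through Proposition \ref{nhom}, Lemma \ref{N1homotopies}, and the formulas (\ref{actionn})--(\ref{inversen}).
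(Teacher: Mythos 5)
Your proposal is correct and follows essentially the same route as the paper: the paper defines the map via the evaluation action $\kappa\big((\Psi_0 \Left3{\eta} \Psi_1), [x]\big) = [\Psi_0(x)\cdot\eta]$ (formula (\ref{action1})) and adjunction, then gets injectivity from Lemma \ref{homotopies} and surjectivity from Proposition \ref{nhom}, which is exactly the dictionary you use, just packaged as an action rather than a levelwise bijection. The existence/coherence verification you flag as the main work is likewise compressed in the paper into the one-line claim that the action is associative, so the two arguments carry the same content.
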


\begin{proof}

Consider the evaluation map $N\autcat(\locm) \times \locm \Right2{\kappa} \locm$, whose effect on $1$-simplices is given by formula (\ref{action1}):
$$
\kappa\big((\Psi_0 \Left3{\eta} \Psi_1), [x] \big) = [\Psi_0(x) \cdot \eta] = [\eta \cdot \Psi_1(x)].
$$
Recall that this is enough to determine $\kappa$ by Lemma \ref{maps}. One checks easily that this action of $N\autcat(\locm)$ on $\locm$ is associative, and hence induces by adjunction a simplicial map
$$
N\autcat(\locm) \Right2{\tilde{\kappa}} \underline{\aut}(\locm).
$$
By definition of $N\autcat(\locm)$ and $\underline{\aut}(\locm)$, together with Lemma \ref{homotopies}, it is clear that $\tilde{\kappa}$ is injective. By Proposition \ref{nhom} it also follows easily that $\tilde{\kappa}$ is surjective.
\end{proof}

For simplicity, we will identify from now on the simplicial groups $\underline{\aut}(\locm)$ and $N \autcat(\locm)$. Next we give a description of the homotopy type of $N \autcat(\locm)$, or rather of its \emph{classifying space} $B\autcat(\locm)$, in terms of the partial group $\locm$. This classifying space is obtained via the so-called \emph{$W$-construction}, which we omit in these notes for the sake of brevity. The interested reader is referred to \cite[Section IV.5]{BGM} for further details. 

\begin{thm}\label{autX}

Let $\locm$ be a partial group. Then, the following holds
\begin{enumerate}[(i)]

\item for each connected component $\cala_i \subseteq N\autcat(\locm)$ there is an equivalence $\cala_i \simeq BZ(\locm)$;

\item there is a fibration $B^2Z(\locm) \longrightarrow B\autcat(\locm) \longrightarrow B\Out(\locm)$;

\item there is an exact sequence of groups $\{1\} \to Z(\locm) \to N(\locm) \to \Aut(\locm) \to \Out(\locm) \to \{1\}$.

\end{enumerate}

\end{thm}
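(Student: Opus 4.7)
I would prove (iii) first as an essentially algebraic reformulation of Lemma \ref{norm-cent-inn}, then use it to identify the connected components of $N\autcat(\locm)$ for (i), and finally deduce the fibration in (ii) by viewing $N\autcat(\locm)\cong \underline{\aut}(\locm)$ as a simplicial group (Theorem \ref{AutX0}) and passing to classifying spaces.

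\textbf{Part (iii).} Send $\eta \in N(\locm)$ to its induced left-conjugation automorphism $c_\eta\colon [x]\mapsto [\9{\eta}x]$, which exists and is a homomorphism by Definition \ref{definorm}. Its kernel is $Z(\locm)$ by the definition of the center, and Lemma \ref{norm-cent-inn}(iii) identifies its image with $\Inn(\locm)$. Splicing with the tautological quotient $\Aut(\locm)\twoheadrightarrow \Aut(\locm)/\Inn(\locm)=\Out(\locm)$ gives the exact sequence.

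\textbf{Part (i).} I would first observe that $\autcat(\locm)$ is a groupoid: formula (\ref{inverse1}) supplies an inverse to every morphism. Two objects $\Psi_0,\Psi_1\in\Aut(\locm)$ are connected by a morphism iff there is $\eta\in N(\locm)$ with $\Psi_0 = c_\eta\circ\Psi_1$, i.e., iff $\Psi_0$ and $\Psi_1$ project to the same class in $\Out(\locm)$ by (iii). At any fixed object $\Psi$, the automorphism group in $\autcat(\locm)$ consists of those $\eta\in N(\locm)$ with $c_\eta=\Id$, and this is precisely $Z(\locm)$. Hence every connected component $\cala_i$ is the nerve of a connected groupoid all of whose isotropy groups equal $Z(\locm)$; the inclusion of any one-object subgroupoid is then an equivalence of groupoids, yielding $\cala_i\simeq BZ(\locm)$.

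\textbf{Part (ii) and main obstacle.} By Theorem \ref{AutX0}, $N\autcat(\locm)$ carries the structure of a simplicial group. The identity component $\cala_e$ of any simplicial group is a normal simplicial subgroup (the group operation preserves path components), and the quotient $N\autcat(\locm)/\cala_e$ is the discrete group of components, which by (i) and (iii) is $\Out(\locm)$. The resulting short exact sequence of simplicial groups
$$
1 \longrightarrow \cala_e \longrightarrow N\autcat(\locm) \longrightarrow \Out(\locm) \longrightarrow 1
$$
induces, via the $W$-construction, a principal fibration $B\cala_e \to B\autcat(\locm) \to B\Out(\locm)$. Since $Z(\locm)$ is abelian by Lemma \ref{norm-cent-inn}(ii) and $\cala_e\simeq BZ(\locm)$ by (i), we have $B\cala_e\simeq B^2 Z(\locm)$, giving the claimed fibration. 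The main delicate point I foresee is ensuring that the identity component really is a simplicial subgroup (not merely a simplicial subset) and that the $W$-functor converts this short exact sequence into an honest Serre fibration; this is a classical fact about simplicial groups, but it is worth spelling out since the rest of the argument is essentially bookkeeping.
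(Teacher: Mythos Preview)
Your proposal is correct and follows essentially the same approach as the paper: identify $\pi_0$ with $\Out(\locm)$ and the isotropy groups in the groupoid $\autcat(\locm)$ with $Z(\locm)$, deduce (i), then pass to classifying spaces for (ii), and read (iii) off Lemma \ref{norm-cent-inn}. The only differences are cosmetic: the paper proves (i)--(ii)--(iii) in that order and argues (ii) by simply observing that $B\autcat(\locm)$ has fundamental group $\Out(\locm)$ and universal cover $B^2Z(\locm)$, whereas you reorder to (iii)--(i)--(ii) and spell out (ii) via the short exact sequence of simplicial groups and the $W$-construction --- a more careful version of the same idea.
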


\begin{proof}

The connected components of $\autcat(\locm)$ correspond to $\Out(\locm)$. Also, for each automorphism $\Psi \in \Aut(\locm)$, $\Aut_{\autcat(\locm)}(\Psi) = Z(\locm)$ by definition of $\autcat(\locm)$. Thus, the nerve $N\autcat(\locm)$ is a simplicial group with
$$
\Out(\locm) = \pi_0(N\autcat(\locm)),
$$
and each component is equivalent to $BZ(\locm)$. This proves points (i). Furthermore, the classifying space of $N\autcat(\locm)$, $B\autcat(\locm)$, has fundamental group $\Out(\locm)$ and universal cover $B^2Z(\locm)$, that is, there is a fibration
$$
B^2Z(\locm) \longrightarrow B\autcat(\locm) \longrightarrow B\Out(\locm)
$$
and this proves point (ii). Finally, the exact sequence in (iii) is a direct consequence of Lemma \ref{norm-cent-inn} (iii), since $\Inn(\locm) \cong N(\locm)/Z(\locm)$.
\end{proof}

To finish this section we analyze the automorphism complex of a locality $(\locl, \Delta, S)$. By definition, $\locl$ contains the $p$-group $S$ as a subgroup, and we can define
$$
\Aut_(\locl;S) \defin \{\Psi \in \Aut(\locl) \,\, \big| \,\, \Psi(S) = S\} \leq \Aut(\locl).
$$
We can define a category $\autcat(\locl;S)$ by means of $\Aut(\locl;S)$ and $N_{\locl}(S)$ as we did with $\autcat(\locl)$.

\begin{lmm}\label{autloc0}

Let $\underline{\aut}(\locl;S) \subseteq \underline{\aut}(\locl)$ be the simplicial subset with vertex set $\Aut(\locl;S)$. Then there is an equivalence $N \autcat(\locl;S) \cong \underline{\aut}(\locl;S)$. In particular,
$$
\pi_0(\underline{\aut}(\locl;S)) \cong \Out(\locl;S) \quad \mbox{and} \quad \pi_i(\underline{\aut}(\locl;S)) \cong \left\{
\begin{array}{ll}
Z(\locl) & \mbox{if } i = 1;\\
0 & \mbox{if } i \geq 2.
\end{array}
\right.
$$

\end{lmm}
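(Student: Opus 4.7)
The plan is to adapt the arguments of Theorem \ref{AutX0} and Theorem \ref{autX} to the subcategory $\autcat(\locl;S)$, with the locality structure entering only to identify the $1$-simplices of the automorphism complex with elements of $N_{\locl}(S)$.

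First I would restrict the simplicial map $\tilde{\kappa}\colon N\autcat(\locl) \to \underline{\aut}(\locl)$ of Theorem \ref{AutX0} to $N\autcat(\locl;S)$ and show its image lies in $\underline{\aut}(\locl;S)$. This is automatic on vertices by the definition of $\autcat(\locl;S)$, and a face of a simplex restricts to a vertex that is again in $\Aut(\locl;S)$. Injectivity is inherited from $\tilde{\kappa}$. For surjectivity, take $F \in \underline{\aut}(\locl;S)_n$. By Proposition \ref{nhom}, $F$ is determined by its vertex restrictions $\Psi_0,\ldots,\Psi_n$ (all of which lie in $\Aut(\locl;S)$ by hypothesis) together with the elements $[\eta_i]=F(v_{\locl},\iota_i)\in N(\locl)$ guaranteed by Lemma \ref{N1homotopies}. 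The key point is to verify that each $\eta_i$ actually belongs to $N_{\locl}(S)$: the homotopy encoded by $[\eta_i]$ relates $\Psi_{i-1}$ and $\Psi_i$ by formula \eqref{conjeta}, so left conjugation by $\eta_i$ carries $\Psi_i(S)=S$ to $\Psi_{i-1}(S)=S$; combined with Lemma \ref{2.5Ch}(iii) and the fact that $S$ is finite, this forces $S\subseteq\dd(\eta_i)$ and $S^{\eta_i}=S$.

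For the homotopy groups, I would argue as in Theorem \ref{autX}. By construction, the connected components of $N\autcat(\locl;S)$ are in bijection with $\Out(\locl;S)$. At every vertex $\Psi\in\Aut(\locl;S)$ the loop group is
$$
\{\eta\in N_{\locl}(S)\mid \text{left conjugation by }\eta\text{ is trivial on }\locl\}.
$$
Every such $\eta$ lies in $Z(\locl)$ by definition; conversely any $\eta\in Z(\locl)$ acts trivially on $\locl$, hence on $S$, and therefore (by the argument of the previous paragraph applied to $\Psi_i=\Psi_{i-1}=\Psi$) lies in $N_{\locl}(S)$. Hence this loop group is exactly $Z(\locl)$, and since $N\autcat(\locl;S)$ is the nerve of a groupoid, each component is equivalent to $BZ(\locl)$, yielding the stated description of $\pi_i$.

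The main obstacle is the technical verification in the second paragraph that the $\eta_i$ coming from a simplex of $\underline{\aut}(\locl;S)$ genuinely normalize $S$ in the locality sense: one must check that the admissible products $[\eta_i|s|\eta_i^{-1}]$ exist for every $s\in S$, invoking the objective partial group axioms \ref{opgroup} together with the identification of $\Psi_{i-1}$ and $\Psi_i$ as $S$-preserving. Once this is in place, the identification $\pi_1\cong Z(\locl)$ is similarly a straightforward consequence of how the center sits inside $N_{\locl}(S)$.
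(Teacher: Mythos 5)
Your proposal is correct and follows exactly the route the paper intends (the paper in fact omits the proof, asserting the lemma as the evident restriction of Theorems \ref{AutX0} and \ref{autX} to the $S$-preserving subcategory): restrict $\tilde{\kappa}$, use Proposition \ref{nhom} and Lemma \ref{N1homotopies} for surjectivity, and identify components and loop groups of the groupoid $\autcat(\locl;S)$. The one technical point you flag is handled correctly, though it is slightly simpler than you suggest: the existence of the products $[\eta_i|s|\eta_i^{-1}]$ is already guaranteed by $[\eta_i]\in N(\locl)$ via Definition \ref{definorm}(i), so only the identity $c_{\eta_i}=\Psi_{i-1}\circ\Psi_i^{-1}$ (which preserves $S$) is needed to conclude $\eta_i\in N_{\locl}(S)$, without appeal to finiteness or the objectivity axioms.
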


If we set $\Out(\locl;S) \defin \pi_0(\underline{\aut}(\locl;S))$ then it is an easy exercise to check that there is an isomorphism $\Out(\locl;S) \cong \Aut(\locl;S)/(N_{\locl}(S)/Z(\locl))$, and hence there is an exact sequence
$$
\{1\} \to Z(\locl) \Right4{} N_{\locl}(S) \Right4{} \Aut(\locl;S) \Right4{} \Out(\locl;S) \to \{1\}.
$$
The reader can compare this exact sequence with that in Theorem \ref{autX} (iii) and that in \cite[Lemma 1.14]{AOV}.

\begin{lmm}\label{autloc1}

There is a commutative diagram of exact sequences
$$
\xymatrix{
\{1\} \ar[r] & Z(\locl) \ar[r] &N(\locl) \ar[r] & \Aut(\locl) \ar[r] & \Out(\locl) \ar[r] & \{1\} \\
\{1\} \ar[r] & Z(\locl) \ar@{=}[u] \ar[r] & N_{\locl}(S) \ar[u]^{\incl} \ar[r] & \Aut(\locl;S) \ar[u]_{\incl} \ar[r] & \Out(\locl;S) \ar[u] \ar[r] & \{1\}
}
$$
where all the vertical arrows are monomorphisms.

\end{lmm}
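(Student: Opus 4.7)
The plan is to take the two exact sequences already in hand, verify the vertical maps are well-defined injections, and check commutativity of the three squares. The top row is Theorem \ref{autX}(iii) applied to the partial group $\locl$, and the bottom is the four-term sequence obtained from the isomorphism $\Out(\locl;S) \cong \Aut(\locl;S)/(N_\locl(S)/Z(\locl))$ noted just above the statement. The identity on $Z(\locl)$ and the set inclusion $\Aut(\locl;S) \hookrightarrow \Aut(\locl)$ are immediate. The remaining vertical maps $N_\locl(S) \to N(\locl)$ and $\Out(\locl;S) \to \Out(\locl)$ require real argument, and the conjugation maps $\eta \mapsto c_\eta$ defining $N_\locl(S) \to \Aut(\locl;S)$ and $N(\locl) \to \Aut(\locl)$ are given by the same formula, so the middle square commutes tautologically.

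To prove $N_\locl(S) \subseteq N(\locl)$, I would fix $\eta \in N_\locl(S)$ and verify conditions (i) and (ii) of Definition \ref{definorm}. Since $N_\locl(S)$ is a group in which $\eta$ acts bijectively on $S$, the equality $S^\eta = S$ holds. For any $x \in \locl$ the standard subgroups $R_x, L_x \in \Delta$ provided by Chermak's axiomatics satisfy $R_x, L_x \leq S$; hence axiom (O2) applied with conjugators $\eta$ and $\eta^{-1}$ yields that $\9\eta R_x$ and $\9{\eta^{-1}} L_x$ also lie in $\Delta$. The chain $(\9\eta R_x, R_x, L_x, \9{\eta^{-1}} L_x)$ then witnesses $(\eta, x, \eta^{-1}) \in \dd(\locl)$, and the resulting mapping $[x] \mapsto [\eta\cdot x\cdot \eta^{-1}]$ extends to a simplicial automorphism of $\locl$ with two-sided inverse $c_{\eta^{-1}}$; this is condition (i). For condition (ii), starting from a $\Delta$-chain $Y_0, \ldots, Y_n$ witnessing a simplex $\omega = [x_1|\ldots|x_n]$, the chain $(\9\eta Y_0, \9\eta Y_1, \ldots, \9\eta Y_i, Y_i, Y_{i+1}, \ldots, Y_n)$ --- again in $\Delta$ by (O2) --- witnesses $\omega_i \in \locl_{n+1}$, and the equalities $\Pi(\omega_0) = \ldots = \Pi(\omega_n)$ reduce to $\dd$-associativity (Lemma \ref{2.2Ch}(ii)) by cancelling the inserted block $\eta \cdot \eta^{-1}$ between consecutive $\omega_i$.

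For the injectivity of $\Out(\locl;S) \to \Out(\locl)$, assume $\Psi \in \Aut(\locl;S)$ equals $c_\eta$ for some $\eta \in N(\locl)$. Condition (i) of Definition \ref{definorm} provides $[\eta \cdot s \cdot \eta^{-1}] \in \locl$ for every $s \in S$; since $\Psi$ preserves $S$, this element lies in $S$, so $\eta \in N_\locl(S)$ and $\Psi$ is already inner in $\Aut(\locl;S)$. Commutativity of the left and right squares is then routine: the left because both paths are inclusions into $N(\locl)$, and the right because both rows take the same quotient by inner automorphisms. The main obstacle is condition (ii) of Definition \ref{definorm} for $\eta \in N_\locl(S)$: translating the algebraic normalizer of Section \ref{partial} into the simplicial normalizer of Definition \ref{definorm} requires producing a coherent $\Delta$-chain for every simplex $\omega_i$, and this is the place where axiom (O2) of objective partial groups does the heavy lifting.
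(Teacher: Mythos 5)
The paper states this lemma without proof, treating it as immediate from Theorem \ref{autX}(iii) and the exact sequence displayed just before the statement; your write-up supplies precisely the verifications being left implicit (the inclusion $N_{\locl}(S)\subseteq N(\locl)$ via (O2)-chains, and the injectivity of $\Out(\locl;S)\to\Out(\locl)$ by showing that an $S$-preserving inner automorphism is conjugation by an element of $N_{\locl}(S)$), and the argument is correct. The only bookkeeping nit is the orientation of your witnessing chain for $(\eta,x,\eta^{-1})$: with the convention $\9{u_i}X_i=X_{i-1}$ used elsewhere in the paper the last entry should be $\9{\eta}L_x$ rather than $\9{\eta^{-1}}L_x$, a sign issue that does not affect the substance of the proof.
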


\begin{lmm}

Let $\g = \ploc$ be a $p$-local finite group and let $(\locl, \Delta, S)$ be the associated locality. Then, there are isomorphisms
$$
\Aut(\locl;S) \cong \Aut_{\typ}^{I}(\LL) \qquad\mbox{and} \qquad \Out(\locl;S) \cong \Out_{\typ}(\LL).
$$

\end{lmm}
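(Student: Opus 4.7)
The plan is to exploit the Chermak correspondence between localities and transporter systems (mentioned right after Definition \ref{locality}, following \cite[Appendix A]{Chermak}), under which the proper locality $(\locl, \Delta, S)$ corresponds bijectively to the centric linking system $\LL$. Under this correspondence, the set of morphisms $\Mor_{\LL}(P, Q)$ identifies with $N_{\locl}(P, Q) = \{u \in \locl \mid P \subseteq \dd(u), \, P^u \leq Q\}$, the structural functor $\varepsilon$ corresponds to the inclusion of subgroups of $S$ into $\locl$, and composition in $\LL$ corresponds to the product $\Pi$ in $\locl$. So everything comes down to translating the two conditions ``isotypical'' and ``inclusion-preserving'' into partial-group language.

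First I would construct a map $\Aut(\locl; S) \to \Aut_{\typ}^I(\LL)$ as follows. Given $\Psi \in \Aut(\locl; S)$, since $\Psi$ restricts to a group automorphism of $S$ it permutes the subgroups of $S$; because $\Delta$ is closed under $\FF$-conjugacy and overgroups and because $\Psi$ preserves the partial product (hence conjugation), $\Psi(\Delta) = \Delta$. On morphism sets, $\Psi$ sends $N_{\locl}(P,Q)$ to $N_{\locl}(\Psi(P), \Psi(Q))$, producing a functor $\widehat\Psi \colon \LL \to \LL$. Since each element $x \in P$ maps via $\varepsilon_P$ to the morphism $x \in \Aut_{\LL}(P) = N_{\locl}(P, P)$, and $\Psi(P) = \Psi(P)$ under $\widehat\Psi$, we get $\widehat\Psi(\varepsilon_P(x)) = \varepsilon_{\Psi(P)}(\Psi(x))$, so $\widehat\Psi$ is isotypical. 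Inclusions correspond to the unit $1 \in \locl$ viewed as an element of $N_{\locl}(P, Q)$ when $P \leq Q$, and $\Psi(1) = 1$ for any partial group morphism, so $\widehat\Psi$ is inclusion-preserving. The assignment $\Psi \mapsto \widehat\Psi$ is clearly a group homomorphism.

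Next I would construct the inverse. Given $\Phi \in \Aut_{\typ}^I(\LL)$, its action on $\coprod_{P,Q} \Mor_{\LL}(P,Q)$ need not directly give a set map $\locl \to \locl$ because each element of $\locl$ can appear in several morphism sets; however, the inclusion-preserving condition ensures that the maps $\Phi_{P,Q} \colon N_{\locl}(P,Q) \to N_{\locl}(\Phi(P), \Phi(Q))$ are compatible under the restriction maps induced by composition with inclusions, so they glue to a well-defined bijection $\widetilde\Phi \colon \locl \to \locl$. Functoriality of $\Phi$ together with the interpretation of $\Pi$ as composition in $\LL$ (via $\Pi$ and the $(r+s)$-product operator from Lemma \ref{simprop}) gives that $\widetilde\Phi$ is a partial group morphism. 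The isotypical condition applied to $P = S$ gives $\widetilde\Phi(S) = S$, so $\widetilde\Phi \in \Aut(\locl; S)$. The two assignments are mutually inverse by construction, proving the first isomorphism.

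For the second isomorphism, I would combine the first isomorphism with Lemma \ref{autloc0}, which provides the exact sequence
$$
\{1\} \to Z(\locl) \Right4{} N_{\locl}(S) \Right4{} \Aut(\locl;S) \Right4{} \Out(\locl;S) \to \{1\},
$$
and compare with the definition $\Out_{\typ}(\LL) = \Aut_{\typ}^I(\LL)/\{c_\varphi \mid \varphi \in \Aut_{\LL}(S)\}$. Since under the Chermak correspondence $N_{\locl}(S) = \Aut_{\LL}(S)$, it suffices to check that the map $N_{\locl}(S) \to \Aut(\locl; S)$ (which sends $\eta \in N_{\locl}(S)$ to the inner automorphism of $\locl$ it induces) agrees with $\varphi \mapsto c_\varphi$ under our identification. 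This is a direct unwinding of the two definitions, using formula (\ref{action1}) for the partial-group side and the explicit formula for $c_\varphi$ from Section \ref{Sploc}. Quotienting both sides by the image of $N_{\locl}(S) = \Aut_{\LL}(S)$ then yields $\Out(\locl;S) \cong \Out_{\typ}(\LL)$.

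The main obstacle will be the construction of the inverse map $\Phi \mapsto \widetilde\Phi$: an element $u \in \locl$ typically lies in $N_{\locl}(P,Q)$ for many pairs $(P,Q)$, so one must verify that the inclusion-preserving condition implies that the values $\Phi_{P,Q}(u)$ are consistent across all such choices, yielding a single image in $\locl$. Equivalently, one must show that the natural presentation of $\locl$ as the colimit (over pairs $P \leq Q$) of the sets $N_{\locl}(P,Q)$ is preserved under any $\Phi \in \Aut_{\typ}^I(\LL)$. Once that consistency is established the rest is formal.
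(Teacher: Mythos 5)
The paper does not actually prove this lemma: it is stated without proof, implicitly as a corollary of Chermak's correspondence between (proper) localities and transporter/linking systems cited after Definition \ref{locality}. Your proposal fills in exactly the argument that correspondence suggests, and the overall structure --- identify $\Mor_{\LL}(P,Q)$ with $N_{\locl}(P,Q)$, translate ``isotypical'' into preservation of the subsets $P\subseteq\locl$ and ``inclusion-preserving'' into $\Phi(1)=1$, build the inverse by gluing the maps $\Phi_{P,Q}$ along restrictions using the maximal-extension property (the content of Lemma \ref{maxrep}, i.e.\ \cite[A.8]{Chermak}), and then match $N_{\locl}(S)=\Aut_{\LL}(S)$ with the subgroup $\{c_\varphi\}$ to get the statement on outer automorphism groups --- is correct and is what the author evidently has in mind. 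You also correctly identify the genuine sticking point, namely the consistency of $\Phi_{P,Q}(u)$ over all pairs $(P,Q)$ containing $u$, and the right way to resolve it.

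One step is justified too quickly: the claim that $\Psi(\Delta)=\Delta$ for $\Psi\in\Aut(\locl;S)$. Closure of $\Delta$ under $\FF$-conjugacy and overgroups does \emph{not} by itself force invariance under the automorphism of $S$ induced by $\Psi$ (for a general locality, $\Delta$ is not even determined by the pair $(\locl,S)$: if $S\nsg G$, both $\Delta=\{P\le S\}$ and $\Delta=\{S\}$ make $G$ into a locality with the same underlying partial group). What saves you here is the hypothesis that $(\locl,\Delta,S)$ is the locality of a $p$-local finite group, so $\Delta=\Ob(\LL)$ is exactly the set of $\FF$-centric subgroups; since $\Psi$ preserves $\dd(\locl)$ and the product, its restriction to $S$ is a fusion-preserving automorphism of $(S,\FF_\Delta(\locl))$, and fusion-preserving automorphisms permute the $\FF$-centric subgroups. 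With that replacement your argument goes through.
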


The following is an immediate consequence of Lemma \ref{autloc0} and \cite[Theorem 7.1]{BLO3}.

\begin{cor}\label{autloc2}

Let $\g = \ploc$ be a $p$-local finite group and let $(\locl, \Delta, S)$ be its associated locality. Then there are homotopy equivalences of topological monoids
$$
|\underline{\aut}(\locl;S)| \simeq |\autcat(\locl;S)| \simeq \Aut(B\g).
$$

\end{cor}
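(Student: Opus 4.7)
The proof splits into the two asserted equivalences and both are short. The first, $|\underline{\aut}(\locl;S)| \simeq |\autcat(\locl;S)|$, is immediate from Lemma \ref{autloc0}: that lemma provides an isomorphism of simplicial groups $N\autcat(\locl;S) \cong \underline{\aut}(\locl;S)$, and since by convention $|\autcat(\locl;S)|$ denotes the realization of the nerve, passing to geometric realization yields the claimed equivalence of topological monoids.

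For the second equivalence $|\autcat(\locl;S)| \simeq \Aut(B\g)$, I would invoke the lemma stated immediately before Corollary \ref{autloc2}, which identifies $\Aut(\locl;S) \cong \Aut_{\typ}^{I}(\LL)$ and $\Out(\locl;S) \cong \Out_{\typ}(\LL)$. Together with the identification $Z(\locl) \cong Z(\g)$ coming from Chermak's correspondence between proper localities and centric linking systems, this transports the monoidal category $\autcat(\locl;S)$ (with composition described in (\ref{productdim1})) to an equivalent category built from $\LL$: the objects are the isotypical, inclusion-preserving self-equivalences of $\LL$, and the morphisms from $\Psi$ to $\Phi$ are elements of $N_{\LL}(S)$ realizing natural transformations between them by conjugation. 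This is precisely the topological monoid used to model $\Aut(B\g)$ in \cite[Theorem 7.1]{BLO3}, and applying that theorem closes the argument.

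The principal obstacle, modest as it is, lies in verifying compatibility between the two monoidal structures (the one on $\autcat(\locl;S)$ given in (\ref{productdim1}) and the composition of natural transformations on the linking system side), bearing in mind the distinct left/right conjugation conventions highlighted at the start of Section \ref{Maps}. Once this bookkeeping is done, Proposition \ref{auttyp} and Lemma \ref{autloc0} serve as a useful sanity check: both $|\autcat(\locl;S)|$ and $\Aut(B\g)$ have $\pi_0 \cong \Out_{\typ}(\LL)$, $\pi_1 \cong Z(\g)$, and trivial higher homotopy groups.
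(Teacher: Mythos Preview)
Your proof is correct and follows essentially the same route as the paper, which states the result as an immediate consequence of Lemma \ref{autloc0} and \cite[Theorem 7.1]{BLO3}. Your additional unpacking of the intermediate identifications (via the lemma preceding the corollary and the center computation) is precisely what makes ``immediate consequence'' honest, and the compatibility check you flag is indeed the only place requiring care.
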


%%%%%%%%%%%%%%%%%%%%%%%%%%%%%%%%%%%%%%%%%%%%%%%%%%%%%%%%%%%%%%%%%%%%%%%%%%%%%%%%%%%%

\section{Extensions of partial groups}\label{FBPG}

In the previous sections we have seen how partial groups can be treated as simplicial objects. Following this point of view, we now show that the total space of a fibre bundle where both base and fibre are partial groups is always a partial group. This will lead to the definition of extension of partial groups.

By \cite[Proposition IV.3.2]{BGM} we know that every fibre bundle is strongly equivalent to an RTCP. Thus, we start by studying RTCPs of partial groups, and in particular we start by describing twisting functions in this setting. Let $\locm'$ and $\locm''$ be partial groups. Recall that a twisting function is a collection
$$
\{\phi_n \colon \locm''_n \to \underline{\aut}_{n-1}(\locm') = N_{n-1} \autcat(\locm')\}_{n \geq 1},
$$
of functions satisfying conditions ($\tau_1$)-($\tau_4$) in Definition \ref{TCP}.

\begin{defi}\label{twistpair}

A \emph{twisting pair} is a pair of functions $t \colon \locm''_1 \to \Aut(\locm')$ and $\eta \colon \locm''_2 \to N(\locm')$, satisfying the following conditions:
\begin{enumerate}[(a)]

\item $t(1) = \Id$ and $\eta(1,g) = [1] = \eta(g,1)$ for all $[g] \in \locm''_1$; and

\item (the cocycle formula) for each $[g|h|k] \in \locm''_3$,
\begin{equation}\label{coboundary2}
\Psi_{g}(\eta(h,k)) \cdot \eta(g, hk) = \eta(g, h) \cdot \eta(gh,k).
\end{equation}

\end{enumerate}

\end{defi}

\begin{lmm}\label{twistingn}

A twisting pair $(t, \eta)$ determines a a twisting function, $\{\phi_n \colon \locm''_n \to \underline{\aut}_{n-1}(\locm') = N_{n-1} \autcat(\locm')\}_{n \geq 1}$ with
$$
\begin{array}{l}
\phi_1 \colon [g] \in \locm''_1 \longmapsto t(g) \in \Aut(\locm') = \underline{\aut}_0(\locm')\\
\phi_2 \colon [g|h] \in \locm''_2 \longmapsto \big(t(g) \Left3{\eta(g,h)} t(gh) \circ t(h)^{-1} \big) \in \Mor(\autcat(\locm')) = \underline{\aut}_1(\locm')\\
\end{array}
$$
Moreover, every twisting function $\{\phi_n \colon \locm''_n \to \underline{\aut}_{n-1}(\locm')\}_{n \geq 1}$ is of this form.

\end{lmm}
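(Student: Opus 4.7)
The plan is to verify the correspondence between twisting pairs and twisting functions by induction on simplicial degree, exploiting the fact that $N\autcat(\locm')$ is the nerve of a category and hence $2$-coskeletal.

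First I would check that the stated $\phi_1$ and $\phi_2$ are well defined. The element $\eta(g,h) \in N(\locm')$ is to be read as a $1$-simplex of $N\autcat(\locm')$ with source $t(g)$ and target $t(gh) \circ t(h)^{-1}$, which is meaningful because, by Lemma \ref{homotopies} and Definition \ref{calautM}, morphisms of $\autcat(\locm')$ are precisely labelled by elements of $N(\locm')$. A direct computation then confirms $(\tau_1)$ and $(\tau_2)$ at $n=2$: the former identifies $t(g)$ with the source of $\phi_2([g|h])$, while the latter reduces to the tautology $(t(gh) \circ t(h)^{-1}) \cdot t(h) = t(gh)$. The degeneracy conditions $(\tau_3)$ and $(\tau_4)$ at $n=1$ come from condition~(a) of Definition~\ref{twistpair}, i.e.\ $t(1) = \Id$ and $\eta(1,g) = \eta(g,1) = 1$.

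I would then extend to higher $n$ by induction. For $b = [g_1|\ldots|g_n] \in \locm''_n$, all the face maps $d_0, \ldots, d_{n-1}$ of the prospective $(n-1)$-simplex $\phi_n(b)$ are prescribed by $(\tau_1)$ and $(\tau_2)$: namely, $d_{i-1}(\phi_n(b)) = \phi_{n-1}(d_i(b))$ for $2 \leq i \leq n$, together with $d_0(\phi_n(b)) = \phi_{n-1}(d_1(b)) \cdot \phi_{n-1}(d_0(b))^{-1}$. Since the nerve of a category is $2$-coskeletal, any compatible collection of faces extends uniquely to a simplex, so existence of $\phi_n(b)$ reduces to verifying the simplicial identities among the prescribed faces. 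For $n = 3$ these collapse to a single nontrivial equation relating $\eta(g,h)$, $\eta(gh,k)$, $\eta(g,hk)$ and the action of $t(g)$ on $\eta(h,k)$; an explicit unwinding using the product formula~(\ref{productdim1}) and the inverse formula~(\ref{inverse1}) identifies this equation with the cocycle~(\ref{coboundary2}). For $n > 3$ the required compatibilities follow formally from the $n=3$ case and the inductive hypothesis applied to the $(n-1)$-faces.

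Conversely, given a twisting function $\{\phi_n\}_{n \geq 1}$, I would set $t = \phi_1$ and define $\eta(g,h) \in N(\locm')$ to be the label of the morphism $\phi_2([g|h])$ in $\autcat(\locm')$. Conditions $(\tau_1)$ and $(\tau_2)$ at $n=2$ force $\phi_2([g|h])$ to have the form $\big(t(g) \Left3{\eta(g,h)} t(gh) \circ t(h)^{-1}\big)$ exactly as in the statement, and $(\tau_3),(\tau_4)$ at $n=1$ produce the normalization in~(a); applying $(\tau_1)$ and $(\tau_2)$ to $\phi_3([g|h|k])$ and comparing the two resulting descriptions of its $d_0$-face recovers the cocycle~(\ref{coboundary2}). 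The main obstacle throughout is the consistency check in the inductive construction of $\phi_n$: the faces assigned by $(\tau_1)$ and $(\tau_2)$ must satisfy all the simplicial identities $d_i d_j = d_{j-1} d_i$ in order to assemble into a genuine simplex, and it is precisely the cocycle, propagated through the $2$-coskeletality of the nerve, that guarantees this in every degree.
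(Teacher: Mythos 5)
Your proposal is correct, but it proves the lemma by a genuinely different mechanism than the paper. The paper writes down explicit closed-form formulas for every $\phi_n$ -- namely $\alpha_j = t(g_1\cdots g_j)\circ t(g_2\cdots g_j)^{-1}$ and $\eta_k = \eta(g_1,g_2\cdots g_k)^{-1}\cdot\eta(g_1,g_2\cdots g_{k+1})$ -- and then verifies $(\tau_1)$--$(\tau_4)$ against these formulas degree by degree, with the cocycle identity entering only in the verification of $(\tau_2)$. You instead exploit the $2$-coskeletality of $N\autcat(\locm')$: the faces of the prospective simplex $\phi_n(b)$ are forced by $(\tau_1)$ and $(\tau_2)$, existence and uniqueness of $\phi_n(b)$ reduce to the simplicial compatibility of those faces, and the only non-formal compatibility occurs at $n=3$, where (after using the conjugation relation $(t(gh)\circ t(h)^{-1})(y)=\eta(g,h)^{-1}\cdot t(g)(y)\cdot\eta(g,h)$ coming from Lemma \ref{homotopies}) it is exactly the cocycle (\ref{coboundary2}). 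This makes the ``every twisting function is of this form'' half essentially automatic, whereas the paper has to re-derive the form of $\phi_n$ inductively. Two small caveats: you should note that $(\tau_3)$ and $(\tau_4)$ also need checking in degrees $n\geq 2$, which follows from the same uniqueness argument (the faces of $\phi_{n+1}(s_i(b))$ and of $s_{i-1}(\phi_n(b))$ agree); and your argument, unlike the paper's, does not produce the explicit formulas (T1), (T2) and (\ref{twistingninv}), which the paper relies on later (e.g.\ in the proof of Theorem \ref{extension1}), so you would eventually have to extract them anyway.
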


\begin{proof}

Let $(t, \eta)$ be a twisting pair. For each $n \geq 1$ and each $[g_1|\ldots|g_n] \in \locm''$, define
$$
\phi_n[g_1|\ldots|g_n] = \big(\alpha_0 \Left2{\eta_1} \ldots \Left2{\eta_{n-1}} \alpha_{n-1} \big)
$$
as follows. Set $\alpha_0 = t(g_1)$, $\eta_1 = \eta(g_1, g_2)$, and set
\begin{itemize}

\item[(T1)] $\alpha_j = t(g_1\ldots g_j) \circ t(g_2 \ldots g_j)^{-1}$, for $j = 2, \ldots, n-1$; and

\item[(T2)] $\eta_k = \eta(g_1, g_2 \ldots g_k)^{-1} \cdot \eta(g_1, g_2 \ldots g_{k+1})$, for $k = 2, \ldots n-1$.

\end{itemize}
We claim that these formulas define a twisting function. Fix $\omega = [g_1|\ldots|g_n] \in \locm''_n$, with,
$$
\phi_n(\omega) = \big(\alpha_0 \Left2{\eta_1} \ldots \Left2{\eta_{n-1}} \alpha_{n-1} \big)
$$
as above. We check conditions ($\tau_1$)-($\tau_4$) of twisting functions.

\textbf{Condition ($\tau_1$).} We have to check that $\phi_{n-1}(d_i(\omega)) = d_{i-1}(\phi_n(\omega))$. Note that
$$
\begin{aligned}
d_{i-1}(\phi_n(\omega)) & = d_{i-1}\big(\alpha_0 \Left2{\eta_1} \ldots \Left2{\eta_{n-1}} \alpha_{n-1} \big) = \\
 & = \big(\alpha_0 \Left2{\eta_1} \ldots \Left2{} \alpha_{i-2} \Left3{\eta_{i-1} \eta_i} \alpha_i \Left2{} \ldots \Left2{\eta_{n-1}} \alpha_{n-1} \big).
\end{aligned}
$$
We may distinguish two cases, namely $i = 2$ and $i \geq 3$. When $i = 2$, we have $\eta_1 \eta_2 = \eta(g_1, g_2g_3)$, and formulas (T1) and (T2) imply that
$$
\begin{aligned}
\big(\alpha_0 \Left3{\eta_1 \eta_2} \alpha_2 \Left2{\eta_3} \ldots \Left2{\eta_{n-1}} \alpha_{n-1}\big) & = d_1\big(\alpha_0 \Left2{\eta_1} \ldots \Left2{\eta_{n-1}} \alpha_{n-1} \big) = \\
 & = \phi_{n-1}[g_1g_2|g_3|\ldots|g_n].
\end{aligned}
$$
When $i \geq 3$, we have $\eta_{i-1} \eta_i = \eta(g_1, g_2\ldots g_{i-1})^{-1} \eta(g_1, g_2\ldots g_{i+1})$, and condition ($\tau_1$) follows easily again from the formulas (T1) and (T2).

\textbf{Condition ($\tau_2$).} We check the case $n = 3$, with the general case being an obvious generalization. Given $[g|h|k] \in \locm''_3$, we have by definition
$$
\phi_3[g|h|k] = \big(t(g) \Left3{\eta(g,h)} t(gh) \circ t(h)^{-1} \Left9{\eta(g,h)^{-1} \eta(g, hk)} t(ghk) \circ t(hk)^{-1}\big).
$$
We have to show that $\phi_1[gh|k] = d_0(\phi_3[g|h|k]) \otimes \phi_1[h|k]$. We have
\begin{itemize}

\item $\phi_1[gh|k] = (t(gh) \Left3{\eta(gh, k)} t(ghk) \circ t(k)^{-1})$;

\item $d_0(\phi_3[g|h|k]) = (t(gh) \circ t(h)^{-1} \Left9{\eta(g,h)^{-1} \eta(g, hk)} t(ghk) \circ t(hk)^{-1})$; and

\item $\phi_1[h|k] = (t(h) \Left3{\eta(h,k)} t(hk) \circ t(k)^{-1})$.

\end{itemize}
Applying (\ref{productdim1}), we get
$$
d_0(\phi_3[g|h|k]) \otimes \phi_1[h|k] = (t(gh) \Left3{x} t(ghk) \circ t(k)^{-1}),
$$
where $x = (t(gh) \circ t(h)^{-1})(\eta(h,k)) \cdot \eta(g,h)^{-1} \cdot \eta(g, hk)$. To check that $x = \eta(gh, k)$, recall that $\eta(g,h)$ determines a homotopy $(t(g) \Left2{\eta(g,h)} t(gh) \circ t(h)^{-1})$, which implies that
$$
(t(gh) \circ t(h)^{-1})(y) = \eta(g,h)^{-1} \cdot t(g)(y) \cdot \eta(g,h)
$$
for all $y \in \locm'_1$. Thus, we have
$$
\begin{aligned}
X & = (t(gh) \circ t(h)^{-1})(\eta(h,k)) \cdot \eta(g,h)^{-1} \cdot \eta(g, hk) = \\
 & = \eta(g,h)^{-1} \cdot t(g)(\eta(h,k)) \cdot \eta(g,h) \cdot \eta(g,h)^{-1} \cdot \eta(g,hk) = \\
 & = \eta(g,h)^{-1} \cdot t(g)(\eta(h,k)) \cdot \eta(g,hk) = \eta(gh, k)
\end{aligned}
$$
where the last equality follows from property (b) of twisting pairs.

\textbf{Conditions ($\tau_3$) and ($\tau_4$).} These two conditions follow easily from the defining formulas (T1) and (T2), together with property (a) of twisting pairs.

To finish the proof we have to show that every twisting function is defined by formulas (T1) and (T2) for some twisting pair $(t, \eta)$. Thus, let $\{\phi_n \colon \locm''_n \to \underline{\aut}_{n-1}(\locm')\}_{n \geq 1}$ be a twisting function, and define functions
$$
t \colon \locm''_1 \Right2{} \Aut(\locm') \qquad \mbox{and} \qquad \eta \colon \locm''_2 \Right2{} N(\locm')
$$
as follows:
\begin{itemize}

\item $t(g) = \phi_1[g]$ for each $[g] \in \locm''_1$; and

\item $\eta(g,h)$ is given by $\phi_2[g|h] = (\alpha \Left2{\eta(g,h)} \beta)$ for each $[g|h] \in \locm''_2$.

\end{itemize}
We have to check that these two functions satisfy conditions (a) and (b) of twisting pairs, and that the original twisting function is determined by the pair $(t, \eta)$ according to formulas (T1) and (T2).

\textbf{Step 1.} The twisting function for $n = 2$ and property (a). Let $[g|h] \in \locm''_2$, and set
$$
\phi_2[g|h] = (\alpha \Left3{\eta(g,h)} \beta).
$$
We seek to determine $\alpha$ and $\beta$ in terms of the function $t$ and the simplex $[g|h]$. To do so, apply properties ($\tau_1$) and ($\tau_2$) of twisting functions to get
$$
\phi_1(d_0[g|h]) = \phi_1[h] = t(h) \qquad \phi_1(d_1[g|h]) = \phi_1[gh] = t(gh) \qquad \phi_1(d_2[g|h]) = \phi_1[g] = t(g).
$$
This way, $\alpha = d_1(\alpha \Left3{\eta(g,h)} \beta) = t(g)$ and $\beta = d_0(\alpha \Left3{\eta(g,h)} \beta) = t(gh) \circ t(h)^{-1}$, and
$$
\phi_2[g|h] = (t(g) \Left3{\eta(g,h)} t(gh) \circ t(h)^{-1}).
$$
Furthermore, by properties ($\tau_3$) and ($\tau_4$) of twisting functions, we have
\begin{align*}
\phi_2[g|1] = \phi_2(s_1[g]) = & s_0(\phi_1[g]) = s_0(t(g)) = (t(g) \Left2{1} t(g)) \\
\phi_2[1|g] = \phi_2(s_0[g]) = & (\Id \Left2{1} \Id)
\end{align*}
Thus, $\eta(g,1) = 1 = \eta(1,g)$. To see that $t(1) = \Id$, it is enough to apply property ($\tau2$) of twisting functions to the simplex $[1|1] \in \locm''_2$.

\textbf{Step 2.} Property (b) of twisting pairs and the formulae (T1) and (T2). To check that property (b) holds, we have to show that
$$
t(g)(\eta(h,k)) \cdot \eta(g,hk) = \eta(g,h) \cdot \eta(gh, k)
$$
for all $[g|h|k] in \locm''_3$. Fix elements $[g|h|k] in \locm''_3$, and set $\phi_3[g|h|k] = (\alpha_0 \Left2{\eta_1} \alpha_1 \Left2{\eta_2} \alpha_2)$. Applying property ($\tau_1$) with $i = 3, 2$ respectively, we get
\begin{align*}
(t(g) \Left3{\eta(g,h)} t(gh) \circ t(h)^{-1}) & = (\alpha_0 \Left2{\eta_1} \alpha_1) \\
(t(g) \Left3{\eta(g, hk)} t(ghk) \circ t(hk)^{-1}) & = (\alpha_0 \Left2{\eta_1 \eta_2} \alpha_2)\\
\end{align*}
Thus, $\phi_3[g|h|k] = (t(g) \Left3{\eta(g,h)} t(gh) \circ t(h)^{-1} \Left9{\eta(g,h)^{-1} \eta(g,hk)} t(ghk) \circ t(hk)^{-1})$.

On the other hand, applying property ($\tau_2$) of twisting functions, together with (\ref{productdim1}), we get the following.
\begin{align*}
(\alpha_1 \Left2{\eta_2} \alpha_2) & = d_0(\phi_3[g|h|k]) = \phi_2[gh|k] \otimes \phi_2[h|k]^{-1} = \\
 & = (t(gh) \Left3{\eta(gh,k)} t(ghk) \circ t(k)^{-1}) \otimes (t(h) \Left3{\eta(h,k)} t(hk) \circ t(k)^{-1})^{-1} = \\
 & = (t(gh) \circ t(h)^{-1} \,\, \Left9{(t(gh) \circ t(h)^{-1})(\eta(h,k)^{-1}) \eta(gh,k)} \,\, t(ghk) \circ t(hk)^{-1}).\\
\end{align*}
Note that $\eta(g,h)$ determines a homotopy $(t(g) \Left2{\eta(g,h)} t(gh) \circ t(h)^{-1})$, and in particular this means that
$$
(t(gh) \circ t(h)^{-1})(y) = \eta(g,h)^{-1} \cdot t(g)(y) \cdot \eta(g,h)
$$
for all $y \in \locm$. Combining this with the above description of $\phi_3[g|h|k]$, we deduce the following.
\begin{align*}
\eta(g,h)^{-1} \cdot \eta(g,hk) & = \eta_2 = (t(gh) \circ t(h)^{-1})(\eta(h,k)^{-1}) \eta(gh,k) = \\
 & = \eta(g,h)^{-1} \cdot t(g)(\eta(h,k)^{-1}) \cdot \eta(g,h) \cdot \eta(gh,k).
\end{align*}
Property (b) of twisting pairs follows immediately.

To finish the proof we have to show that $\phi_n[g_1|\ldots|g_n]$ is given by the formulae (T1) and (T2) above, and this is actually easily checked inductively by applying property ($\tau1$) with $i = n-1, n$ to $\phi_n[g_1|\ldots|g_n]$.
\end{proof}

\begin{rmk}\label{etagh}

Notice that $\eta(g_1,g_2) \in N(\locm)$ determines a homotopy from $\Psi_{g_1 g_2} \circ \Psi_{g_2}^{-1}$ to $\Psi_{g_1}$. By Lemma \ref{homotopies}, this implies the following equality for all $[x] \in \locm'$
$$
[(\Psi_{g_1} \circ \Psi_{g_2})(x)] = [\eta(g_1,g_2) \cdot \Psi_{g_1 g_2}(x) \cdot \eta(g_1,g_2)^{-1}]
$$
This useful interpretation of the elements $\eta(g_1, g_2)$ will be tacitly applied from now on.

\end{rmk}

The simplex $\phi_n[g_1|\ldots|g_n]^{-1}$ plays a crucial role in the simplicial structure of a TCP and it is interesting to have an explicit formula for it. In the notation above we have
\begin{equation}\label{twistingninv}
\big(\phi_n[g_1|\ldots|g_n]\big)^{-1} = (\varphi_1^{-1} \Left6{\omega_{1,2}} \varphi_2^{-1} \Left6{\omega_{2,3}} \ldots \Left6{\omega_{n-1,n}} \varphi_n^{-1}),
\end{equation}
where $\omega_{1,2} = \varphi_1^{-1}(\eta(g_1,g_2)^{-1})$ and $\omega_{k, k+1} = \varphi_k^{-1}(\eta(g_1\ldots g_k, g_{k+1})^{-1}) \cdot \eta(g_2 \ldots g_k, g_{k+1})$ for every $k = 2, \ldots, n-1$.

\begin{thm}\label{extension1}

The category of partial groups is closed by fibre bundles.

\end{thm}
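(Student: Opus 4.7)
The plan is to reduce to the explicit case of a regular twisted Cartesian product and then verify the characterisation of partial groups given by Theorem~\ref{isocat}. By Proposition~\ref{strongequiv}, every fibre bundle whose fibre $\locm'$ and base $\locm''$ are partial groups is strongly equivalent to an RTCP $X = \locm' \times_\phi \locm''$; since the construction behind that proposition produces a level-wise isomorphism of total spaces over the identity on the base, and the properties of being reduced, $N$-simplicial and equipped with an inversion are preserved by simplicial isomorphism, it suffices to show that every such RTCP is a partial group. By Lemma~\ref{twistingn} the twisting function $\phi$ corresponds to a twisting pair $(t, \eta)$, which I would use throughout to describe the algebraic structure of $X$ explicitly.

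Reducedness is immediate from $X_0 = \locm'_0 \times \locm''_0 = \{\ast\}$. For the $N$-simplicial property, I would analyse the enumerating operator $E_n \colon X_n \to X_1^n$ using the RTCP face maps. A direct computation shows that the $i$-th edge of a simplex $(\sigma, \tau) \in X_n = \locm'_n \times \locm''_n$ is a pair $(\xi_i, b_i) \in \locm'_1 \times \locm''_1$, where $(b_1, \dots, b_n)$ is the edge sequence of $\tau$ in $\locm''$ and $\xi_i$ is the $i$-th edge of $\sigma$ in $\locm'$ twisted by $\phi_k$ evaluated on a prescribed face of $\tau$ and acting via (\ref{action1}). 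The $N$-simplicity of $\locm''$ recovers $\tau$ from $(b_i)$; once $\tau$ is known the twists are known invertible elements of $\underline{\aut}(\locm')$, allowing recovery of the edges of $\sigma$, which in turn determine $\sigma$ by $N$-simplicity of $\locm'$.

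Granted that $X$ is $N$-simplicial, Lemma~\ref{maps} implies that any simplicial endomorphism of $X$ (in particular a candidate inversion) is determined by its action on $X_1$. On $X_1 = \locm'_1 \times \locm''_1$ the multiplication induced by $d_1$ on $2$-simplices takes the explicit form
$$
(f_1, b_1) \cdot (f_2, b_2) = \big( f_1 \cdot t(b_1)^{-1}(f_2 \cdot \eta(b_1, b_2)^{-1}),\ b_1 \cdot b_2 \big),
$$
and requiring $[(f, b)^{-1}|(f, b)]$ to correspond to a $2$-simplex of $X$ with $\Pi$-value $(1_{\locm'}, 1_{\locm''})$ forces
$$
(f, b)^{-1} = \big( t(b)(f^{-1}) \cdot \eta(b, b^{-1}),\ b^{-1} \big).
$$
That this formula defines an involution on $X_1$ reduces, via Remark~\ref{etagh}, to the identity $t(b^{-1})(\eta(b, b^{-1})) = \eta(b^{-1}, b)$, which is precisely the cocycle condition (\ref{coboundary2}) of Definition~\ref{twistpair} applied to the triple $[b^{-1}|b|b^{-1}]$, normalised by Definition~\ref{twistpair}~(a). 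I would then extend $\nu$ to all of $X$ by the formula reversing and inverting the edge sequence, and verify it is an anti-involution compatible with the face and degeneracy operators of the RTCP.

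The main obstacle is verifying that for every simplex $x \in X_n$ with $n \geq 2$, the concatenated word $[\nu(x)|x]$ actually represents a $2n$-simplex of $X$, i.e.\ lies in the image of $E_{2n}$, and that its $\Pi$-value is the unit $1_X$. Both assertions reduce to repeated use of the cocycle identity (\ref{coboundary2}), Remark~\ref{etagh}, and the associativity and cancellation rules of Lemma~\ref{2.2Ch}, organised by induction on $n$. This bookkeeping, tracking how the $\eta$-cocycle propagates through the simplicial structure of the RTCP, is the essential technical content of the proof.
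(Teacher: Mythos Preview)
Your overall strategy coincides with the paper's: reduce to an RTCP via Proposition~\ref{strongequiv}, then use Theorem~\ref{isocat} to check that the total space is a reduced $N$-simplicial set with inversion. Your argument for reducedness and for injectivity of the enumerating operator $E_n$ matches the paper's computation (the paper writes out $E_2$, $E_3$ and then the general formula $z_k=\Psi_{g_1\cdots g_{k-1}}^{-1}(y_k\cdot\eta(g_1\cdots g_{k-1},g_k)^{-1})$, from which injectivity is immediate).

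The divergence is in the inversion step. The paper dispatches it in one sentence: since only $d_0$ is twisted in an RTCP and $\Pi=d_1^{\,n-1}$ never uses $d_0$, the inversions of $\locm'$ and $\locm''$ can be used directly on the product description $X_n=\locm'_n\times\locm''_n$. You instead descend to $X_1$, derive explicit product and inverse formulas, and flag the verification of $[\nu(x)\mid x]\in X_{2n}$ as the main obstacle. Two cautions here. First, your formulas are off: compare with Proposition~\ref{TCP1} and Remark~\ref{represext}, where the correct product and inverse are
\[
(x_1,g_1)\cdot(x_2,g_2)=\bigl(x_1\,\Psi_{g_1}(x_2)\,\eta(g_1,g_2),\,g_1g_2\bigr),\qquad
(x,g)^{-1}=\bigl(\eta(g^{-1},g)^{-1}\,\Psi_{g^{-1}}(x^{-1}),\,g^{-1}\bigr);
\]
your versions have $t(b_1)^{-1}$, $\eta^{-1}$ (respectively $t(b)$, $\eta(b,b^{-1})$) in the wrong places, and the cocycle checks you outline depend on getting these right. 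Second, once the formulas are corrected, the ``bookkeeping'' you anticipate is exactly what the paper's one-line observation is meant to bypass: working in the RTCP coordinates rather than the bar coordinates makes both the existence of $[\nu(x)\mid x]$ and $\Pi([\nu(x)\mid x])=1$ reduce componentwise to the corresponding facts in $\locm'$ and $\locm''$, with no induction or cocycle manipulation required.
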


\begin{proof}

We have to show that every fibre bundle of partial groups is again a partial group. By Theorem \ref{isocat} and by \cite[Proposition IV.3.2]{BGM}, it is enough to show that every RTCP of partial groups is a reduced $N$-simplicial set with inversion. Let $\locm'$ and $\locm''$ be partial groups, and let $X = (\locm' \times_{\phi} \locm'')$ be a RTCP.

Note first that, since $\locm'$ and $\locm''$ are reduced, then so is $\locm' \times_{\phi} \locm''$. Next let us check that the enumerating operator on $\locm' \times_{\phi} \locm''$ is injective for all $n$. Let
$$
\omega_n = \big([y_1|\ldots|y_n],[g_1|\ldots|g_n] \big),
$$
where $[y_1|\ldots|y_n] \in \locm'_n$ and $[g_1|\ldots|g_n] \in \locm''_n$, and recall that the enumerating operator $E_n$ is defined recursively as $E_1 = \Id$ and
$$
E_n \colon X_n \Right8{(F_1, d_0)} X_1 \times X_{n-1} \Right8{\Id \times E_{n-1}} \underbrace{X_1 \times \ldots \times X_1}_{\mbox{$n$ times}},
$$
where $F_1 = d_2 \circ \ldots \circ d_n$. For $n = 1$, the enumerating operator is the identity, so there is nothing to check. For $n = 2$, we have
$$
E_2(w_2) = \Big(([y_1],[g_1]), \big([\Psi_{g_1}^{-1}(y_2 \cdot \eta(g_1,g_2)^{-1})],[g_2]\big) \Big).
$$

Describing $E_3$ will be enough to deduce the general case. Using Lemma \ref{twistingn} and the formula (\ref{twistingninv}), we have the following expression of $\phi_3[g_1|g_2|g_3]^{-1}$:
$$
\xymatrix{
(\Psi_{g_1}^{-1} & & & \Psi_{g_2} \circ \Psi_{g_1g_2}^{-1} \ar[lll]_{\Psi_{g_1}^{-1}(\eta(g_1,g_2)^{-1})} & & & & & \Psi_{g_2g_3} \circ \Psi_{g_1g_2g_3}^{-1}), \ar[lllll]_{(\Psi_{g_2} \circ \Psi_{g_1g_2}^{-1})(\eta(g_1g_2,g_3)^{-1}) \cdot \eta(g_2,g_3)}
}
$$
from where we deduce the following, using formula (\ref{actionn}):
$$
\phi_3[g_1|g_2|g_3]^{-1} \cdot [y_2|y_3] = [\Psi_{g_1}^{-1}(y_2 \cdot \eta(g_1,g_2)^{-1})|(\Psi_{g_2} \circ \Psi_{g_1g_2}^{-1})(y_3 \cdot \eta(g_1g_2,g_3)^{-1}) \cdot \eta(g_2,g_3)].
$$
Thus, given $\omega_3 = ([y_1|y_2|y_3], [g_1|g_2|g_3]) \in X_3$, the enumerating operator $E_3$ has the following effect:
\small
$$
\begin{aligned}
E(\omega_3) & = \big((\Id \times E_2) \circ (F_1, d_0)\big)(\omega_3) \\[4pt]
 & = (\Id \times E_2)\bigg(\big([y_1], [g_1]\big), \big([\Psi_{g_1}^{-1}(y_2 \cdot \eta(g_1,g_2)^{-1})|(\Psi_{g_2} \circ \Psi_{g_1g_2}^{-1})(y_3 \cdot \eta(g_1g_2,g_3)^{-1}) \cdot \eta(g_2,g_3)], [g_2|g_3]\big) \bigg) \\[4pt]
 & = \Big(([y_1],[g_1])\, , \, ([\Psi_{g_1}^{-1}(y_2 \cdot \eta(g_1,g_2)^{-1})],[g_2])\, , \, ([\Psi_{g_1g_2}^{-1}(y_3 \cdot \eta(g_1g_2,g_3)^{-1})],[g_3]) \Big).
\end{aligned}
$$

\normalsize

In general, the enumerating operator $E_n$ is given by the formula
$$
E_n([y_1|\ldots|y_n],[g_1|\ldots|g_n]) = \Big(([z_1],[g_1]), ([z_2],[g_2]), \ldots, ([z_n],[g_n]) \Big),
$$
where $z_1 = y_1$ and, for $k = 2, \ldots, n$, $z_k = \Psi_{g_1 \ldots g_{k-1}}^{-1}(y_k \cdot \eta(g_1 \ldots g_{k-1}, g_k)^{-1})$. It is clear now that $E_n$ is injective on $\locm' \times_{\phi} \locm''$, since multiplication on the right by $\eta(g_1 \ldots g_{k-1}, g_k)^{-1}$ is a free action, and $\Psi_{g_1 \ldots g_{k-1}}^{-1}$ is an automorphism of partial groups.

Finally, we have to check that $\locm' \times_{\phi} \locm''$ has an inversion. This follows immediately since the face operator $d_0$ is not involved in the definition of the product operator $\Pi$, and hence we can use the inversions on $\locm'$ and $\locm''$ to define an inversion on $\locm' \times_{\phi} \locm''$.
\end{proof}

Let now $\locm', \locm''$ be partial groups, and let $\Gamma \leq \underline{\aut}(\locm')$. Let also $\locm' \times_{\phi} \locm''$ be a $\Gamma$-RTCP. We can define the associated \emph{outer action} of $\locm' \times_{\phi} \locm''$ by
\begin{equation}\label{oaction}
\xymatrix@R=1mm@C=15mm{
\varepsilon \colon \locm'' \ar[r] & \pi_0(\Gamma) \ar[r]^{\incl_{\ast}} & \Out(\locm')\\
[g] \ar@{|->}[rr] & & \overline{\phi[g]}
}
\end{equation}

\begin{lmm}

The map $\varepsilon$ in (\ref{oaction}) is a morphism of partial groups.

\end{lmm}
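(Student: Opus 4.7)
The plan is to reduce the statement to the two checks that $\varepsilon$ preserves the unit and that $\varepsilon[gh]=\varepsilon[g]\cdot\varepsilon[h]$ whenever $[g|h]\in\locm''_2$. Since the codomain $\Out(\locm')$ is an ordinary group, condition (i) in Definition \ref{morphpg} is automatic ($\dd(\Out(\locm'))=\ww(\Out(\locm'))$), and condition (ii) needs only be verified on $2$-simplices: for a word $[g_1|\ldots|g_n]\in\dd(\locm'')$, the $\dd$-multiplicativity of $\Pi$ in Lemma \ref{2.2Ch}(i) together with an obvious induction reduces the identity $\varepsilon[g_1\cdots g_n]=\varepsilon[g_1]\cdots\varepsilon[g_n]$ to the binary case.

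For the unit, property (a) of twisting pairs in Definition \ref{twistpair} gives $t(1)=\Id_{\locm'}$, so by definition of $\varepsilon$ we have $\varepsilon[1]=\overline{\phi_1[1]}=\overline{t(1)}=\overline{\Id}=1\in\Out(\locm')$.

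For the multiplicativity on $2$-simplices, fix $[g|h]\in\locm''_2$. By Lemma \ref{twistingn}, the twisting function assigns to this simplex the $1$-simplex
$$
\phi_2[g|h]=\big(t(g)\Left3{\eta(g,h)} t(gh)\circ t(h)^{-1}\big)\in N_1\autcat(\locm')=\underline{\aut}_1(\locm').
$$
The very existence of this morphism in $\autcat(\locm')$ exhibits $t(g)$ and $t(gh)\circ t(h)^{-1}$ as endpoints of an edge in $\underline{\aut}(\locm')$, so they represent the same class in $\pi_0(\underline{\aut}(\locm'))$, and hence the same class in $\Out(\locm')$. Composing with the inclusion $\pi_0(\Gamma)\hookrightarrow\Out(\locm')$ used to define $\varepsilon$, this yields
$$
\varepsilon[g]=\overline{t(g)}=\overline{t(gh)\circ t(h)^{-1}}=\overline{t(gh)}\cdot\overline{t(h)}^{-1}=\varepsilon[gh]\cdot\varepsilon[h]^{-1}\quad\text{in }\Out(\locm'),
$$
from which $\varepsilon[gh]=\varepsilon[g]\cdot\varepsilon[h]$. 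The routine task of chasing inverses is not needed, since a multiplicative map to a group automatically sends inverses to inverses (Definition \ref{morphpg}(b)). The only mildly delicate point is simply observing that the cocycle condition (\ref{coboundary2}) is precisely what ensures the simplex $\phi_2[g|h]$ exists with the stated endpoints; once Lemma \ref{twistingn} is in hand, no further calculation is needed.
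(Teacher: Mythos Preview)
Your argument is correct and simply spells out what the paper dismisses as ``immediate by definition.'' The reduction to the binary case via $\dd$-multiplicativity and the use of the edge $\phi_2[g|h]$ in $\underline{\aut}(\locm')$ to identify $\overline{t(g)}$ with $\overline{t(gh)\circ t(h)^{-1}}$ in $\pi_0$ is exactly the content behind the paper's one-line proof.

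One small misattribution in your closing remark: the cocycle condition \eqref{coboundary2} is property (b) of twisting pairs and concerns $3$-simplices; it plays no role here. The existence of $\phi_2[g|h]$ with the stated endpoints is simply part of the description of the twisting function in Lemma \ref{twistingn} (equivalently, it follows from axioms ($\tau_1$) and ($\tau_2$) at $n=2$). This does not affect the validity of your proof.
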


\begin{proof}

This is immediate by definition.
\end{proof}

\begin{defi}\label{extBI}

Let $\locm', \locm''$ be partial groups and let $\Gamma \leq \underline{\aut}(\locm')$.
\begin{enumerate}[(i)]

\item An \emph{outer action} of $\locm''$ on $\locm'$ is a morphism of partial groups $\varepsilon \colon \locm'' \to \Out(\locm')$.

\item Given a $\Gamma$-RTCP $\locm' \times_{\phi} \locm''$, the \emph{associated outer action} is the morphism of partial groups $\varepsilon \colon \locm'' \to \Out(\locm')$ defined in (\ref{oaction}).

\item Let $\varepsilon \colon \locm'' \to \Out(\locm')$ be an outer action that satisfies
$$
\Im(\varepsilon) \subseteq \incl_{\ast}(\pi_0(\Gamma)) \subseteq \pi_0(\underline{\aut}(\locm')) = \Out(\locm').
$$
A \emph{$\Gamma$-extension of $\locm''$ by $\locm'$ with action $\varepsilon$}, or simply a $\Gamma$-extension of $(\locm', \locm'', \varepsilon)$, is a $\Gamma$-RTCP $\locm' \times_{\phi} \locm''$ whose associated outer action is $\varepsilon$.

\end{enumerate}

\end{defi}

Let $X = \locm' \times_{\phi} \locm''$ be a $\Gamma$-extension of $(\locm', \locm'', \varepsilon)$. The partial group structure of $X$ is encrypted in the description of $X$ as an RTCP. For this reason, we now introduce a simplicial set $\locm$ which is strongly equivalent to $X$ and where the partial group structure is evident.

 \begin{prop}\label{TCP1}

Let $X = \locm' \times_{\phi} \locm''$ be a $\Gamma$-extension of $(\locm', \locm'', \varepsilon)$, where the twisting function $\phi$ is determined by
$$
\big\{\phi[g] = \Psi_g \in \Aut(\locm')\big\}_{[g] \in \locm''} \qquad \mbox{and} \qquad \big\{\phi[g_1|g_2] = (\Psi_{g_1} \Left3{\eta(g_1,g_2)} \Psi_{g_1g_2} \circ \Psi_{g_2}^{-1})\big\}_{[g_1|g_2] \in \locm''},
$$
and by Lemma \ref{twistingn}. Let also $\locm = \{[(v_{\locm'}, v_{\locm''})]\} \coprod_{n \geq 1} \locm_n$, where, for each $n \geq 0$ the set $\locm_n$ is the collection of symbols
$$
[(x_1,g_1)|(x_2,g_2)|\ldots|(x_n,g_n)] \in (\locm'_1 \times \locm''_1)^n,
$$
subject to the following conditions
\begin{enumerate}[(a)]

\item $[x_1| \Psi_{g_1}(x_2) \cdot \eta(g_1,g_2)| \Psi_{g_1g_2}(x_3) \cdot \eta(g_1g_2,g_3)|\ldots |\Psi_{g_1\ldots g_{n-1}} (x_n) \cdot \eta(g_1\ldots g_{n-1},g_n)] \in \locm'_n$;\\[1pt]

\item $[g_1|\ldots|g_n] \in \locm''_n$.

\end{enumerate}
Set also $s_0[(v_{\locm'}, v_{\locm''})] = [(1,1)]$ and, for each $n \geq 1$ and each $[(x_1,g_1)|\ldots|(x_n,g_n)] \in \locm_n$, set
$$
d_i(w_n) = \left\{
\begin{array}{ll}
[(x_2,g_2)|\ldots|(x_n,g_n)] & i = 0 \\[2pt]
[(x_1,g_1)|\ldots |(x_i \cdot \Psi_{g_i}(x_{i+1}) \cdot \eta(g_i,g_{i+1}),g_i g_{i+1})| \ldots |(x_n,g_n)] & 1 \leq i \leq n-1 \\[2pt]
[(x_1,g_1)|\ldots|(x_{n-1},g_{n-1})] & i = n
\end{array}
\right.
$$
$$
s_i[(x_1,g_1)|(x_2,g_2)|\ldots|(x_n,g_n)] = [(x_1,g_1)|\ldots|(x_i,g_i)|(1,1)|(x_{i+1}, g_{i+1})|\ldots |(x_n,g_n)]. \phantom{AAA}
$$
Finally, let $\tau \colon \locm \to \locm''$ be defined by $[(x_1,g_1)|(x_2,g_2)|\ldots|(x_n,g_n)] \mapsto [g_1|g_2|\ldots|g_n]$. Then, the following holds.
\begin{enumerate}[(i)]

\item $\locm$ is a simplicial set.

\item $\tau \colon \locm \to \locm''$ is a $\Gamma$-bundle.

\item $\locm' \times_{\phi} \locm''$ is strongly equivalent to $\tau \colon \locm \to \locm''$.

\end{enumerate}
In particular, $\locm$ is a partial group.

\end{prop}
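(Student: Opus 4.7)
The strategy is to construct an explicit simplicial isomorphism $\Phi \colon X \to \locm$ compatible with the projections to $\locm''$, which would simultaneously yield (i), (ii), and (iii); the final ``in particular'' claim then follows from Theorem \ref{extension1}. On $n$-simplices, I would define
$$\Phi_n\big([y_1|\ldots|y_n],[g_1|\ldots|g_n]\big) = [(z_1,g_1)|\ldots|(z_n,g_n)],$$
with $z_1 = y_1$ and $z_k = \Psi_{g_1\ldots g_{k-1}}^{-1}(y_k \cdot \eta(g_1\ldots g_{k-1},g_k)^{-1})$ for $k \geq 2$, echoing the enumerating operator computed in the proof of Theorem \ref{extension1}. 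Its inverse sends $[(x_1,g_1)|\ldots|(x_n,g_n)]$ to $([y_1|\ldots|y_n],[g_1|\ldots|g_n])$ with $y_k = \Psi_{g_1\ldots g_{k-1}}(x_k) \cdot \eta(g_1\ldots g_{k-1},g_k)$; conditions (a) and (b) on $\locm$ are exactly what is needed for both directions to produce bona fide simplices.

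Next I would verify that $\Phi$ intertwines all face and degeneracy operators. Degeneracies and the top face $d_n$ are immediate, since they amount to inserting the identity or truncating the last entry in both word components without invoking the twisting function. Interior faces $d_i$ with $1 \leq i \leq n-1$ on the RTCP are the ordinary ``merge the $i$-th and $(i+1)$-th entries'' operations; translated through $\Phi$, undoing $\Psi_{g_1\ldots g_{i-1}}^{-1}$ before the merge and reapplying it afterwards produces exactly the hybrid entry $x_i \cdot \Psi_{g_i}(x_{i+1}) \cdot \eta(g_i,g_{i+1})$ prescribed in the proposition, and this reduces to a routine manipulation using Remark \ref{etagh}.

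The main obstacle is $d_0$: on the RTCP it involves the twist $\phi_n[g_1|\ldots|g_n]^{-1}$ via the formulas (\ref{twistingninv}) and (\ref{actionn}), whereas on $\locm$ it is defined as the trivial ``drop first pair'' operation. To reconcile them I would substitute the explicit expression $y'_j = \alpha_{j-1}^{-1}(y_{j+1}) \cdot \omega_{j,j+1}$ (with $\alpha_j = \Psi_{g_1\ldots g_j} \circ \Psi_{g_2\ldots g_j}^{-1}$ and $\omega_{j,j+1}$ as in formula (\ref{twistingninv})) into the formula for $\Phi_{n-1}$ applied to the shifted base $[g_2|\ldots|g_n]$, and then collapse the resulting product using the cocycle identity (\ref{coboundary2}) together with the conjugation relation $\Psi_g(w) = \eta(g,h) \cdot (\Psi_{gh}\Psi_h^{-1})(w) \cdot \eta(g,h)^{-1}$ coming from Remark \ref{etagh}; the outcome is precisely $z_{j+1}$, as required. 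Once $\Phi$ is a simplicial isomorphism, part (i) follows by transport of structure; the equality $\mathrm{pr}_{\locm''} = \tau \circ \Phi$ shows that $(\Phi, \Id_{\locm''})$ is a strong equivalence of fibre bundles from $X$ to $\tau$, automatically a $\Gamma$-map because $\Phi$ preserves the $\locm'$-fibre, yielding (ii) and (iii); and Theorem \ref{extension1} closes out the assertion that $\locm$ is a partial group.
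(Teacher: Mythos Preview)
Your proposal is correct and follows essentially the same approach as the paper: the paper defines the inverse map $\alpha_n \colon \locm_n \to X_n$ by exactly the formula you give for $\Phi^{-1}$, asserts that the $\alpha_n$ are bijective and commute with the simplicial operators, and concludes by transport of structure. The only difference is that you supply considerably more detail on the operator compatibilities (in particular the delicate $d_0$ case), which the paper simply states holds by construction.
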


\begin{proof}

For each $n$ let $\alpha_n \colon \locm_n \to (\locm' \times_{\phi} \locm'')_n = \locm' \times \locm''$ be the map defined by
$$
\begin{aligned}
\alpha_n[(x_1,g_1)| & \ldots |(x_n,g_n)] =\\
 & = \big([x_1| \Psi_{g_1}(x_2) \cdot \eta(g_1,g_2)| \ldots |\Psi_{g_1\ldots g_{n-1}} (x_n) \cdot \eta(g_1\ldots g_{n-1},g_n)] \, , \, [g_1|\ldots|g_n] \big).
\end{aligned}
$$
The maps $\alpha_n$ are clearly bijective for all $n$, and furthermore they are defined to commute with the simplicial operators on $\locm' \times_{\phi} \locm''$ and the operators $d_i$, $s_i$ defined above. This makes $\locm$ into a simplicial set. It is also clear that the map $\tau$is a $\Gamma$-fibre bundle, and it is strongly equivalent to $\locm' \times_{\phi} \locm''$ by construction. Since $X$ is a partial group, then so is $\locm$.
\end{proof}

The following is an alternative condition for (i) above. It will be useful in later sections.

\begin{lmm}\label{alternativecond}

Let $\locm', \locm''$ be partial groups and let $\Gamma \leq \underline{\aut}(\locm')$ be a subgroup. Let also $\locm' \times_{\phi} \locm''$ be a $\Gamma$-TCP, and let $[(x_1,g_1)], \ldots, [(x_n, g_n)] \in \locm'_1 \times \locm''_1$ be such that $[g_1|\ldots |g_n] \in \locm''_n$. Then, the following statements are equivalent.
\begin{itemize}

\item[(i)] $[x_1| \Psi_{g_1}(x_2) \cdot \eta(g_1,g_2)| \Psi_{g_1g_2}(x_3) \cdot \eta(g_1g_2,g_3)|\ldots |\Psi_{g_1\ldots g_{n-1}} (x_n) \cdot \eta(g_1\ldots g_{n-1},g_n)] \in \locm'_n$.\\[2pt]

\item[(i')] $[x_1|\Psi_{g_1}(x_2)| (\Psi_{g_1} \circ \Psi_{g_2})(x_3)|\ldots |(\Psi_{g_1} \circ \ldots \Psi_{g_{n-1}})(x_n)] \in \locm'_n$.

\end{itemize}

\end{lmm}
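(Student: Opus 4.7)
The plan is to relate conditions (i) and (i') by exploiting that every element $\eta(g_1\ldots g_{k-1}, g_k)$ lies in $N(\locm')$, together with all of their products. I would first rewrite (i') in a form comparable with (i): iterating Remark \ref{etagh} by induction on $k$, one gets
\[
(\Psi_{g_1} \circ \cdots \circ \Psi_{g_{k-1}})(x_k) \,=\, \beta_k \cdot \Psi_{g_1\ldots g_{k-1}}(x_k) \cdot \beta_k^{-1},
\]
where $\beta_k := \eta(g_1,g_2) \cdot \eta(g_1g_2,g_3) \cdots \eta(g_1\ldots g_{k-2},g_{k-1})$ lies in $N(\locm')$ by Lemma \ref{norm-cent-inn}(i), with the conventions $\beta_1 = \beta_2 = 1$ and the recursion $\beta_{k+1} = \beta_k \cdot \eta(g_1\ldots g_{k-1}, g_k)$. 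Denoting by $y_k$ and $z_k$ the $k$-th entries of the simplices in (i) and (i') respectively, a direct substitution yields the clean relation $z_k = \beta_k \cdot y_k \cdot \beta_{k+1}^{-1}$ for every $k$.

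The main technical ingredient is the following \emph{swap principle}: for every $[a_1|\cdots|a_n] \in \locm'_n$, every $\gamma \in N(\locm')$ and every $1 \leq k \leq n-1$, the simplex $[a_1|\cdots|a_{k-1}|a_k\gamma^{-1}|\gamma a_{k+1}|a_{k+2}|\cdots|a_n]$ also lies in $\locm'_n$. To prove it I would apply condition (ii) of Definition \ref{definorm} twice in succession: first with $\eta = \gamma^{-1}$ at position $i = k$, producing $[\9{\gamma^{-1}}a_1|\cdots|\9{\gamma^{-1}}a_k|\gamma^{-1}|a_{k+1}|\cdots|a_n] \in \locm'_{n+1}$; and then with $\eta = \gamma$ at position $i = k+1$ (applied to the new $(n+1)$-simplex), producing $[a_1|\cdots|a_k|\gamma^{-1}|\gamma|a_{k+1}|\cdots|a_n] \in \locm'_{n+2}$, since the two conjugations cancel (using $\9{\gamma}\9{\gamma^{-1}}x = x$ and $\9{\gamma}\gamma^{-1} = \gamma^{-1}$). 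Applying $d_k$ and then $d_{k+1}$ collapses this to the claimed simplex.

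Starting from condition (i), I would then apply the swap principle successively at positions $k = 2, 3, \ldots, n-1$ with $\gamma = \beta_{k+1}$, proceeding left to right so that each step leaves the already-processed entries intact; this produces
\[
[y_1 \,|\, y_2 \beta_3^{-1} \,|\, \beta_3 y_3 \beta_4^{-1} \,|\, \cdots \,|\, \beta_{n-1} y_{n-1} \beta_n^{-1} \,|\, \beta_n y_n] \in \locm'_n.
\]
It remains to incorporate $\beta_{n+1}^{-1}$ into the rightmost entry, which is not covered by the swap principle (there is no position $n+1$). For this I would invoke the dual of condition (ii), obtained by composing it with the inversion of $\locm'$: for any simplex $[a_1|\cdots|a_n] \in \locm'_n$ and any $\gamma \in N(\locm')$, the simplex $[a_1|\cdots|a_n|\gamma]$ lies in $\locm'_{n+1}$. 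Appending $\beta_{n+1}^{-1}$ and then applying $d_n$ collapses the last two entries to $\beta_n y_n \cdot \beta_{n+1}^{-1} = z_n$, yielding condition (i'). The reverse implication (i') $\Rightarrow$ (i) follows by running the same argument in reverse, using that the swap principle is itself an equivalence (applying it with $\gamma^{-1}$ in place of $\gamma$ undoes it). The most delicate point is the asymmetry of condition (ii) of Definition \ref{definorm}, which only directly allows left-conjugation, so that the trailing $\beta_{n+1}^{-1}$ requires the separate appeal to inversion.
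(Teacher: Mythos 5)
Your proof is correct and is a fully worked-out version of the paper's one-line argument, which rests on exactly the two facts you isolate: that each $\eta(g_1\cdots g_k,g_{k+1})$ (and hence each product $\beta_k$) lies in $N(\locm')$, so that condition (ii) of Definition \ref{definorm} lets you insert and cancel these elements inside simplices, and that these elements realize the homotopies of Remark \ref{etagh} relating the entries of (i) to those of (i'). Your ``swap principle'' and the inversion trick for the trailing factor are precisely the details the paper leaves implicit, and they check out.
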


\begin{proof}

This follows since $[\eta(g_1 \ldots g_k, g_{k+1})] \in N(\locm)$ for all $k = 1, \ldots, n-1$ and because, by definition, $[\eta(g_1\ldots g_k, g_{k+1})]$ defines a homotopy from $\Psi_{g_1 \ldots g_{k+1}} \circ \Psi_{g_{k+1}}^{-1}$ to $\Psi_{g_1\ldots g_k}$.
\end{proof}

\begin{rmk}\label{represext}

Let $\locm' \times_{\phi} \locm''$ and $\tau \colon \locm \to \locm''$ be as defined in Proposition \ref{TCP1}. The obvious inclusion of $\locm'$ into $\locm$, defined by $[x] \mapsto [(x,1)]$, is actually a map of simplicial sets, namely $\iota \colon \locm' \to \locm$. We will represent the extension $\locm' \times_{\phi} \locm''$ by
$$
\locm' \Right4{\iota} \locm \Right4{\tau} \locm'',
$$
with multiplication and inversion rules in $\locm$ given by the formulas
\begin{equation}\label{prodext}
\begin{array}{l}
\Pi[(x_1,g_1)|(x_2,g_2)] = [(x_1 \cdot \Psi_{g_1}(x_2) \cdot \eta(g_1, g_2), g_1 \cdot g_2)] \\[2pt] \relax
[(x,g)]^{-1} = [(\eta(g^{-1},g)^{-1} \cdot \Psi_{g^{-1}}(x^{-1}), g^{-1})].
\end{array}
\end{equation}
These generalize the usual multiplication and inversion formulae in group extensions, see \cite[Chapter IV]{MacLane} or \cite[Chapter IV]{Brown} for further details.

\end{rmk}

Finally, we describe the cohomological obstructions to the existence and uniqueness of extensions of partial groups (given some initial data). Unsurprisingly, the result generalizes the existing obstructions for existence and uniqueness of extensions of finite groups.

Let us be more precise. Let $\locm', \locm''$ be partial groups, and let $\varepsilon \colon \locm'' \to \overline{\Gamma} \leq \Out(\locm')$ be an outer action. Let also $\Gamma^{\ast} \leq \underline{\aut}(\locm')$ be the pre-image of $\overline{\Gamma}$, so $\pi_0(\Gamma^{\ast}) = \overline{\Gamma}$. With this initial data, we define obstructions to the existence and uniqueness of $\Gamma^{\ast}$-extensions of $(\locm', \locm'', \varepsilon)$.

Such obstructions are described as cohomology classes of the space $\locm''$ with \emph{local coefficients} in $Z(\locm')$. The precise description of the cohomology groups in Theorem \ref{classext} below is in \cite{DK2}, and the reader is referred to this source for further details.

\begin{thm}\label{classext}

Let $\locm', \locm''$ be partial groups, and let $\varepsilon\colon \locm'' \to \overline{\Gamma} \leq \Out(\locm')$ be an outer action. Then, the following holds.
\begin{enumerate}[(i)]

\item There is an obstruction class $[\kappa] \in H^3(\locm'';Z(\locm'))$ to the existence of $\Gamma^{\ast}$-extensions of $(\locm', \locm'', \varepsilon)$: such extensions exist if and only if the class $[\kappa] = 0$.

\item If there is any, the set of isomorphism classes of $\Gamma^{\ast}$-extensions of $(\locm', \locm'', \varepsilon)$ is in one-to-one correspondence with the set $H^2(\locm'';Z(\locm'))$.

\end{enumerate}

\end{thm}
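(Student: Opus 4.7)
The plan is to follow the classical obstruction template for group extensions (compare \cite[Chapter IV]{Brown} and \cite[Chapter IV]{MacLane}), translated into the language of twisting pairs from Definition \ref{twistpair} and Lemma \ref{twistingn}. By Proposition \ref{TCP1} and Proposition \ref{strongequiv}, every $\Gamma^{\ast}$-extension of $(\locm', \locm'', \varepsilon)$ comes from a twisting pair $(t,\eta)$, and two such extensions are isomorphic exactly when the pairs are related by a normalized $1$-cochain $\beta \colon \locm''_1 \to N(\locm')$ acting as in (\ref{productdim1}) and (\ref{actionn}). The theorem therefore reduces to measuring which normalized pairs exist and when they are equivalent.

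For part (i), first choose any normalized set-theoretic lift $t \colon \locm''_1 \to \Aut(\locm')$ of $\varepsilon$, possible because $\varepsilon$ factors through $\pi_0(\Gamma^{\ast})$. For each $[g|h] \in \locm''_2$ the composite $t(g) \circ t(h) \circ t(gh)^{-1}$ is inner, so Lemma \ref{norm-cent-inn}(iii) supplies a representative $\eta(g,h) \in N(\locm')$, which I normalize so that $\eta(g,1) = \eta(1,g) = 1$. Such a pair satisfies Definition \ref{twistpair}(a) but not necessarily (b), and I measure the failure of (b) by
$$\kappa[g|h|k] \;=\; \bigl(t(g)(\eta(h,k)) \cdot \eta(g,hk)\bigr) \cdot \bigl(\eta(g,h) \cdot \eta(gh,k)\bigr)^{-1}.$$
By Remark \ref{etagh} both bracketed factors implement the same automorphism $t(g) \circ t(h) \circ t(k) \circ t(ghk)^{-1}$ of $\locm'$, so $\kappa[g|h|k] \in Z(\locm')$, and a direct expansion on a $4$-simplex $[g|h|k|\ell] \in \locm''_4$ via associativity in $\locm'$ shows that $\kappa$ is a normalized $3$-cocycle in the local-coefficient complex described in \cite{DK2}. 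Any other normalized choice $(t',\eta')$ differs from $(t,\eta)$ by a $1$-cochain $\beta \colon \locm''_1 \to N(\locm')$ and a $2$-cochain valued in $Z(\locm')$; a direct calculation then shows $\kappa' - \kappa = d\alpha$ for the resulting $2$-cochain $\alpha$, so $[\kappa] \in H^3(\locm''; Z(\locm'))$ is well-defined. Finally $[\kappa]=0$ iff some $\gamma \colon \locm''_2 \to Z(\locm')$ satisfies $d\gamma = -\kappa$, in which case replacing $\eta$ by $\eta \cdot \gamma$ produces a genuine twisting pair and hence an extension via Lemma \ref{twistingn} and Proposition \ref{TCP1}; conversely, any extension produces a twisting pair for which $\kappa$ is trivial.

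For part (ii), fix the twisting pair $(t,\eta)$ associated to a reference extension and let $(t',\eta')$ be a second twisting pair lifting the same $\varepsilon$. Lemma \ref{norm-cent-inn}(iii) supplies $\beta \colon \locm''_1 \to N(\locm')$ with $t'(g)$ equal to left conjugation by $\beta(g)$ composed with $t(g)$, and a short computation shows that
$$\gamma(g,h) \;=\; \eta'(g,h) \cdot \bigl(\beta(g) \cdot t(g)(\beta(h)) \cdot \eta(g,h) \cdot \beta(gh)^{-1}\bigr)^{-1}$$
lies in $Z(\locm')$, while the cocycle identity (\ref{coboundary2}) for both pairs forces $\gamma$ to be a $2$-cocycle. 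Changing $\beta$ by an $\alpha \colon \locm''_1 \to Z(\locm')$ modifies $\gamma$ by $d\alpha$, so $[\gamma] \in H^2(\locm''; Z(\locm'))$ depends only on the two pairs, and by the isomorphism criterion recalled in the first paragraph, the extensions are isomorphic iff $[\gamma] = 0$. This yields the free and transitive action of $H^2(\locm''; Z(\locm'))$ on the set of isomorphism classes of $\Gamma^{\ast}$-extensions lifting $\varepsilon$.

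The technically delicate point throughout is verifying the cocycle identities for $\kappa$ and $\gamma$ in the partial setting: each manipulation must be performed on genuinely existing simplices of $\locm''$, and every intermediate lift must remain in $N(\locm')$ so that the product formulas (\ref{prodext}) and the associative structure of $\autcat(\locm')$ recorded in (\ref{productdim1})-(\ref{actionn}) apply; this is precisely the role played by Definition \ref{definorm}.
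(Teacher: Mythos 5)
Your argument is correct in substance, but it takes a genuinely different route from the paper. The paper's proof is purely homotopy-theoretic and occupies only a few lines: by \cite[\S 5]{BGM} the set of equivalence classes of $\Gamma^{\ast}$-extensions is identified with the set of liftings of the classifying map $\locm'' \to B\overline{\Gamma} \subseteq B\Out(\locm')$ through the fibration $B^2Z(\locm') \to B\Gamma^{\ast} \to B\Out(\locm')$ supplied by Theorem \ref{autX}(ii), and both cohomological statements then follow from classical obstruction theory for liftings with local coefficients \cite{DK2}. What you have written is instead the explicit cochain-level argument that the paper only sketches in the discussion immediately following the theorem: choose lifts $t$, measure the failure of the cocycle formula (\ref{coboundary2}) by a $Z(\locm')$-valued $3$-cochain $\kappa$, and compare two twisting pairs by a $2$-cocycle $\gamma$. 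Your route buys an explicit description of the obstruction classes, which the homotopy-theoretic proof hides inside \cite{DK2}; the cost is that you must verify the cocycle and coboundary identities by hand in the partial setting (legitimate, since $N(\locm')$ is an honest subgroup of $\locm'$ by Lemma \ref{norm-cent-inn}, so every word you manipulate does lie in $\dd(\locm')$ and full associativity applies), and, more importantly, you must justify the assertion in your first paragraph that isomorphism of extensions corresponds exactly to the stated relation on twisting pairs. That identification is the content of the bundle-classification results of \cite{BGM} (strong equivalence of RTCPs versus transformations of twisting functions), and it is precisely the point where the paper's proof, which works with classifying maps from the outset, is more economical. With that reference made explicit, your proof is complete and fills in the ``group theoretical'' outline the author records after the theorem.
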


\begin{proof}

By \cite[\S 5]{BGM}, existence and uniqueness of $\Gamma^{\ast}$-extensions of $(\locm', \locm'', \varepsilon)$ correspond to existence and uniqueness of liftings of the induced map $\locm'' \to B\overline{\Gamma} \leq B\Out(\locm')$ to $B\Gamma^{\ast} \subseteq B\underline{\aut}(\locm')$: each lifting
$$
\widetilde{\varepsilon} \colon \locm'' \Right3{} B \Gamma^{\ast} \subseteq B \underline{\aut}(\locm')
$$
corresponds to an isomorphism class of such extensions. Now, by Theorem \ref{autX} and by definition of $\Gamma^{\ast}$, there is a fibration $B^2Z(\locm') \to B\Gamma^{\ast} \to B\Out(\locm')$, and the proof is finished by classical obstruction theory \cite{DK2}.
\end{proof}

\begin{rmk}

Let $(\locl', \Delta', S')$ be a locality and let $\overline{\Gamma} = \Out(\locl';S') \leq \Out(\locl')$. Then it follows from Lemma \ref{autloc0} that $\Gamma^{\ast} = \underline{\aut}(\locl';S')$.

\end{rmk}

Let us briefly discuss the classification of extensions of partial groups attained in Theorem \ref{classext}. The reader should keep in mind that we have not defined (and we shall not in this paper) modules of partial groups, which should be the first step towards developing homological algebra for partial groups.

The idea behind the proof of Theorem \ref{classext} is to construct the twisting function $\{\phi_n \colon \locm''_n \to \Gamma^{\ast} \leq \underline{\aut}_{n-1}(\locm')\}$ directly out of $\varepsilon$, by choosing liftings of the elements $\varepsilon[g]$ in $\Aut(\locm')$, and then proceeding to higher dimensions. Filling in the gaps in this idea would lead to ``group theoretical'' obstructions to the existence and uniqueness of extensions of partial groups, as done in \cite[Chapter IV]{MacLane} or \cite[Chapter IV]{Brown} for extensions of groups (more exactly, this way one would construct cocycles in the cochain complex for the simplicial set $\locm''$ with local coefficients in the abelian group $Z(\locm')$, which produce the desired obstructons). This is an outline of the process that the interested reader should follow.
\begin{enumerate}

\item Starting from $\varepsilon$, one may choose a representative $\Psi_g \in \Aut(\locm')$ for each $[g] \in \locm''_1$ (in particular, choose $\Psi_1 = \Id$).

\item The above choices are random, and so in general the equality $\Psi_g \circ \Psi_{h} = \Psi_{gh}$ does not hold. However, for each word $[g|h] \in \locm''_2$ the exact sequence of Theorem \ref{autX} (iii) implies the existence of some $\eta(g,h) \in N(\locm)$ such that
$$
\Psi_g \circ \Psi_{h} = c_{\eta(g,h)} \circ \Psi_{gh}.
$$
Since $\Psi_1 = \Id$, one may choose $\eta(g,1) = 1 = \eta(1,g)$ for all $[g] \in \locm''_1$.

\item Using the above choices, one may define $\locm$ as in Proposition \ref{TCP1} and a multiplication using the formula (\ref{prodext}). This done, one has to check that this multiplication is associative (associativity only needs to be checked on the elements of $\locl''_3$), and this produces the class $[\kappa]$ of Theorem \ref{classext} (i).

\item The vanishing of the class $[\kappa]$ in the previous step implies associativity of the product defined by the formula (\ref{prodext}). To check part (ii) of Theorem \ref{classext} from this group theoretical point of view, one just has to check that modifying the choices in steps (1) and (2) by a $2$-dimensional cocycle produces a different multiplication rule (and hence a different extension).

\end{enumerate}

%%%%%%%%%%%%%%%%%%%%%%%%%%%%%%%%%%%%%%%%%%%%%%%%%%%%%%%%%%%%%%%%%%%%%%%%%%%%%%%%%%%%

\section{Extensions of localities}\label{isoext}

We focus our study on extensions of saturated localities, our goal being to give conditions for such an extension to give rise to a new saturated locality. In order to give a clear introduction to this section, let $(\locl', \Delta', S')$ and $(\locl'', \Delta'', S'')$ be (saturated) localities, and let $\varepsilon \colon \locl'' \to \Out(\locl')$ be an outer action. Let also
$$
\locl' \Right3{} X \Right3{\tau} \locl''
$$
be an extension of $(\locl', \locl'', \varepsilon)$. Ultimately, we want conditions for $X$ to be equivalent to the classifying space of a $p$-local finite group (up to $p$-completion). Given such an extension, one should not expect $X$ to be a locality in general, although we show some examples where this is the case.

%%%%%%%%%%%

\subsection{Isotypical extension of localities}

In order to achieve our goals for this section, we start with a rather more general setup, in which we do not require any locality to be saturated. This way we are able to prove some general results which we later specialize to saturated localities. Intuitively, it is necessary to put some conditions on such an extension for the locality structures to play any role.

\begin{defi}

Let $(\locl', \Delta', S')$ and $(\locl'', \Delta'', S'')$ be localities. An extension of $\locl''$ by $\locl'$ is \emph{isotypical} if the following conditions hold:
\begin{enumerate}[(i)]

\item the action $\varepsilon \colon \locl'' \to \Out(\locl')$ factors through $\Out(\locl';S') \leq \Out(\locl')$; and

\item $\Delta'$ is $\Aut(\locl';S')$-invariant, i.e. $\Psi(P) \in \Delta'$ for all $P \in \Delta'$ and all $\Psi \in \Aut(\locl';S')$.

\end{enumerate}

\end{defi}

Thus, roughly speaking an isotypical extension of $(\locl'', \Delta'', S'')$ by $(\locl', \Delta', S')$ is an $\underline{\aut}(\locl';S')$-bundle over $\locl''$, and with fibre $\locl'$.

\begin{hyp}\label{hyp1}

Fix an isotypical extension $\locl' \Right1{} \locl \Right1{\tau} \locl''$, where
\begin{enumerate}[(a)]

\item $(\locl', \Delta', S')$ is a locality, with associated fusion system $\FF' = \FF_{\Delta'}(\locl')$.

\item $(\locl'', \Delta'', S'')$ is a locality, with associated fusion system $\FF'' = \FF_{\Delta''}(\locl'')$.

\end{enumerate}
The above extension is determined by some twisting function $\{\phi_n \colon \locl''_n \to (\underline{\aut})_{n-1}(\locl';S')\}$, which in turn is determined by the data
$$
\{\Psi_g \in \Aut(\locl';S') \,\, \big| \,\, [g] \in \locl''_1\} \qquad \mbox{and} \qquad \{[\eta(g,h)] \in N_{\locl'}(S') \,\, \big| \,\, [g|h] \in \locl''_2\},
$$
satisfying the following conditions (see Lemma \ref{twistingn})
\begin{enumerate}[(1)]

\item $\overline{\Psi_g} = \varepsilon[g]$ for all $[g] \in \locl''_1$;

\item $\Psi_1 = \Id$ and $\eta(1,h) = [(1,1)] = \eta(g,1)$ for all $[g], [h] \in \locl''_1$;

\item $[\eta(g,h) \cdot \Psi_{gh}(x) \cdot \eta(g,h)^{-1}] = [(\Psi_g \circ \Psi_h)(x)]$ for all $[g|h] \in \locl''_2$ and all $[x] \in \locl'_1$; and

\item $[\Psi_g(\eta(h,k)) \cdot \eta(g, hk)] = [\eta(g,h) \cdot \eta(gh, k)]$ for all $[g|h|k] \in \locl''_3$.

\end{enumerate}
We will consider these choices fixed, in case we have to perform explicit calculations with elements in $\locl$. Notice that the fusion systems $\FF'$ and $\FF''$ are not assumed to be saturated.

\end{hyp}

\begin{lmm}\label{propext1}

The partial group $\locl$ contains a $p$-subgroup $S \leq \locl$ which makes the following diagram of extensions commutative
$$
\xymatrix{
BS' \ar[rr]^{\iota} \ar[d]_{\incl} & & BS \ar[rr]^{\tau} \ar[d]_{\incl} & & BS'' \ar[d]^{\incl} \\
\locl' \ar[rr]_{\iota} & & \locl \ar[rr]_{\tau} & & \locl''
}
$$
In particular, $S$ is maximal in the poset of $p$-subgroups of $\locl$.

\end{lmm}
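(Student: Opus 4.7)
The plan is to find $S$ as a Sylow $p$-subgroup of an auxiliary honest group $T \leq \locl$ obtained by lifting $S''$ through the extension. Using the description of $\locl$ from Proposition \ref{TCP1}, I would set
\[
T \defin \big\{[(x, g)] \in \locl \,:\, x \in N_{\locl'}(S'),\; g \in S''\big\}
\]
and first verify that $T$ is a subgroup of the partial group $\locl$. The required words lie in $\dd(\locl)$ because words of elements of $S''$ lie in $\dd(\locl'')$ via $S'' \in \Delta''$, and, by Lemma \ref{alternativecond} together with the isotypical hypothesis (each $\Psi_g$ preserves $S'$ and hence $N_{\locl'}(S')$), the corresponding words are words in $N_{\locl'}(S')$, which lie in $\dd(\locl')$ via $S' \in \Delta'$ by the biset property of the locality. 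Closure of the multiplication formula (\ref{prodext}) inside $T$ follows because $\eta(g_1,g_2) \in N_{\locl'}(S')$ and $N_{\locl'}(S')$ is a group; closure under inversion is analogous.

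Next I would observe that $\tau|_T$ presents $T$ as a short exact sequence of honest groups
\[
1 \longrightarrow N_{\locl'}(S') \longrightarrow T \longrightarrow S'' \longrightarrow 1,
\]
with kernel corresponding to pairs $[(x,1)]$. Since $S'$ is a maximal $p$-subgroup of $\locl'$, it is a Sylow $p$-subgroup of $N_{\locl'}(S')$, so $|T|_p = |S'|\cdot |S''|$. Let $S$ be a Sylow $p$-subgroup of $T$ containing $\iota(S')$. An order count shows that $S \cap \iota(N_{\locl'}(S'))$ is a $p$-subgroup of $\iota(N_{\locl'}(S'))$ containing $\iota(S')$, hence equals $\iota(S')$ by maximality, and that $\tau(S) \leq S''$ has order $|S''|$, so $\tau(S) = S''$. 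This yields the desired extension $1 \to S' \to S \to S'' \to 1$ on the top row, and commutativity of the diagram follows because $\iota$ and $\tau$ there are restrictions of the corresponding maps on $\locl$. For maximality in the poset of $p$-subgroups of $\locl$, suppose $P$ is a $p$-subgroup with $S \lneq P$. Then $\tau(P) \supseteq S''$ is a $p$-subgroup of $\locl''$, so $\tau(P) = S''$ by maximality of $S''$; and $P \cap \iota(\locl') \supseteq \iota(S')$ is a $p$-subgroup of $\iota(\locl')$, so $P \cap \iota(\locl') = \iota(S')$ by maximality of $S'$. This forces $|P| = |S'|\cdot |S''| = |S|$, contradicting the strict inclusion.

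The principal obstacle lies in the first step: extracting an honest subgroup $T$ from the partial group $\locl$ and verifying that the isotypical hypothesis (particularly the containment $\eta(g,h) \in N_{\locl'}(S')$ and the invariance of $S'$ under every $\Psi_g$) provides precisely the closure properties required to keep the twisted multiplication inside $T$. Once $T$ is identified, everything else reduces to elementary Sylow theory applied to the finite group $T$.
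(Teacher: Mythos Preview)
Your argument is correct and follows essentially the same route as the paper: the paper constructs the auxiliary group $N = \{[(x,g)] : x \in N_{\locl'}(S'),\ g \in N_{\locl''}(S'')\}$, shows it is an honest subgroup of $\locl$ via Lemma~\ref{alternativecond} and the isotypical hypothesis, and takes $S$ to be a Sylow $p$-subgroup of $N$. Your variant with base $S''$ in place of $N_{\locl''}(S'')$ works equally well (and makes $\tau(S)=S''$ slightly more transparent), and you additionally spell out the maximality of $S$, which the paper leaves implicit.
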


\begin{proof}

Write $N' = N_{\locl'}(S')$ and $N'' = N_{\locl''}(S'')$ for short, which are finite groups since $S' \in \Delta'$ and $S'' \in \Delta''$, and let $N = \{[(x,g)] \in \locl \, | \, [x] \in N' \mbox{ and } [g] \in N''\}$. We claim that $N$ is a (finite) subgroup of $\locl$. To prove the claim we have to check that $[(x_1,g_1)|\ldots|(x_n,g_n)] \in \locl$ for every sequence $[(x_1,g_1)], \ldots, [(x_n,g_n)] \in N$.

Since the subgroup $N' \leq \locl'$ is invariant under the action of $\Aut(\locl';S')$, it follows that $[x_1|\Psi_{g_1}(x_2)|\ldots|(\Psi_{g_1} \circ \ldots \circ \Psi_{g_{n-1}})(x_n)] \in BN' \leq \locl'$. Also, $[g_1|\ldots|g_n] \in BN'' \leq \locl''$, since $N''$ is a group. By Lemma \ref{alternativecond} this means that $[(x_1,g_1)|\ldots|(x_n,g_n)] \in \locl$, and thus $N'$ is a group.

It is clear now that $N$ is an extension of $N''$ by $N'$, and there is a commutative diagram of extensions 
$$
\xymatrix{
BN' \ar[r] \ar[d] & BN \ar[r] \ar[d] & BN'' \ar[d] \\
\locl' \ar[r] & \locl \ar[r] & \locl'' \\
}
$$
In particular, $N$ has Sylow $p$-subgroups, and we may choose $S \in \Syl_p(N)$ completing the diagram in the statement.
\end{proof}

Fix a choice of a subgroup $S \leq \locl$ satisfying the properties described in Lemma \ref{propext1}. For a subgroup $P \leq S$, we use the following notation: $P' \defin P \cap S' \leq S'$ and $P'' \defin \tau(P) \leq S''$. Fix also the collection
\begin{equation}\label{collectionS}
\Delta = \{P \leq S \,\, \big| \,\, P' = P \cap S' \in \Delta' \mbox{ and } P'' = \tau(P) \in \Delta''\}.
\end{equation}
Notice that $\Delta$ depends heavily on the choices of $\Delta'$ and $\Delta''$ (and these choices are usually not unique!).

\begin{rmk}\label{prodHi}

Let $[(x,g)] \in \locl$, and let $L_{[(x,g)]} \leq S$ be the biggest subgroup of $S$ that is left conjugated by $[(x,g)]$ to a subgroup of $S$ (notice that we do not know whether $(\locl, \Delta, S)$ is a locality or not, and thus we cannot use the definition in (\ref{SwSu0})). This means that, for each $[(y,h)] \in L_{[(x,g)]}$, we have $u = [(x,g)|(y,h)|(x,g)^{-1}] \in \locl$ and $\Pi(u) \in S$. More specifically, we have
$$
\begin{aligned}
\Pi(u) & = \Pi\big[(x, g)|(y, h)|(\eta(g^{-1}, g)^{-1} \cdot \Psi_{g^{-1}}(x^{-1}), g^{-1})\big] = \\[4pt]
 & = \Pi\big[(x \cdot \Psi_{g}(y) \cdot \eta(g, h), g \cdot h)|(\eta(g^{-1}, g)^{-1} \cdot \Psi_{g^{-1}}(x^{-1}), g^{-1})\big] = \\[4pt]
 & = \big[(x \cdot \Psi_{g}(y) \cdot \eta(g, h) \cdot \Psi_{g h}(\eta(g^{-1}, g)^{-1}) \cdot \Psi_{g h}(\Psi_{g^{-1}}(x^{-1})) \cdot \eta(gh, g^{-1}), g \cdot h \cdot g^{-1}) \big] = \\[4pt]
 & = \big[(x \cdot \Psi_{g}(y) \cdot \eta(g, h) \cdot \eta(ghg^{-1}, g) \cdot (\9{ \eta(gh, g^{-1})^{-1}}((\Psi_{gh} \circ \Psi_{g^{-1}})(x^{-1}))), g \cdot h \cdot g^{-1})\big] = \\[4pt]
 & = \big[(x \cdot \Psi_{g}(y) \cdot \eta(g, h) \cdot \eta(ghg^{-1}, g) \cdot \Psi_{ghg^{-1}}(x^{-1}), g \cdot h \cdot g^{-1})\big],
\end{aligned}
$$
where the equality between lines three and four follows by an application of the cocycle formula (\ref{coboundary2}), and the equality between lines four and five follows from property (2) in Hypothesis \ref{hyp1}. In particular, if $[(y,h)] \in S'$, then $h = 1$ and the above formula implies that
$$
\Pi[(x, g)|(y, 1)|(x, g)^{-1}] = [(x \cdot \Psi_{g}(y) \cdot x^{-1}, 1)].
$$

\end{rmk}

We start by analyzing the triple $(\locl, \Delta, S)$. As we show below, this is not a locality in general, but it is rather close from being one. This requires the application of the formulae (\ref{prodext}), and the cocycle condition (\ref{coboundary2}), which we use without any further mention. Let $\ww(\locl_1)$ be the free monoid on $\locl_1$. To simplify the notation, we denote the words in $\ww(\locl_1)$ by $[(x_1, g_1)| \ldots| (x_n,g_n)]$. Let $\dd_{\Delta}$ be the collection of words $\omega = [(x_1, g_1)| \ldots| (x_n,g_n)] \in \ww(\locl_1)$ for which there exists $H_0, \ldots, H_n \in \Delta$ such that $\9{(x_i, g_i)}H_i = H_{i-1}$ for all $i = 1, \ldots, n$ (note that this is equivalent to the definition in \ref{opgroup}).

\begin{prop}\label{propext3}

The collection $\Delta$ defined in (\ref{collectionS}) satisfies the following properties:
\begin{enumerate}[(i)]

\item $S \in \Delta$;

\item $(\locl, \Delta)$ satisfies condition (O2) of objective partial groups;

\item there is an inclusion $\dd_{\Delta} \subseteq \locl$.

\end{enumerate}

\end{prop}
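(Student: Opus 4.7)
The plan is the following, split by the three parts of the statement.

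\textbf{Part (i)} is immediate from Lemma \ref{propext1}: the constructed subgroup $S \leq \locl$ fits into a short exact sequence of groups $1 \to S' \to S \to S'' \to 1$, so $S \cap S' = S'$ and $\tau(S) = S''$. Both $S' \in \Delta'$ and $S'' \in \Delta''$ by the definition of a locality, so $S \in \Delta$.

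\textbf{For part (iii)}, fix a word $\omega = [(x_1, g_1)|\ldots|(x_n, g_n)] \in \dd_{\Delta}$ with witnessing sequence $H_0, \ldots, H_n \in \Delta$ satisfying $\9{(x_i, g_i)} H_i = H_{i-1}$. I invoke Lemma \ref{alternativecond}: it suffices to show that $[g_1|\ldots|g_n] \in \locl''$ and that $[x_1|\Psi_{g_1}(x_2)|\ldots|(\Psi_{g_1}\circ\cdots\circ\Psi_{g_{n-1}})(x_n)] \in \locl'$. The first is witnessed by $\tau(H_i) \in \Delta''$, using (O1) for $(\locl'', \Delta'', S'')$. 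For the second, I propose to use the subgroups $K_i = (\Psi_{g_1}\circ\cdots\circ\Psi_{g_i})(H_i \cap S')$, which lie in $\Delta'$ thanks to the isotypical invariance of $\Delta'$ under $\Aut(\locl';S')$. The required identity $\9{(\Psi_{g_1}\circ\cdots\circ\Psi_{g_{i-1}})(x_i)} K_i = K_{i-1}$ reduces, via Remark \ref{prodHi}, to showing $\9{x_i}\Psi_{g_i}(H_i \cap S') = H_{i-1} \cap S'$, which in turn follows from the assumed $\9{(x_i, g_i)} H_i = H_{i-1}$ together with the observation that conjugation in $\locl$ by $[(x_i, g_i)]$ sends elements of the form $(y, 1)$ to elements of the form $(\cdot, 1)$.

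\textbf{Part (ii)} is the technical heart. Given $X, Z \in \Delta$, $Y \leq Z$, and $u = [(x, g)] \in \locl$ with $X^u \leq Y$, the goal is to show $N_Y(X^u) \cap S' \in \Delta'$ and $\tau(N_Y(X^u)) \in \Delta''$. My strategy is to feed the data to (O2) on each side of the extension separately. On the quotient, $\tau(X)^{[g]} = \tau(X^u) \leq \tau(Y)$ with all three subgroups in $\Delta''$, so (O2) for $(\locl'', \Delta'', S'')$ furnishes $N_{\tau(Y)}(\tau(X^u)) \in \Delta''$. On the kernel, Remark \ref{prodHi} identifies $X^u \cap S' = \9{x}\Psi_g(X \cap S')$; since $\Psi_g(X \cap S') \in \Delta'$ by the isotypical hypothesis, applying (O2) for $(\locl', \Delta', S')$ with conjugating element $x \in \locl'$ and ambient group $Y \cap S' \leq Z \cap S' \in \Delta'$ yields $N_{Y \cap S'}(X^u \cap S') \in \Delta'$.

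The hardest step is the bridge between these ``naive'' normalizers and the actual ones we want: in general $N_Y(X^u) \cap S' \subsetneq N_{Y \cap S'}(X^u \cap S')$, and similarly $\tau(N_Y(X^u)) \subsetneq N_{\tau(Y)}(\tau(X^u))$, because the condition of normalizing $X^u$ is strictly stronger than normalizing $X^u \cap S'$ or $\tau(X^u)$ separately (one must preserve the exact coset structure of $X^u$ inside $X^u \cdot S'$, not merely the kernel and image). The plan for closing these gaps is to rewrite $y a y^{-1} = a \cdot [a^{-1}, y^{-1}]$ for $y \in S'$ and $a \in X^u$, observe that the commutator automatically lies in $S'$, and then express the extra condition ``$[a^{-1}, y^{-1}] \in X^u \cap S'$ for all $a \in X^u$'' as membership in a smaller, explicit subgroup of $Y \cap S'$. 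I then aim to realize both $N_Y(X^u) \cap S'$ and $\tau(N_Y(X^u))$ themselves as normalizers (of finer subgroups, obtained from the preceding step) in the localities $\locl'$ and $\locl''$ respectively, so that a second round of applications of (O2) in each locality, together with the isotypical invariance of $\Delta'$, produces the required membership in $\Delta'$ and $\Delta''$.
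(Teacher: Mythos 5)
Parts (i) and (iii) of your proposal are correct and essentially identical to the paper's argument: (i) is immediate from the construction of $S$, and for (iii) the paper uses exactly your witnessing sequences, namely $H_0'',\ldots,H_n''$ in $\Delta''$ for $\omega''$ and $H_0', \Psi_{g_1}(H_1'), (\Psi_{g_1}\circ\Psi_{g_2})(H_2'),\ldots$ in $\Delta'$ for $\omega'$, together with Lemma \ref{alternativecond} and Remark \ref{prodHi}.

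Part (ii) contains a genuine gap, and it is a self-inflicted one. Your ``hardest step'' --- bridging from $N_{Y\cap S'}(X^u\cap S')$ and $N_{\tau(Y)}(\tau(X^u))$ to the actual $N_Y(X^u)\cap S'$ and $\tau(N_Y(X^u))$ --- is never carried out: what you give is a plan (``I then aim to realize both \ldots as normalizers \ldots so that a second round of applications of (O2) \ldots produces the required membership''), not an argument, and it is far from clear that $N_Y(X^u)\cap S'$ can be realized as a normalizer inside $\locl'$ of anything to which (O2) applies. More importantly, the entire difficulty evaporates once you observe that $\Delta$ is closed under overgroups: if $P\in\Delta$ and $P\leq Q\leq S$, then $P\cap S'\leq Q\cap S'$ and $\tau(P)\leq\tau(Q)$, so $Q\in\Delta$ because $\Delta'$ and $\Delta''$ are themselves closed under overgroups (being object sets of localities). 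Granting this, (O2) reduces to conjugation-invariance of $\Delta$: since $X^u\leq N_Y(X^u)$, it suffices to know $X^u\in\Delta$, and that is precisely what the first half of your part (ii) already proves --- $\tau(X^u)={}^{[g]}\tau(X)\in\Delta''$ by (O2) in $\locl''$, and $X^u\cap S'={}^{[x]}\Psi_g(X\cap S')\in\Delta'$ by isotypicality and conjugation-closure of $\Delta'$. This is exactly the paper's proof. You should delete the commutator analysis and the second round of (O2) entirely and replace them with the overgroup-closure observation.
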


\begin{proof}

Property (i) is immediate by definition of $S$ and $\Delta$. To prove property (ii), note that $\Delta$ is clearly closed by overgroups, and we have to show that if $K \in \Delta$ and $[(x,g)] \in \locl$ are such that $\9{(x,g)}K = H \leq S$, then $H \in \Delta$. By definition of $\Delta$, we need to check that $H' \in \Delta'$ and $H'' \in \Delta''$. Note that $H'' = [g] \cdot K'' \cdot [g^{-1}] \leq S''$, and thus $H'' \in \Delta''$ since $(\locl'', \Delta'', S'')$ is a locality. Regarding $H'$, the conjugation formula in Remark \ref{prodHi} implies that
$$
H' = [x] \cdot \Psi_g(K') \cdot [x^{-1}].
$$
Since $K' \in \Delta'$ and every isotypical automorphism of $\locl'$ preserves $\Delta'$, it follows that $\Psi_g(K') \in \Delta'$, and it follows that $H' \in \Delta$ since $(\locl', \Delta', S')$ is a locality and $\Delta'$ is closed by conjugation.

To prove property (iii), let $\omega = [(x_1, g_1)| \ldots| (x_n,g_n)] \in \dd_{\Delta}$, via the sequence $H_0, \ldots, H_n \in \Delta$, so $\9{[(x_i,g_i)]}H_i = H_{i-1}$ for $i = 1, \ldots, n$. In particular, for each $i$ and each $[(y_i, h_i)] \in H_i$ we have $\omega_i = [(x_i, g_i)|(y_i, h_i)|(x_i, g_i)^{-1}] \in \locl$ and $\Pi(\omega_i) \in H_{i-1}$. By Lemma \ref{alternativecond}, in order to show that $\omega \in \locl$ we have to check that
\begin{enumerate}[(1)]

\item $\omega'' = [g_1|\ldots|g_n] \in \locl''$; and

\item $\omega' = [x_1|\Psi_{g_1}(x_2)|(\Psi_{g_1} \circ \Psi_{g_2})(x_3)|\ldots |(\Psi_{g_1} \circ \ldots \Psi_{g_{n-1}})(x_n)] \in \locl'$.

\end{enumerate}
Since $(\locl'', \Delta'', S'')$ is a locality, we see that $\omega'' \in \locl''$ via the sequence $H_0'', \ldots, H_n'' \in \Delta''$. To check that $\omega' \in \locl'$, use the formula in Remark \ref{prodHi} above: we see that $\omega'$ conjugates the sequence $H_0', \Psi_{g_1}(H_1'), (\Psi_{g_1} \circ \Psi_{g_2})(H_2'), \ldots, (\Psi_{g_1} \circ \ldots \Psi_{g_n})(H_n') \in \Delta'$. Since $(\locl', \Delta', S')$ is a locality, it follows that $\omega' \in \locl'$.
\end{proof}

The partial group $\locl$ determines a locality as follows. Recall that $\dd_{\Delta} \subseteq \locl$ by Proposition \ref{propext3}, where $\dd_{\Delta}$ is the set of all words $\omega \in \ww(\locl_1)$ that conjugate a sequence in $\Delta$.

\begin{prop}\label{propext3-1}

Set $\loct = \dd_{\Delta}$. Then, the triple $(\loct, \Delta, S)$ is a locality.

\end{prop}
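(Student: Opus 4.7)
The plan is to verify the two defining conditions of a locality (Definition \ref{locality}) for the triple $(\loct, \Delta, S)$. I would take $\loct$ to be the partial subgroup of $\locl$ whose $n$-simplices are the length-$n$ words in $\dd_\Delta$ and whose domain of products is $\dd_\Delta$ itself. Proposition \ref{propext3}(iii) already gives $\dd_\Delta \subseteq \locl$, so it suffices to check closure under inversion and under the product operator. Closure under inversion is immediate: if $\omega = [(x_1, g_1)|\ldots|(x_n, g_n)]$ lies in $\dd_\Delta$ via $H_0, \ldots, H_n \in \Delta$ (so that $\9{(x_i, g_i)} H_i = H_{i-1}$), then $\omega^{-1}$ lies in $\dd_\Delta$ via the reversed sequence $H_n, H_{n-1}, \ldots, H_0$.

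Closure under products is the main technical step. Given such an $\omega$, I must show that its product $\Pi(\omega)$ lies in $\loct$ as a length-one element of $\dd_\Delta$, i.e., that $\9{\Pi(\omega)} H_n = H_0$. For each $h_n \in H_n$, the word $\omega \circ (h_n) \circ \omega^{-1}$ lies in $\dd_\Delta$ via the concatenated sequence $H_0, H_1, \ldots, H_n, H_n, H_{n-1}, \ldots, H_0$, and hence in $\locl$ by Proposition \ref{propext3}(iii). Iterating Lemma \ref{2.2Ch}(i), together with $\Pi(\omega^{-1}) = \Pi(\omega)^{-1}$ from Lemma \ref{2.2Ch}(vi), its product equals $\Pi(\omega) \cdot h_n \cdot \Pi(\omega)^{-1}$, which by construction is an element of $H_0$. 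This proves $\9{\Pi(\omega)} H_n \subseteq H_0$; the reverse inclusion follows from the same argument applied to $\omega^{-1}$.

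With $\loct$ now a partial group, the remaining verifications are mostly formal. Condition (O1), $\dd(\loct) = \dd_\Delta$, holds by construction; condition (O2) is exactly Proposition \ref{propext3}(ii), which remains valid when the conjugating element is taken from $\loct$. For condition (L2), Proposition \ref{propext3}(i) gives $S \in \Delta$, and every $s \in S$ satisfies $\9{s} S = S$ since $S$ is a subgroup, so $S$ embeds as a subgroup of $\loct$. Since $\loct \subseteq \locl$ and $S$ is maximal among $p$-subgroups of $\locl$ by Lemma \ref{propext1}, it is also maximal among $p$-subgroups of $\loct$.

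The chief obstacle is the closure-under-products argument in the second paragraph: one has to check that the concatenated word $\omega \circ (h_n) \circ \omega^{-1}$ actually lies in $\dd_\Delta$ (hence in $\locl$), and then iterate the multiplicativity of $\Pi$ to extract the desired conjugation formula. Once this is in hand, the remaining items reduce to previously established statements about the pair $(\locl, \Delta)$ and the subgroup $S \leq \locl$.
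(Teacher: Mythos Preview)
Your proof is correct and follows essentially the same approach as the paper's: verify that $\loct$ (with domain $\dd_\Delta$) is a partial group by checking closure under products and inversion, then deduce (O1), (O2), (L1), (L2) from Proposition~\ref{propext3} and Lemma~\ref{propext1}. The paper asserts the key step $\9{\Pi(\omega)}H_n = H_0$ as ``clear,'' whereas you spell it out via the auxiliary word $\omega \circ (h_n) \circ \omega^{-1} \in \dd_\Delta$; one small clarification would help---your phrase ``by construction is an element of $H_0$'' is really justified by applying axiom (P2) iteratively from the inside (replacing $(x_n,g_n)\,h_n\,(x_n,g_n)^{-1}$ by $h_{n-1} \in H_{n-1}$, etc.), not directly by Lemma~\ref{2.2Ch}(i).
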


\begin{proof}

It is clear that $S \leq \loct$. Let $\omega = [(x_1, g_1)|\ldots|(x_n,g_n)] \in \loct$. By definition of $\loct$, there is a sequence $H_0, \ldots, H_ n \in \Delta$ such that $\9{[(x_i,g_i)]}H_i = H_{i-1}$ for each $i = 1, \ldots, n$. Clearly, it follows that $\9{\Pi(\omega)}H_n = H_0$, so $\loct$ is closed by products, and we can see that $\loct$ is closed by inversion since we have $\9{[(x_i,g_i)]^{-1}}H_{i-1} = H_i$ for each $i = 1, \ldots, n$. This shows that $\loct$ is a partial group, with multiplication and inversion induced by those in $\locl$.

Let us prove now that $(\loct, \Delta, S)$ is a locality. Since $S$ is maximal in $\locl$, it must be maximal in $\loct$ too. We have to check that $(\loct, \Delta)$ is an objective partial group, where $\Delta$ is the collection defined in (\ref{collectionS}). Note that $\Delta$ is closed by overgroups and conjugation in $\loct$ since the same holds with respect to $\locl$ by Proposition \ref{propext3} (ii). Thus, $(\loct, \Delta)$ satisfies condition (O2) of objective partial groups.

Let $\dd'_{\Delta} \subseteq \ww(\loct_1)$ be the subset of all words $[(x_1,g_1)|\ldots|(x_n,g_n)]$ for which there exists some sequence $H_0, \ldots, H_ n \in \Delta$ such that $\9{[(x_i,g_i)]}H_i = H_{i-1}$ for each $i = 1, \ldots, n$. We have to check that $\dd'_{\Delta} = \loct$, and this is immediate by definition of $\loct$.
\end{proof}

\begin{cor}\label{propext3-2}

The subgroup $S' \leq S$ is strongly closed in the fusion system $\FF_{\Delta}(\loct)$.

\end{cor}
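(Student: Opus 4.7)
Recall that a subgroup $A \leq S$ is strongly $\FF$-closed in $\FF = \FF_{\Delta}(\loct)$ if for every $P \leq S$ and every $f \in \Hom_{\FF}(P, S)$ one has $f(P \cap A) \leq A$. The plan is to reduce to a generating family of morphisms and then use the simplified conjugation formula at the end of Remark \ref{prodHi}.

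First I would note that the property of sending $S'$-elements into $S'$ is preserved under composition and under restriction to subgroups. By Definition \ref{asscat}, every morphism of $\FF_{\Delta}(\loct)$ is a composition of restrictions of conjugation maps $c_{[(x,g)]} \colon X \to Y$ with $X, Y \in \Delta$ and $[(x,g)] \in N_{\loct}(X,Y)$. Hence it suffices to prove, for any such data, that $c_{[(x,g)]}(X \cap S') \leq S'$.

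Next I would exploit the identification of $S'$ inside $S$ coming from the diagram of Lemma \ref{propext1}. The commutative square gives an extension of finite groups
\[
1 \longrightarrow S' \stackrel{\iota}{\longrightarrow} S \stackrel{\tau}{\longrightarrow} S'' \longrightarrow 1,
\]
so $S' = \ker(\tau|_S)$, and an element of $S$ belongs to $S'$ precisely when its second coordinate is trivial. Thus any $[(y,h)] \in X \cap S'$ has $h = 1$, and I only need to analyze $c_{[(x,g)]}[(y,1)]$.

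The key step is then a direct calculation. Since $[(y,1)] \in X \leq \dd([(x,g)])$, the conjugate $c_{[(x,g)]}[(y,1)]$ is defined as $\Pi[(x,g)|(y,1)|(x,g)^{-1}]$. Applying the general formula derived in Remark \ref{prodHi}, together with $\eta(g,1) = [(1,1)] = \eta(1,g)$ from property (2) of Hypothesis \ref{hyp1}, the unpleasant cocycle terms collapse and one obtains the clean expression
\[
c_{[(x,g)]}[(y,1)] = [(x \cdot \Psi_g(y) \cdot x^{-1}, 1)].
\]
This element has trivial $\locl''$-component and, by hypothesis, lies in $Y \leq S$. Combined with the identification $S' = \ker(\tau|_S)$ from the previous paragraph, this forces $c_{[(x,g)]}[(y,1)] \in S'$, which completes the proof. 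I do not anticipate any substantial obstacle; the only thing to watch is the correct specialization of the conjugation formula when the conjugated element lies in the fibre, and this is precisely the computation already isolated in Remark \ref{prodHi}.
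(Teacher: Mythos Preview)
Your proposal is correct and follows essentially the same approach as the paper: the paper's proof is a single sentence pointing to the conjugation formula in Remark \ref{prodHi}, and you have simply unpacked that reference by reducing to the generating conjugations of $\FF_{\Delta}(\loct)$ and applying the specialized formula $\Pi[(x,g)|(y,1)|(x,g)^{-1}] = [(x \cdot \Psi_g(y) \cdot x^{-1}, 1)]$.
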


\begin{proof}

This follows immediately from the conjugation formula in Remark \ref{prodHi}.
\end{proof}

The locality $(\loct, \Delta, S)$ will play an important role in the next subsections, where we analyze its relation to $\locl$.

%%%%%%%%%%%%%

\subsection{Examples of isotypical extensions}\label{Sexpl}

We now present some examples of isotypical extensions, all of which produce localities. These examples have already been studied in the context of $p$-local finite groups, in \cite{OV} and in \cite{BCGLO2} respectively. Among other reasons, we include these examples here to show that our results and constructions are independent from the aforementioned papers.

Let us start by analyzing isotypical extensions where the fibre is a $p$-group. This situation was first studied in \cite{OV}(in the context of transporter systems), where the authors showed (among other results) that every extension of a transporter system by a $p$-group is again a transporter system, and the reader is referred to this paper for further details.

\begin{expl}\label{explOV}

Let $\locl'$ be a finite $p$-group. In other words, $\locl'_1 = S'$, and $\locl' = BS'$. Let $\Delta' = \{S'\}$. Then $(BS', \Delta', S')$ is a proper locality. In this example we show that an isotypical extension of a locality $(\locl'', \Delta'', S'')$ by $(BS', \Delta', S')$ always gives rise to a locality. More specifically, fix such an extension $BS' \to \locl \to \locl''$, and let $(\loct, \Delta, S)$ be the locality associated to the extension, as shown in Proposition \ref{propext3-1}. We show that $\loct = \locl$.

By Propositions \ref{propext3} and \ref{propext3-1}, we only have to show that $\locl \subseteq \loct ( = \dd_{\Delta})$. In other words, given $[(x_1, g_1)|\ldots|(x_n,g_n)] \in \locl$, we have to find a sequence of subgroups $H_0, \ldots, H_n \in \Delta$ such that $\9{[(x_i,g_i)]}H_i = H_{i-1}$ for all $i = 1, \ldots, n$.

Since $(\locl'', \Delta'', S'')$ is a locality, we have $[g_1|\ldots|g_n] \in \locl''$ via a sequence $P_0'', \ldots, P_n'' \in \Delta''$, and now an easy calculation shows that it is enough to take $H_i$ to be the pull-back of $S \Right2{} S'' \Left2{} P_i''$ for $i = 0, \ldots, n$. Notice that the fusion system of any such extension need not be saturated. In \cite{OV} the authors give an example of an extension of a transporter system by a $p$-group whose associated fusion system is not saturated (see right after \cite[Proposition 5.8]{OV}), and this same example applies here. We will not analyze the question of saturation of these extensions here. Rather than that, we leave this for a later subsection.

\end{expl}

Next we consider isotypical extensions where the base is a finite group. These extensions were analyzed in \cite{BCGLO2} (in the context of $p$-local finite groups). Here, we consider a more general situation, in which no saturation is assumed. This example in particular will play an important role in the forthcoming subsections.

\begin{expl}\label{particular1}

Let $\locl''$ be a finite group, so $\locl_1'' = G$, and $\locl'' = BG$. Fix some $S'' \in \Syl_p(G)$, and let $\Delta''$ be the collection of all subgroups of $S''$. Then $(BG, \Delta'', S'')$ is a locality (not necessarily proper). Let $(\locl', \Delta', S')$ be a locality, and fix an isotypical extension $\locl' \to \locl \to BG$. Again, let $(\loct, \Delta, S)$ be the locality associated to the extension, as in Proposition \ref{propext3-1}. We show that $\loct = \locl$.

As happened in the previous examples, by Propositions \ref{propext3} and \ref{propext3-1} we only have to show that for every simplex $\omega = [(x_1,g_1)|\ldots|(x_n,g_n)] \in \locl$ there is some sequence $H_0, \ldots, H_n \in \Delta$ such that $\9{[(x_i,g_i)]}H_i = H_{i-1}$ for each $i = 1, \ldots, n$.

By Proposition \ref{TCP1} and Lemma \ref{alternativecond}, we know that $\omega \in \locl$ if and only if
\begin{enumerate}[(i)]

\item $[g_1|\ldots|g_n] \in \locl''_n$; and\\[2pt]

\item $\omega' = [x_1|\Psi_{g_1}(x_2)|(\Psi_{g_1} \circ \Psi_{g_2})(x_3)|\ldots|(\Psi_{g_1} \circ \ldots \circ \Psi_{g_{n-1}})(x_n)] \in \locl'_n$.

\end{enumerate}
Set $y_1 = x_1$ and $y_i = (\Psi_{g_1} \circ \ldots \Psi_{g_{i-1}})(x_i)$ for $i = 2, \ldots, n$ for short, so $\omega' = [y_1|\ldots|y_n]$. Since $(\locl', \Delta', S')$ is a locality, the word $\omega'$ conjugates some sequence of subgroups $K'_0, \ldots, K'_n \in \Delta'$. That is, $\9{[y_i]}(K_i') = K_{i-1}'$ for each $i = 1, \ldots, n$.

Set $H'_0 = K'_0$ and $H'_i = (\Psi_{g_1} \circ \ldots \circ \Psi_{g_i})^{-1}(K'_i)$ for $i = 1, \ldots, n$. Then, $H'_i \in \Delta' \subseteq \Delta$ since every isotypical automorphism of $\locl'$ preserves $\Delta'$. Moreover, it follows that $\omega \in \dd_{\Delta}$ since we have $\9{[(x_i,g_i)]}(H'_i) = H'_{i-1}$ for each $i$ (the reader can compare this with the conjugation formula in Remark \ref{prodHi}).
\end{expl}

We finish this subsection by studying saturation in the example above. We choose to do this in this section since this example will be crucial later on in this section.

\begin{cor}\label{particular2}

Let $G$ be a finite group, and let $\locl' \to \locl \to BG$ be an isotypical extension, where
\begin{enumerate}[(i)]

\item $(\locl', \Delta', S')$ is a saturated locality such that $\Delta'$ contains all the $\FF'$-centric subgroups of $S'$; and

\item $(BG, \Delta'', S'')$ is a locality, with $S'' \in \Syl_p(G)$ and $\Delta'' = \{P'' \leq S''\}$.

\end{enumerate}
Let also $(\locl, \Delta, S)$ be the locality structure described in Example \ref{particular1}, and let $\FF = \FF_{\Delta}(\locl)$ be the associated fusion category. Then, $\Delta$ contains all the $\FF$-centric $\FF$-radical subgroups, and in particular $\FF$ is a saturated fusion system over $S$.

\end{cor}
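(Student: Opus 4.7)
The plan is to reduce the statement to an $\FF'$-centricity claim and then invoke \cite[Theorem A]{BCGLO1}. Since $\Delta'' = \{P'' \leq S''\}$ contains every subgroup of $S''$, the defining condition (\ref{collectionS}) of $\Delta$ simplifies to $P \in \Delta$ if and only if $P \cap S' \in \Delta'$. So given any $\FF$-centric $\FF$-radical $P \leq S$, I need to show $R := P \cap S' \in \Delta'$, and by the hypothesis on $\Delta'$ it suffices to show that $R$ is $\FF'$-centric. Once this is established, $\Delta$ contains every $\FF$-centric $\FF$-radical subgroup of $S$ and \cite[Theorem A]{BCGLO1} immediately yields the saturation of $\FF$.

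Two structural ingredients will drive the centricity proof. Corollary \ref{propext3-2} says that $S'$ is strongly $\FF$-closed, so $R$ is characteristic in $P$ under the action of $\Aut_{\FF}(P)$; the restriction $\rho \colon \Aut_{\FF}(P) \to \Aut(R)$ is therefore a well-defined homomorphism with normal kernel. The $\FF$-centricity of $P$ gives $C_S(P) \leq P$, which makes the conjugation map $N_S(P)/P \to \Out_{\FF}(P)$ injective. A direct computation with the conjugation formula of Remark \ref{prodHi} shows that $P$ normalises $C_{S'}(R)$ inside $S$, and combining these facts shows that every element of $(N_S(P) \cap C_S(R)) \setminus P$ yields a nontrivial class in $\Out_{\FF}(P)$ lying inside the image of $\ker(\rho)$, a normal $p$-subgroup of $\Out_{\FF}(P)$.

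The hard part will be to ensure that, if $R$ is not $\FF'$-centric, then $N_S(P) \cap C_S(R) \not\leq P$ for some $\FF$-conjugate of $P$; this suffices because $\Delta$ is closed under $\locl$-conjugation, so the membership $P \in \Delta$ follows from the corresponding statement for the conjugate. The strategy is to use saturation of $\FF'$ to reduce to a fully $\FF'$-centralised representative $R_0$ of $R^{\FF'}$, so that the centricity claim for $R$ becomes the inequality $C_{S'}(R_0) \leq R_0$; then pull back any witness $u_0 \in C_{S'}(R_0) \setminus R_0$ through $\locl'$-conjugation (which preserves centralisers, since $\locl'$ is a locality) to obtain $u \in C_{S'}(R) \setminus R$; and finally apply a Sylow/maximality argument inside the finite group $N$ from Lemma \ref{propext1} to place $u$ (after a further $\FF$-conjugation of $P$ if necessary) into $N_S(P) \cap C_S(R)$. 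The resulting nontrivial normal $p$-subgroup of $\Out_{\FF}(P)$ contradicts the $\FF$-radicality of $P$, completing the argument. This construction essentially adapts to the locality setting the analogous argument used for extensions of $p$-local finite groups in \cite{BCGLO2}.
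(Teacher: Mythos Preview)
Your overall architecture matches the paper's: reduce to showing $P' = P \cap S'$ is $\FF'$-centric, then derive a contradiction with $\FF$-radicality via a normal $p$-subgroup of $\Out_{\FF}(P)$. The difficulty is entirely in the execution of the reduction, and that is where your sketch breaks.

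The step ``pull back any witness $u_0 \in C_{S'}(R_0)\setminus R_0$ through $\locl'$-conjugation \ldots to obtain $u \in C_{S'}(R)\setminus R$'' does not work. If $[x] \in \locl'$ realises $R \to R_0$, there is no reason for $[x^{-1}]$ to carry $C_{S'}(R_0)$ into $S'$ at all, and in any case $|C_{S'}(R)| \le |C_{S'}(R_0)|$ since $R_0$ is fully centralised, so one cannot expect to transport the witness backwards. What you actually need is an $\FF$-conjugate $Q$ of the \emph{whole} subgroup $P$ with $Q' = Q\cap S'$ fully $\FF'$-centralised (or normalised). This is genuinely delicate: $\FF$-conjugating $P$ moves $P'$ by morphisms that need not lie in $\FF'$, so one cannot simply invoke saturation of $\FF'$. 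The paper isolates this as a separate claim and proves it by choosing $P$ of maximal order among the putative counterexamples (so that the saturation axioms in $\FF$ are already available for strictly larger subgroups) and then running an iterative argument through \cite[Proposition A.2]{BLO2} that strictly increases $|N_{S'}(P')|$ at each step. Your proposal contains no analogue of this, and the vague ``Sylow/maximality argument inside $N$'' does not supply one: the group $N$ of Lemma \ref{propext1} controls only what happens over $N_G(S'')$ and says nothing about the position of $P'$ inside its $\FF'$-class.

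A smaller point: once one has $P'$ fully $\FF'$-centralised, you still need a single $u \in C_{S'}(P')\setminus P'$ to land in $N_S(P)$, and there is no direct reason for that. The paper avoids this by arguing globally: it looks at the action of $P/P'$ on $C_{S'}(P')/Z(P')$, observes that fixed cosets give automorphisms in $\Aut^0_{\FF}(P)$ (trivial on both $P'$ and $P''$, a $p$-group by \cite[Lemma 1.15]{BCGLO2}), and uses radicality to force $\Aut^0_{\FF}(P)\le \Inn(P)$, whence the fixed set is trivial and $C_{S'}(P')=Z(P')$. Note also that your $\ker(\rho)$ (automorphisms trivial on $P'$ only) is not obviously a $p$-group; it is the smaller $\Aut^0_{\FF}(P)$ that one can control, and your witnesses, being in $S'$, do land there.
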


\begin{proof}

Suppose otherwise that $\Delta$ does not contain all the $\FF$-centric $\FF$-radical subgroups of $S$, and let $P \notin \Delta$ be an $\FF$-centric $\FF$-radical subgroup of $S$. By definition of $\Delta$, this means that $P' = P \cap S' \notin \Delta'$, so in particular $P'$ is not $\FF'$-centric.

We can choose $P$ to be of maximal order among those $\FF$-centric $\FF$-radical subgroups not in $\Delta$. Thus we may assume as well that the saturation axioms hold in $\FF$ for all subgroups $R \leq S$ such that $|P| < |R|$. First we prove the following claim
\begin{itemize}

\item[(\textasteriskcentered)] There is some $Q \in P^{\FF}$ such that $Q'$ is fully $\FF'$-normalized.

\end{itemize}

Clearly, we may assume that $P'$ is not fully $\FF'$-normalized (in particular $P' \lneqq S'$), since otherwise there is nothing to show. Set $P_0' = P'$, and let $P_1' \leq S'$ be $\FF'$-conjugate to $P_0'$ and fully $\FF'$-normalized. Let also $\rho \in \Hom_{\FF'}(N_{S'}(P_0'), N_{S'}(P_1'))$ be such that $\rho(P_0') = P_1'$. Since $P_0' \lneqq S'$, we also have $P_0' \lneqq N_{S'}(P_0')$, and the saturation axioms in $\FF$ hold for $N_{S'}(P_0')$ and $N_{S'}(P_1')$.

Set now $R_0' = N_{S'}(P_0')$ and $K_0 = \{f \in \Aut(R_0') \,\, | \,\, f(P_0') = P_0'\}$, and recall from \cite[Appendix \S A]{BLO2} the notation
$$
N_S^{K_0}(R_0') = \{x \in N_{S'}(R_0') \,\, | \,\, c_x \in K_0\}.
$$
Let also $R_1' = \rho(R_0') \leq N_{S'}(P_1')$ and $K_1 = \{\rho \circ f \circ \rho^{-1} \,\, | \,\, f \in K_0\}$. Finally, let $R_2' \leq S'$ and $\gamma \in \Iso_{\FF}(R_0', R_2')$ be such that $R_2'$ is fully $K_2$-normalized in $\FF$, where
$$
K_2 = \{\gamma \circ f \circ \gamma^{-1} \,\, | \,\, f \in K_0\}.
$$
Set also $P_2' = \gamma(P_0')$.

By \cite[Proposition A.2 (b)]{BLO2} there exist $\chi, \chi' \in \Aut_{\FF}^{K_2}(R_2')$ and morphisms
$$
\alpha \in \Hom_{\FF}(N_S^{K_0}(R_0'), N_S^{K_2}(R_2')) \qquad \beta \in \Hom_{\FF}(N_S^{K_1}(R_1'), N_S^{K_2}(R_2'))
$$
such that $\alpha|_{R_0'} = \chi \circ \gamma$ and $\beta|_{R_1'} = \chi' \circ \gamma \circ \rho^{-1}$. We claim that there is a sequence of inequalities
$$
|N_{S'}(P_0')| \lneqq |N_{S'}^{K_1}(R_1')| \leq |N_{S'}^{K_2}(R_2')| \leq |N_{S'}(P_2'')|.
$$

Indeed, since $P_0'$ is not fully $\FF'$-normalized, we have $R_1' = \rho(N_{S'}(P_0')) \lneqq N_{S'}(P_1')$, and thus
$$
R_1' \lneqq N_{N_{S'}(P_1')}(R_1') = N_{S'}^{K_1}(R_1').
$$
This proves the leftmost inequality. The middle inequality holds immediately since $\beta$ restricts to an inclusion of $N_{S'}^{K_1}(R_1')$ into $N_{S'}^{K_2}(R_2')$. Finally, the rightmost inequality holds since every element in $N_{S'}^{K_2}(R_2')$ normalizes $P_2'$ by definition.

Set $Q = \alpha(P)$, so in particular $Q' = P_2'$, and we have $|N_{S'}(P')| < |N_{S'}(Q')|$. If $Q'$ is not fully $\FF'$-normalized, we can iterate the process, until we find some $Q \in P^{\FF}$ such that $Q'$ is fully $\FF'$-normalized. This proves the claim (\textasteriskcentered).

Suppose now that $P$ is such that $P'$ is fully $\FF'$-centralized, and let $\Aut^0_{\FF}(P) \leq \Aut_{\FF}(P)$ be the subgroup of automorphisms which induce the identity on $P'$ and $P''$. This is a normal subgroup of $\Aut_{\FF}(P)$, and it is also a $p$-group by \cite[Lemma 1.15]{BCGLO2}. Since $P$ is $\FF$-radical, it follows that
$$
\Aut^0_{\FF}(P) \leq \Inn(P).
$$
Now, $P$ acts on $C_{S'}(P')$ since $P$ normalizes $P'$, and thus $P/P'$ acts on $C_{S'}(P')/Z(P')$. If the coset $[(z,1)] \cdot Z(P') \in C_{S'}(P')/Z(P')$ is fixed by the action of $P/P'$, then $c_{(z,1)} \in \Aut^0_{\FF}(P) \leq \Inn(P)$. Thus, $\Pi[(z,1)|(x,g)] \in C_S(P)$ for some $[(x,g)] \in P$. Since $P$ is $\FF$-centric, this implies that $\Pi[(z,1)|(x,g)] \in Z(P) \leq P$, and hence
$$
[(z,1)] \in P \cap C_{S'}(P') = P \cap S' \cap C_S(P') = P' \cap C_S(P') = Z(P').
$$
This means that the action of $P/P'$ on $C_{S'}(P')/Z(P')$ only fixes the trivial element, and since all groups involved are $p$-groups, this implies that $C_{S'}(P') = Z(P')$. Since $P'$ is fully $\FF'$-normalized, this means that $P'$ is $\FF'$-centric, contradicting the assumption that $P' \notin \Delta'$.
\end{proof}

\begin{rmk}

Combining Example \ref{particular1} and Corollary \ref{particular2}, we see that any extension of a finite group by a saturated locality is, up to $p$-completion, the classifying space of a $p$-local finite group. The reader may compare this with \cite[Theorem A]{BLO6} for $p$-local finite groups. Note also that our results do not require $(\locl', \Delta', S')$ to be a proper locality, just as long as $\Delta'$ contains all the centrics.

\end{rmk}

%%%%%%%%%%%%%%%%

\subsection{Further properties of isotypical extensions}\label{Sfurther}

We have seen earlier in this section how an isotypical extension gives rise to a locality (Proposition \ref{propext3-1}). The purpose of this subsection is to study some further properties of this locality associated to an isotypical extension. This subsection contains a series of technical results, and the reader can skip it in a first reading. Fix an isotypical extension
$$
\locl' \Right3{} \locl \Right3{\tau} \locl'',
$$
where $(\locl', \Delta', S')$ and $(\locl'', \Delta'', S'')$ are localities, as done in Hypothesis \ref{hyp1}. Let also $(\loct, \Delta, S)$ be the locality associated to the above extension.

\begin{lmm}\label{aux-1}

Let $H'' \leq \locl''$ be a subgroup satisfying that $H'' \cap S'' \in \Syl_p(H'')$. Set also
\begin{enumerate}[(i)]

\item $\locl(H'') \defin \{[(x,g)] \in \locl \, | \, [g] \in H''\}$;

\item $S(H'') \defin \locl(H'') \cap S$; and

\item $\Delta(H'') \defin \{P \leq S(H'') \, | \, P \cap S' \in \Delta'\}$.

\end{enumerate}
Then, the triple $(\locl(H''), \Delta(H''), S(H''))$ is a locality.

\end{lmm}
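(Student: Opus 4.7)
The plan is to realize $(\locl(H''), \Delta(H''), S(H''))$ as the locality produced by Example \ref{particular1} applied to a restricted isotypical extension over the finite group $BH''$. Write $S_{H''} \defin H'' \cap S''$ and let $\Delta_{H''}$ be the collection of all subgroups of $S_{H''}$; the hypothesis $S_{H''} \in \Syl_p(H'')$ makes $(BH'', \Delta_{H''}, S_{H''})$ a locality of the type handled in that example.

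First, I would verify that $\locl(H'') \subseteq \locl$ is a partial subgroup and that restricting the twisting data $(\Psi_g, \eta(g,h))$ to $g, h \in H''$ defines an isotypical extension $\locl' \to \locl(H'') \to BH''$. Closure is straightforward: given a word $[(x_1,g_1)|\ldots|(x_n,g_n)]$ all of whose letters lie in $\locl(H'')$, the projection $[g_1|\ldots|g_n]$ sits in $BH'' \subseteq \locl''$ since $H''$ is a subgroup, and the formulas (\ref{prodext}) show that $\Pi$ and $(-)^{-1}$ preserve the condition $g \in H''$. The isotypical conditions transfer verbatim from the original extension. Applying Example \ref{particular1} (together with Lemma \ref{propext1} and Proposition \ref{propext3-1}) then produces a locality $(\locl(H''), \widetilde\Delta, \widetilde S)$, where $\widetilde S$ is any Sylow $p$-subgroup of
\[
\widetilde N \defin \{[(x,g)] \in \locl(H'') \mid x \in N_{\locl'}(S'),\ g \in N_{H''}(S_{H''})\},
\]
and $\widetilde\Delta = \{P \leq \widetilde S \mid P \cap S' \in \Delta' \text{ and } \tau(P) \in \Delta_{H''}\}$.

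The final and most substantive step is to show that $\widetilde S$ may be taken to be $S(H'')$, and that with this choice $\widetilde\Delta = \Delta(H'')$. The conjugation formula in Remark \ref{prodHi} gives $S(H'') \leq \widetilde N$, and a $p$-part count against the short exact sequences $1 \to S' \to S(H'') \to S_{H''} \to 1$ and $1 \to N_{\locl'}(S') \to \widetilde N \to N_{H''}(S_{H''}) \to 1$, using $S' \in \Syl_p(N_{\locl'}(S'))$ and $S_{H''} \in \Syl_p(N_{H''}(S_{H''}))$, yields $|S(H'')| = |S'| \cdot |S_{H''}| = |\widetilde N|_p$, so $S(H'') \in \Syl_p(\widetilde N)$. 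Since $\tau(S(H'')) = S_{H''}$ and $\Delta_{H''}$ contains all subgroups of $S_{H''}$, the condition $\tau(P) \in \Delta_{H''}$ is automatic for every $P \leq S(H'')$, and $\widetilde\Delta$ reduces to $\Delta(H'')$. The main obstacle is this Sylow count, which pins down $S(H'')$ as the correct maximal $p$-subgroup; everything else is a direct transport of structure from Example \ref{particular1}.
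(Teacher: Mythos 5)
Your proposal is correct and follows exactly the paper's route: the paper's proof consists of observing that $\locl(H'')$ is the total space of an isotypical extension $\locl' \to \locl(H'') \to BH''$ and invoking Example \ref{particular1}. You have merely supplied the routine verifications (partial-subgroup closure, the Sylow count identifying $S(H'')$ as a valid choice of maximal $p$-subgroup, and the collapse of $\widetilde\Delta$ to $\Delta(H'')$) that the paper leaves implicit, and these are all sound.
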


\begin{proof}

By definition of $\locl(H'')$ there is an isotypical extension $\locl' \Right1{} \locl(H'') \Right1{\tau} BH''$, where by abuse of notation $\tau$ denotes the restriction of $\tau \colon \locl \to \locl''$ to $\locl(H'')$. The statement follows by Example \ref{particular1}.
\end{proof}

In particular, if $P'' \in \Delta''$ is fully normalized in $\FF''$ and $H'' = N_{\locl''}(P'')$, then it follows that $H'' \cap S'' = N_{S''}(P'') \in \Syl_p(N_{\locl''}(P''))$, the triple $(\locl(H''), \Delta(H''), S(H''))$ is a locality, with $S(H'') \leq S$. Furthermore, if $P \leq S$ is such that $\tau(P) = P''$, then we have $N_{\locl}(P) = N_{\locl(H'')}(P)$:
\begin{itemize}

\item $N_{\locl}(P) \leq N_{\locl(H'')}(P)$, since the projection map $\tau \colon \locl \to \locl''$ sends $N_{\locl}(P)$ to a subgroup of $N_{\locl''}(P'')$; and

\item $N_{\locl(H'')}(P) = \locl(H'') \cap N_{\locl}(P) \leq N_{\locl}(P)$.

\end{itemize}

Recall that a \emph{section} of $\tau \colon \locl \to \locl''$ is a map (of sets) $\sigma \colon \locl'' \to \locl$ such that $\tau \circ \sigma = \Id_{\locl''}$. The following two lemmas deal with some particular sections of $\tau$.

\begin{lmm}\label{aux0}

The section $\sigma_0 \colon \locl'' \to \locl$ defined by $[g] \mapsto [(1,g)]$ satisfies the following properties.
\begin{enumerate}[(i)]

\item For each $[g] \in \locl''$, left conjugation by $\sigma_0(g)$ induces an element of $\Aut(\locl';S')$.

\item If $[g] \in \locl''$ and $H'', K'' \leq S''$ are such that $\9{[g]}(K'') = H''$, then left conjugation by $\sigma_0(g)$ induces an isomorphism of partial groups form $\locl(K'')$ to $\locl(H'')$.

\item If $[g_1|\ldots|g_n] \in \locl''$, then $[\sigma_0(g_1)|\ldots|\sigma_0(g_n)] \in \locl$.

\end{enumerate}

\end{lmm}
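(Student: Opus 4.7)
The plan is to prove the three parts in the order (iii), (i), (ii), since (iii) provides the basic tool for defining conjugates of simplices in $\locl$ by $\sigma_0$-elements. Throughout, the workhorses will be Lemma \ref{alternativecond} (to detect membership in $\locl$) and the explicit conjugation computation in Remark \ref{prodHi}.

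For part (iii), I would apply Lemma \ref{alternativecond} directly to the word $[(1,g_1)|\ldots|(1,g_n)]$. The second-coordinate condition $[g_1|\ldots|g_n] \in \locl''$ is the hypothesis. The first-coordinate word is
$$
[1\,|\,\Psi_{g_1}(1)\,|\,(\Psi_{g_1}\circ\Psi_{g_2})(1)\,|\,\ldots\,|\,(\Psi_{g_1}\circ\cdots\circ\Psi_{g_{n-1}})(1)] = [1|1|\ldots|1],
$$
which is an iterated degeneracy of the vertex of $\locl'$, hence in $\locl'_n$.

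For part (i), fix $[y] \in \locl'_1$ and consider $[(1,g)|(y,1)|(1,g)^{-1}]$. To verify this is defined in $\locl$, I would again invoke Lemma \ref{alternativecond}: the second-coordinate word $[g|1|g^{-1}] = s_1[g|g^{-1}]$ lies in $\locl''$, and the first-coordinate word, after using the description of $(1,g)^{-1}$ from Remark \ref{represext}, reduces to $[1|\Psi_g(y)|\Psi_g(\eta(g^{-1},g)^{-1})]$, which lies in $\locl'$ because $\eta(g^{-1},g) \in N(\locl')$ (Lemma \ref{N1homotopies}) and $\Psi_g \in \Aut(\locl')$. Substituting $x=1$ and $h=1$ into the conjugation formula of Remark \ref{prodHi} (and using $\eta(g,1)=\eta(1,g)=1$ from Hypothesis \ref{hyp1}) collapses the product to
$$
\Pi[(1,g)|(y,1)|(1,g)^{-1}] = [(\Psi_g(y),1)].
$$
Hence left conjugation by $\sigma_0(g)$ restricts to $\Psi_g$ on $\locl'$, which lies in $\Aut(\locl';S')$ by Hypothesis \ref{hyp1}(1).

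For part (ii), for $[(y,k)] \in \locl(K'')$ the same Remark \ref{prodHi} gives
$$
\Pi[(1,g)|(y,k)|(1,g)^{-1}] = [(\Psi_g(y) \cdot \eta(g,k) \cdot \eta(gkg^{-1},g),\, gkg^{-1})],
$$
whose second coordinate lies in $H''$ exactly because $\9{[g]}K'' = H''$; that the triple product is defined in $\locl$ follows once more from Lemma \ref{alternativecond}, using that $[g|k|g^{-1}] \in \locl''$ (conjugation in the locality $(\locl'',\Delta'',S'')$) and that the corresponding first-coordinate word lies in $\locl'$ by the $N(\locl')$-property of $\eta$ together with the $\Aut(\locl';S')$-invariance of $\Delta'$. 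The induced map $c_{\sigma_0(g)} \colon \locl(K'') \to \locl(H'')$ is multiplicative because conjugation in a partial group is a homomorphism wherever defined (a standard consequence of (P2) and Lemma \ref{2.2Ch}), and extending part (iii) shows the corresponding words remain defined. Bijectivity follows by exhibiting $c_{\sigma_0(g)^{-1}}$ as a two-sided inverse; by part (i) this inverse induces $\Psi_g^{-1}$ on $\locl'$, which together with the computation above shows the composition in either order is the identity.

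The main obstacle is purely bookkeeping: tracking the twisting cocycles $\eta(-,-)$ when verifying that the triple products needed in (i) and (ii) actually lie in $\locl$. Once Lemma \ref{alternativecond} is applied and Remark \ref{prodHi} is invoked, all three statements reduce to properties already established for the ingredients $\Psi_g$, $\eta(g,h)$, and the locality structures of $\locl'$ and $\locl''$.
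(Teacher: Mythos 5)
Your computations via Remark \ref{prodHi} and your treatment of parts (iii) and (i) are essentially the paper's argument (the paper happens to prove (iii) last, but it is independent), and the formulas $\Pi[(1,g)|(y,1)|(1,g)^{-1}] = [(\Psi_g(y),1)]$ and $\Pi[(1,g)|(y,k)|(1,g)^{-1}] = [(\Psi_g(y)\cdot\eta(g,k)\cdot\eta(gkg^{-1},g),\,gkg^{-1})]$ are correct. The gap is in part (ii), at the sentence asserting that $c_{\sigma_0(g)}$ is multiplicative ``because conjugation in a partial group is a homomorphism wherever defined (a standard consequence of (P2) and Lemma \ref{2.2Ch})''. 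This principle is false for partial groups, and the paper warns against it explicitly in Section \ref{partial}: even if $x,y \in \dd(u)$ and $(x,y) \in \dd(\mm)$, the word $(u^{-1}, x\cdot y, u)$ need not lie in $\dd(\mm)$, and the induced map $c_u$ on a subgroup need not be a homomorphism. So showing that each $1$-simplex of $\locl(K'')$ has a well-defined conjugate in $\locl(H'')$ does not yet show that $c_{\sigma_0(g)}$ is a morphism of partial groups, i.e.\ that it carries $\dd(\locl(K''))$ into $\dd(\locl(H''))$ compatibly with $\Pi$.

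The missing step — which is the actual content of the paper's proof of (i) and (ii) — is to show that for \emph{every} word $[(y_1,h_1)|\ldots|(y_n,h_n)] \in \locl(K'')$ the full interleaved word
$$
[(1,g)|(y_1,h_1)|(1,g)^{-1}|(1,g)|(y_2,h_2)|(1,g)^{-1}|\ldots|(1,g)|(y_n,h_n)|(1,g)^{-1}]
$$
lies in $\dd(\locl)$; multiplicativity of $c_{\sigma_0(g)}$ then follows by collapsing consecutive triples with (P2). Verifying this via Lemma \ref{alternativecond} is not just bookkeeping: the first-coordinate word $[1|a_1|b_1|\ldots|1|a_n|b_n]$ involves alternating factors $\Psi_{g^{-1}}\circ\Psi_g$ and the cocycle $\eta(g^{-1},g)$, and one needs the homotopy identity $(\Psi_{g^{-1}}\circ\Psi_g)[y] = [\eta(g^{-1},g)\cdot y\cdot\eta(g^{-1},g)^{-1}]$ to reduce it to $[\Psi_g(y_1)|(\Psi_g\circ\Psi_{h_1})(y_2)|\ldots|(\Psi_g\circ\Psi_{h_1}\circ\cdots\circ\Psi_{h_{n-1}})(y_n)] \in \locl'$, which holds because $[(y_1,h_1)|\ldots|(y_n,h_n)] \in \locl$ and $\Psi_g$ is an automorphism. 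Your appeal to ``extending part (iii)'' does not supply this, since (iii) concerns words consisting only of the $\sigma_0(g_i)$ with no fibre elements interleaved.
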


\begin{proof}

To prove parts (i) and (ii), we show that, for each $\omega = [(y_1,h_1)|\ldots|(y_n,h_n)] \in \locl(K'')$, we have
$$
[(1,g)|(y_1,h_1)|(1,g)^{-1}|(1,g)|(y_2,h_2)|(1,g)^{-1}|\ldots|(1,g)|(y_n,h_n)|(1,g)^{-1}] \in \locl.
$$
The conjugation formula of \ref{prodHi} implies that conjugation by $\sigma_0(g)$ induces an automorphism of $\locl'$, as well an isomorphism of partial groups from $\locl(K'')$ to $\locl(H'')$. The conjugation formula of \ref{prodHi} also implies that conjugation by $\sigma_0(g)$ induces an automorphism of $S'$, since the extension is isotypical.

Set $\eta = \eta(g^{-1},g) \in N_{\locl'}(S')$ for short, so $[(1,g)^{-1}] = [(\eta^{-1}, g^{-1})]$. Then, by Lemma \ref{alternativecond}, it is enough to show that the following conditions hold:
\begin{enumerate}[(1)]

\item $[g^{-1}|h_1|g|g^{-1}|h_2|g|\ldots |g^{-1}|h_n|g] \in \locl''$; and

\item $[1|a_1|b_1|1|a_2|b_2|\ldots|1|a_n|b_n] \in \locl'$, where $a_1 = \Psi_g(y_1)$ and $b_1 = (\Psi_g \circ \Psi_{h_1})(\eta^{-1})$, and 
$$
\begin{array}{l}
a_i = (\Psi_{g} \circ \Psi_{h_1} \circ \Psi_{g^{-1}} \circ \ldots \circ \Psi_{g} \circ \Psi_{h_{i-1}} \circ \Psi_{g^{-1}} \circ \Psi_{g})(y_i) \\[4pt]
b_i = (\Psi_{g} \circ \Psi_{h_1} \circ \Psi_{g^{-1}} \circ \ldots \circ \Psi_{g} \circ \Psi_{h_i})(\eta^{-1})
\end{array}
$$
for $i = 2, \ldots, n$.

\end{enumerate}
Condition (1) follows immediately since $H'' = \9{[g]}(K'')$ by assumption. To show that condition (2) also holds, notice that by definition the element $[\eta] \in N_{\locl}(S)$ defines a homotopy from $\Psi_g^{-1}$ to $\Psi_{g^{-1}}$. Recall from Lemma \ref{homotopies} that this is equivalent to
$$
[\Psi_{g^{-1}}(x) \cdot \eta] = [\eta \cdot \Psi_g^{-1}(x)]
$$
for all $[x] \in \locl'$. Furthermore, since $[\eta] \in N_{\locl}(S)$, it follows that $\Psi_{g^{-1}}[x] = [\eta \cdot \Psi_{g^{-1}}(x) \cdot \eta^{-1}]$. Equivalently, for all $[y] \in \locl'$ we have
$$
(\Psi_{g^{-1}} \circ \Psi_g)[y] = [\eta \cdot y \cdot \eta^{-1}].
$$
It is easy to see now that condition (2) above holds if and only if
$$
[\Psi_g(y_1)|(\Psi_g \circ \Psi_{h_1})(y_2)|\ldots|(\Psi_g \circ \Psi_{h_1} \circ \ldots \circ \Psi_{h_{n-1}})(y_n)] \in \locl'.
$$
Since $\omega \in \locl$, it follows that $[y_1|\Psi_{h_2}(y_2)|\ldots|(\Psi_{h_1} \circ \ldots \circ \Psi_{h_{n-1}})(y_n)] \in \locl'$ and the above condition holds.

Finally, we prove property (iii). We have to show that $[\sigma_0(g)|\ldots|\sigma_0(g_n)] \in \locl$ for each $[g_1|\ldots|g_n] \in \locl''$. By Proposition \ref{TCP1} and Lemma \ref{alternativecond}, part (iii) holds if the following conditions are satisfied: $[g_1|\ldots|g_n] \in \locl''$ and $[1|1|\ldots|1] \in \locl'$. Clearly, both conditions hold, and thus part (iii) follows.
\end{proof}

The section $\sigma_0$ has the disadvantage that in general it does not take values in $\loct$. Thus, we now show that there is a section with similar properties and which, in addition, factors through $\loct$.

\begin{lmm}\label{aux2}

There is a section $\sigma \colon \locl'' \to \locl$ satisfying the following properties:
\begin{enumerate}[(i)]

\item $\sigma(g) \in \loct$ for all $[g] \in \locl''$;

\item For each $[g] \in \locl''$, left conjugation by $\sigma(g)$ induces an element of $\Aut(\locl';S')$.

\item If $[g] \in \locl''$ and $H'', K'' \leq S''$ are such that $\9{[g]}(K'') = H''$, then left conjugation by $\sigma(g)$ induces an isomorphism of partial groups form $\locl(K'')$ to $\locl(H'')$.

\item If $[g_1|\ldots|g_n] \in \locl''$, then $[\sigma(g_1)|\ldots|\sigma(g_n)] \in \locl$.

\end{enumerate}
Furthermore, we can choose the above so that $\sigma(1) = [(1,1)]$.

\end{lmm}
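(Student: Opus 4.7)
\emph{Proof plan.} My strategy is to start from the section $\sigma_0$ of Lemma \ref{aux0}, given by $\sigma_0(g) = [(1,g)]$, which already satisfies properties (ii)--(iv) of the statement together with the normalization $\sigma_0(1) = [(1,1)]$, and modify it pointwise so that (i) also holds. The correction will have the form $\sigma(g) = [(x_g, g)]$ for a suitable element $x_g \in N_{\locl'}(S')$, chosen so that $\sigma(g)$ actually conjugates a pair of subgroups in $\Delta$ and therefore lies in $\loct = \dd_\Delta$.

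For the selection of $x_g$, for each $[g] \in \locl''$ I would use the locality of $(\locl'', \Delta'', S'')$ to pick $P_g'', Q_g'' \in \Delta''$ with $\9{[g]}P_g'' = Q_g''$, and then set $H_g = \tau^{-1}(P_g'') \cap S$ and $K_g = \tau^{-1}(Q_g'') \cap S$; both lie in $\Delta$ because $H_g \cap S' = K_g \cap S' = S' \in \Delta'$. Using the conjugation formula from Remark \ref{prodHi}, the requirement $\9{[(x_g,g)]}H_g = K_g$ amounts to demanding that the first coordinate of $\9{[(x_g,g)]}[(y,h)]$ lie in $S'$ for every $[(y,h)] \in H_g$. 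Reducing modulo the normal subgroup $S' \trianglelefteq N_{\locl'}(S')$, this becomes a twisted coboundary equation on $\overline{x_g}$ with right-hand side $\delta_g(h) = \overline{\eta(g,h) \cdot \eta(\9{[g]}h, g)}$, for $h \in P_g''$.

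The hard part will be to show that $\delta_g$ is a $1$-coboundary in $H^1(P_g''; N_{\locl'}(S')/S')$, with the action of $P_g''$ induced by the automorphisms $\overline{\Psi_{\9{[g]}h}}$. First one would verify that $\delta_g$ is a $1$-cocycle, which should follow from the cocycle identity (condition (4) of Hypothesis \ref{hyp1}) after a careful manipulation of the $\eta$ values together with the fact that conjugation by $\Psi_g$ intertwines $\eta$ appropriately. Its vanishing as a cohomology class is the main technical obstacle, and I would approach it by exploiting the interplay between the $p$-group $P_g''$ and the finite group $N_{\locl'}(S')/S'$, combined with the locality structure of $\locl'$ which controls how $\eta$-values can differ from elements of $S'$.

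Once $x_g$ has been chosen for each $g$, with $x_1 = 1$ to enforce the normalization $\sigma(1) = [(1,1)]$, properties (ii)--(iv) transfer from $\sigma_0$ to $\sigma$ with essentially no extra work: the factorization $\sigma(g) = [(x_g, 1)] \cdot \sigma_0(g)$ in $\locl$ shows that conjugation by $\sigma(g)$ differs from conjugation by $\sigma_0(g)$ only by the inner automorphism $c_{x_g}$ of $\locl'$, which lies in $\Aut(\locl';S')$ because $x_g \in N_{\locl'}(S')$; and the fact that $N_{\locl'}(S')$ is a subgroup of $\locl'$ preserved by every $\Psi_g$ ensures that the multiplicative conditions (iii) and (iv) continue to hold, via the same sequence/concatenation arguments used in Lemma \ref{aux0}.
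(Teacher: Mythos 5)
The overall shape of your answer is right --- the paper's $\sigma(g)$ does in the end have the form $[(w_g,1)]\cdot[(1,g)]$ with $w_g \in N_{\locl'}(S')$ --- but the step you yourself flag as ``the main technical obstacle'' is exactly where the proof lives, and your plan for it does not go through. You propose to produce $x_g$ by showing that a twisted $1$-cocycle $\delta_g$ with values in $N_{\locl'}(S')/S'$ is a coboundary. There is no reason for that class to vanish: $N_{\locl'}(S')/S'$ is merely a finite group whose order may well be divisible by $p$, and nothing in Hypothesis \ref{hyp1} forces $H^1(P_g''; N_{\locl'}(S')/S')$ to be trivial or singles out $\delta_g$ as a coboundary. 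The obstruction you are trying to kill is not cohomological in nature, so this reduction leaves the essential existence claim unproved.

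What the paper does instead is a Sylow-type conjugation inside a locality. Take $P''=L_{[g]}$ and $Q''=R_{[g]}$, both in $\Delta''$ by \cite[Lemma 2.14]{Chermak}; by Lemma \ref{aux-1} (via Example \ref{particular1}) the triple $(\locl(Q''),\Delta(Q''),S(Q''))$ is a locality, and $X=\9{[(1,g)]}(S(P''))$ is a $p$-subgroup of $\locl(Q'')$ of the same order as $S(Q'')$. Maximality of $S(Q'')$ then produces $[(y,h)]\in\locl(Q'')$ with $\9{[(y,h)]}X=S(Q'')$, so that $\Pi[(y,h)|(1,g)]$ conjugates $S(P'')$ onto $S(Q'')$, two members of $\Delta$, and hence lies in $\loct$; a final correction by $[(z,h)^{-1}]\in S(Q'')$, where $[(z,h)]$ is a preimage of $[h]$, restores $\tau(\sigma(g))=[g]$. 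This bypasses your cocycle computation entirely. Your closing claim that (ii)--(iv) ``transfer with essentially no extra work'' is also optimistic: the paper spends two further steps verifying that conjugation by the correcting factor $[(y,h)]$ genuinely induces an automorphism of $\locl'$ and an isomorphism $\locl(K'')\to\locl(H'')$, which requires exhibiting explicit conjugating sequences of subgroups in $\Delta'$. But the decisive gap is the unproved, and as stated unprovable, cohomological vanishing.
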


\begin{proof}

The proof is divided into several steps for the reader's convenience.

\textbf{Step 1.} Definition of $\sigma$ and properties (i) and (iv). Fix some $[g] \in \locl''$, and let $P'' = L_{[g]}$ and $Q'' = R_{[g]}$, as in (\ref{SwSu0}). In particular, note that $P'', Q'' \in \Delta''$, by \cite[Lemma 2.14]{Chermak}. By Lemma \ref{aux0}, conjugation by $[(1,g)] \in \locl$ defines an isomorphism of partial groups $\locl(P'') \to \locl(Q'')$ by the formula $[(y,h)] \mapsto [(1,g)\cdot (y,h) \cdot (1,g)^{-1}]$. However, in general we have
$$
X \defin \9{[(1,g)]}(S(P'')) \neq S(Q'')
$$
(note that $X$ is a subgroup of $\locl(Q'')$). Since $(\locl(Q''), \Delta(Q''), S(Q''))$ is a locality, there is some $[(y,h)] \in \locl(Q'')$ such that $\9{[(y,h)]}X \leq S(Q'')$, and thus $\9{[(y,h)]}X = S(Q'')$ since $|S(P'')| = |X| = |S(Q'')|$.

By Lemma \ref{alternativecond}, we have $[(y,h)|(1,g)] \in \locl$, and thus
$$
[(y \cdot \eta(h,g), h \cdot g)] = \Pi[(y,h)|(1,g)] \in \loct,
$$
since the above element conjugates the subgroup $S(P'')$ to $S(Q'')$, and these are elements of $\Delta$. In particular, $[h] \in Q''$, since $\tau(\locl(Q'')) = Q''$ by definition. Let $[(z,h)] \in S(Q'') \leq S$ be a preimage of $[h]$. Then,
$$
[(z,h)^{-1}|(y \cdot \eta(h,g), h \cdot g)] \in \loct
$$
since $(\loct, \Delta, S)$ is a locality. Set $\sigma(g) \defin \Pi[(z,h)^{-1}|(y \cdot \eta(g,h), h \cdot g)] \in \loct$. Note that in the case of $1 \in \locl''$, conjugation by $[(1,1)]$ already satisfies the required conditions, so we can choose $\sigma(1)$ to be $[(1,1)]$.

By definition property (i) holds, and property (iv) follows easily too. Indeed, if $[g_1|\ldots|g_n] \in \locl''$, via a sequence $P_0'', \ldots, P_n'' \in \Delta''$, then the word $\omega = [\sigma(g_1)|\ldots|\sigma(g_n)]$ conjugates the sequence $S(P_0''), \ldots, S(P_n'') \in \Delta$, and thus $\omega \in \loct$. 

\textbf{Step 2.} Property (iii). Fix some $[g] \in \locl''$ as above, and let $[(1,g)]$, $[(y,h)]$ and $[(z,h)^{-1}]$ be the elements used in Step 1 to define $\sigma(g)$. Notice that left conjugation by any of these three elements must restrict to an automorphism of $S'$. Indeed,
\begin{enumerate}[(1)]

\item for $[(1,g)]$, it follows from Lemma \ref{aux0};

\item for $[(y,h)]$, it follows because both conjugation by $[(1,g)]$ and by $\Pi[(y,h)|(1,g)]$ do; and

\item for $[(z,h)^{-1}]$, it follows because this is an element of $S(Q'') \leq S$.

\end{enumerate}
Thus, conjugation by $\sigma(g)$ also restricts to an automorphism of $S'$. We have to check that restriction to $\locl'$ also produces an automorphism. For $[(1,g)]$ and $[(z,h)^{-1}]$ this is clear, either by Lemma \ref{aux0}, in the case of $[(1,g)]$, or because $[(z,h)^{-1}] \in S$.

It remains to check that conjugation by $[(y,h)]$ induces an automorphism of $\locl'$. Let $\omega_1 = [u_1|\ldots|u_n] \in \locl'$, via the sequence $P_0', \ldots, P_n' \in \Delta'$, and let $\varphi_g \in \Aut(\locl';S')$ be the automorphism induced by left conjugation by $[(1,g)]$. Then, $\omega_2 \defin \varphi_g(\omega_1) \in \locl'$ via the sequence $\varphi_g(P_0'), \ldots \varphi_g(P_n') \in \Delta'$. Set for short $\omega_2 = [v_1|\ldots|v_n]$, and $Q_i' = \varphi_g(P_i')$ for $i = 0, \ldots, n$.

Recall that $(\locl(Q''), \Delta(Q''), S(Q''))$ is a locality, and that $\Delta' \subseteq \Delta(Q'')$ by definition. Since conjugation by $[(y,h)]$ restricts to an automorphism of $S'$, we have $R_i' \defin \9{[(y,h)]}(Q_i') \in \Delta'$, for $i = 0, \ldots, n$, and it follows that
$$
[(y,h)|(v_1,1)|(y,h)^{-1}|(y,h)|(v_2,1)|(y,h)^{-1}|\ldots|(y,h)|(v_n,1)|(y,h)^{-1}] \in \locl(Q''),
$$
via the sequence $R_0', Q_0', Q_1', R_1', Q_1', Q_2', \ldots, Q_n', R_n' \in \Delta'$. The conjugation formula of Remark \ref{prodHi} now implies that this induces an automorphism of $\locl'$.

\textbf{Step 3.} Property (ii). Again, fix some $[g] \in \locl''$, and suppose that $\9{[g]}(K'') = H''$ for some $H'', K'' \leq S''$. Let also $[(1,g)]$, $[(y,h)]$ and $[(z,h)^{-1}]$ be as in previous steps of the proof. By Lemma \ref{aux0}, conjugation by $[(1,g)]$ induces an isomorphism from $\locl(K'')$ to $\locl(H'')$, and conjugation by $[(z,h)^{-1}] \in S(H'')$ clearly induces an automorphism of $\locl(H'')$. Thus, to prove property (iii), it is enough to show that conjugation by $[(y,h)] \in \locl(H'')$ also induces an automorphism of $\locl(H'')$.

Recall that $(\locl(H''), \Delta(H''), S(H''))$ is a locality by Lemma \ref{aux-1}. Let $[(u_1, k_1)|\ldots |(u_n, k_n)] \in \locl(H'')$, via some sequence $P_0, \ldots, P_n \in \Delta(H'')$. By definition,
$$
\Delta(H'') = \{R \leq S(H'') \, \, | \, \, R \cap S' \in \Delta'\}.
$$
This means that $\Delta' \subseteq \Delta(H'')$, and that we may assume that $P_0, \ldots, P_n \in \Delta'$ (if $P_i \notin \Delta'$, we can replace $P_i$ by $P_i \cap S' \in \Delta'$). Set $V_i = \9{[(y,h)]}U_i$, for $i = 0, \ldots, n$, and note that $V_i \in \Delta'$ by Step 2, since conjugation by $[(y,h)]$ induces an automorphism of $S'$, and $U_i \leq S'$. It follows that
$$
[(y,h)|(u_1,k_1)|(y,h)^{-1}|(y,h)|(u_2, k_2)|(y,h)^{-1}|\ldots|(y,h)|(u_n,k_n)|(y,h)^{-1}] \in \locl(H''),
$$
via the sequence $V_0, U_0, U_1, V_1, U_1, U_2, V_2, \ldots, U_n, V_n$. Property (ii) follows easily now.
\end{proof}

\begin{lmm}\label{aux1}

The subgroup $S$ is a Sylow $p$-subgroup of $\locl$: if $H \leq \locl$ is a $p$-group, then $H$ is conjugate in $\locl$ to a subgroup of $S$.

\end{lmm}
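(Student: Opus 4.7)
My plan is a two-step reduction: first project the $p$-subgroup down to $\locl''$ and conjugate its image into $S''$, and then work inside the preimage $\locl(S'')$ to conjugate the rest of the subgroup into $S$.

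Given a $p$-subgroup $H \leq \locl$, its image $\tau(H) \leq \locl''$ is a $p$-subgroup of the locality $(\locl'', \Delta'', S'')$. The Sylow-type property for localities (a standard consequence of Chermak's theory) yields an element $[g_0] \in \locl''$ with $\tau(H) \leq R_{[g_0]}$ and $\9{[g_0]}\tau(H) \leq S''$. I would lift $[g_0]$ to $\sigma(g_0) \in \loct \subseteq \locl$ via the section constructed in Lemma \ref{aux2}. Since $\tau(H) \leq R_{[g_0]}$ means exactly that $H \leq \locl(R_{[g_0]})$, Lemma \ref{aux2}(iii) guarantees that left conjugation by $\sigma(g_0)$ is a well-defined isomorphism of partial groups from $\locl(R_{[g_0]})$ onto $\locl(L_{[g_0]}) \leq \locl(S'')$. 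Setting $H_1 := \9{\sigma(g_0)}H$, this is an $\locl$-conjugate of $H$ and is a $p$-subgroup of $\locl(S'')$.

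Next, I work inside $\locl(S'')$. By Lemma \ref{aux-1} applied with $H'' = S''$ (which trivially satisfies $S'' \cap S'' \in \Syl_p(S'')$), the triple $(\locl(S''), \Delta(S''), S)$ is itself a locality; note that $S(S'') = S$ because $\tau(S) = S''$. This sub-locality is precisely the extension $\locl' \to \locl(S'') \to BS''$ of the $p$-group $S''$ considered in Example \ref{particular1}. Applying the Sylow-type property of this sub-locality to $H_1$ produces an element $[(y,h)] \in \locl(S'')$ with $\9{[(y,h)]}H_1 \leq S$, and composing both conjugations exhibits $H$ as $\locl$-conjugate to a subgroup of $S$.

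The main obstacle I foresee is the invocation of the Sylow-type property for localities, which is used twice (once for $\locl''$ and once for the sub-locality coming from Example \ref{particular1}); while this is a classical result of the general theory, in the sub-locality it can also be argued directly by exploiting that the base $S''$ is a $p$-group. In that case one first intersects $H_1$ with $\locl'$ and applies the Sylow property of $(\locl', \Delta', S')$, then extends the conjugation to all of $H_1$ by a suitable lift through $\sigma_0$ (Lemma \ref{aux0}), in the spirit of the classical argument for group extensions with a $p$-group quotient. Beyond that, the argument is a clean two-step reduction whose correctness rests on Lemma \ref{aux2}(iii) providing a section that simultaneously lifts to $\loct$ and induces conjugation-isomorphisms between the relevant preimages.
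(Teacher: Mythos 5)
Your proposal is essentially the paper's proof: project to the base, conjugate $H''=\tau(H)$ into $S''$ using the Sylow property of the locality $(\locl'',\Delta'',S'')$ (Chermak's Proposition 2.21), lift the conjugating element, and finish inside the preimage of a subgroup of $S''$, which is a locality by Example \ref{particular1} with Sylow subgroup contained in $S$. Two small points of divergence are worth noting. First, the paper lifts $[g]$ via the naive section $\sigma_0(g)=[(1,g)]$ of Lemma \ref{aux0} rather than the section $\sigma$ of Lemma \ref{aux2}; the extra work that goes into $\sigma$ (landing in $\loct$) buys you nothing here, since any $\locl$-conjugation suffices. Second, your formulation ``$\tau(H)\leq R_{[g_0]}$'' is not quite right in the case that actually requires an argument: $R_{[g_0]}$ is by definition a subgroup of $S''$, so this condition would force $\tau(H)\leq S''$, i.e.\ the case where no conjugation in the base is needed at all. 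What you need when $\tau(H)\not\leq S''$ is that conjugation by the single element $[g_0]$ is defined on all of $\tau(H)$; the paper secures this by invoking Proposition 2.21(a) to find $Q''\in\Delta''$ normalized by $H''$ and then choosing $[g]\in N_{\locl''}(Q'')$, so that the words $[g|h|g^{-1}]$ lie in $\dd(\locl'')$ for every $h\in H''$. With that correction your two-step reduction goes through exactly as in the paper.
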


\begin{proof}

Let $H \leq \locl$ be a $p$-subgroup. Then $H''$ is a $p$-subgroup of $\locl''$, and it is conjugate to a subgroup of $S''$ by \cite[Proposition 2.21 (b)]{Chermak} since $(\locl'', \Delta'',S'')$ is a locality.

Suppose first that $H'' \leq S''$. In this case the pull-back $\locl(S'')$ is a locality with Sylow $p$-subgroup $S$ and $H \leq \locl(S'')$, and the statement follows by \cite[Proposition 2.21 (b)]{Chermak}. Suppose now that $H''$ is not a subgroup of $S''$, and let $[g] \in \locl''$ be such that $\9{[g]}(H'') = K'' \leq S''$. By \cite[Proposition 2.21 (a)]{Chermak}, the subgroup $H''$ normalizes some $Q'' \in \Delta''$, and this means that we can choose $[g] \in N_{\locl''}(Q'')$ so that $K'' \leq N_{S''}(Q'')$.

By Lemma \ref{aux0} the element $[(1,g)] \in \locl$ induces an isomorphism of partial groups from $\locl(H'')$ to $\locl(K'')$, and in particular the subgroup $H$ is sent to a $p$-subgroup $K \leq \locl(K'')$. To finish the proof, notice that $(\locl(K''), \Delta(K''), S(K''))$ is a locality by Example \ref{particular1}, and thus by \cite[Proposition 2.21 (b)]{Chermak} $K$ is conjugate in $\locl(K'')$ to a subgroup of $S(K'') \leq S$.
\end{proof}

%%%%%%%%%%%%%%%

\subsection{Isotypical extensions and saturated localities}\label{isosat}

Finally, we study isotypical extensions of saturated localities. More specifically, given an isotypical extension $\locl' \to \locl \to \locl''$, with $(\locl', \Delta', S')$ and $(\locl'', \Delta'', S'')$ saturated localities, we want sufficient conditions for $\locl$ to be equivalent to the classifying space of a $p$-local finite group after $p$-completion. Theorem \ref{good1} below motivates our approach to this question.

\begin{thm}\label{good1}

Let $(\locl', \Delta', S')$ and $(\locl'', \Delta'', S'')$ be saturated localities, let $\locl' \Right1{} \locl \Right1{\tau} \locl''$ be an isotypical extension, and let $(\loct, \Delta, S)$ be the locality described in Proposition \ref{propext3-1}, with associated fusion system $\FF_{\Delta}(\loct)$. Suppose in addition that the following conditions hold:
\begin{enumerate}[(a)]

\item $\loct$ contains $\locl'$ as a partial normal subgroup; and

\item $\Delta$ contains all the $\FF_{\Delta}(\loct)$-centric $\FF_{\Delta}(\loct)$-radical subgroups of $S$.

\end{enumerate}
Then the following holds.
\begin{enumerate}[(i)]

\item $(\loct, \Delta, S)$ is a saturated locality.

\item The inclusion $\loct \subseteq \locl$ induces an equivalence after $p$-completion.

\item $\FF'$ is a normal subsystem of $\FF_{\Delta}(\loct)$.

\end{enumerate}
In particular, after $p$-completion $\locl$ is equivalent to the classifying space of a $p$-local finite group, and $\FF_{\Delta}(\loct) = \FF_S(\locl)$.

\end{thm}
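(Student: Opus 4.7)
Part (i) is essentially definitional. According to Definition \ref{locality}, a locality is called \emph{saturated} precisely when condition (PL1) holds, which is exactly hypothesis (b); together with Proposition \ref{propext3-1} this gives assertion (i), and \cite[Theorem A]{BCGLO1} then produces a saturated fusion system $\FF_{\Delta}(\loct)$. For part (iii), I would verify conditions (N1)--(N4) of Definition \ref{normalF}. Condition (N1) is the assumed saturation of $(\locl',\Delta',S')$, and (N2) is Corollary \ref{propext3-2}. To prove (N3), I would use that any $\gamma \in \Hom_{\FF_{\Delta}(\loct)}(Q,S)$ is realized by left-conjugation by some $[(x,g)] \in \loct$, and Remark \ref{prodHi} shows that the restriction of this conjugation to $Q \cap S'$ agrees with left multiplication by $[x]$ composed with $\Psi_g$; since $\Psi_g \in \Aut(\locl';S')$ preserves the whole fusion data of $\FF'$, this yields the required bijection of morphism sets. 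For (N4), I would combine the section $\sigma$ of Lemma \ref{aux2} with axiom (II) of the transporter system structure associated to $\loct$: given $f \in \Aut_{\FF'}(S')$, first lift to a morphism in $N_{\loct}(S')$, then extend via (II) to a morphism on $S' \cdot C_S(S')$. Hypothesis (a) that $\locl' \lhd \loct$ ensures that this extension restricts to an $\FF'$-automorphism of $S'$ and that the commutator condition $[\widetilde{f}, C_S(S')] \leq Z(S')$ holds.

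Part (ii) is the main obstacle. The plan has three moves. First, combining assertion (i) with hypothesis (b) and Chermak's bijection of \cite[Appendix A]{Chermak}, the saturated locality $(\loct,\Delta,S)$ corresponds to a transporter system whose centric part is a centric linking system $\LL$ associated to $\FF_{\Delta}(\loct)$, so Appendix \ref{AppA} identifies $|\loct|^{\wedge}_p \simeq |\LL|^{\wedge}_p$ with the classifying space of a $p$-local finite group. Second, restriction of the fibre bundle $\tau \colon \locl \to \locl''$ yields a map $\tau|_{\loct} \colon \loct \to \locl''$ whose fibre over the basepoint is exactly $\locl'$ by hypothesis (a); I would show that this restricted map produces a fibration $|\locl'| \to |\loct| \to |\locl''|$ on realizations by rerunning the analysis of Proposition \ref{TCP1} restricted to the $\Delta$-conjugating sequences that define $\dd_\Delta$. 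Third, the inclusion $|\iota| \colon |\loct| \to |\locl|$ is then a map of fibrations inducing the identity on both base and fibre, so by the Zeeman comparison theorem it is a mod-$p$ cohomology isomorphism; since both sides are $p$-complete nilpotent spaces, $|\iota|$ becomes a homotopy equivalence after $p$-completion. The hard part will be the second move, where one must verify that passing from $\locl$ to $\loct$ does not destroy the fibration property up to $\F_p$-homotopy, despite $\loct$ being only a sub-partial-group of $\locl$ rather than a full fibre-bundle extension in the sense of Definition \ref{FB}.

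The ``in particular'' statements then follow easily: the first move of part (ii)'s proof already showed $|\loct|^{\wedge}_p$ is the classifying space of a $p$-local finite group, and the full conclusion of (ii) transfers this to $|\locl|^{\wedge}_p$. For the equality $\FF_{\Delta}(\loct) = \FF_S(\locl)$, one inclusion is immediate from $\loct \subseteq \locl$. Conversely, any $\locl$-conjugation morphism between two subgroups $P,Q \in \Delta$ is realized by an element whose left-conjugation passes from $Q$ to $P$, both members of $\Delta$, so it automatically lies in $\loct = \dd_{\Delta}$; combined with Alperin's fusion theorem applied to the saturated system $\FF_{\Delta}(\loct)$, this extends the identification on objects in $\Delta$ to the whole fusion system on all subgroups of $S$.
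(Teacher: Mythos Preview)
Your treatment of parts (i) and (iii) is close to the paper's. For (N4) the paper is more direct than your transporter-system route: given $f$ realized by $[x]\in N_{\locl'}(S')$, the extension $\widetilde f$ is simply conjugation by $[(x,1)]$ on $S'\cdot C_S(S')$, and the commutator $[(x,1)|(z,g)|(x,1)^{-1}|(z,g)^{-1}]$ is computed explicitly to land in $Z(S')$. Your appeal to axiom (II) and the section $\sigma$ is unnecessary overhead here.

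The real issue is your second move in part (ii). You want $\tau|_{\loct}\colon\loct\to\locl''$ to realize to a fibration with fibre $|\locl'|$, and then compare Serre spectral sequences. But there is no reason this restriction should be a fibre bundle, or even a quasifibration: $\loct=\dd_\Delta$ is cut out of $\locl$ by a condition on \emph{both} coordinates simultaneously (the existence of a $\Delta$-sequence), and the preimage of a simplex $[g_1|\ldots|g_n]\in\locl''$ under $\tau|_{\loct}$ depends on which $\Delta''$-sequences witness $[g_1|\ldots|g_n]$. Rerunning Proposition \ref{TCP1} will not help, because that proposition describes the \emph{full} RTCP, not a sub-partial-group carved out by an objectivity condition. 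You correctly flag this as ``the hard part'', but you have no mechanism to fill it.

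The paper circumvents this entirely. Instead of producing a second fibration, it invokes D\'iaz's spectral sequence \cite[Theorem 1.1]{Diaz} for a strongly $\FF_{\Delta}(\loct)$-closed subgroup: since $S'\leq S$ is strongly closed (Corollary \ref{propext3-2}), there is a spectral sequence with $E_2^{r,s}=H^r(S'';H^s(S';\F_p))^{\FF_{\Delta}(\loct)}$ converging to $H^*(\FF_{\Delta}(\loct);\F_p)\cong H^*(\loct;\F_p)$. A cohomological Mackey functor argument (following \cite[Theorem 5.1]{Diaz}) then identifies this $E_2$-page with the $E_2$-page of the genuine Serre spectral sequence for $\locl'\to\locl\to\locl''$, yielding $H^*(\locl;\F_p)\cong H^*(\loct;\F_p)$ without ever claiming $\loct$ fibres over $\locl''$.

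For the equality $\FF_{\Delta}(\loct)=\FF_S(\locl)$, your Alperin argument is incomplete: a morphism in $\FF_S(\locl)$ between subgroups $P,Q\in\Delta$ is a priori a \emph{composite} of conjugations that may pass through subgroups outside $\Delta$, so you cannot immediately place it in $\loct$. The paper instead uses the topological fusion system $\ee(\gamma)$ of \cite[\S7]{BLO2}: once (ii) is established, $|\locl|^{\wedge}_p$ is a classifying space with fusion system $\FF_{\Delta}(\loct)$ by \cite[Proposition 7.3]{BLO2}, and the chain $\FF_{\Delta}(\loct)\subseteq\FF_S(\locl)\subseteq\ee(\gamma')\subseteq\ee(\gamma)\cong\FF_{\Delta}(\loct)$ forces equality.
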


\begin{proof}

By definition, $\FF_{\Delta}(\loct)$ is $\Delta$-generated and $\Delta$-saturated. Thus, saturation of $\FF_{\Delta}(\loct)$ follows immediately from \cite[Theorem A]{BCGLO1} or \cite[Proposition 3.6]{OV}. The rest of the proof is divided into steps for the reader's convenience.

\textbf{Step 1.} The $p$-completed space $|\loct|^{\wedge}_p$ is equivalent to the classifying space of a $p$-local finite group. Let $\LL$ be the unique centric linking system associated to the saturated fusion system $\FF_{\Delta}(\loct)$. We show that $|\loct|^{\wedge}_p \simeq |\LL|^{\wedge}_p$. Let $\hh = \Delta \cap \FF_{\Delta}(\loct)^c$. By definition the set $\hh$ contains all the $\FF_{\Delta}(\loct)$-centric $\FF_{\Delta}(\loct)$-radical subgroups of $S$. Set also,
\begin{itemize}

\item $\TT = \TT_{\Delta}(\loct)$, the transporter category associated to $(\loct, \Delta, S)$;

\item $\TT_{\hh} \subseteq \TT$, the full subcategory with object set $\hh$;

\item $\LL$, the centric linking system associated to the saturated fusion system $\FF_{\Delta}(\loct)$;

\item $\LL_{\hh} \subseteq \LL$, the full subcategory with object set $\hh$.

\end{itemize}
Then there is a zigzag
$$
|\loct| \Left2{\proj^{\TT}_{\loct}} |\TT| \Left2{\incl_{\TT}} |\TT_{\hh}| \Right2{\proj_{\LL}} |\LL_{\hh}| \Right2{\incl_{\LL}} |\LL|,
$$
where the map $|\loct| \Left1{\proj^{\TT}_{\loct}} |\TT|$ is a weak equivalence by Theorem \ref{equivnerves}, and all the other maps are equivalences after $p$-completion by \cite[Proposition 4.6]{OV}.

\textbf{Step 2.} Proof of part (ii). The proof follows \cite[Theorem 5.1]{Diaz}, with some small modifications. Consider the commutative diagram of extensions
\begin{equation}\label{diag43}
\xymatrix{
BS' \ar[r] \ar[d] & BS \ar[r] \ar[d] & BS'' \ar[d] \\
\locl' \ar[r] & \locl \ar[r] & \locl''
}
\end{equation}
from Lemma \ref{propext1}. Each row has an associated mod $p$ Lyndon-Hochschild-Serre spectral sequence, whose second pages are
$$
E_{2,S}^{r,s} = H^r(S'; H^s(S''; \F_p)) \qquad \mbox{and} \qquad E_{2, \locl}^{r,s} = H^r(\locl''; H^s(\locl'; \F_p)), 
$$
and converging to $H^{\ast}(S; \F_p)$ and $H^{\ast}(\locl; \F_p)$, respectively. Consider also the fusion system $\FF_{\Delta}(\loct)$ over $S$, and recall that $S' \leq S$ is a strongly $\FF_{\Delta}(\loct)$-closed subgroup by Corollary \ref{propext3-2}. Thus, by \cite[Theorem 1.1]{Diaz} there is a spectral sequence with second page
$$
E_{2, \loct}^{r,s} = H^r(S/S'; H^s(S'; \F_p))^{\FF_{\Delta}(\loct)} = H^r(S''; H^s(S'; \F_p))^{\FF_{\Delta}(\loct)}
$$
and converging to $H^{r+s}(\FF_{\Delta}(\loct); \F_p)$. By part (i), $\loct$ is mod $p$ equivalent to the classifying space of a $p$-local finite group, and thus by \cite[Theorem 5.8]{BLO2} there is an isomorphism
$$
H^{\ast}(\loct; \F_p) \cong H^{\ast}(\FF_{\Delta}(\loct); \F_p).
$$

By the same arguments of the proof of \cite[Theorem 5.1]{Diaz}, there is a cohomological Mackey functor $(A,B) \colon \FF_{\Delta}(\loct) \to CCh^2(\Z_{(p)})$, which produces morphisms of spectral sequences
$$
\xymatrix@C=3cm{
H^r(\locl''; H^s(\locl';\F_p)) \ar@/^2pc/[rr]^{H^{r,s}(A)(\iota)} & & H^r(S''; H^s(S'; \F_p)) \ar@/^2pc/[ll]^{H^{r,s}(B)(\iota)}
}
$$
where $H^{r,s}(A)(\iota)$ coincides with the restriction morphism induced by diagram (\ref{diag43}). The isomorphism between $E_{2,\locl}^{\ast,\ast}$ and $E_{2, \loct}^{\ast, \ast}$ follows now by the same arguments of \cite{Diaz}. To finish the proof, this isomorphism of spectral sequences implies an isomorphism of the $\infty$-pages, and thus there is an isomorphism $H^{\ast}(\locl; \F_p) \cong H^{\ast}(\loct; \F_p)$, which is induced by the inclusion $\loct \to \locl$.

\textbf{Step 3.} The equality $\FF_{\Delta}(\loct) = \FF_S(\locl)$. Set for short $\FF = \FF_S(\locl)$. By part (i) and Step 2 we know that the $p$-completion of $\locl$ is equivalent to the classifying space of a $p$-local finite group. Consider the commutative triangle
$$
\xymatrix{
 & BS \ar[rd]^{\gamma} \ar[ld]_{\gamma'} & \\
|\locl| \ar[rr]_{(-)^{\wedge}_p} & & |\locl|^{\wedge}_p
}
$$
and consider the topological construction introduced in \cite[Section 7]{BLO2}. Given a $p$-group $R$, this construction assigns a fusion system $\ee(\mu)$ to each map $\mu \colon BR \to X$, as follows. For each $P, Q \leq R$,
$$
\Hom_{\ee(\mu)}(P,Q) = \{f \in \Inj(P,Q) \, | \, \mu|_{BP} \simeq \mu|_{BQ} \circ Bf\},
$$
where $\Inj(P,Q)$ is the set of injective group homomorphism from $P$ to $Q$. In the particular case where $X$ is the classifying space of a $p$-local finite group and $\mu \colon BR \to X$ is the inclusion of its Sylow $p$-subgroup, the authors prove in \cite[Proposition 7.3]{BLO2} that $\ee(\mu)$ is (isomorphic to) the fusion system of the original $p$-local finite group.

Applying the topological construction to the triangle above, we get the following sequence of inclusions and isomorphisms
$$
\FF_{\Delta}(\loct) \subseteq \FF \subseteq \ee(\gamma') \subseteq \ee(\gamma) \cong \FF_{\Delta}(\loct).
$$
More specifically, from left to right, the first inclusion is given by definition of $\FF_{\Delta}(\loct)$ and $\FF$; the second inclusion follows from the topological construction, since clearly any conjugation by an element of $\locl$ will induce an morphism in $\ee(\gamma')$; the third inclusion is given again by the topological construction; and the isomorphism follows from \cite[Proposition 7.3]{BLO2}, together with part (ii).

\textbf{Step 4.} Proof of part (iii). By Step 3, it is enough to show that the fusion systems $\FF' \subseteq \FF$ satisfy conditions (N1)-(N4) in Definition \ref{normalF}. By hypothesis the fusion system $\FF'$ is saturated since $(\locl', \Delta', S')$ is a proper locality, and condition (N1) holds. Condition (N2) also holds easily: $\locl'$ is a partial normal subgroup of $\locl$, and thus $S' = S \cap \locl'$ is strongly $\FF$-closed.

To show that condition (N3) holds, let $P' \leq Q' \leq S'$ and $\gamma$ be as in Definition \ref{normalF}. By definition, the morphism $\gamma$ is the left conjugation morphism induced by some word $[(x_1,g_1)|\ldots|(x_n,g_n)] \in \ww(\locl_1)$, so that
$$
\9{[(x_i,g_i)]}(\ldots \9{[(x_{n-1}, g_{n-1})]}(\9{[(x_n,g_n)]}Q')) \leq S
$$
for all $i = 1, \ldots, n$. In particular, it is enough to check the case $n = 1$. That is, $[(x,g)] \in \locl$ such that ${[(x,g)]}Q' \leq S'$. By the conjugation formula in Remark \ref{prodHi}, we have
$$
\Pi[(x,g)|(y,1)|(x,g)^{-1}] = [(x \cdot \Psi_g(y) \cdot x^{-1}, 1)]
$$
for all $[(y,1)] \in Q'$. Thus, $\gamma = \alpha \circ \Psi_g$, where $\alpha$ is the left conjugation morphism induced by $[(x, 1)]$. Clearly, there are bijections
$$
\xymatrix@R=1mm{
\Hom_{\FF'}(P',Q') \ar[rr]^{\cong} & & \Hom_{\FF'}(\Psi_g(P'), \Psi_g(Q')) \ar[rr]^{\cong} & & \Hom_{\FF'}(\gamma(P'), \gamma(Q'))\\
f \ar@{|->}[rr] & & \Psi_g \circ f \circ \Psi_g^{-1} & & \\
 & & f' \ar@{|->}[rr] & & \alpha \circ f' \circ \alpha^{-1}
}
$$
and condition (N3) follows.

Finally, let us show that condition (N4) holds. Let $f \in \Aut_{\FF'}(S')$, and let $[x] \in N_{\locl'}(S')$ be such that $f$ is the left conjugation automorphism induced by $[x]$. Then, for each element $[(z,g)] \in C_S(S')$, we have $[(x,1)|(z,g)|(x,1)^{-1}|(z,g)^{-1}] \in \locl$, and
$$
\alpha = \Pi[(x,1)|(z,g)|(x,1)^{-1}|(z,g)^{-1}] = \Pi[(x,1)|(z \cdot \Psi_g(x^{-1}) \cdot z^{-1},1)] \in S'.
$$
Notice that the conjugation actions on $S'$ of the elements $[x]$ and $[z \cdot \Psi_g(x^{-1}) \cdot z^{-1}]$ are the same since $[(z,g)]$ centralizes $S'$, and hence $\alpha \in C_{S'}(S') = Z(S')$.
\end{proof}

\begin{rmk}

Given a good isotypical extension $\locl' \to \locl \to \locl''$, the associated locality $(\loct, \Delta, S)$ described above need not be proper. For example, consider the split extension $\locl' \to \locl' \times \locl'' \to \locl''$. In this case, $\loct = \locl' \times \locl''$, and $(\loct, \Delta, S)$ is proper if and only if both $\locl'$ and $\locl''$ are proper, which is not necessarily the case.

\end{rmk}

\begin{defi}\label{defigood}

Let $(\locl', \Delta', S')$ and $(\locl'', \Delta'', S'')$ be saturated localities, let $\locl' \Right1{} \locl \Right1{\tau} \locl''$ be an isotypical extension, and let $(\loct, \Delta, S)$ be the locality described in Proposition \ref{propext3-1}, with associated fusion system $\FF_{\Delta}(\loct)$. The above extension is \emph{good} if the following conditions hold:
\begin{enumerate}[(i)]

\item $\loct$ contains $\locl'$ as a partial normal subgroup; and

\item $\Delta$ contains all the $\FF_{\Delta}(\loct)$-centric $\FF_{\Delta}(\loct)$-radical subgroups of $S$.

\end{enumerate}

\end{defi}

\begin{rmk}

Let $(\locl', \Delta', S')$, $(\locl'', \Delta'', S'')$ be saturated localities, and let $\varepsilon \colon \locl'' \to \Out(\locl';S')$ be an outer action. Assuming that some extension of $(\locl', \locl'', \varepsilon)$ is good, it is not clear whether all the extensions of $(\locl', \locl'', \varepsilon)$ are good or not.

\end{rmk}

The rest of this section is devoted to present sufficient (but not necessary) conditions for an isotypical extension $\locl' \to \locl \to \locl''$ to be good. Given such an extension, let $(\loct, \Delta, S)$ be the locality described in Proposition \ref{propext3-1}. Note that in general $\loct$ does not contain $\locl'$ as a partial subgroup, and as a consequence the fusion system $\FF_{\Delta}(\loct)$ may not contain $\FF'$ as a subsystem. The concept of \emph{rigid} extensions is thus introduced to deal with condition (i) in Definition \ref{defigood}. Regarding condition (ii) in \ref{defigood}, our analysis follows the steps of \cite{OV}, where the authors study extensions of transporter systems by $p$-groups. This way we introduce \emph{admissible} extensions, generalizing the same concept introduced in \cite[Definition 5.10]{OV}. The results in the previous subsections, and in particular Corollary \ref{particular2}, impose some mild restrictions to our approach, which we summarize as follows. 

\begin{hyp}\label{hyp3}

Fix an isotypical extension $\locl' \Right1{} \locl \Right1{\tau} \locl''$, where
\begin{enumerate}[(a)]

\item $(\locl', \Delta', S')$ is a saturated locality such that $\Delta'$ contains all $\FF'$-centric subgroups.

\item $(\locl'', \Delta'', S'')$ is a saturated locality.

\end{enumerate}
The twisting function that determines the above extension is assumed to satisfy the same properties (1)-(4) as listed in Hypothesis \ref{hyp1}. Also, the following notation will be tacitly used throughout the rest of this section.
\begin{enumerate}[(i)]

\item $S \leq \locl$ and $\Delta$ are as described in Lemma \ref{propext1} and (\ref{collectionS}) respectively.

\item $\FF$ is the fusion system over $S$ whose morphisms are generated by all the conjugations among subgroups of $S$ induced by elements of $\locl$.

\item $(\loct, \Delta, S)$ is the locality described in Proposition \ref{propext3-1}, with fusion system $\FF_{\Delta}(\loct)$.

\end{enumerate}

\end{hyp}

\begin{lmm}\label{propext5-2}

For all $P, Q \in \Delta$ there is an equality $\Hom_{\FF_{\Delta}(\loct)}(P,Q) = \Hom_{\FF}(P,Q)$.

\end{lmm}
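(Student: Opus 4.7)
The plan is to prove the two inclusions separately. The inclusion $\Hom_{\FF_{\Delta}(\loct)}(P,Q) \subseteq \Hom_{\FF}(P,Q)$ is immediate from the definitions: $\loct \subseteq \locl$, and every morphism of $\FF_{\Delta}(\loct)$ is by definition a composition of restrictions of conjugations by elements of $\loct$, hence also by elements of $\locl$, so it lies in $\FF$.

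For the reverse inclusion, I would take an arbitrary $\varphi \in \Hom_{\FF}(P,Q)$ and write it as a composition
$$\varphi = c_{(x_n,g_n)}|_{P_{n-1}} \circ \cdots \circ c_{(x_1,g_1)}|_{P_0},$$
where $P_0 = P$, $P_n = Q$, and each $P_i \leq S$. The key observation is that all of the intermediate subgroups $P_i$ automatically lie in $\Delta$. This follows by induction on $i$ using axiom (O2) of objective partial groups: $P_0 = P \in \Delta$ and $S \in \Delta$ by Proposition \ref{propext3} (i), so with $X = P_{i-1} \in \Delta$, $Z = S \in \Delta$, $Y = S$, and $u = (x_i, g_i)$, axiom (O2) applied via Proposition \ref{propext3} (ii) yields $P_i = X^u \in \Delta$.

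With each $P_i \in \Delta$, we have $(x_i, g_i) \in N_{\locl}(P_{i-1}, P_i)$ by construction, so the word $[(x_1, g_1)|\ldots|(x_n, g_n)]$ conjugates the sequence $P_0, P_1, \ldots, P_n$ in $\Delta$. By definition of $\dd_{\Delta}$ and the equality $\loct = \dd_{\Delta}$ from Proposition \ref{propext3-1}, this word belongs to $\loct$, so its product $\Pi[(x_1, g_1)|\ldots|(x_n, g_n)]$ is an element of $\loct$, and conjugation by this element realizes $\varphi$ as a morphism in $\FF_{\Delta}(\loct)$. Since the argument is essentially a direct application of Proposition \ref{propext3} followed by the construction of $\loct$ in Proposition \ref{propext3-1}, I do not expect any serious obstacle; the only subtle point is verifying that the inductive application of (O2) indeed produces all intermediate subgroups inside $\Delta$.
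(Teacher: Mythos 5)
Your proof is correct and follows essentially the same route as the paper's: the paper likewise reduces the nontrivial inclusion to showing that any word $\omega \in \ww(\locl_1)$ conjugating $P$ into $Q$ lies in $\loct = \dd_{\Delta}$, though it asserts this ``by definition'' where you make explicit the inductive application of condition (O2) (via Proposition \ref{propext3}) to place all intermediate subgroups in $\Delta$. If anything, your write-up supplies the detail the paper leaves implicit.
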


\begin{proof}

By definition, $\FF_{\Delta}(\loct) \subseteq \FF$. Fix $P, Q \in \Delta$, and let $\omega = [(x_1, g_1)|\ldots|(x_n,g_n)] \in \ww(\locl_1)$ be such that $\9{\omega}P \leq Q$ (so that $\omega$ induces a morphism in $\Hom_{\FF}(P,Q)$). Since $P, Q \in \Delta$, it follows that $\omega \in \loct$ by definition, and the statement follows.
\end{proof}

\begin{prop}\label{aux3}

Each $\FF_{\Delta}(\loct)$-conjugacy class of elements of $\Delta$ contains some element $P$ such that $P'$ is fully $\FF'$-normalized and $P''$ is fully $\FF''$-normalized.

\end{prop}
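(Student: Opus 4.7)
My plan is to construct the required representative in two stages: first conjugate in $\loct$ to make the $S''$-projection fully $\FF''$-normalized, and then conjugate further by a word lying above the identity of $\locl''$ to make the $S'$-intersection fully $\FF'$-normalized without disturbing the projection.

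For the first stage, take any $R$ in the $\FF_{\Delta}(\loct)$-class of $P$. Then $R'' = \tau(R)$ belongs to the $\FF''$-class of $P''$, so by saturation of $\FF''$ there exists $g \in \locl''$ with $\9{[g]}R''$ fully $\FF''$-normalized. Using the section $\sigma \colon \locl'' \to \loct$ from Lemma \ref{aux2}, the element $\sigma(g)$ lies in $\loct$ and conjugates $S(R'')$ onto $S(\9{[g]}R'')$, both subgroups of $S$. Since $R \leq S(R'')$, the conjugate $R_1 := \9{\sigma(g)}R$ is a subgroup of $S$ in the same $\FF_{\Delta}(\loct)$-class as $P$ and with $R_1''$ fully $\FF''$-normalized.

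For the second stage, fix a fully $\FF''$-normalized $K''$ in the class of $P''$ and let $\mathcal{C}$ be the nonempty collection of all $R$ in the $\FF_{\Delta}(\loct)$-class of $P$ with $R'' = K''$. I would pick $R \in \mathcal{C}$ maximizing $|N_{S'}(R')|$ and claim this $R'$ is fully $\FF'$-normalized. If it were not, saturation of $\FF'$ together with axiom (II) of Definition \ref{defisat} would produce an $\FF'$-morphism $\tilde \varphi$ extending an isomorphism $\varphi \colon R' \to T'$ to a subgroup $N_\varphi \supseteq R'$, with $|N_{S'}(T')| > |N_{S'}(R')|$. Realize $\tilde \varphi$ by a word $\omega' = [x_1|\ldots|x_n]$ in $\locl'$ conjugating a chain in $\Delta'$, and lift to $\omega = [(x_1, 1)|\ldots|(x_n, 1)] \in \loct$ (which sits in $\loct$ thanks to the inclusion $\Delta' \subseteq \Delta$). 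Since $\tau(\omega)$ is the identity word in $\locl''$, the conjugate $R^* := \9{\omega}R$ satisfies $R^{*\prime\prime} = K''$ while $R^{*\prime} = T'$, contradicting the maximal choice of $R$.

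The main obstacle will be verifying that $R^* = \9{\omega}R$ actually sits inside $S$, not merely inside $\locl$. Concretely, this amounts to exhibiting a chain $H_0, \ldots, H_n \in \Delta$ with $R \leq H_n \leq S$ and $\9{[(x_i,1)]}H_i = H_{i-1}$ for each $i$. A natural candidate is $H_n = R \cdot N_\varphi \leq N_S(R')$ with $H_{i-1}$ defined inductively by left conjugation, using that $\omega'$ moves $N_\varphi$ through $\Delta'$-subgroups; checking that each $H_i$ actually lies in $S$ reduces, via the explicit conjugation formula of Remark \ref{prodHi}, to a compatibility condition between the cocycle data $(\Psi_g, \eta(g,h))$ and the coset representatives realizing the fibres of $S$ over $S''$. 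If this direct verification proves delicate, an alternative is to perform the second stage inside the sublocality $\hat{\mathcal{N}} := \locl(N_{\locl''}(K''))$ from Lemma \ref{aux-1}, which is a saturated locality by Corollary \ref{particular2} (since its base is a group), and then transport the maximum-normalizer argument into that saturated setting.
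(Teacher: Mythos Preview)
Your first stage matches the paper's proof essentially verbatim: use the section $\sigma$ of Lemma \ref{aux2} to conjugate so that the projection to $S''$ becomes fully $\FF''$-normalized.

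For the second stage, your direct approach has a real gap. The inclusion $\Delta' \subseteq \Delta$ you invoke to place $\omega = [(x_1,1)|\ldots|(x_n,1)]$ in $\loct$ is false in general: for $P' \in \Delta'$ one has $\tau(P') = \{1\}$, and $\{1\}$ need not lie in $\Delta''$. (This failure is exactly what the later rigidity hypothesis in Definition \ref{rigidext} is designed to repair, but rigidity is not assumed in the present proposition.) So your lifted word $\omega$ need not be a simplex of $\loct$, and the subsequent difficulty you flag---whether $\9{\omega}R$ lands in $S$---remains unresolved. The candidate chain $H_i = R \cdot (\text{intermediate }\Delta'\text{-subgroup})$ does not obviously stay inside $S$, since conjugation of $[(y,h)] \in R$ by $[(x_i,1)]$ produces $[(x_i \cdot y \cdot \Psi_h(x_i^{-1}), h)]$, and there is no a priori reason the first coordinate is compatible with the coset structure of $S$ over $S''$.

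Your proposed alternative, however, is precisely the paper's argument. One passes to the locality $\locl(H'')$ with $H'' = N_{\locl''}(K'')$; this is saturated by Corollary \ref{particular2} since its base is a group. Inside its fusion system $\FF(H'')$ one replaces $P'$ by a fully $\FF(H'')$-normalized conjugate $R'$ via a morphism defined on $N_{S(H'')}(P')$; since $P \leq N_{S(H'')}(P')$, the morphism carries $P$ to some $R \leq S(H'') \leq S$ with $R'' = P''$ unchanged. Strong closure of $S'$ in $\FF(H'')$ then forces $R'$ to be fully $\FF'$-normalized. So your fallback is correct and is the route the paper takes; the maximizing argument over $|N_{S'}(R')|$ is not needed once one works in $\locl(H'')$.
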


\begin{proof}

Recall from Lemma \ref{propext5-2} that $\Hom_{\FF_{\Delta}(\loct)}(P,Q) = \Hom_{\FF}(P,Q)$ for all $P, Q \in \Delta$. First we show that we may assume $P''$ to be fully $\FF''$-normalized. Suppose otherwise that $P''$ is not fully $\FF''$-normalized. Since $\FF''$ is saturated, there exists some morphism $f'' \colon P'' \to R''$ such that $R''$ is fully $\FF''$-normalized. Since $\FF'' = \FF_{\Delta''}(\locl'')$ and $P'' \in \Delta''$, there exists some $[g] \in \locl''$ such that $f''$ is the left conjugation homomorphism induced by $[g]$. Let $\sigma(g) \in \locl$ be as in Corollary \ref{aux2}. Then, $R = \9{\sigma(g)}P$ is such that $\tau(R) = R''$ is fully normalized in $\FF''$.

Suppose for simplicity that $P$ already satisfies that $P''$ is fully normalized in $\FF''$. Consider the locality $(\locl(H''), \Delta(H''), S(H''))$, where $H'' = N_{\locl''}(P'')$. Note that the associated fusion system $\FF(H'')$ is saturated by Corollary \ref{particular2}.

If $P'$ is not fully $\FF'$-normalized, then there are some subgroup $R' \leq S(H'')$ and some morphism $f \in \Hom_{\FF(H'')}(N_{S(H'')}(P'), N_{S(H'')}(R'))$ such that $f(P') = R'$, where $R'$ is chosen to be fully $\FF(H'')$-normalized. In particular, $R'$ is fully $\FF'$-normalized because $S' \leq S(H'')$ is strongly $\FF(H'')$-closed.

Notice that $P \leq N_{S(H'')}(P')$ by construction, and thus $R \defin f(P) \leq S(H'') \leq S$ is $\FF$-conjugate to $P$ and $R'$ is fully $\FF'$-normalized. Furthermore, $R'' = P''$ by construction, and $P''$ was already assumed to be fully $\FF''$-normalized by the first part of the proof.
\end{proof}

We are now ready to introduce rigid and admissible extensions. Before we do so, let us motivate the definition of admissible extension. The following is a (partial) generalization of \cite[Lemma 5.9]{OV}.

\begin{prop}\label{aux4}

Set $S_1'' = \Ker(S'' \Right2{\incl} \locl'' \Right2{\varepsilon} \Out(\locl';S') \Right2{} \Out_{\fus}(S'))$. Then, for each $P \in \Delta$ that is $\FF_{\Delta}(\loct)$-centric $\FF_{\Delta}(\loct)$-radical, the following holds.
\begin{enumerate}[(i)]

\item $P'$ is $\FF'$-centric; and

\item $C_{S_1''}(P'') \leq P''$.

\end{enumerate}

\end{prop}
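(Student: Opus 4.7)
The plan is to proceed by contradiction in both parts, using the interplay between $\FF$-radicality of $P$ and the subgroup $\Aut^0_{\FF}(P) \leq \Aut_{\FF}(P)$ of automorphisms that act trivially on both $P'$ and $P/P' \cong P''$. This subgroup is normal in $\Aut_{\FF}(P)$ (it is the kernel of the action on $P' \times P''$) and is a $p$-group by \cite[Lemma~1.15]{BCGLO2}; hence $\FF$-radicality of $P$ (where $\FF := \FF_{\Delta}(\loct)$) forces $\Aut^0_{\FF}(P) \leq \Inn(P)$. As a preliminary reduction, by Proposition~\ref{aux3} and since being $\FF$-centric $\FF$-radical is invariant under $\FF$-conjugation, we may and will assume that $P'$ is fully $\FF'$-normalized and $P''$ is fully $\FF''$-normalized.

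For part (i), I would suppose for contradiction that $C_{S'}(P') > Z(P')$. The conjugation formula of Remark~\ref{prodHi} gives $\9{[(x,g)]}[(z,1)] = [(c_x\circ\Psi_g(z),1)]$ for $[(x,g)] \in P$ and $[(z,1)] \in C_{S'}(P')$. Since $c_x\circ\Psi_g \in \Aut(S')$ sends $P'$ to $P'$ (because $P'$ is normal in $P$), it also preserves $C_{S'}(P')$, so $P$ normalizes $C_{S'}(P')$. The induced action of $P$ on the nontrivial $p$-group $C_{S'}(P')/Z(P')$ factors through $P''$ (different $P$-representatives over the same $g$ differ by an element of $P'$, which centralizes $C_{S'}(P')$), so a standard $p$-group fixed-point argument produces a nontrivial fixed coset represented by some $[(z,1)] \notin Z(P')$. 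A direct computation using Remark~\ref{prodHi} then shows that $[(z,1)]$ normalizes $P$, its conjugation action is trivial on $P'$ (since $z$ centralizes $P'$) and on $P''$ (its projection is $1$), so it lies in $\Aut^0_{\FF}(P) \leq \Inn(P)$. Writing $[(z,1)] = [(x_0,g_0)]\cdot c$ with $[(x_0,g_0)] \in P$ and $c \in C_{\locl}(P)$, the fact that both factors lie in the $p$-group $S$ forces $c \in C_S(P) = Z(P) \leq P$ by $\FF$-centricity. Hence $[(z,1)] \in P \cap S' \cap C_{S'}(P') = Z(P')$, contradicting the choice of $[(z,1)]$.

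For part (ii), the strategy is parallel: suppose $[h] \in C_{S_1''}(P'') \setminus P''$, and construct from $h$ a nontrivial element of $\Aut^0_{\FF}(P)$ whose lift to $\locl$ cannot sit inside $P$, yielding a contradiction. Since $[h] \in S_1''$, there exists $[y] \in N_{\locl'}(S')$ with $\Psi_h|_{S'} = c_y|_{S'}$. The plan is to work inside the pulled-back locality $(\locl(H''), \Delta(H''), S(H''))$ for $H'' = N_{\locl''}(P'')$, which is a saturated locality by Lemma~\ref{aux-1}, Example~\ref{particular1} and Corollary~\ref{particular2} (using $N_{S''}(P'') \in \Syl_p(H'')$, coming from $P''$ being fully $\FF''$-normalized). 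In this saturated subsystem, $P$ is $\FF(H'')$-centric, and saturation axiom~(I) permits one to find a lift $[(w,h)]$ of $h$ that normalizes $P$. Using part~(i) (so that $C_{S'}(P') = Z(P')$), one adjusts $w$ on the left by an element of $Z(P')$ to arrange that $wy$ centralizes $P'$, making the induced automorphism of $P$ trivial on $P'$; triviality on $P''$ is automatic because $h$ centralizes $P''$. The resulting automorphism therefore lies in $\Aut^0_{\FF}(P) \leq \Inn(P)$, so $[(w,h)] = [(x_0,g_0)]\cdot c$ for some $[(x_0,g_0)] \in P$ and $c \in C_{\locl}(P)$; the same $p$-group argument as in (i), applied in a Sylow of $N_{\locl}(P)$ containing $P$ and $[(w,h)]$, places $c$ in $C_S(P) = Z(P) \leq P$. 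Projecting to $\locl''$ gives $[h] = [g_0] \cdot \tau(c) \in P''$, contradicting $[h] \notin P''$.

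The main obstacle is part (ii), specifically the construction of a lift $[(w,h)] \in \locl$ that both normalizes $P$ and induces an automorphism lying in $\Aut^0_{\FF}(P)$. The normalization requires extracting a Sylow-theoretic statement from the intermediate saturated locality $(\locl(H''), \Delta(H''), S(H''))$, while the triviality on $P'$ requires the freedom to modify $w$ within the coset $y \cdot Z(P')$ provided by part~(i); matching these two constraints simultaneously is the delicate step, whereas the contradictions in both parts, once an element of $\Aut^0_{\FF}(P)$ has been produced, reduce to the same routine $p$-group argument leveraging $\FF$-centricity and the normal $p$-Sylow structure of $S$.
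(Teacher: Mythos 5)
Your part (i) is correct; it is essentially the fixed-point variant of the argument the paper already runs in Corollary \ref{particular2}, while the paper's own proof of (i) packages the same idea slightly differently (it takes $K = P\cdot C_{S'}(P')$ and observes that $N_K(P)/P$ is a nontrivial subgroup of $\Out_S(P)\cap O_p(\Out_{\FF_{\Delta}(\loct)}(P))$, contradicting radicality). The two versions are interchangeable.

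Part (ii) has a genuine gap, concentrated exactly at the step you yourself flag as delicate. First, the existence of a lift $[(w,h)]$ of $h$ that normalizes $P$ is not justified: saturation axiom (I) for the sub-locality $(\locl(H''),\Delta(H''),S(H''))$ only gives Sylow information about $\Out_S(P)$ inside the relevant outer automorphism group; it does not produce, for a \emph{prescribed} element $h$ of the quotient, a preimage normalizing $P$. A lift of $h$ to $S$ certainly exists (the row $BS'\to BS\to BS''$ is a group extension), and any such lift carries $P$ to a subgroup with the same image $P''$, but there is no reason it can be corrected within its $S'$-coset so as to stabilize $P$ itself. Second, even granting such a lift, your adjustment ``by an element of $Z(P')$'' to make the induced automorphism trivial on $P'$ achieves nothing: replacing $w$ by $zw$ with $z\in Z(P')=C_{S'}(P')$ changes $c_{wy}|_{P'}$ by $c_z|_{P'}=\Id$, so the restriction to $P'$ is unaffected; what you would actually need is that $c_{wy}|_{P'}$ is already realized by conjugation by an element of $P$, which is not automatic. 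The paper sidesteps both problems by never lifting the chosen $h$: it sets $K(Q')=\{[(x,g)]\in S \mid [(x,g)]\in C_S(Q') \mbox{ and } [g]\in C_{S''}(P'')\}$, uses $C_{S_1''}(P'')\not\le P''$ to see that $K(P')\not\le P$, and then a normalizer argument in the $p$-group $P\cdot K(P')$ produces \emph{some} element of $(K(P')\cap N_S(P))\setminus P$. Such an element centralizes $P'$ and acts trivially on $P''$ by construction, so it contradicts radicality directly via $\Aut^0_{\FF}(P)$ and centricity, with no need for it to project onto the originally chosen $h$. To salvage your route you would have to show that every element of $C_{S_1''}(P'')$ lifts to $K(P')\cap N_S(P)$, a surjectivity statement that the tools you cite do not provide.
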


\begin{proof}

By Lemma \ref{aux3} we may assume that both $P'$ and $P''$ are fully normalized in the corresponding fusion systems. Let us prove first that $P'$ is $\FF'$-centric. Suppose otherwise that $P'$ is not $\FF'$-centric, and let $K = P \cdot C_{S'}(P')$. By hypothesis, $P \lneqq K$ and hence $P \lneqq N_K(P)$, and $P' \lneqq N_K(P) \cap S'$. Since $P$ is $\FF_{\Delta}(\loct)$-centric and the elements in $N_K(P)$ induce the identity both on $P'$ and $P''$ (modulo an inner automorphism of $P$), we have
$$
\{1\} \neq N_K(P)/P \leq \Out_S(P) \cap O_p(\Out_{\FF_{\Delta}(\loct)}(P)),
$$
contradicting the hypothesis that $P$ is $\FF_{\Delta}(\loct)$-radical.

Finally we prove the second part of the statement. Again, we proceed by contradiction, so assume that $C_{S_1''}(P'') \not \leq P''$. For a subgroup $Q' \leq S'$ set
$$
K(Q') = \{[(x,g)] \in S \,\, | \,\, [(x,g)] \in C_S(Q') \mbox{ and } [g] \in C_{S''}(P'')\}.
$$
We claim that $K(P') \not \leq P$. Indeed, if $K(P') \leq P$ then $\tau(K(P')) \leq P''$. However, by definition we have
$$
K(S') \leq K(P') \qquad \mbox{and} \qquad \tau(K(S')) = C_{S_1''}(P'').
$$
Since $C_{S_1''}(P'') \not \leq P''$ it follows that $K(P') \not \leq P$.

Next we claim that $K(P') \cap P \lneqq K(P') \cap N_S(P)$. Suppose otherwise that we have $K(P') \cap P = K(P') \cap N_S(P)$, and let $J(P) \leq N_S(P')$ be the subgroup generated by $P$ and $K(P')$ (note that by definition $K(P') \leq N_S(P')$). By assumption, $P \lneqq J(P)$, and thus $P \lneqq N_{J(P)}(P)$. Now, by definition, for each $\alpha \in N_{J(P)}(P)$ there is some $\beta \in P$ such that
$$
(c_{\alpha})|_{P'} = (c_{\beta})|_{P'} \qquad \mbox{and} \qquad (c_{\tau(\alpha)})|_{P''} = (c_{\tau(\beta)})|_{P''}.
$$
This means that if $\alpha \notin P$, then conjugation by $\beta^{-1} \cdot \alpha \in N_{J(P)}(P) \setminus P$ induces the identity on both $P'$ and $P''$, which contradicts the original hypothesis that $K(P') \cap P = K(P') \cap N_S(P)$.

Thus we have $K(P') \cap P \lneqq K(P') \cap N_S(P)$. Since $P$ is assumed to be $\FF_{\Delta}(\loct)$-centric, we have
$$
\{1\} \neq (K(P') \cap N_S(P)) \cdot P / P \leq \Out_S(P) \cap O_p(\Out_{\FF_{\Delta}(\loct)}(P)),
$$
which contradicts the hypothesis that $P$ is $\FF_{\Delta}(\loct)$-radical.
\end{proof}

\begin{defi}\label{rigidext}

Let $\locl' \Right1{} \locl \Right1{\tau} \locl''$ be the isotypical extension fixed in Hypothesis \ref{hyp3}, with associated outer action $\varepsilon \colon \locl'' \to \Out(\locl';S')$.
\begin{enumerate}[(i)]

\item The above extension is \emph{rigid} if $S_0'' \defin \Ker(S'' \Right1{\incl} \locl'' \Right1{\varepsilon} \Out(\locl';S')) \in \Delta''$.

\item The above extension is \emph{adimissible} if, upon setting
$$
S_1'' \defin \Ker(S'' \Right2{\incl} \locl'' \Right2{\varepsilon} \Out(\locl';S') \Right2{} \Out_{\fus}(S')),
$$
the following condition holds: if $P'' \leq S''$ is fully $\FF''$-centralized and $C_{S_1''}(P'') \leq P''$, then $P'' \in \Delta''$.

\end{enumerate}

\end{defi}

\begin{rmk}\label{perfect1}

We have some observations to make about the above definition.

\begin{enumerate}[(i)]

\item It may seem that rigidity is a very strong condition to impose on an extension. Consider for example the situation where the action $\varepsilon$ is \emph{faithful} (i.e., $\Ker(\varepsilon) = \{1\}$). Rigidity implies in this case that the trivial subgroup of $S''$ must be an object in $\Delta''$, which forces $\locl''$ to be a group (with $\locl'' = \ww(\locl''_1)$). Notice that in this case $\locl''$ is essentially acting on $\locl'$ as an actual group, via the subgroup of $\Out(\locl';S')$ generated by the image of $\varepsilon$. Thus, the idea behind rigidity is that is a locality is acting as a group, it should be a group.

\item Let $S_0'', S_1'' \leq S''$ be as above. By definition, these two subgroups are strongly $\FF''$-closed, and $S_0'' \leq S_1''$. Furthermore, this inclusion is an equality for all odd primes by \cite[Theorem C]{Oliver}. Thus, every admissible extension is rigid for all odd primes. Whether the same is true for $p = 2$ remains an open question.

\item The extensions considered in Corollary \ref{particular2} are always rigid and admissible, since in this case $\Delta''$ is the collection of all subgroups of $S''$.

\item Let $(\locl', \Delta', S')$, $(\locl'', \Delta'', S'')$ and $\varepsilon \colon \locl'' \to \Out(\locl';S')$ be as in Hypothesis \ref{hyp3}. If an extension of this data is rigid (respectively admissible), then so is any other extension of this data, just by definition of rigid (respectively admissible) extensions.

\end{enumerate}

\end{rmk}

\begin{thm}\label{propext6}

Let $\locl' \Right1{} \locl \Right1{\tau} \locl''$ be the isotypical extension fixed in Hypothesis \ref{hyp3}. Then the following holds.
\begin{enumerate}[(i)]

\item If the extension is rigid, then $\locl'$ is a partial normal subgroup of $\loct$, and $\FF_{\Delta}(\loct)$ contains $\FF'$ as a subsystem.

\item If the extension is admissible, then the fusion system $\FF_{\Delta}(\loct)$ is saturated.

\end{enumerate}
In particular, if the extension is both rigid and admissible, then it is a good extension.

\end{thm}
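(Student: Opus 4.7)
The plan is to prove parts (i) and (ii) separately; the final statement of the theorem then follows immediately from Definition \ref{defigood}. For \textbf{part (i)}, assuming $S_0''\in\Delta''$, I first show $\locl'\subseteq\loct$. Take $[x_1|\ldots|x_n]\in\locl'$ witnessed by a $\Delta'$-sequence $P_0',\ldots,P_n'$ with $\9{x_i}P_i'=P_{i-1}'$. By Lemma \ref{alternativecond} the lifted word $[(x_1,1)|\ldots|(x_n,1)]$ already lies in $\locl$, and what remains is to produce a $\Delta$-sequence $H_0,\ldots,H_n$ with $\9{(x_i,1)}H_i=H_{i-1}$. The idea is to build each $H_i\le S$ so that $H_i\cap S'\supseteq P_i'$ (hence $H_i\cap S'\in\Delta'$) and $\tau(H_i)\subseteq S_0''$ (hence $\tau(H_i)\in\Delta''$ by rigidity), by combining $P_i'$ with a suitable lift inside $N_S(P_i')$ of a subgroup of $S_0''$. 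The crucial input is that for $h\in S_0''$ the automorphism $\Psi_h$ is inner in $\locl'$, realized by conjugation by some $a_h\in N_{\locl'}(S')$ via the exact sequence in Lemma \ref{autloc1}; this reduces the invariance of $H_i$ under $(x_i,1)$-conjugation (via Lemma \ref{alternativecond}) to identities inside $\locl'$ that are verifiable from the $P_i'$ sequence. Normality of $\locl'$ in $\loct$ is immediate from Remark \ref{prodHi}: for $[(x,g)]\in\loct$ with $(y,1)\in\dd((x,g))$ the product $\Pi[(x,g)^{-1}|(y,1)|(x,g)]$ has second coordinate $g^{-1}\cdot 1\cdot g=1$, so the conjugate lies in $\locl'$. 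Finally, $\FF'\subseteq\FF_{\Delta}(\loct)$ follows because every morphism in $\FF'$ is induced by conjugation by an element of $\locl'\subseteq\loct$, which corresponds to a morphism in $\FF_{\Delta}(\loct)$ by Lemma \ref{propext5-2}.

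For \textbf{part (ii)}, assume admissibility. It suffices to show that $\Delta$ contains every $\FF_{\Delta}(\loct)$-centric $\FF_{\Delta}(\loct)$-radical subgroup $P\le S$; saturation of $\FF_{\Delta}(\loct)$ will then follow from \cite[Theorem A]{BCGLO1} exactly as in Step~1 of the proof of Theorem \ref{good1}. Given such $P$, I adapt the argument of Proposition \ref{aux3} (whose proof only uses morphisms in $\FF'$ and $\FF''$ together with the auxiliary locality $(\locl(H''),\Delta(H''),S(H''))$ of Lemma \ref{aux-1}, so it works for any $P\le S$, not just $P\in\Delta$) to replace $P$ by an $\FF_{\Delta}(\loct)$-conjugate with $P''$ fully $\FF''$-normalized (hence fully $\FF''$-centralized) and $P'$ fully $\FF'$-normalized. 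Then I invoke the arguments of Proposition \ref{aux4}; their proofs use only the subgroups $K=P\cdot C_{S'}(P')$ and $K(Q')\le S$ together with the centric-radical hypothesis in $\FF_{\Delta}(\loct)$, and therefore apply without a priori requiring $P\in\Delta$. This yields that $P'$ is $\FF'$-centric (so $P'\in\Delta'$ by Hypothesis \ref{hyp3}(a)) and $C_{S_1''}(P'')\le P''$. Admissibility now forces $P''\in\Delta''$, and hence $P\in\Delta$ by definition of $\Delta$.

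The \textbf{main obstacle} is the explicit construction of the lift $H_i\in\Delta$ in part (i): one must choose a subgroup of $S$ projecting into $S_0''$ and combining with $P_i'$ to form a subgroup of $S$ whose conjugation by $(x_i,1)$ yields the analogous lift for $P_{i-1}'$. The inner-ness of $\Psi_h$ for $h\in S_0''$---which is precisely what rigidity encodes at the Sylow level---is the key ingredient allowing the reduction of this invariance condition to manipulations inside $\locl'$ via Lemma \ref{alternativecond}. A secondary subtlety is the verification that Proposition \ref{aux3} and Proposition \ref{aux4} really do extend beyond $P\in\Delta$, which hinges on unpacking their proofs to check that no step exploits membership in $\Delta$ other than through the centric-radical and fusion-theoretic hypotheses.
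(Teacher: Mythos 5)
Your part (ii) and your treatment of normality are essentially the paper's argument: the paper also deduces saturation from Proposition \ref{aux4} plus admissibility and \cite[Theorem A]{BCGLO1}, and your observation that Propositions \ref{aux3} and \ref{aux4} must be checked to apply to centric radical $P$ not a priori in $\Delta$ is a legitimate point that the paper glosses over. The problem is in part (i), where you have the crucial containment backwards. You construct $H_i \leq S$ with $\tau(H_i) \subseteq S_0''$ and claim ``hence $\tau(H_i) \in \Delta''$ by rigidity.'' Rigidity gives $S_0'' \in \Delta''$, and $\Delta''$ is closed under \emph{overgroups}, not subgroups; so a proper subgroup of $S_0''$ obtained by lifting ``a subgroup of $S_0''$'' into $N_S(P_i')$ need not lie in $\Delta''$ at all. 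What you actually need is $\tau(H_i) \supseteq S_0''$, i.e.\ equality $\tau(H_i) = S_0''$, and this is the nontrivial content of the step: one must show that \emph{every} element $h \in S_0''$ admits a lift in $S$ that centralizes $\locl'$, not merely that some subgroup of $S_0''$ does.

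This is exactly what the paper's proof supplies. It introduces the subgroup
$$
C_S(\locl') = \{[(x,g)] \in S \mid \forall\, [y] \in \locl',\ [(x,g)|(y,1)|(x,g)^{-1}] \in \locl \mbox{ and } \Pi[(x,g)|(y,1)|(x,g)^{-1}] = [(y,1)]\},
$$
verifies $\tau(P' \cdot C_S(\locl')) = S_0''$ (here your correct observation that $\Psi_h$ is realized by conjugation by some $a_h \in N_{\locl'}(S')$ for $h \in S_0''$ is precisely what produces the lifts $[(a_h^{-1},h)]$), and sets $H_i = P_i' \cdot C_S(\locl')$. The conjugation formula of Remark \ref{prodHi} then shows elements of $C_S(\locl')$ are fixed under conjugation by $[(x_i,1)]$, so $\9{[(x_i,1)]}H_i = H_{i-1}$ comes for free, rather than requiring the ad hoc invariance verification you defer. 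So your ``main obstacle'' is real, but the resolution is not a lift of an arbitrary subgroup of $S_0''$: it is the surjectivity of $\tau$ restricted to the centralizer of $\locl'$ in $S$ onto $S_0''$, without which membership of $\tau(H_i)$ in $\Delta''$ cannot be concluded from rigidity.
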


\begin{proof}

Clearly, if the extension is both rigid and admissible, then it is good, so there is nothing to prove in this case. Suppose first that the extension is rigid. Let $C_S(\locl') = \{[(x,g)] \in S \,\, | \,\, \forall [y] \in \locl', \, \omega_y =[(x,g)|(y,1)|(x,g)^{-1}] \in \locl \mbox{ and } \Pi(\omega_y) = [(y,1)]\}$. Clearly, this is a subgroup of $S$. Furthermore, it satisfies the following condition: for each $P' \in \Delta'$, we have $P' \cdot C_S(\locl') \in \Delta$ since
$$
(P' \cdot C_S(\locl')) \cap S' \geq P' \qquad \mbox{and} \qquad \tau(P' \cdot C_S(\locl')) = S_0''.
$$
Thus, if $[x_1|\ldots|x_n] \in \locl'_n$ via some $P_0', \ldots, P_n' \in \Delta'$, then $[(x_1,1)|\ldots|(x_n,1)] \in \loct$ via the sequence $P_0, \ldots, P_n \in \Delta$, where $P_i = P_i' \cdot C_S(\locl')$. This shows that $\locl' \subseteq \loct$. That $\locl'$ is a partial subgroup of $\loct$ follows by the description of the simplices of $\locl'$ as pairs $[(x,1)]$, and normality follows because $\locl'$ is normal in $\locl$. The inclusion $\FF' \subseteq \FF_{\Delta}(\loct)$ follows easily.

Suppose now that the extension is admissible. By definition, $\FF_{\Delta}(\loct)$ is $\Delta$-generated and $\Delta$-saturated. Since $\locl$ is an admissible extension it follows from Proposition \ref{aux4} that $\Delta$ contains all the $\FF_{\Delta}(\loct)$-centric $\FF_{\Delta}(\loct)$-radical subgroups, and hence by \cite[Theorem A]{BCGLO1} or \cite[Proposition 3.6]{OV} it follows that $\FF_{\Delta}(\loct)$ is saturated.
\end{proof}

\begin{cor}\label{propext6-1}

Let $(\locl', \Delta', S')$ be a saturated locality such that $\Delta'$ contains all the centric subgroups, let $(\locl'', \Delta'', S'')$ be a saturated locality such that $\Delta''$ contains all the centric subgroups, and let $\varepsilon \colon \locl'' \to \Out(\locl';S')$ be an outer action. If $\varepsilon$ is the trivial morphism, then every extension of $(\locl', \locl'', \varepsilon)$ is good.

\end{cor}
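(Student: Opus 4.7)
The plan is to verify that, under the stated hypotheses, every extension of $(\locl', \locl'', \varepsilon)$ is both rigid and admissible in the sense of Definition \ref{rigidext}, so that goodness follows directly from Theorem \ref{propext6}. By Remark \ref{perfect1} (iv), rigidity and admissibility depend only on the triple $(\locl', \locl'', \varepsilon)$, so it suffices to check these two conditions using the triviality of $\varepsilon$ and the centric-completeness of $\Delta'$ and $\Delta''$.

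For rigidity, recall that we must show $S_0'' = \Ker(S'' \Right1{\incl} \locl'' \Right1{\varepsilon} \Out(\locl';S')) \in \Delta''$. Since $\varepsilon$ is the trivial morphism, $S_0'' = S''$, and $S'' \in \Delta''$ by the very definition of a locality. So rigidity is immediate.

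For admissibility, the triviality of $\varepsilon$ forces $S_1'' = S''$ as well. The admissibility condition then reduces to: every $P'' \leq S''$ which is fully $\FF''$-centralized and satisfies $C_{S''}(P'') \leq P''$ must lie in $\Delta''$. The key observation is that such a $P''$ is automatically $\FF''$-centric. Indeed, if $P''$ is fully $\FF''$-centralized then $C_{S''}(P'') \leq P''$ forces $C_{S''}(P'') = Z(P'')$, and then for any $Q'' \in (P'')^{\FF''}$ one has $|C_{S''}(Q'')| \leq |C_{S''}(P'')| = |Z(P'')| = |Z(Q'')|$, together with $Z(Q'') \leq C_{S''}(Q'')$, giving $C_{S''}(Q'') = Z(Q'')$. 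Hence $P''$ is $\FF''$-centric, and since $\Delta''$ contains all $\FF''$-centric subgroups by hypothesis, $P'' \in \Delta''$, verifying admissibility.

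Having established both conditions, Theorem \ref{propext6} implies that the extension is good. The main (very modest) obstacle is simply the verification that the combinatorial condition $C_{S''}(P'') \leq P''$ plus fully centralized equals $\FF''$-centricity, which is an elementary fusion-theoretic argument as sketched above. No step involves heavy computation.
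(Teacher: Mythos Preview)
The proposal is correct and follows exactly the same approach as the paper: both argue that triviality of $\varepsilon$ forces the extension to be rigid and admissible, then invoke Theorem \ref{propext6}. Your version simply spells out the verifications that the paper compresses into the word ``clearly''; in particular, your argument that $C_{S''}(P'') \leq P''$ plus fully $\FF''$-centralized implies $\FF''$-centric is correct and is precisely the content the paper is taking for granted.
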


\begin{proof}

If $\varepsilon \colon \locl'' \to \Out(\locl';S')$ is trivial, then any extension of $(\locl', \locl'', \varepsilon)$ is clearly rigid and admissible, and thus by Theorem \ref{propext6} the extension is good.
\end{proof}

%%%%%%%%%%%%%%%%%%%%%%%%%%%%%%%%%%%%%%%%%%%%%%%%%%%%%%%%%%%%%%%%%%%%%%%%%%%%%%%%%%%%

\section{Applications}\label{Sapp}

In this section we study some situations related to isotypical extensions of localities. We start by relating extensions of finite groups to isotypical extensions of localities.

\begin{expl}\label{Expl1}

Let $K \Right1{} G \Right1{\tau} Q$ be an extension of finite groups, and let $p$ be a prime. Fix Sylow $p$-subgroups
$$
S_K \in \Syl_p(K) \qquad S_G \in \Syl_p(G) \qquad S_Q \in \Syl_p(Q)
$$
so that the above extension of groups restricts to an extension $S_K \to S_G \to S_Q$. Set also $\FF_K = \FF_{S_K}(K)$, $\FF_G = \FF_{S_G}(G)$ and $\FF_Q = \FF_{S_Q}(Q)$. Finally, let $(\locl_K, \Delta_K, S_K)$ be the proper locality associated to $K$, where $\Delta_K$ is the collection of centric subgroups in $\FF_K$.

An easy computation shows that $Z(K) \cong Z(\locl_K) \times Z'(K)$, where $Z'(K) = O_{p'}(Z(K))$. Furthermore, there is a natural group homomorphism $\Out(K) \to \Out((BK)^{\wedge}_p) \cong \Out(\locl_K;S_K)$. Thus, there is a natural map
$$
\underline{\aut}(BK) \Right3{\Omega} \underline{\aut}((BK)^{\wedge}_p) \cong \underline{\aut}(\locl_K;S_K).
$$
The group extension $K \to G \to Q$ is classified by a map $BQ \to B \underline{\aut}(BK)$, and composition with $\Omega$ yields a map $BQ \to \underline{\aut}(\locl_K;S_K)$ which in turn determines an isotypical extension
$$
\locl_K \Right2{} \locl \Right2{} BQ.
$$
Moreover, this is a good extension in the sense of \ref{defigood} (here, $Q$ is seen as a locality $(BQ, \Delta_Q, S_Q)$, where $\Delta_Q$ is the collection of all subgroups of $S_Q$).

Consider now the following modification. The classifying map $BQ \to B\underline{\aut}(\locl_K;S_K)$ induces an outer action
$$
\alpha \colon Q \Right2{} \Out(\locl_K;S_K).
$$
Given such an action, let $\Delta_Q$ be the smallest collection of subgroups of $S_Q$ that satisfies the following conditions:
\begin{enumerate}[(1)]

\item it is closed by $\FF_Q$-conjugation and overgroups;

\item it contains every $\FF_Q$-centric $\FF_Q$-radical subgroup of $S_Q$; and

\item the action $\alpha$ is both rigid and admissible.

\end{enumerate}
Let also $(\locl_Q, \Delta_Q, S_Q)$ be the locality associated to $Q$ with $N_{\locl_Q}(X, Y) = N_Q(X, Y)$ for each $X, Y \in \Gamma_Q$. This is indeed a locality  by \cite[Example 2.10]{Chermak}, and there is an obvious monomorphism of partial groups $\iota \colon \locl_Q \to BQ$. Moreover, a group extension $K \to H \to Q$ with the above outer action produces a map 
$$
\locl_Q \Right2{\iota} BQ \Right2{} B\underline{\aut}(BK) \Right2{\Omega} B\underline{\aut}((BK)^{\wedge}_p) \cong B\underline{\aut}(\locl_K;S_K),
$$
where the middle arrow is the classifying map of the group extension. Note that the outer action associated to $\locl_Q \to B\underline{\aut}(\locl_K;S_K)$ is the composition $\alpha' \colon \locl_Q \Right1{\iota} Q \Right1{\alpha} \Out(\locl_K;S_K)$, and thus it is rigid and admissible by hypothesis. Thus, this determines a good extension
$$
\locl_K \Right2{} \locl_H \Right2{} \locl_Q,
$$
where $|\locl_H|^{\wedge}_p \simeq (BH)^{\wedge}_p$.

Let $\alpha'$ and $(\locl_Q, \Delta_Q, S_Q)$ be as above. In general, it is not clear whether a map $\locl_Q \to B\underline{\aut}(\locl_K)$ (which classifies a good extension of localities) determines a group extension $K \to H \to Q$, since it depends on the following lifting/extension problem
$$
\xymatrix@C=3cm{
BQ \ar@{.>}[r] & B\underline{\aut}(BK) \ar[d]^{\Omega} \\
\locl_Q \ar[u]^{\iota} \ar[r] & B\underline{\aut}(\locl_K;S_K)
}
$$

\end{expl}

Next, we relate certain fibrations involving $p$-local finite groups to isotypical extensions of localities. The following is a generalization of \cite[Proposition 7.1]{BLO6}.

\begin{prop}\label{fibrationfb}

Let $\g = \ploc$ be a $p$-local finite group and let $(\locl, \Delta, S)$ be its associated proper locality. Then, for each saturated locality $(\overline{\locl}, \overline{\Delta}, \overline{S})$ there is a bijection from the set of equivalence classes of fibre bundles over $\overline{\locl}$ with fibre $\locl$ and structure group $N\autcat(\locl;S)$ to the set of equivalence classes of fibrations over $\overline{\locl}$ with fibre homotopy equivalent to $B\g$: a bijection which sends the class of a fibre bundle to the equivalence class of its fibrewise $p$-competion.

\end{prop}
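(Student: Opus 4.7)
My plan is to show that both sides of the claimed bijection are classified by homotopy classes of maps into a common classifying space, and then identify the bijection between them with fibrewise $p$-completion. Set $\Gamma = N\autcat(\locl;S)$, so that $\Gamma \cong \underline{\aut}(\locl;S)$ as simplicial groups by Theorem \ref{AutX0}. By the general classification of $\Gamma$-bundles of simplicial sets (Section IV.5 of \cite{BGM}), the set of equivalence classes of $\Gamma$-bundles with fibre $\locl$ over $\overline{\locl}$ is in natural bijection with $[\overline{\locl}, \overline{W}\Gamma]$, the set of simplicial homotopy classes of maps into the classifying complex $\overline{W}\Gamma$. Passing to realizations, this set is in bijection with $[|\overline{\locl}|, B|\Gamma|]$ since $\overline{W}\Gamma$ is a Kan complex and realization is a Quillen equivalence of model categories.

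For the fibration side, classical results of Dold--May show that equivalence classes of Hurewicz fibrations over $|\overline{\locl}|$ with fibre of the homotopy type of $B\g$ are in natural bijection with $[|\overline{\locl}|, B\Aut(B\g)]$, where $\Aut(B\g)$ denotes the topological monoid of self-equivalences of $B\g$. By Corollary \ref{autloc2}, the equivalences $|\underline{\aut}(\locl;S)| \simeq |\autcat(\locl;S)| \simeq \Aut(B\g)$ of topological monoids induce, upon applying the $B$ functor, an equivalence $B|\Gamma| \simeq B\Aut(B\g)$. Combining these bijections we obtain a natural bijection between the equivalence classes of $\Gamma$-bundles over $\overline{\locl}$ with fibre $\locl$ and the equivalence classes of fibrations over $|\overline{\locl}|$ with fibre homotopy equivalent to $B\g$.

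It remains to show that this abstract bijection agrees with fibrewise $p$-completion. The equivalence of topological monoids $|\underline{\aut}(\locl;S)| \to \Aut(B\g)$ used in Corollary \ref{autloc2} is induced by the $p$-completion map $|\locl| \to |\locl|^{\wedge}_p \simeq B\g$: an automorphism of $\locl$ fixing $S$ induces a self-equivalence of $B\g$ after realization and $p$-completion. Given a $\Gamma$-bundle $\tau \colon X \to \overline{\locl}$, the realization $|\tau|$ is a Serre fibration with fibre $|\locl|$ whose classifying map to $B|\Gamma|$, composed with the map $B|\Gamma| \to B\Aut(B\g)$, is precisely the classifying map of the fibrewise $p$-completion of $|\tau|$ (this is the functoriality of fibrewise $p$-completion applied to the structural group action). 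Thus the bijection constructed above is the map induced by fibrewise $p$-completion.

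The main technical obstacle will be the last step: verifying carefully that the classifying map of the fibrewise $p$-completed fibration is precisely the composition of the classifying map of the bundle with the monoid map $B|\Gamma| \to B\Aut(B\g)$. This is essentially a naturality statement, and it would follow from a careful bookkeeping of how fibrewise $p$-completion interacts with the structure group action on an RTCP model of the fibre bundle (using Proposition \ref{strongequiv} to reduce to RTCPs, where the twisting function $\phi \colon \overline{\locl}_n \to \underline{\aut}_{n-1}(\locl)$ explicitly determines the classifying map).
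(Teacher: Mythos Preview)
Your approach is essentially the same as the paper's: both sides are identified with homotopy classes of maps into classifying spaces (using \cite[IV.5]{BGM} for both, in the paper's version), and Corollary \ref{autloc2} provides the equivalence of monoids that matches the two classifying spaces; the fibrewise $p$-completion interpretation then follows because the monoid equivalence $|\autcat(\locl;S)| \to \underline{\aut}(B\g)$ is itself induced by $p$-completion. One small citation fix: the identification $N\autcat(\locl;S) \cong \underline{\aut}(\locl;S)$ is Lemma \ref{autloc0}, not Theorem \ref{AutX0} (which treats the full automorphism complex rather than the $S$-preserving version).
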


\begin{proof}

By \cite[Theorem IV.5.6]{BGM}, there is a bijection between the set of equivalence classes of fibrations over $|\overline{\locl}|$ with fibre $B\g$ and the set $[|\overline{\locl}|, B\underline{\aut}(B\g)]_{\ast}$ of homotopy classes of pointed maps. Similarly, there is a bijection between the set of equivalence classes of $|\autcat(\locl;S)|$-bundles over $\overline{\locl}$ with fibre $\locl$ and the set $[\overline{\locl}, B\autcat(\locl;S)]_{\ast}$ of homotopy classes of pointed maps.

By Corollary \ref{autloc2} the natural map
$$
\Lambda \colon |\autcat(\locl;S)| \Right4{} \underline{\aut}(\locl) \Right4{} \underline{\aut}(B\g),
$$
where the leftmost arrow is an inclusion of simplicial sets (see Lemma \ref{autloc0}) and the rightmost arrow is induced by $p$-completion, is a homotopy equivalence. It follows that the map $\Phi \colon [\overline{\locl}, B\autcat(\locl;S)]_{\ast} \Right6{B\Lambda \circ -} [|\overline{\locl}|, B\underline{\aut}(B\g)]_{\ast}$ that sends the class of an $|\autcat(\locl;S)|$-bundle to the class of its fibrewise $p$-completion is a bijection.
\end{proof}

Part of the proof for the following result was communicated by R. Levi.

\begin{thm}\label{app2}

Let $F \to X \to B$ be a fibration where both $B$ and $F$ are homotopy equivalent to classifying spaces of $p$-local finite groups. Then, $X$ is homotopy equivalent to the classifying space of a $p$-local finite group. Moreover, there exist proper localities $(\locl_F, \Delta_F, S_F)$ and $(\locl_B, \Delta_B, S_B)$, and a commutative diagram of fibre bundles
$$
\xymatrix{
F \ar[r] & X \ar[r] & B \\
|\locl_F| \ar[u] \ar[r] & |\locl| \ar[r] \ar[u] & |\locl_B| \ar[u]
}
$$
where the bottom row is (the realization of) a good extension and all the vertical arrows are equivalences after $p$-completion.

\end{thm}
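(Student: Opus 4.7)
The plan is to combine Proposition \ref{fibrationfb}, the construction in Example \ref{Expl1}, and Theorems \ref{propext6} and \ref{good1}. I begin by fixing $p$-local finite groups $\g_F$ and $\g_B$ with $B\g_F \simeq F$ and $B\g_B \simeq B$, and denote their associated proper localities by $(\locl_F, \Delta_F, S_F)$ and $(\locl_B^0, \Delta_B^0, S_B)$. Pulling back the fibration $F \to X \to B$ along the natural mod-$p$ equivalence $|\locl_B^0| \to B$ produces a fibration over $|\locl_B^0|$ whose fibre is equivalent to $B\g_F$; by Proposition \ref{fibrationfb} this fibration is the fibrewise $p$-completion of a fibre bundle $|\locl_F| \to Y \to |\locl_B^0|$ with structure group $N\autcat(\locl_F; S_F)$. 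Theorem \ref{extension1} and Proposition \ref{TCP1} then identify this bundle with the realization of an isotypical extension of partial groups $\locl_F \to \locm \to \locl_B^0$ with outer action $\varepsilon \colon \locl_B^0 \to \Out(\locl_F; S_F)$.

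The next step, following Example \ref{Expl1}, is to replace $\Delta_B^0$ with the smallest collection $\Delta_B$ of subgroups of $S_B$ that is closed under $\FF_B$-conjugation and overgroups, contains every $\FF_B$-centric $\FF_B$-radical subgroup of $S_B$, and makes $\varepsilon$ both rigid and admissible in the sense of Definition \ref{rigidext}. Let $(\locl_B, \Delta_B, S_B)$ be the corresponding saturated locality, realized as a sub-partial group of $\locl_B^0$ by retaining only those sequences that conjugate a sequence in $\Delta_B$. Pulling back the extension above along the inclusion $\locl_B \hookrightarrow \locl_B^0$ produces an isotypical extension $\locl_F \to \locl \to \locl_B$ whose associated outer action is the restriction of $\varepsilon$ and is, by construction, both rigid and admissible. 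Theorem \ref{propext6} therefore guarantees that this extension is good.

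Applying Theorem \ref{good1} produces a saturated locality $(\loct, \Delta, S)$ with $\loct \subseteq \locl$ such that $|\loct|^\wedge_p \simeq B\g$ for some $p$-local finite group $\g$, and such that the inclusion $\loct \hookrightarrow \locl$ induces a mod-$p$ equivalence. Since both $|\locl_B|$ and $|\locl_B^0|$ have $p$-completion equivalent to $B$, the fibre bundle $|\locl_F| \to |\locl| \to |\locl_B|$ has fibrewise $p$-completion equivalent to the pullback of $F \to X \to B$ along $|\locl_B| \to B$. A standard Bousfield--Kan type argument, leveraging the $p$-completeness of the fibre $F = B\g_F$, identifies $|\locl|^\wedge_p$ with $X$ and yields $X \simeq |\locl|^\wedge_p \simeq |\loct|^\wedge_p \simeq B\g$, together with the desired commutative diagram of fibre bundles in which all vertical arrows are mod-$p$ equivalences, with $(\locl_B, \Delta_B, S_B)$ in the role of the proper locality appearing in the statement.

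The main obstacle will be the existence and control of the collection $\Delta_B$ in the second step. One has to verify that closing the centric-radical subgroups of $S_B$ under $\FF_B$-conjugation, overgroups, and the additional conditions imposed by rigidity and admissibility---namely $\ker(\varepsilon|_{S_B}) \in \Delta_B$, together with every fully $\FF_B$-centralized subgroup whose centralizer in the appropriate subgroup of $S_B$ is small enough being forced into $\Delta_B$---produces a collection still meeting the hypotheses of Theorem \ref{good1}, so that the resulting $(\locl_B, \Delta_B, S_B)$ is a saturated locality and the realization behaves correctly under $p$-completion. At the prime $p = 2$, where rigidity does not follow automatically from admissibility (Remark \ref{perfect1}), this verification is more delicate.
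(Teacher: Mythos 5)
Your opening move (pull back along $|\locl_B^0|\to B$, apply Proposition \ref{fibrationfb}, and read off an isotypical extension) matches the paper. But the second step contains the essential gap, and you have in effect deferred the entire content of the theorem to the "main obstacle" paragraph without resolving it. Declaring $\Delta_B$ to be "the smallest collection making $\varepsilon$ rigid and admissible" is not a construction that is available for a general $p$-local finite group base. Unlike the situation of Example \ref{Expl1}, where $Q$ is an honest finite group and $N_Q(X,Y)$ supplies morphism sets for \emph{any} collection of subgroups, a locality attached to a $p$-local finite group only has morphism sets defined for (sub)collections of quasicentric subgroups; there is no ambient group from which to manufacture $N_{\locl_B}(X,Y)$ for the extra subgroups that rigidity and admissibility may force into $\Delta_B$. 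Worse, as Remark \ref{perfect1}(i) points out, if the outer action $\varepsilon$ were faithful, rigidity would force the trivial subgroup into $\Delta_B$ and hence force $\locl_B$ to be a group --- so for an exotic or non-group base your "smallest rigid and admissible collection" simply cannot be realized by a locality with $|\locl_B|^{\wedge}_p\simeq B$. The existence of a workable $\Delta_B$ is precisely what has to be \emph{proved}, and it requires showing that the outer action arising from a fibration is special.

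The paper does this by a Postnikov/covering-space reduction that is entirely absent from your proposal. Since $\pi_i(B\underline{\aut}(F))=0$ for $i\geq 3$ (Proposition \ref{auttyp} via \cite[Theorem 8.1]{BLO2}), the classifying map is nullhomotopic when $B$ is $2$-connected, so the extension is a product and trivially good. For $1$-connected $B$ one passes to the $2$-connected cover $D$ via the central extension $BA\to D\to B$ with $A=H_2(B;\Z^{\wedge}_p)$, deduces that the outer action is trivial (it factors through the surjection $\locl_D\to\locl_B$ and is trivial on $\locl_D$), and invokes Corollary \ref{propext6-1}. In the general case one passes to the universal cover $C\to B$ (with $\pi_1(B)$ a finite $p$-group), shows $S_C\leq\Ker(S_B\to\Out(\locl_F;S_F))$ with $S_C\in\Delta_B$, which yields rigidity, and then verifies by an explicit comparison of the collections $\Delta$ and $\Delta_X$ that all centric radicals are present. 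Without some version of this reduction --- or another argument that the kernel of $\varepsilon$ restricted to $S_B$ is large enough to be quasicentric --- your proof does not go through. The final identification of $|\locl|^{\wedge}_p$ with $X$ is also handled in the paper by an explicit spectral-sequence comparison rather than a generic Bousfield--Kan argument, but that is a secondary issue.
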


\begin{proof}

Let $(\locl_F, \Delta_F, S_F)$ be the proper locality associated to the $p$-local finite group for $F$, so $|\locl_F|^{\wedge}_p \simeq F$. For each linking system $\LL_B$ with $|\LL_B|^{\wedge}_p \simeq B$ (we assume that $\Ob(\LL_B)$ contains all the centric radicals), let $(\locl_B, \Delta_B, S_B)$ be the proper locality associated to $\LL_B$. By Theorem \ref{equivnerves} there is an equivalence $|\locl_B|^{\wedge}_p \simeq B$, and thus there is a commutative diagram
\begin{equation}\label{diagfibr}
\xymatrix{
F \ar[r] \ar@{=}[d] & X \ar[r] & B \\
F \ar[r] & X_0 \ar[u] \ar[r] & |\locl_B| \ar[u] \\
|\locl_F| \ar[r] \ar[u] & |\locl| \ar[r] \ar[u] & |\locl_B| \ar@{=}[u] \\
}
\end{equation}
where each row is a fibre bundle. The middle row is obtained from the top row by pulling back along the completion map $|\locl_B| \to B$, and the bottom row is obtained form the middle row by Proposition \ref{fibrationfb}. In particular, the classifying map for $\locl_F \to \locl \to \locl_B$ (and thus the associated twisting function) is given by the composition
$$
\alpha \colon |\locl_B| \Right2{} B \Right2{} B\underline{\aut}(F) \cong B\underline{\aut}(\locl_F;S_F).
$$
By comparing the Serre spectral sequences associated to each of the fibre bundles in (\ref{diagfibr}) it follows that $\locl$ is homotopy equivalent to $X$ after $p$-completion. However, this still does not imply that $X$ is the classifying space of a $p$-local finite group. We consider three cases: when $B$ is $2$-connected, when $B$ is $1$-connected, and the general case. Each case is done in a separate step.

\textbf{Step 1.} Suppose first that $B$ is $2$-connected. By \cite[Theorem 8.1]{BLO2}, we have $\pi_i(B\underline{\aut}(F)) = 0$ for all $i \geq 3$, and thus the map $B \to B\underline{\aut}(F)$ is nulhomotopic. As a consequence, we have
$$
X \simeq F \times B \qquad \mbox{and} \qquad \locl \cong \locl_F \times \locl_B.
$$
Clearly, the extension $\locl_F \to \locl \to \locl_B$ is good in the sense of \ref{defigood}, and $X$ is equivalent to the classifying space of a $p$-local finite group. Note also that in this case the choice of the locality $(\locl_B, \Delta_B, S_B)$ is irrelevant, as long as $\Delta_B$ contains all the centric radicals and $|\locl_B|^{\wedge}_p \simeq B$.

\textbf{Step 2.} Suppose now that $B$ is $1$-connected. Set $A = H_2(B, \Z^{\wedge}_p)$, which is a finite abelian $p$-group, and consider the commutative diagram of fibration sequences
\begin{equation}\label{diag34}
\xymatrix{
 & K(A, 1) \ar@{=}[r] \ar[d] & K(A,1) \ar[d] \\
F \ar@{=}[d] \ar[r] & Y \ar[r] \ar[d] & D \ar[d] \\
F \ar[r] & X \ar[r] \ar[d] & B \ar[d] \\
 & K(A,2) \ar@{=}[r] & K(A,2)
}
\end{equation}
From now on we abbreviate $K(A, 1) \simeq BA$. By construction, $D$ is the $2$-connected cover of $B$. To see that $D$ is also homotopy equivalent to the classifying space of a $p$-local finite group, consider the fibration $BA \to D \to B$. By definition, this is a central extension in the sense of \cite{BCGLO2}, and thus $D$ is homotopy equivalent the classifying space of a $p$-local finite group by \cite[Theorem E]{BCGLO2}. Step 1 applies to show that $Y \simeq F \times D$ is homotopy equivalent to the classifying space of a $p$-local finite group. Finally, we see that $X$ is equivalent to the classifying space of a $p$-local finite group since it corresponds to the (central) quotient of the $p$-local finite group associated to $Y$ by the subgroup $A$.

For each space $Z$ in the two middle rows of the above diagram, let $S_Z$ denote the Sylow $p$-subgroup of the corresponding $p$-local finite group. These $p$-groups can be chosen so that there is a commutative diagram of group extensions
$$
\xymatrix{
 & A \ar@{=}[r] \ar[d] & A \ar[d] \\
S_F \ar[r] \ar@{=}[d] & S_Y \ar[r] \ar[d] & S_D \ar[d] \\
S_F \ar[r] & S_X \ar[r] & S_B
}
$$
Let $(\locl_D, \Delta_D, S_D)$ be the proper locality associated to $D$, where $\Delta_D$ is the collection of quasicentric subgroups that contain the subgroup $A \leq S_D$. Let also $\Delta_B$ be the collection of subgroups of $S_B$ of the form $PA/A$ for $P \in \Delta_D$, and let $(\locl_B, \Delta_B, S_B)$ be the proper locality associated to $B$ with object set $\Delta_B$. Notice that $\Delta_B$ is the collection of all quasicentric subgroups by \cite[Lemma 6.4]{BCGLO2}.

By first pulling back along the completion map $|\locl_B| \to B$ and then applying Lemma \ref{fibrationfb} in (\ref{diag34}), we produce a commutative diagram of isotypical extensions
$$
\xymatrix{
 & BA \ar@{=}[r] \ar[d] & BA \ar[d] \\
\locl_F \ar@{=}[d] \ar[r] & \locl_Y \ar[r] \ar[d] & \locl_D \ar[d] \\
\locl_F \ar[r] & \locl \ar[r] & \locl_B
}
$$
By Corollary \ref{propext6-1}, the rightmost column is a good extension in the sense of \ref{defigood}, since the associated outer action is trivial; and so is the middle row, by Step 1. Note also that there is a commutative diagram of morphisms of partial groups
$$
\xymatrix{
\locl_D \ar[d] \ar[rd]^{\varepsilon_D} & \\
\locl_B \ar[r]_{\varepsilon} & \Out(\locl_F;S_F)
}
$$
where $\varepsilon \colon \locl_B \to \Out(\locl_F;S_F)$ and $\varepsilon_D \colon \locl_D \to \Out(\locl_F;S_F)$ denote the corresponding outer actions. Since $D$ is $2$-connected, it follows by Step 1 that $\varepsilon_D$ is the trivial map, and thus $\varepsilon$ is also the trivial map (since $\locl_D \to \locl_B$ is surjective). Thus, the extension $\locl_F \to \locl \to \locl_B$ is good by Corollary \ref{propext6-1} (note that $\Delta_B$ contains all the centrics).

\textbf{Step 3.} The general case. There is a commutative diagram of fibrations
$$
\xymatrix{
F \ar@{=}[d] \ar[r] & W \ar[r] \ar[d] & C \ar[d] \\
F \ar[r] & X \ar[r] \ar[d] & B \ar[d] \\
 & B\pi_1(B) \ar@{=}[r] & B \pi_1(B)
}
$$
where $\pi_1(B)$ is a finite $p$-group by \cite[Proposition 1.12]{BLO2}, since $B$ is the classifying space of a $p$-local finite group. Note that $W \to X$ and $C \to B$ are (regular) covering spaces, and thus both $C$ and $X$ are equivalent to classifying spaces of $p$-local finite groups by \cite[Theorem A]{BLO6}.

Since all the spaces involved are homotopy equivalent to classifying spaces of $p$-local finite groups, the above diagram induces a commutative diagram of group extensions
$$
\xymatrix{
S_F \ar[r] \ar@{=}[d] & S_W \ar[r]^{\tau_W} \ar[d] & S_C \ar[d] \\
S_F \ar[r] & S_X \ar[r]_{\tau} & S_B
}
$$
where $S_Z$ denotes the Sylow $p$-subgroup for the $p$-local finite group associated to the space $Z$. Even if the $p$-local finite group associated to $Z$ is only unique up to isomorphism, we can still choose Sylow $p$-subgroups as above to make the diagram commutative, and we assume these choices fixed. Consider the following.
\begin{enumerate}[(1)]

\item $\Delta_C$ is the collection of all quasicentric subgroups of $S_C$ with respect to the $p$-local finite group associated to $C$.

\item $\Delta_B = \{P \leq S_B \, | \, P \cap S_C \in \Delta_C\}$.

\item $\Delta_F$ is the collection of all centric subgroups of $S_F$ with respect to the $p$-local finite group associated to $F$.

\item $\Delta_W = \{P \leq S_W \, | \, P \cap S_F \in \Delta_F \mbox{ and } \tau_W(P) \in \Delta_C\}$.

\item $\Delta_X = \{P \leq S_X \, | \, P \cap S_W \in \Delta_W\}$.

\end{enumerate}
We claim that each of the above collections of subgroups contains all the centric radical subgroups with respect to the corresponding $p$-local finite group.
\begin{enumerate}[(a)]

\item Let $(\locl_C, \Delta_C, S_C)$ be the proper locality associated to $C$. By Lemma \ref{fibrationfb} there is an isotypical extension $\locl_C \Right2{} \locl_B \Right2{} B\pi_1(B)$, which is rigid and admissible by Remark \ref{perfect1} (iii) (and thus it is good by Theorem \ref{propext6}). In particular $\Delta_B$ contains all the centric radical subgroups of $S_B$ with respect to the $p$-local finite group associated to $B$.

\item The fibration $F \to W \to C$ induces an isotypical extension $\locl_F \Right2{} \locl_W \Right2{} \locl_C$, which is good by Step 2. As a consequence, $\Delta_W$ contains all the centric radical subgroups of $S_W$ with respect to the $p$-local finite group associated to $W$.

\item Let $(\4{\locl}_W, \Delta_W, S_W)$ be the proper locality associated to $W$, with $\Delta_W$ as above. By Lemma \ref{fibrationfb} there is an isotypical extension $\4{\locl}_W \Right2{} \locl_X \Right2{} B\pi_1(B)$, which again is rigid and admissible by Remark \ref{perfect1} (iii) (and thus good by Theorem \ref{propext6}). In particular this implies that the collection $\Delta_X$ contains all the centric radical subgroups of $S_X$ with respect to the $p$-local finite group associated to $X$.

\end{enumerate}

Finally, consider the fibration $F \to X \to B$. By first pulling back along the $p$-completion map $|\locl_B| \to B$, and then applying Lemma \ref{fibrationfb}, we obtain an extension
$$
\locl_F \Right2{} \locl \Right2{\tau} \locl_B.
$$
Let $S_X$ be as above, and set
$$
\Delta = \{P \leq S_X \, | \, P \cap S_F \in \Delta_F \mbox{ and } \tau(P) \in \Delta_B\}.
$$
We claim that the above extension is good in the sense of \ref{defigood}. To show this, we check that this extension is rigid, and that $\Delta$ contains all the centric radical subgroups of $S_X$ with respect to the $p$-local finite group associated to $X$.

By Step 2, the outer action $\varepsilon_C \colon \locl_C \Right2{} \Out(\locl_F;S_F)$ is trivial. Furthermore, it factors through the outer action $\varepsilon \colon \locl_B \to \Out(\locl_F;S_F)$ associated to the extension $\locl_F \to \locl \to \locl_B$ above, which  implies that
$$
S_C \leq \Ker(S_B \Right2{} \locl_B \Right2{} \Out(\locl_F;S_F)).
$$
On the other hand, $S_C \in \Delta_B$ by definition, and thus the extension $\locl_F \to \locl \to \locl_B$ is rigid, since $\Delta_B$ is closed by overgroups.

To show that $\Delta$ contains all the centric radical subgroups of $S_X$ with respect to the $p$-local finite group associated to $X$, we prove that $\Delta = \Delta_X$ above. Indeed, let $P \in \Delta_X$. Then,
\begin{itemize}

\item $P \cap S_W \in \Delta_W$, which implies that that $P \cap S_F = (P \cap S_W) \cap S_F \in \Delta_F$; and

\item $\tau(P) \cap S_C = \tau_W(P \cap S_W) \in \Delta_C$, and thus $P \in \Delta$.

\end{itemize}
Conversely, let $P \in \Delta$. Then,
\begin{itemize}

\item $(P \cap S_W) \cap S_F = (P \cap S_F) \cap S_W = P \cap S_F \in \Delta_F$; and

\item $\tau_W(P \cap S_W) = \tau(P) \cap S_C \in \Delta_C$, since $\tau(P) \in \Delta_B$. Thus $P \in \Delta_X$.

\end{itemize}
We already showed above that $\Delta_X$ contains all the centric radical subgroups, and thus the extension $\locl_F \to \locl \to \locl_B$ is good and satisfies the properties required in the statement.
\end{proof}

%%%%%%%%%%%%%%%%%%%%%%%%%%%%%%%%%%%%%%%%%%%%%%%%%%%%%%%%%%%%%%%%%%%%%%%%%%%%%%%%%%%%

\appendix

\section{Localities and transporter systems}\label{AppA}

In this appendix we discuss the relation between a locality and its associated transporter category. Let us start by recalling some constructions and results from \cite{Chermak}.

Given an objective partial group $(\locl, \Delta)$, we can form the associated transporter category $\TT = \TT_{\Delta}(\locl)$, as described in \ref{asscat}, where $\Ob(\TT) = \Delta$ and $\Mor_{\TT}(P,Q) = N_{\locl}(P,Q)$. Composition in $\calt$ is given by the product $\Pi$ in $\locl$: for $u \in \Mor_{\calt}(X,Y)$ and $v \in \Mor_{\calt}(Y,Z)$,
$$
v \circ u = \Pi[v|u] \in \Mor_{\calt}(X,Z).
$$
In view of the composition rule, we will represent morphisms in the category $\TT$ by $X \Left2{u} Y$. This way, the above composition becomes
$$
\big(X \Left2{u} Y \Left2{v} Z\big) = \big(X \Left4{u \circ v} Z\big).
$$
Note that every morphism in $\calt$ factors uniquely as an isomorphism followed by an inclusion morphism.

The following example sketches the construction of a locality out of a transporter system (in the sense of \cite{OV}). It also establishes a simplicial map from the nerve of this transporter system to the resulting locality.

\begin{expl}\label{nervimodrel}

Let $\calc$ be a small category and $\cald\subset\calc$ a subcategory. Then we can define and equivalence relation on $N(\calc)$ by declaring two $n$-simplices $c_0\Left1{\varphi_1} c_1\Left1{\varphi_2}\dots \Left1{\varphi_n}c_n$ and $d_0\Left1{\psi_1}  d_1\Left1{\psi_2}\dots \Left1{\psi_n}d_n$ related if there is a commutative diagram
 $$
\xymatrix{
c_0 \ar[d]^{\alpha_0} & c_1 \ar[l]_{\varphi_1} \ar[d]^{\alpha_1} & \ldots \ar[l]_{\varphi_2} & c_n \ar[l]_{\varphi_n} \ar[d]^{\alpha_n} \\
d_0 & d_1 \ar[l]^{\psi_1} & \ldots \ar[l]^{\psi_2} & d_n \ar[l]^{\psi_n}
}
$$
where the vertical maps are morphisms of $\calc$ or of $\calc\op$, i.e. for each $i$, either $\alpha_i\in \Mor_\cald(c_i,d_i)$ or $\alpha_i\in \Mor_\cald(d_i,c_i)$.

This relation is not necessarily symmetric or transitive, but it determines an equivalence relation where two simplices are related if there is a finite zig-zap of diagrams like the one above connecting both simplicies (the empty zig-zag makes the relation reflexive as well). This relation is compatible with face maps and degeneracies, so the set of equivalence classes is again a simplicial, denoted $N(\calc)/\cald$ set and the projection $$N(\calc)\Right2{proj}N(\calc)/\cald$$ a simplicial map.

\end{expl}

In particular, if $(\TT, \varepsilon, \rho)$ is a transporter system in the sense of \cite{OV}, we can set $\pp$ to be the poset of objects of $\TT$, with
$$
\Mor_{\TT}(P,Q) = \left\{
\begin{array}{ll}
\{\varepsilon_{P,Q}(1)\} & \mbox{if } P \leq Q \\
\emptyset & \mbox{otherwise,}
\end{array}
\right.
$$
and the resulting simplicial set $\locl = N(\TT)/\pp$ is a locality (details are left to the reader). Naturally, there is a projection map $\tau \colon N(\TT) \to \locl$, and we say that a simplex $\omega \in \locl$ is \emph{represented} by a simplex $\7 \varphi = (P_0 \Left1{\varphi_1} \ldots \Left1{\varphi_n} P_n) \in N(\TT)$ if $\tau(\7 \varphi) = \omega$. The goal of this section is to show that the above map $N(\TT) \to \locl$ is a weak equivalence of simplicial sets. First we need two technical lemmas.

\begin{lmm}\label{maxrep}

Let $(S, \calt)$ be a transporter system, let $(\locl, \Delta, S)$ be the associated locality, and let $\tau \colon N(\calt) \to \locl$ be the projection map described above. Then, each $\omega \in \locl$ has a representative $\7 \varphi$ which is maximal with respect to restrictions (and thus is maximal among all representatives of $\omega$). Equivalently, for each sequence of composable isomorphisms $\7 \varphi = (P_0 \Left2{\varphi_1} P_1 \Left2{\varphi_2} \ldots \Left2{\varphi_n} P_n)$, there is a sequence $\7 \psi = (R_0 \Left2{\psi_1} R_1 \Left2{\psi_2} \ldots \Left2{\psi_n} R_n)$ of composable isomorphisms which is maximal with respect to restrictions: $\7 \varphi$ is the restriction of $\7 \psi$, and if $\7 \varphi$ is the restriction of some other sequence $\7 \phi$, then $\7 \phi$ is a restriction of $\7 \psi$.

\end{lmm}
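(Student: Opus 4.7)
The plan is to use the maximality structure of the locality (Lemma \ref{SwSu}) together with extension axiom (II) of transporter systems (Definition \ref{defitransporter}) to construct $\7{\psi}$ directly from $\7{\varphi}$. Unwinding the definitions, a composable sequence of isomorphisms $(R_0 \Left2{\psi_1} \ldots \Left2{\psi_n} R_n)$ in $\calt$ projecting to $\omega = [x_1|\ldots|x_n]$ is precisely the data of a ``conjugation chain'' $R_0, \ldots, R_n \in \Delta$ for $\omega$ (i.e.\ $\9{x_i}R_i = R_{i-1}$), together with a morphism-theoretic lift $\psi_i \in \Iso_\calt(R_i, R_{i-1})$ of each conjugation $c_{x_i}$. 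Under the equivalence relation of Example \ref{nervimodrel}, $\7{\psi}$ restricts to $\7{\varphi}$ exactly when $P_i \leq R_i$ and $\varphi_i = \psi_i|_{P_i}$ for all $i$. Hence the task splits into (a) finding a conjugation chain for $\omega$ containing $(P_0,\ldots,P_n)$ that is componentwise maximal, and (b) showing that the $\varphi_i$ extend uniquely along this chain.

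For step (a), I would set $R_n$ equal to the largest subgroup of $S$ on which $\omega$-conjugation is everywhere defined (in the sense of (\ref{SwSu0})) and then define $R_{i-1} := \9{x_i}R_i$ inductively. Since $P_n$ itself admits iterated $\omega$-conjugation into $S$ by virtue of being the terminal object of the chain witnessed by $\7{\varphi}$, we obtain $P_i \leq R_i$ automatically. By Lemma \ref{SwSu} together with objectivity condition (O2), each $R_i$ lies in $\Delta$. Maximality of the chain is built into the construction of $R_n$: any other chain $(Q_0,\ldots,Q_n)$ for $\omega$ with $P_n \leq Q_n$ must satisfy $Q_n \leq R_n$, and then inductively $Q_i \leq R_i$.

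For step (b), I would extend $\varphi_i \in \Iso_\calt(P_i, P_{i-1})$ to $\psi_i \in \Iso_\calt(R_i, R_{i-1})$ by iterated application of axiom (II). A single application of (II) requires normality of the source in the larger subgroup, which need not hold for the jump $P_i \leq R_i$; to circumvent this I would factor the inclusion through a chain of successive normalizers $P_i \lhd N_{R_i}(P_i) \lhd N_{R_i}(N_{R_i}(P_i)) \lhd \cdots \lhd R_i$, applying (II) to the most recently extended morphism at each stage. Composability of the resulting $\7{\psi}$ is automatic, since the target $R_{i-1}$ of $\psi_i$ is by construction the source of $\psi_{i-1}$.

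For uniqueness, any other extension $\7{\phi} = (Q_0 \Left2{\phi_1} \ldots \Left2{\phi_n} Q_n)$ of $\7{\varphi}$ representing $\omega$ gives a conjugation chain $(Q_0,\ldots,Q_n)$ containing the $P_i$, hence contained in $(R_0,\ldots,R_n)$ by step (a). Both $\phi_i$ and $\psi_i|_{Q_i}$ lift the same element $x_i$ of the locality and restrict to $\varphi_i$ on $P_i$, so they coincide by the free action of $E(Q_i)$ on morphism sets together with the injectivity of $\varepsilon$ on inclusions (axioms (A2) and (B)). The hardest part, I expect, is the bookkeeping in step (b): verifying that iterated applications of axiom (II) along the normalizer filtration produce a well-defined composite lift matching the locality-theoretic conjugation $c_{x_i}$, and that this lift does not depend on the particular filtration chosen between $P_i$ and $R_i$.
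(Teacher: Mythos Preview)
Your approach is correct but differs from the paper's. The paper argues by induction on $n$: the case $n=1$ is quoted as \cite[A.8]{Chermak}, and for $n \geq 2$ one takes the maximal extension $(X_1 \leftarrow \ldots \leftarrow X_n)$ of the truncated chain (by the inductive hypothesis) together with the maximal extension $(Y_0 \leftarrow Y_1)$ of the first edge (by the $n=1$ case), then restricts both to $R_1 = X_1 \cap Y_1$; maximality follows by applying the inductive hypothesis to any competitor. Your route instead builds the maximal conjugation chain in one stroke from the locality data $R_\omega$, $L_\omega$, and then lifts each $\varphi_i$ separately via axiom (II) iterated along a normalizer filtration --- which is effectively the content of \cite[A.8]{Chermak} unpacked at each index rather than invoked as a black box. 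Your step (a) makes the maximal chain more explicit; the paper's induction is cleaner bookkeeping. The filtration-dependence you worry about at the end is in fact absent: under the correspondence between $\calt$ and $\locl$, the set $\Mor_\calt(R_i, R_{i-1})$ is in bijection with $N_\locl(R_i, R_{i-1})$, so the lift of $x_i$ is unique once it exists. (Minor convention slip: in the paper's setup one has $P_i^{x_i} = P_{i-1}$ rather than $\9{x_i}P_i = P_{i-1}$, but this does not affect the argument.)
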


\begin{proof}

The argument is by induction on $n$, the length of the sequence $\7 \varphi$. If $n = 0$, there is nothing to prove since $S$ contains every other object of $\TT$, and if $n = 1$ then the statement corresponds to \cite[A.8]{Chermak}.

Assume $n \geq 2$. By induction hypothesis, the sequence $(P_1 \Left2{\varphi_2} \ldots \Left2{\varphi_n} P_n)$ has a maximal, unique extension, $(X_1 \Left2{\3{\varphi_2}} \ldots \Left2{\3{\varphi_n}} X_n)$,
$$
\xymatrix{
 & X_1 & X_2 \ar[l]_{\3{\varphi_2}} & \ldots \ar[l]_{\3{\varphi_3}} & X_n \ar[l]_{\3{\varphi_n}} \\
P_0 & P_1 \ar[l]^{\varphi_1} \ar[u]^{\iota} & P_2 \ar[l]^{\varphi_2} \ar[u]^{\iota} & \ldots \ar[l]^{\varphi_3} & P_n \ar[l]^{\varphi_n} \ar[u]_{\iota}
}
$$
Let also $Y_0 \Left2{\3{\varphi_1}} Y_1$ be the unique maximal extension of $P_0 \Left2{\varphi_1} P_1$. Both $Y_0 \Left2{\3{\varphi_1}} Y_1$ and $(X_1 \Left2{\3{\varphi_2}} \ldots \Left2{\3{\varphi_n}} X_n)$ can be restricted to $R_1 = X_1 \cap Y_1$, to obtain a sequence of composable isomorphisms $\7 \psi = (R_0 \Left2{\psi_1} R_1 \Left2{\psi_2} \ldots \Left2{\psi_n} R_n)$ whose restriction to $P_0$ is the original sequence $\7 \varphi$.

Maximality and uniqueness of $\7 \psi$ follow by construction. Indeed, if $\7 \varphi$ is the restriction of a sequence $\7 \phi = (Q_0 \Left2{\phi_1} Q_1 \Left2{\phi_2} \ldots \Left2{\phi_n} Q_n)$, then $Q_0 \Left2{\phi_1} Q_1$ is a restriction of $Y_0 \Left2{\3{\varphi_1}} Y_1$ and $(Q_1 \Left2{\phi_2} \ldots \Left2{\phi_n} Q_n)$ is a restriction of $(X_1 \Left2{\3{\varphi_2}} \ldots \Left2{\3{\varphi_n}} X_n)$ by the induction hypothesis, and thus $Q_1 \leq X_1 \cap Y_1$. The claim follows easily.
\end{proof}

In \cite[Section III.5]{GoJa}, the authors introduce the concept of \emph{extra degeneracy} for a simplicial set, and they show that a simplicial set with an extra degeneracy is always contractible. We review here this definition, and add a variation of it.

\begin{defi}\label{altextdeg1}

Let $X$ be a simplicial set. Define $X_{-1} = \pi_0(X)$, and set $d_0 \colon X_0 \to X_{-1}$ as the canonical map. An \emph{extra degeneracy} on $X$ is a collection of maps $\{s_{-1} \colon X_n \to X_{n+1}\}_{n \geq -1}$, satisfying the following conditions for all $n \geq -1$:
\begin{enumerate}[(i)]

\item $d_0 \circ s_{-1}$ is the identity on $X_n$; and

\item for all $0 \leq i, j, \leq n$, there are identities
$$
d_{i+1} \circ s_{-1} = s_{-1} \circ d_i \qquad \mbox{and} \qquad s_{j+1} \circ s_{-1} = s_{-1} \circ s_j.
$$

\end{enumerate}
Similarly, an \emph{alternative extra degeneracy} on $X$ is a collection of maps $\{\3{s}_{-1} \colon X_n \to X_{n+1}\}_{n \geq -1}$, satisfying the following conditions for all $n \geq -1$:
\begin{enumerate}[(a)]

\item $d_{n+1} \circ \3{s}_{-1}$ is the identity on $X_n$; and

\item for all $0 \leq i, j \leq n$, there are identities
$$
d_{i} \circ \3{s}_{-1} = \3{s}_{-1} \circ d_i \qquad \mbox{and} \qquad s_{j} \circ \3{s}_{-1} = \3{s}_{-1} \circ s_j.
$$

\end{enumerate}

\end{defi}

\begin{lmm}\label{altextdeg2}

Let $X$ be a simplicial set. If $X$ admits either an extra degeneracy or an alternative extra degeneracy, then the canonical map $X \to K(\pi_0(X),0)$ is a homotopy equivalence.

\end{lmm}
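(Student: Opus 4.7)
The plan is to show that each connected component of $X$ is contractible; since the canonical map $X \to K(\pi_0(X),0)$ induces a bijection on $\pi_0$ by construction, this suffices. As a first step, I would verify that an (alternative) extra degeneracy restricts to each connected component: conditions (i) and (a) ensure that $s_{-1}(x)$ (resp.\ $\3{s}_{-1}(x)$) has $x$ as one of its faces, so it lies in the same component as $x$; the compatibility conditions (ii) and (b) then ensure that these restrictions give (alternative) extra degeneracies on each component. Thus we may assume $X$ is connected and show that $X$ is weakly contractible.

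For the standard case (extra degeneracy $s_{-1}$), fix a vertex $v\in X_0$ and let $c\colon X\to X$ denote the constant simplicial map at $v$. The plan is to produce an explicit simplicial homotopy $H\colon X\times \Delta[1] \to X$ from $c$ to $\mathrm{id}_X$. Recall that the non-degenerate $(n+1)$-simplices of $\Delta[n]\times \Delta[1]$ are indexed by $0\le k\le n$ (choosing on which edge the ``jump'' $0\to 1$ happens); on the $k$-th such simplex $H$ is defined by the classical formula obtained by iterating $s_{-1}$ and the usual face and degeneracy maps (roughly, $H_k(x)= s_0^k\, s_{-1}(d_0^{n-k}x)$, together with the analogous expression on the other strand of the prism). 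The simplicial identities for $H$ are verified by combining the ordinary simplicial identities in $X$ with relations (i) and (ii), which are precisely what is needed to guarantee that $d_0 H_0 = c$, $d_{n+1}H_n = \mathrm{id}$, and that adjacent $H_k$'s agree on their common face.

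For the alternative case, I would pass to the opposite simplicial set $X\op = X\circ \mathrm{op}$ introduced earlier in the paper. Under this duality the face $d_i$ of $X$ becomes $d_{n-i}$ of $X\op$ and the degeneracy $s_j$ of $X$ becomes $s_{n-j}$ of $X\op$, so an alternative extra degeneracy $\3{s}_{-1}$ on $X$ is exactly an ordinary extra degeneracy on $X\op$. Since $|X|\cong |X\op|$ canonically (and in particular $\pi_0(X)=\pi_0(X\op)$), the alternative case reduces to the standard one already handled.

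The main obstacle is the bookkeeping of the explicit formula for the contracting homotopy: one has to choose an indexing of the prism decomposition of $\Delta[n]\times \Delta[1]$, write down $H$ on each piece, and verify compatibility on common faces using (i) and (ii). Once this formula is in place, everything reduces to a routine check of simplicial identities, and the alternative case is handled essentially for free via the $\mathrm{op}$-duality.
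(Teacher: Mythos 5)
Your argument is correct, but it takes a genuinely different route from the paper. The paper handles the standard case by citing \cite[Lemma III.5.1]{GoJa}: the extra degeneracy is exactly the data needed to factor $\Id_X$ through the front inclusion $j\colon X\to CX$ into the cone, exhibiting each component as a retract of a contractible simplicial set; the alternative case is then treated by reworking the factorization criterion for the back inclusion $k\colon X\to CX$. You instead build an explicit simplicial contraction and dispose of the alternative case by passing to $X^{\mathrm{op}}$. Both halves of your plan are sound. For the second half, the translation is exact: with $(d^{\mathrm{op}})_i=d_{n-i}$ and $(s^{\mathrm{op}})_j=s_{n-j}$ as defined in Section 4, conditions (a) and (b) become precisely (i) and (ii) for $X^{\mathrm{op}}$, and $|X^{\mathrm{op}}|\cong|X|$, so this is arguably cleaner than the paper's treatment. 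For the first half, your rough formula needs repair: as written, $s_0^k s_{-1}(d_0^{n-k}x)$ lands in $X_{2k+1}$ rather than $X_{n+1}$. The correct prism components are
$$
h_j \;=\; s_0^{\,j}\circ s_{-1}\circ d_0^{\,j}\;\colon\; X_n\longrightarrow X_{n+1},\qquad 0\le j\le n,
$$
with \emph{equal} exponents. One checks, using only the relations of Definition \ref{altextdeg1}(i)--(ii) together with the ordinary simplicial identities (in particular $d_is_0^{\,j}=s_0^{\,j-1}$ and $s_is_0^{\,j}=s_0^{\,j+1}$ for $i\le j$, and $d_0^{\,j+1}s_i=d_0^{\,j}$ for $i\le j$), that these satisfy all the homotopy identities, including the degeneracy compatibilities $s_ih_j=h_{j+1}s_i$ ($i\le j$) and $s_ih_j=h_js_{i-1}$ ($i>j$), with $d_0h_0=\Id$ and $d_{n+1}h_n(x)=s_0^n(v)$ for $v=s_{-1}(\ast)$. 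So on a connected component you get an actual simplicial homotopy from the identity to the constant map at $v$ — a slightly stronger conclusion than the paper's retract argument, which only yields weak contractibility (which is all the lemma needs). Your component-by-component reduction at the start is also fine, since $d_0s_{-1}=\Id$ (resp.\ $d_{n+1}\widebar{s}_{-1}=\Id$) keeps $s_{-1}(x)$ in the component of $x$.
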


\begin{proof}

The case where $X$ admits an extra degeneracy corresponds to \cite[Lemma III.5.1]{GoJa}. Suppose that $X$ admits an alternative extra degeneracy. The proof for this case follows similar arguments as that in \cite{GoJa}. For simplicity we may assume that $X$ is connected. Consider the cone of $X$,
$$
CX \defin \colim_{\Delta^n \to X} \Delta^{n+1},
$$
where the colimit is taken over the simplex category of $X$. The inclusions $\Delta^n \to \Delta^{n+1}$ given by $i \mapsto i$ define an inclusion map $j \colon X \to CX$, and \cite[Section III.5]{GoJa} gives necessary and sufficient conditions for a map $f \colon X \to Y$ to factor through $j \colon X \to CX$. Similarly, the inclusions $\Delta^n \to \Delta^{n+1}$ given by $i \mapsto i+1$ induce an inclusion map $k \colon X \to CX$, and we can deduce a corresponding set of necessary and sufficient conditions for a map $f \colon X \to Y$ to factor through $k \colon X \to CX$. As happened in \cite[Lemma III.5.1]{GoJa}, the alternative extra degeneracy provides the necessary data to construct a factorization of $\Id \colon X \to X$ through $k \colon X \to CX$. Thus, since $CX$ is contractible, the statement follows.
\end{proof}

\begin{thm}\label{equivnerves}

Let $\calt$ be a transporter system defined over a finite $p$-group $S$, and let $\locl$ be the nerve of the associated locality. The projection map $\pi \colon N(\calt) \to \locl$ is a weak homotopy equivalence of simplicial sets.

\end{thm}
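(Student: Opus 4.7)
The strategy I would follow is a version of Quillen's Theorem A for simplicial sets: establish that over each simplex $\omega \in \locl_n$, the appropriately defined fiber of $\pi$ is contractible, and conclude that $\pi$ is a weak equivalence by a standard skeletal induction. The two lemmas stated just before the theorem (Lemma \ref{maxrep} on maximal representatives and Lemma \ref{altextdeg2} on extra degeneracies) strongly suggest this is the intended route.

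Concretely, for each simplex $\omega \in \locl_n$ I would introduce a fiber simplicial set $F_\omega$ whose $m$-simplices are prism-like commutative diagrams in $\TT$: an $n$-simplex $\7\psi = (Q_0 \Leftarrow Q_1 \Leftarrow \ldots \Leftarrow Q_n)$ of $N(\TT)$ representing $\omega$, together with an $m$-fold tower of object inclusions at each vertex such that all squares commute via inclusion morphisms. Face maps in the tower direction are composition of consecutive inclusions; face maps in the $\7\psi$-direction come from restricting the representative along the tower. By construction, vertices of $F_\omega$ are representatives of $\omega$ and $1$-simplices record refinement relations between them.

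Next, using Lemma \ref{maxrep}, I would define an alternative extra degeneracy $\3{s}_{-1}$ on $F_\omega$ in the sense of Definition \ref{altextdeg1}: given an $m$-simplex of $F_\omega$, prepend to its tower the unique maximal extension $\7\varphi_{\max}$ of the top representative in the tower. Uniqueness of the maximal extension is precisely what is needed to show that $\3{s}_{-1}$ commutes with all the other face and degeneracy maps as required by condition (b) of Definition \ref{altextdeg1}, and condition (a) is immediate since chopping off the top of the tower recovers the original simplex. Lemma \ref{altextdeg2} then shows that $F_\omega \simeq K(\pi_0(F_\omega), 0)$; but $\pi_0(F_\omega)$ is a single point, because every representative of $\omega$ connects to $\7\varphi_{\max}$ via a zigzag (in fact a direct restriction) by Lemma \ref{maxrep}. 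Hence every $F_\omega$ is contractible.

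Finally, I would assemble the local contractibility statements into a global weak equivalence. The cleanest way is to note that the category of simplices of $\locl$ admits a covering by the fibers $F_\omega$, and the induced map $\hocolim_{\omega \in \locl} F_\omega \to N(\TT)$ is an iso of simplicial sets (each simplex of $N(\TT)$ lies in exactly one $F_\omega$ for $\omega = \pi(\7\psi)$), while the augmentation $\hocolim_{\omega} F_\omega \to \hocolim_\omega * = \locl$ is a weak equivalence because each $F_\omega$ is contractible; both composites give $\pi$ up to the canonical identifications. The main obstacle in carrying this out is the careful bookkeeping in the definition of $F_\omega$ and the verification of the simplicial identities for $\3{s}_{-1}$, since maximal extensions of faces of $\omega$ need not coincide with faces of the maximal extension of $\omega$ (this is precisely why restrictions, not extensions, must be used when defining the face structure of $F_\omega$, and why the alternative rather than the usual extra degeneracy is the appropriate tool).
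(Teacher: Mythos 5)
Your local analysis of the fibers is in the right spirit, but the assembly step at the end contains a genuine gap. The claim that the induced map $\hocolim_{\omega} F_\omega \to N(\calt)$ is an isomorphism of simplicial sets is false: an $m$-simplex $\7\varphi$ of $N(\calt)$ appears only as a \emph{vertex} of $F_{\pi(\7\varphi)}$, while the higher simplices of $F_\omega$ (the towers of restrictions) are not simplices of $N(\calt)$ at all, so the dimensions and simplicial structures do not match. More fundamentally, your $F_\omega$ is a model for the \emph{strict} fiber of $\pi$ over the simplex $\omega$, and there is no general principle by which contractibility of strict fibers of a map of simplicial sets implies that the map is a weak equivalence. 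The sufficient conditions that do work require contractibility of either the comma-category-style homotopy fibers (Quillen's Theorem A) or of the honest pullbacks $N(\calt) \times_{\locl} \Delta[n]$ over every simplex $\omega \colon \Delta[n] \to \locl$; your $F_\omega$ is neither of these (the pullback, for instance, also contains all simplices of $N(\calt)$ lying over faces and degeneracies of $\omega$, together with a structure map to $\Delta[n]$, and is a much larger object).

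The paper closes exactly this gap by running the bisimplicial two-sided bar construction from Quillen's proof of Theorem A. It introduces a bisimplicial set $\Gamma$ whose $(q,r)$-simplices are triples $(\omega, [x], \7 \varphi)$ with $\omega \in \locl_q$, $\7 \varphi \in N_r(\calt)$, and a \emph{connecting element} $[x] \in \locl_1$ subject to a composability condition. The extra coordinate $[x]$ is precisely what turns the column over a fixed $\omega$ into a homotopy fiber rather than a strict fiber; its contractibility is then proved with an ordinary extra degeneracy built from the maximal representative of Lemma \ref{maxrep}, much as you propose for $F_\omega$, while the row over a fixed $\7 \varphi$ is contracted by absorbing $[x]$ into $\omega$ (this is where the alternative extra degeneracy of Definition \ref{altextdeg1} is actually used). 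The two realizations of $\Gamma$ identify $\diag(\Gamma)$ with $N(\calt)$ and, after a comparison with the bisimplicial set built from the identity map, with $\locl$, yielding the weak equivalence. To repair your outline you would need either to replace $F_\omega$ by the pullback $N(\calt) \times_{\locl} \Delta[n]$ and prove \emph{that} is contractible, or to introduce the connecting datum $[x]$ and the bisimplicial bookkeeping; as written, contractibility of your $F_\omega$ does not imply the theorem.
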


\begin{proof}

Following the argument in \cite[Proof of Theorem. A]{Quillen}, we define a bisimplicial set $\Gamma$ that will allow us to compare the homotopy types of $N(\calt)$ and $\locl$.

\textbf{Step 1.} Define $\Gamma$ as the set with $(q,r)$-simplices the symbols $\sigma = (\omega, [x], \7 \varphi)$, where
\begin{enumerate}[(1)]

\item $\omega = [z_1|\ldots|z_q]$ is a $q$-simplex in $\locl$;

\item $[x] \in \locl_1$; and

\item $\7 \varphi = (P_0 \Left1{\varphi_1} P_1 \Left1{\varphi_2} \ldots \Left1{\varphi_r} P_r)$ is a $r$-simplex in $N_q(\calt)$;

\end{enumerate}
subject to the condition that $[z_1|\ldots|z_q|x|\tau(\varphi_1)|\ldots|\tau(\varphi_r)] \in \locl$. Let $\sigma = (\omega, [x], \7 \varphi) \in \Gamma_{q,r}$, with $\omega = [z_1|\ldots|z_q]$ and $\7 \varphi = (P_0 \Left1{\varphi_1} \ldots \Left1{\varphi_r} P_r)$. The horizontal face and degeneracy operators are given by the following formulas.
$$
d_i^h(\sigma) = \left\{
\begin{array}{ll}
(d_i(\omega), [x], \7 \varphi), & 0 \leq i \leq q-1\\
(d_q(\omega), [z_q \cdot x], \7 \varphi), & i = q\\
\end{array}
\right.
\qquad
s_j^h(\sigma) = (s_j(\omega), [x], \7 \varphi), \,\, 0 \leq j \leq q.
$$
Similarly, the vertical face and degeneracy operators are given by the following formulas.
$$
d_i^v(\sigma) = \left\{
\begin{array}{ll}
(\omega, [x \cdot \tau(\varphi_1)], d_0(\7 \varphi)), & i = 0\\
(\omega, [x], d_i(\7 \varphi)), & 1 \leq i \leq r\\
\end{array}
\right.
\qquad
s_j^v(\sigma) = (\omega, [x], s_j(\7 \varphi)), \,\, 0 \leq j \leq r.
$$
Note that these operators are well defined by definition of $\Gamma$. The following holds.
\begin{enumerate}[(a)]

\item The realization of a bisimplicial set is the diagonal simplicial set. There are maps 
\begin{equation}\label{diag1}
\locl  \Left4{\pr_1} \diag(\Gamma) \Right4{\pr_2} N(\calt)
\end{equation}
induced by obvious projections.

\item Equivalent realizations are obtained by first realizing the horizontal simplicial sets, thus obtaining (vertically) a simplicial space, and then realizing this simplicial space, or the other way around, that is, realizing first the vertical simplicial sets and then the remaining horizontal simplicial space.

\end{enumerate}

\textbf{Step 2.}  The simplicial set in \emph{rows}. Fix a row $r$, and consider the simplicial subset $\Gamma_{\bullet, r} \subseteq \Gamma$. This can be partitioned on one simplicial set $\Gamma_{\bullet, \7 \varphi}$ for each $\7 \varphi \in N_r(\calt)$, where
$$
\Gamma_{q,\7 \varphi} =   \{(\omega, [x], \7 \varphi)\in \Gamma_{q,r} \}.
$$
We show that $\Gamma_{\bullet, \7 \varphi}$ is contractible by showing that it is connected and admits an alternative extra degeneracy.

Fix $\7 \varphi = (P_0 \Left1{\varphi_1} \ldots \Left1{\varphi_r} P_r) \in N_r(\calt)$, and let $v \in \locl_0$ be its unique vertex. Let also $w_0 = (v, [1], \7 \varphi), w = (v, [x], \7 \varphi) \in \Gamma_{0, \7 \varphi}$. Then, the $1$-simplex $\sigma = ([x^{-1}], [x], \7 \varphi)$ satisfies
$$
d_0^h(\sigma) = w \qquad \mbox{and} \qquad d_1^h(\sigma) = w_0.
$$
In particular, this shows that $\Gamma_{\bullet, \7 \varphi}$ is connected. Set $\Gamma_{-1, \7 \varphi} = \{\ast\}$, and define $\3{s}_{-1}^h$ as follows. In dimension $q = -1$, set $\3{s}_{-1}^h(\ast) = w_0$, where $w_0$ is the vertex specified above. In dimensions $q \geq 0$, set
$$
\3{s}_{-1}^h(\omega, [x], \7 \varphi) = ([z_1|\ldots|z_q|x], [1], \7 \varphi),
$$
where $\omega = [z_1|\ldots|z_q]$. That $\3{s}_{-1}^h$ satisfies the conditions to be an alternative extra degeneracy is an easy exercise left to the reader. By Lemma \ref{altextdeg2}, it follows that $\Gamma_{\bullet, \7 \varphi}$ is contractible. Hence, the realization of the $r$-th row is
$$
\coprod_{\7 \varphi \in N_r(\calt)} |\Gamma_{\bullet, \7 \varphi}| \simeq \coprod_{\7 \varphi \in N_r(\calt)}\{\ast\}.
$$
Thus, the  \emph{vertical} simplicial space is equivalent to the simplicial set $N(\calt)$, and the projection $\pr_2\colon \diag(\Gamma) \to N(\calt)$ is a weak homotopy equivalence.

\textbf{Step 3.} The simplicial sets in \emph{columns}. Fix a column $q$, and consider $\Gamma_{q, \bullet} \subseteq \Gamma$. This a union of simplicial sets $\Gamma_{\omega, \bullet}$, with $\omega \in \locl_q$. Following an argument similar to Step 2, we show that $\Gamma_{\omega, \bullet}$ is contractible by proving that it is connected and that it admits an extra degeneracy.

Fix $\omega = [z_1|\ldots|z_q] \in \locl_q$, and let $\7 \psi = (R_0 \Left1{\psi_1} \ldots \Left1{\psi_q} R_q)$ be the maximal representative of $\omega$, as described in Lemma \ref{maxrep}. To check that $\Gamma_{\omega, \bullet}$ is connected, let $u = (\omega, [x], (P_0)) \in \Gamma_{\omega, 0}$ be a vertex. By definition of $\Gamma$, we have $[z_1|\ldots|z_q|x] \in \locl$, which means that there is some sequence
$$
K_0 \Left2{\gamma_1} K_1 \Left2{\gamma_2} \ldots \Left2{\gamma_q} K_q \Left2{\gamma} P_0
$$
that represents $[z_1|\ldots|z_q|x]$. Without loss of generality we may assume that $\gamma$ is an isomorphism in $\calt$. Notice that $K_0 \Left1{\gamma_1} \ldots \Left1{\gamma_q} K_q$ is a representative of $\omega$, and thus by Lemma \ref{maxrep} it is the restriction of $\7 \psi$. In particular, $K_q \leq R_q$. Consider the vertices $u_0 = (\omega, [1], (R_q)), u_1 = (\omega, [1], (K_q)) \in \Gamma_{\omega, 0}$, and the $1$-simplices $\sigma_1 = (\omega, [x], (P_0 \Left1{\gamma^{-1}} K_q))$ and $\sigma_2 = (\omega, [1], (R_q \Left1{\incl} K_q))$. Then,
$$
d_0^v(\sigma_1) = u_1 \qquad d_1^v(\sigma_1) = u \qquad d_0^v(\sigma_2) = u_1 \qquad d_1^v(\sigma_2) = u_0,
$$
and this shows that $\Gamma_{\omega, \bullet}$ is connected.

Set $\Gamma_{\omega, -1} = \{\ast\}$ and define an extra degeneracy on $\Gamma_{\omega, \bullet}$ as follows. In dimension $r = -1$, set $s_{-1}^v(\ast) = u_0$, the vertex specified in the previous paragraph. In dimensions $r \geq 0$, define
$$
s_{-1}^v(\omega, [x], (P_0 \Left1{\varphi_1} \ldots \Left1{\varphi_r} P_r)) = (\omega, [1], (R_q \Left1{\alpha} P_0 \Left1{\varphi_1} \ldots \Left1{\varphi_r} P_r)),
$$
where $R_q \Left1{\alpha} P_0$ is the (unique) morphism from $P_0$ to $R_q$ such that $\tau(\alpha) = [x]$. Checking that $s_{-1}^h$ is an extra degeneracy is again a routine exercise which is left to the reader. Thus $\Gamma_{\omega, \bullet}$ is contractible, and as a consequence the map $\pr_1\colon \diag(\Gamma) \Right1{} \locl$ is a weak homotopy equivalence.

\textbf{Step 4.}  The same construction with the identity map $\Id \colon N(\calt) \Right1{} N(\calt)$ gives a new bisimplicial set $\widetilde{\Gamma}$ and a commutative diagram of simplicial sets
$$
\xymatrix{
N(\calt) \ar[d]_{\tau} & \diag(\widetilde{\Gamma}) \ar[d]^{\tau_\sharp} \ar[l]_{\pr_1} \ar[r]^{\pr_2} &            N(\calt) \ar@{=}[d]  \\
\locl & \diag(\Gamma) \ar[l]_{\pr_1} \ar[r]^{\pr_2} & N(\calt)
}
$$
where the horizontal arrows are weak homotopy equivalences. As a consequence the map $ \tau \colon N(\calt) \Right1{} \locl$ is also a weak equivalence.
\end{proof}

%%%%%%%%%%%%%%%%%%%%%%%%%%%%%%%%%%%%%%%%%%%%%%%%%%%%%%%%%%%%%%%%%%%%%%%%%%%%%%%%%%%%

\bibliographystyle{alpha}
\bibliography{/Users/agondem/Dropbox/MATH/Tex-extras/Main}

\newcommand{\etalchar}[1]{$^{#1}$}
\begin{thebibliography}{BCG{\etalchar{+}}07}

\bibitem[AKO11]{AKO}
Michael Aschbacher, Radha Kessar, and Bob Oliver.
\newblock {\em Fusion systems in algebra and topology}, volume 391 of {\em
  London Mathematical Society Lecture Note Series}.
\newblock Cambridge University Press, Cambridge, 2011.

\bibitem[AOV10]{AOV}
Kasper K.~S. Andersen, Bob Oliver, and Joana Ventura.
\newblock Reduced, tame and exotic fusion systems, 2010.

\bibitem[Asc08]{Aschbacher}
Michael Aschbacher.
\newblock Normal subsystems of fusion systems.
\newblock {\em Proc. Lond. Math. Soc. (3)}, 97(1):239--271, 2008.

\bibitem[BCG{\etalchar{+}}05]{BCGLO1}
Carles Broto, Nat{\`a}lia Castellana, Jesper Grodal, Ran Levi, and Bob Oliver.
\newblock Subgroup families controlling {$p$}-local finite groups.
\newblock {\em Proc. London Math. Soc. (3)}, 91(2):325--354, 2005.

\bibitem[BCG{\etalchar{+}}07]{BCGLO2}
C.~Broto, N.~Castellana, J.~Grodal, R.~Levi, and B.~Oliver.
\newblock Extensions of {$p$}-local finite groups.
\newblock {\em Trans. Amer. Math. Soc.}, 359(8):3791--3858, 2007.

\bibitem[BGM59]{BGM}
M.~G. Barratt, V.~K. A.~M. Gugenheim, and J.~C. Moore.
\newblock On semisimplicial fibre-bundles.
\newblock {\em Amer. J. Math.}, 81:639--657, 1959.

\bibitem[BK72]{BK}
A.~K. Bousfield and D.~M. Kan.
\newblock {\em Homotopy limits, completions and localizations}.
\newblock Lecture Notes in Mathematics, Vol. 304. Springer-Verlag, Berlin,
  1972.

\bibitem[BLO03a]{BLO1}
Carles Broto, Ran Levi, and Bob Oliver.
\newblock Homotopy equivalences of {$p$}-completed classifying spaces of finite
  groups.
\newblock {\em Invent. Math.}, 151(3):611--664, 2003.

\bibitem[BLO03b]{BLO2}
Carles Broto, Ran Levi, and Bob Oliver.
\newblock The homotopy theory of fusion systems.
\newblock {\em J. Amer. Math. Soc.}, 16(4):779--856, 2003.

\bibitem[BLO07]{BLO3}
Carles Broto, Ran Levi, and Bob Oliver.
\newblock Discrete models for the {$p$}-local homotopy theory of compact {L}ie
  groups and {$p$}-compact groups.
\newblock {\em Geom. Topol.}, 11:315--427, 2007.

\bibitem[BLO14]{BLO6}
Carles Broto, Ran Levi, and Bob Oliver.
\newblock An algebraic model for finite loop spaces.
\newblock {\em Algebr. Geom. Topol.}, 14(5):2915--2981, 2014.

\bibitem[Bro82]{Brown}
Kenneth~S. Brown.
\newblock {\em Cohomology of groups}, volume~87 of {\em Graduate Texts in
  Mathematics}.
\newblock Springer-Verlag, New York, 1982.

\bibitem[Che13]{Chermak}
Andrew Chermak.
\newblock Fusion systems and localities.
\newblock {\em Acta Math.}, 211(1):47--139, 2013.

\bibitem[Che15a]{Chermak2}
Andrew Chermak.
\newblock Localities and projections, 2015.
\newblock arXiv:1505.07786.

\bibitem[Che15b]{Chermak3}
Andrew Chermak.
\newblock Subnormal embeddings of localities, 2015.
\newblock arXiv:1505.08110.

\bibitem[Cur71]{Curtis}
Edward~B. Curtis.
\newblock Simplicial homotopy theory.
\newblock {\em Advances in Math.}, 6:107--209 (1971), 1971.

\bibitem[DK84]{DK2}
W.~G. Dwyer and D.~M. Kan.
\newblock An obstruction theory for diagrams of simplicial sets.
\newblock {\em Nederl. Akad. Wetensch. Indag. Math.}, 46(2):139--146, 1984.

\bibitem[DR14]{Diaz}
Antonio D{\'{\i}}az~Ramos.
\newblock A spectral sequence for fusion systems.
\newblock {\em Algebr. Geom. Topol.}, 14(1):349--378, 2014.

\bibitem[GJ09]{GoJa}
Paul~G. Goerss and John~F. Jardine.
\newblock {\em Simplicial homotopy theory}.
\newblock Modern Birkh\"auser Classics. Birkh\"auser Verlag, Basel, 2009.
\newblock Reprint of the 1999 edition [MR1711612].

\bibitem[ML95]{MacLane}
Saunders Mac~Lane.
\newblock {\em Homology}.
\newblock Classics in Mathematics. Springer-Verlag, Berlin, 1995.
\newblock Reprint of the 1975 edition.

\bibitem[Oli13]{Oliver}
Bob Oliver.
\newblock Existence and uniqueness of linking systems: {C}hermak's proof via
  obstruction theory.
\newblock {\em Acta Math.}, 211(1):141--175, 2013.

\bibitem[OV07]{OV}
Bob Oliver and Joana Ventura.
\newblock Extensions of linking systems with {$p$}-group kernel.
\newblock {\em Math. Ann.}, 338(4):983--1043, 2007.

\bibitem[Pui06]{Puig1}
Lluis Puig.
\newblock Frobenius categories.
\newblock {\em J. Algebra}, 303(1):309--357, 2006.

\bibitem[Pui09]{Puig2}
Llu{\'{\i}}s Puig.
\newblock {\em Frobenius categories versus {B}rauer blocks}, volume 274 of {\em
  Progress in Mathematics}.
\newblock Birkh\"auser Verlag, Basel, 2009.
\newblock The Grothendieck group of the Frobenius category of a Brauer block.

\bibitem[Qui73]{Quillen}
Daniel Quillen.
\newblock Higher algebraic {$K$}-theory. {I}.
\newblock In {\em Algebraic {$K$}-theory, {I}: {H}igher {$K$}-theories ({P}roc.
  {C}onf., {B}attelle {M}emorial {I}nst., {S}eattle, {W}ash., 1972)}, pages
  85--147. Lecture Notes in Math., Vol. 341. Springer, Berlin, 1973.

\bibitem[Wei67]{Weingram}
S.~Weingram.
\newblock The realization of a semisimplicial bundle map is a {$k$}-bundle map.
\newblock {\em Trans. Amer. Math. Soc.}, 127:495--514, 1967.

\end{thebibliography}

\end{document}